\renewcommand{\phi}{\varphi}
\renewcommand{\ker}{\Ker}
\def\ch{\mathrm{ch}}
\def\chb{\mathrm{ch,b}}
\def\cl{\mathrm{cl}}
\def\Har{\mathrm{Har}}
\def\dd{\partial}
\def\al{\alpha}
\def\be{\beta}
\def\si{\sigma}
\def\ga{\gamma}
\def\Ga{\Gamma}
\def\la{\lambda}
\def\De{\Delta}
\newcommand{\mc}[1]{\mathcal{#1}}
\newcommand{\mf}[1]{\mathfrak{#1}}
\newcommand{\mb}[1]{\mathbb{#1}}
\newcommand{\tint}{{\textstyle\int}}
\DeclareMathOperator{\Hom}{Hom}
\DeclareMathOperator{\End}{End}
\DeclareMathOperator{\ad}{ad}
\DeclareMathOperator{\Der}{Der}
\DeclareMathOperator{\Inder}{Inder}
\DeclareMathOperator{\Cas}{Cas}
\DeclareMathOperator{\Ker}{Ker}
\DeclareMathOperator{\Span}{span}
\DeclareMathOperator{\fil}{F}
\DeclareMathOperator{\gr}{gr}
\DeclareMathOperator{\cur}{Cur}
\DeclareMathOperator{\Vir}{Vir}
\newcommand{\vir}{\mathrm{Vir}}
\newcommand{\PV}{\mathop{\rm PV }}
\newcommand{\vac}{|0\rangle}
\def\smallunderbrace#1{\mathop{\vtop{\m@th\ialign{##\crcr
   $\hfil\displaystyle{#1}\hfil$\crcr
   \noalign{\kern3\p@\nointerlineskip}%
   \tiny\upbracefill\crcr\noalign{\kern3\p@}}}}\limits}
\theoremstyle{plain}
\newtheorem{theorem}{Theorem}[section]
\newtheorem{lemma}[theorem]{Lemma}
\newtheorem{proposition}[theorem]{Proposition}
\newtheorem{corollary}[theorem]{Corollary}
\newtheorem{conjecture}[theorem]{Conjecture}
\theoremstyle{definition}
\newtheorem{definition}[theorem]{Definition}
\newtheorem{example}[theorem]{Example}
\theoremstyle{remark}
\newtheorem{remark}[theorem]{Remark}
\numberwithin{equation}{section}
\definecolor{light}{gray}{.9}
\begin{document}

\title{Computation of cohomology of vertex algebras}

\author{Bojko Bakalov}
\address{Department of Mathematics, North Carolina State University,
Raleigh, NC 27695, USA}
\email{bojko\_bakalov@ncsu.edu}

\author{Alberto De Sole}
\address{Dipartimento di Matematica, Sapienza Universit\`a di Roma,
P.le Aldo Moro 2, 00185 Rome, Italy}
\email{desole@mat.uniroma1.it}
\urladdr{www1.mat.uniroma1.it/$\sim$desole}

\author{Victor G. Kac}
\address{Department of Mathematics, MIT,
77 Massachusetts Ave., Cambridge, MA 02139, USA}
\email{kac@math.mit.edu}

\subjclass[2010]{
Primary 17B69; 
Secondary 17B63, 17B65, 17B80, 18D50
}


\begin{abstract}
We review cohomology theories corresponding to the chiral and classical operads.
The first one is the cohomology theory of vertex algebras,
while the second one is the classical cohomology of Poisson vertex algebras (PVA),
and we construct a spectral sequence relating them.
Since in ``good'' cases the classical PVA cohomology coincides with the variational PVA cohomology
and there are well-developed methods to compute the latter,
this enables us to compute the cohomology of vertex algebras in many interesting cases.
Finally, we describe a unified approach to integrability through vanishing 
of the first cohomology,
which is applicable to both classical and quantum systems of Hamiltonian PDEs.
\end{abstract}
\keywords{
Chiral and classical operads,
vertex algebra cohomology, 
classical and variational Poisson cohomology,
Harrison cohomology,
spectral sequences.
}

\maketitle

\tableofcontents

\pagestyle{plain}

\section{Introduction}\label{sec:1}

In the series of papers \cite{BDSHK18,BDSHK19,BDSK19,BDSKV19,BDSHKV20}
we developed, with our collaborators,
the foundations of cohomology theory of vertex algebras.

This theory is the last in the series of cohomology theories
beyond the Lie (super)algebra cohomology,
which are intimately related to each other.
All these theories are based on a $\mb Z$-graded Lie superalgebra
\begin{equation}\label{eq:intro1}
W_{\mc P}=\bigoplus_{k\geq-1}W_{\mc P}^k\,,
\end{equation}
associated to a linear symmetric operad $\mc P$,
governing the corresponding cohomology theory.
Recall that $W_{\mc P}^{k-1}=\mc P(k)^{S_k}$
and the Lie superalgebra bracket in $W_{\mc P}$
is defined in terms of the $\circ_i$-products of the operad $\mc P$,
see \cite{Tam02} or \cite{BDSHK18} for details.
An odd element $X\in W_{\mc P}^1$ satisfying $[X,X]=0$
defines a cohomology complex $(W_{\mc P},\ad X)$,
which is a differential graded Lie superalgebra.

The most known example of this construction is the Lie (super)algebra cohomology.
In this case one takes the well-known operad $\mc Hom(V)$
(also often denoted $\mc End(V)$)
for which $\mc Hom(V)(n)=\Hom(V^{\otimes n},V)$,
where $V$ is a fixed vector superspace,
and the action of $S_n$ is by permutation of the factors of $V^{\otimes n}$.
Then $W_{\mc Hom(V)}$ is the Lie superalgebra of polynomial vector fields on $V$
with the $\mb Z$-grading defined by letting $\deg V=-1$.
Furthermore, odd elements $X\in W_{\mc Hom(\Pi V)}^1$
(where $\Pi$ stands for parity reversal)
such that $[X,X]=0$,
correspond bijectively to Lie superalgebra structures on $V$
by letting
\begin{equation}\label{eq:intro2}
[a,b]=(-1)^{p(a)}X(a\otimes b)
\,\,,\qquad
a,b\in V
\,.
\end{equation}
The complex $(W_{\mc Hom(\Pi V)},\ad X)$
is the Chevalley--Eilenberg cohomology complex of the Lie superalgebra $V$
with Lie bracket \eqref{eq:intro2}, with coefficients in the adjoint module.
Moreover, given a $V$-module $M$,
we extend the Lie (super)algebra structure of $V$ to $V\oplus M$
by taking $M$ to be an abelian ideal,
and let $\widetilde X\in W_{\mc Hom(\Pi V\oplus\Pi M)}^1$ 
be the element corresponding to this Lie (super)algebra structure.
Then a natural reduction of the complex $(W_{\mc Hom(\Pi V\oplus\Pi M)},\ad\widetilde X)$
produces the Chevalley--Eilenberg cohomology complex
of $V$ with coefficients in $M$.
Note that, while the cohomology of $V$ with coefficients in the adjoint module
inherits the Lie superalgebra structure from $W_{\mc Hom(\Pi V)}$,
this is not the case for the reduction.

Returning to vertex algebras,
recall that they were defined in \cite{B86}
as algebras with bilinear products labeled by $n\in\mb Z$
satisfying the rather complicated Borcherds identity.
According to an equivalent, Poisson-like definition given in \cite{BK03}, a vertex algebra is a module $V$
over the algebra of polynomials $\mb F[\partial]$,
where $\partial$ is an even endomorphism,
endowed with the following two structures:
a structure of a Lie conformal (super)algebra (LCA),
defined by a $\lambda$-bracket
\begin{equation}\label{eq:intro3}
V\otimes V\,\to\, V[\lambda]
\,\,,\,\,\,\,
a\otimes b\mapsto [a_\lambda b]
\,,
\end{equation}
satisfying axioms L1--L3 from Definition \ref{def:lca} in Section \ref{sec:2.1},
and a bilinear product 
$$
V\otimes V\,\to\, V
\,\,,\,\,\,\,
a\otimes b\mapsto \,{:}ab{:}
\,,
$$
called the normally ordered product,
which defines a commutative and associative,
up to ``quantum corrections,'' differential (super)algebra structure on $V$.
These two operations are related by the Leibniz rule
up to a ``quantum correction''.
See \eqref{eq:qc}--\eqref{eq:wi} in Section \ref{sec:4.1} for the precise identities.

Since the LCA structure is an important part of a vertex algebra structure,
the first step towards vertex algebra cohomology
is the LCA cohomology theory.
The latter was constructed in \cite{BKV99}
and in the more general setting of Lie pseudoalgebras in \cite{BDAK01};
see also \cite{DSK09} for a correction of \cite{BKV99}.
In \cite{DSK13} the $\mb Z$-graded Lie superalgebra ``governing'' the LCA cohomology
was introduced,
and in \cite[Sec. 5.2]{BDSHK18} the corresponding operad $\mc P=\mc Chom(V)$
was explicitly constructed for an $\mb F[\partial]$-module $V$.
The construction goes as follows.

Introduce the vector superspaces
\begin{equation}\label{eq:1.3}
V_n=V[\lambda_1,\dots,\lambda_n]/\langle\partial+\lambda_1+\dots+\lambda_n\rangle
\,,
\end{equation}
where all $\lambda_i$ have even parity and $\langle\Phi\rangle$
stands for the image of the endomorphism $\Phi$.
Then 
\begin{equation}\label{eq:1.4}
\mc Chom(V)(n)
\subset\Hom(V^{\otimes n},V_n)
\end{equation}
consists of all maps $Y_{\lambda_1,\dots,\lambda_n}\colon V^{\otimes n}\to V_n$
satisfying the sesquilinearity property ($1\leq i\leq n$):
\begin{equation}\label{eq:1.5}
Y_{\lambda_1,\dots,\lambda_n}(v_1\otimes\dots\otimes\partial v_i \otimes\dots\otimes v_n)
=
-\lambda_i \,
Y_{\lambda_1,\dots,\lambda_n}(v_1\otimes\dots\otimes v_n)
\,.
\end{equation}
The action of $S_n$ on $\mc Chom(V)(n)$
is given by the simultaneous permutation of the factors of $V^{\otimes n}$
and the $\lambda_i$'s.
The construction of the products $\circ_i$
can be found in \cite{BDSHK18}.

Then 
odd elements $X\in W^1_{\mc Chom(\Pi V)}$ bijectively correspond
to \emph{skewsymmetric} $\lambda$-\emph{brackets} on $V$,
i.e., maps $[\cdot\,_\lambda\,\cdot]\colon  V^{\otimes2}\to V[\lambda]$
satisfying the sesquilinearity L1
and skewsymmetry L2 from Definition \ref{def:lca}.
Explicitly, this bijection is given by
\begin{equation}\label{eq:1.6}
[a_\lambda b]=(-1)^{p(a)}X_{\lambda,-\lambda-\partial}(a\otimes b)
\,.
\end{equation}
Finally, the condition $[X,X]=0$ is equivalent to the Jacobi identity L3.

Thus, taking for $X\in W^1_{\mc Chom(\Pi V)}$ 
the map corresponding to the LCA structure on $V$
defined by \eqref{eq:1.6},
we obtain the cohomology complex $(W_{\mc Chom(\Pi V)},\ad X)$,
with the structure of a differential graded Lie superalgebra.
The cohomology of this complex is the LCA cohomology complex with coefficients in the adjoint
module.
By a reduction, mentioned above,
one defines the LCA cohomology complex of $V$ with coefficients in an arbitrary $V$-module.

Yet another important to us example is the \emph{variational Poisson vertex (super)algebra 
(PVA) cohomology} \cite{DSK13}.
Recall that a PVA $\mc V$ is an $\mb F[\partial]$-module
equipped with a structure of a unital commutative associative differential superalgebra
with derivation $\partial$, and a structure of an LCA,
such that the Leibniz rule L4 from Definition \ref{def:pva} holds.
In other words, a PVA is an ``approximation'' of a vertex algebra for which all quantum 
corrections disappear.
The variational PVA cohomology complex is constructed for a commutative associative differential
superalgebra $\mc V$
by considering the subalgebra 
\begin{equation}\label{eq:1.11}
W_{\PV}(\Pi\mc V)
=
\bigoplus_{k=-1}^\infty W^k_{\PV}(\Pi\mc V)
\end{equation}
of the Lie superalgebra $W_{\mc Chom(\Pi\mc V)}$
consisting of all maps $Y$ satisfying, besides the sesquilinearity property \eqref{eq:1.5}
and the $S_{k}$-invariance,
the Leibniz rule \eqref{eq:leib} in Section \ref{sec:clpva}.
Then odd elements $X\in W^1_{\PV}(\Pi\mc V)$
correspond bijectively via \eqref{eq:1.6}
to skewsymmetric $\lambda$-brackets on $V$ satisfying the Leibniz rule L4.
The condition $[X,X]=0$ is again equivalent to the Jacobi identity L3;
hence such $X$ correspond bijectively to PVA structures on the differential algebra $\mc V$.
The resulting complex $(W_{\PV}(\Pi\mc V),\ad X)$
is called the variational cohomology complex of the PVA $\mc V$
with coefficients in the adjoint module.
As mentioned above, given a $\mc V$-module $M$
one defines the corresponding cohomology complex
with coefficients in $M$
by a simple reduction procedure.
The corresponding variational PVA cohomology is denoted by
\begin{equation}\label{eq:1.12}
H_{\PV}(\mc V,M)
=
\bigoplus_{n=0}^\infty 
H^n_{\PV}(\mc V,M)
\,.
\end{equation}
Here and for all other cohomology theories,
we shift the indices by $1$ as compared with \eqref{eq:1.11}
in order to keep the traditional notation.

The goal of the present paper is to develop methods of
computation of the vertex algebra cohomology introduced in \cite{BDSHK18}.
This cohomology is defined by considering the operad $\mc P^{\ch}(V)$,
which is a local version of the chiral operad of Beilinson and Drinfeld \cite{BD04}
associated to a $\mc D$-module on a smooth algebraic curve $X$,
by considering $X=\mb F$ and the $\mc D$-module translation equivariant.
We showed that in this case the operad $\mc P^{\ch}(V)$ 
admits a simple description,
which is an enhancement of the operad $\mc Chom(V)$ described above.

In order to describe this construction,
let $\mc O^{\star,T}_n=\mb F[z_i-z_j,(z_i-z_j)^{-1}]_{1\leq i<j\leq n}$.
For an $\mb F[\partial]$-module $V$,
the superspace $\mc P^{\ch}(V)(n)$
is defined as the set of all linear maps
\begin{equation}\label{eq:1.13}
Y\colon V^{\otimes n}\otimes\mc O^{\star,T}_n\to V_n
\,\,,\qquad
v_1\otimes\dots\otimes v_n\otimes f\mapsto
Y_{\lambda_1,\dots,\lambda_n}(v_1\otimes\dots\otimes v_n\otimes f)
\,,
\end{equation}
where $V_n$ is defined by \eqref{eq:1.3},
satisfying the following two sesquilinearity properties ($1\leq i\leq n$):
\begin{equation}\label{eq:1.14}
Y_{\lambda_1,\dots,\lambda_n}(v_1\otimes\dots\otimes (\partial+\lambda_i)v_i \otimes\dots\otimes v_n\otimes f)
=
Y_{\lambda_1,\dots,\lambda_n}\Bigl(v_1\otimes\dots\otimes v_n\otimes \frac{\partial f}{\partial z_i}\Bigr)
\,,
\end{equation}
and
\begin{equation}\label{eq:1.15}
Y_{\lambda_1,\dots,\lambda_n}(v_1\otimes\dots\otimes v_n\otimes (z_i-z_j)f)
=
\Bigl(
\frac{\partial}{\partial\lambda_j}
-
\frac{\partial}{\partial\lambda_i}
\Bigr)
Y_{\lambda_1,\dots,\lambda_n}(v_1\otimes\dots\otimes v_n\otimes f)
\,.
\end{equation}
(Note that \eqref{eq:1.14} turns into \eqref{eq:1.5} if $f=1$.)
In \cite{BDSHK18} we also defined the action of $S_n$ on $\mc P^{\ch}(V)(n)$
and the $\circ_i$-products, making $\mc P^{\ch}(V)$ an operad.

As a result,
we obtain the Lie superalgebra 
$$
W_{\ch}(V):=W_{\mc P^{\ch}(V)}=\bigoplus_{k=-1}^\infty W_{\ch}^k(V)
\,,
$$
see \eqref{eq:intro1}.
We show in \cite{BDSHK18}
that odd elements $X\in W_{\ch}^1(\Pi V)$
such that $[X,X]=0$
correspond bijectively to vertex algebra structures on the $\mb F[\partial]$-module $V$ 
such that $\partial$ is the translation operator.
As before, this leads to the vertex algebra cohomology
$$
H_{\ch}(V,M)=\bigoplus_{n=0}^\infty H_{\ch}^n(V,M)
\,,
$$ 
for any $V$-module $M$.

Now suppose that the $\mb F[\partial]$-module $V$
is equipped with an increasing $\mb Z_+$-filtration by $\mb F[\partial]$-submodules.
Taking the increasing filtration of $\mc O^{\star,T}_n$ by the number of divisors,
we obtain an increasing filtration of $V^{\otimes n}\otimes\mc O^{\star,T}_n$.
This filtration induces a decreasing filtration of the superspace $\mc P^{\ch}(V)(n)$.
The associated graded spaces $\gr\mc P^{\ch}(V)(n)$
form a graded operad.

On the other hand, in \cite{BDSHK18}
we introduced the closely related operad $\mc P^{\cl}(V)$,
which ``governs'' the Poisson vertex algebra structures on the $\mb F[\partial]$-module $V$.
The vector superspace $\mc P^{\cl}(V)(n)$
is the space of linear maps (cf. \eqref{eq:1.13})
\begin{equation}\label{eq:1.16}
Y\colon V^{\otimes n}\otimes\mc G(n)
\to V_n \,,
\qquad v\otimes\Gamma \mapsto Y^\Gamma(v)
\,,
\end{equation}
where $\mc G(n)$ is the space spanned by oriented graphs with $n$ vertices,
subject to certain conditions.
The corresponding $\mb Z$-graded Lie superalgebra 
$$
W_{\cl}(\Pi V)=\bigoplus_{k=-1}^\infty W_{\cl}^k(\Pi V)
$$
is such that odd elements $X\in W_{\cl}^1(\Pi V)$ with $[X,X]=0$
parametrize the PVA structures on the $\mb F[\partial]$-module $V$
by (cf.\ \eqref{eq:1.6}):
\begin{equation}\label{eq:1.17}
ab=(-1)^{p(a)}X^{\bullet\to\bullet}(a\otimes b) 
\,,\qquad
[a_\lambda b]=
(-1)^{p(a)}X^{\bullet\,\,\,\bullet}_{\lambda,-\lambda-\partial}(a\otimes b)
\,.
\end{equation}
This leads to the classical PVA cohomology
$$
H_{\cl}(V,M)
=
\bigoplus_{n=0}^\infty H^n_{\cl}(V,M)
\,.
$$

Assuming that $V$ is endowed with an increasing $\mb Z_+$-filtration by $\mb F[\partial]$-submodules,
we have a canonical linear map of graded operads
\begin{equation}\label{eq:1.18}
\gr\mc P^{\ch}(V)
\to
\mc P^{\cl}(\gr V)
\,.
\end{equation}
We proved in \cite{BDSHK18}
that the map \eqref{eq:1.18} is injective.
The main result of \cite{BDSHK19} is that this map is an isomorphism provided that the filtration of $V$
is induced by a grading by $\mb F[\partial]$-modules.
If, in addition, this filtration of $V$ is such that $\gr V$ inherits from the vertex algebra structure of $V$ a PVA structure, and $\gr M$ inherits a structure of a PVA module over $\gr V$ (see \cite{Li04, DSK05}), then as
a result, the vertex algebra cohomology is majorized by the classical PVA cohomology
(see Corollary \ref{cor:ss}):
\begin{equation}\label{eq:1.19}
\dim H_{\chb}^n(V,M)
\leq
\dim H_{\cl}^n(V,M)
\,\,,\,\,\,\,
n\geq0
\,.
\end{equation}

Unfortunately, 
we had to replace in \eqref{eq:1.19} the space $H_{\ch}^n(V,M)$
by $H_{\chb}^n(V,M)$.
It is because we were unable to prove that the decreasing filtration on $\mc P^{\ch}(V)$,
induced by the increasing filtration of $V$, is exhaustive.
However, see Section \ref{sec:wick} for the first step in this direction.
Therefore, we have to replace $\mc P^\ch(V)$ by $\mc P^{\chb}(V)$,
which is the union of all members of the filtatrion of $\mc P^{\ch}(V)$,
and introduce the ``bounded'' VA cohomology $H_{\chb}(V,M)$
of the complex $(W_{\mc P^{\chb}(V)},\ad X)$.

Fortunately,
it is easy to show that 
\begin{equation}\label{eq:intro17}
H^1_{\ch}(V,M)
=
H^1_{\chb}(V,M)
\,,
\end{equation}
if $V$ is a finitely strongly generated vertex algebra
(see Proposition \ref{thm:exhcoc}).

Finally, the obvious inclusion of Lie superalgebras 
$W_{\PV}(\Pi\mc V) \hookrightarrow W_{\cl}(\Pi\mc V)$
induces an injective map in cohomology, and we prove in \cite{BDSHKV20}
that this map is an isomorphism, provided that as a differential algebra, $\mc V$ is an algebra of differential polynomials. Hence, we obtain from \eqref{eq:1.19} the following inequality:
\begin{equation}\label{eq:1.20}
\dim H_{\chb}^n(V,M)
\leq
\dim H_{\PV}^n(\gr V,\gr M)
\,,
\end{equation}
provided that as a differential algebra, $\gr V$ is an algebra of differential polynomials.

The inequality \eqref{eq:1.20} is used to obtain upper bounds for the dimension of vertex algebra cohomology,
using the results of \cite{BDSK19} on computation of the variational PVA cohomology.
A more powerful tool is a spectral sequence from classical PVA cohomology to vertex algebra cohomology,
constructed in Section \ref{sec:ss}.
It allows us to obtain in many interesting cases an equality in \eqref{eq:1.19}, hence in \eqref{eq:1.20}.
Some of the resulting computations are given by Theorems \ref{thm:ss21},
 \ref{thm:triv3}--\ref{thm:bos}, 
and are stated in the following theorem.
\begin{theorem}\label{thm:1.1}
\begin{enumerate}[(a)]
\item
Let\/ $V$ be a commutative associative superalgebra and\/ $M$ be a\/ $V$-module.
We can view\/ $V$ as a vertex algebra with $\partial=0$,
zero\/ $\lambda$-bracket, and\/ ${:}ab{:}\,=ab$.
Then we have
$$
H_{\ch}(V,M)=H_{\Har}(V,M)
\,,
$$
where the subscript\/ $\Har$ stands for the Harrison cohomology \cite{Har62}.
\item
Let\/ $V$ be a vertex algebra freely generated by elements $W_0=L, W_1,\dots, W_r$,
where $L$ is a Virasoro element and the $W_i$'s have positive conformal weights.
Then\/ $\dim H_{\chb}^n(V)<\infty$ for all\/ $n\geq0$.
In particular, this holds for all universal $W$-algebras\/ $V=W^k(\mf g,f)$
where\/ $k\ne -h^\vee$.
\item
The bounded $n$-th cohomology of the universal Virasoro vertex algebra $\Vir^c$
with any central charge $c$ is $1$-dimensional for $n=0,2,3$,
and $0$ otherwise.
\item
The bounded $n$-th cohomology of the VA of free superfermions is $0$ if $n\geq1$
and it is $1$-dimensional for $n=0$.
\item
For the VA of free superbosons $B_{\mf h}$ on a superspace $\mf h$,
one has 
$$
H^n_{\chb}(B_{\mf h})
\simeq
\big(S^n(\Pi\mf h)\big)^*
\oplus
\big(S^{n+1}(\Pi\mf h)\big)^*
\,\,,\,\,\,\,
n\geq0
\,.
$$
\end{enumerate}
\end{theorem}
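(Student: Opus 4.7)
The plan is to prove the five parts of Theorem \ref{thm:1.1} by combining three tools: the inequality \eqref{eq:1.19} and its variational upgrade \eqref{eq:1.20}, both coming from the spectral sequence of Section \ref{sec:ss}; an explicit degeneration of the chiral operad when $\partial=0$ (needed for part (a)); and direct computations of variational PVA cohomology for algebras of differential polynomials drawn from \cite{BDSK19,BDSHKV20}.

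For part (a), I would first unwind the definition of $\mc P^{\ch}(V)(n)$ when $\partial=0$. The sesquilinearity \eqref{eq:1.14} reduces to $\lambda_i\,Y(\cdots\otimes f)=Y(\cdots\otimes\partial f/\partial z_i)$, so $Y$ is determined by its value at $\lambda_i=0$, and combined with \eqref{eq:1.15} this identifies $\mc P^{\ch}(V)(n)$ with a space of maps $V^{\otimes n}\to V$ twisted by rational functions in the $z_i-z_j$. A direct comparison should show that, together with $S_n$-invariance, this is precisely the Harrison cochain space, and that $\ad X$ for the $X$ associated via \eqref{eq:1.6} to the commutative associative product matches the Harrison differential, essentially because the shuffle symmetrizers implicit in Harrison cohomology arise naturally from the structure of $\mc O^{\star,T}_n$.

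Parts (b)--(e) all rely on \eqref{eq:1.20}, together with the identification from \cite{BDSHKV20} of variational and classical PVA cohomology when $\gr V$ is an algebra of differential polynomials. In (b), the conformal weight filtration has associated graded equal to the algebra of differential polynomials in $\bar L,\bar W_1,\dots,\bar W_r$; finiteness of $H^n_{\PV}(\gr V,\gr V)$ then follows from \cite{BDSK19}, and strong freeness of $W^k(\mf g,f)$ for $k\ne -h^\vee$ handles the $W$-algebra case. For (c), $\gr \Vir^c$ is the classical Virasoro PVA on $\mb F[L,L',L'',\dots]$, whose variational cohomology I would compute or cite to be $1$-dimensional in degrees $0,2,3$ and zero otherwise; exhibiting chiral lifts of the classical cocycles (the identity, the Virasoro $\lambda$-bracket itself, and the central-extension $3$-cocycle) forces equality in \eqref{eq:1.19}. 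Parts (d) and (e) proceed similarly: for free superfermions, $\gr V$ carries a non-degenerate PVA structure whose variational cohomology vanishes in positive degree by a symplectic contraction argument; for free superbosons, $\gr B_{\mf h}$ is a commutative differential algebra with zero $\lambda$-bracket, and part (a) together with the differential-polynomial identification reduces the problem to Harrison cohomology of $S(\Pi\mf h)$ with adjoint coefficients, which yields the stated $(S^n(\Pi\mf h))^*\oplus(S^{n+1}(\Pi\mf h))^*$.

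The main obstacle is upgrading the inequality \eqref{eq:1.19} to an equality in (c)--(e): the classical PVA cohomology is only an upper bound, and matching it demands either constructing explicit chiral lifts of every classical cocycle or proving that the spectral sequence of Section \ref{sec:ss} degenerates at $E_1$. There is no uniform mechanism for this, and each case requires a separate verification, typically by producing enough independent chiral cocycles by hand. A secondary obstacle lies in part (a), where one must carefully match two a priori unrelated combinatorial complexes; once the operadic identification is in place, the cohomological comparison is formal.
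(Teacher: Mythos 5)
Parts (a) and (b) of your plan track the paper closely: for (a) the paper also identifies $\mc P^{\ch}(V)\simeq\mc P^{\cl}(V)$ when $\partial=0$ (Theorems \ref{thm:triv}, \ref{thm:triv2}) and then quotes \cite{BDSKV19} for the matching of the classical differential with the Harrison differential; for (b) the conformal-weight filtration, Theorem \ref{thm:cl-pv} and the finiteness result of \cite{BDSK19} are exactly the ingredients used in Theorem \ref{thm:ss21}. The problems are in (c)--(e).

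First, for the superfermion and superboson cases your identification of the associated graded is wrong, and with it the whole strategy of bounding by $H_{\PV}(\gr V)$. The canonical filtration of $V^c(R)$ has $\gr V^c(R)\simeq\mc V^0(R)\simeq\mc V(\bar R)$ (Lemma \ref{lem:uvg}): the central term $(a|b)K$ (resp.\ $\la(a|b)K$) drops one filtration degree, so $\gr F_{\mf h}$ and $\gr B_{\mf h}$ carry the \emph{zero} $\la$-bracket, not a nondegenerate one. Consequently $H_{\PV}(\gr V)$ is the full cochain space $C_{\PV}$ (infinite-dimensional in every degree, cf.\ Theorem \ref{thm:ss3}), and \eqref{eq:1.19}--\eqref{eq:1.20} give no useful bound at all for (d) and (e). The PVAs whose cohomology \cite{BDSK19} computes to be small, $\mc F_{\mf h}=\mc V^1(R^f_{\mf h})$ and $\mc B_{\mf h}=\mc V^1(R^b_{\mf h})$, are the level-one quotients, not $\gr V$. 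Likewise in (e) part (a) cannot be invoked: $\gr B_{\mf h}$ has $\partial\ne0$, and the answer $(S^n(\Pi\mf h))^*\oplus(S^{n+1}(\Pi\mf h))^*$ is not a Harrison cohomology of $S(\Pi\mf h)$ but the LCA cohomology $H_{\mc Chom}(\bar R,R)$ of the abelian LCA $\bar R=\mb F[\partial]\mf h$ with coefficients in the central extension $R$.

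Second, the step you flag as the ``main obstacle''---upgrading \eqref{eq:1.19} to an equality---is left entirely open in your proposal (``no uniform mechanism \dots producing enough independent chiral cocycles by hand''), but this is precisely where the paper's new content lies, and it \emph{is} uniform: Theorems \ref{thm:ss4} and \ref{thm:ss5}. When $H_{\PV}(\mc V(\bar R))\simeq H_{\mc Chom}(\bar R,\mb F)$ (Virasoro) or $H_{\PV}(\mc V^c(R))\simeq H_{\mc Chom}(\bar R,R)$ (abelian $\bar R$), the rows $E_r^{p,q}$ with $q>0$ (resp.\ $q>1$) vanish, so by Remark \ref{rem:col} the spectral sequence of Theorem \ref{thm:ss1} collapses at $E_2$ (resp.\ at $E_3$, where the differential $d_2$ reinstates exactly the bracket lost in passing to $\gr V$), and \eqref{eq:egrh1} gives $\gr H_{\chb}\simeq E_\infty$ without constructing any chiral cocycles by hand. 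The only pointwise verification needed is the degree-$2$ subtlety for $\Vir^c$: since the central-extension class $\al$ becomes exact in $H^2_{\mc Chom}(R,\mb F)$, one must check its chiral lift $Y$ is not of the form $dZ$, which the paper does by evaluating $dZ$ at $\vac\otimes\vac\otimes z_{21}^{-1}$ and comparing with $Y\in\fil^2 C^2_{\ch}$. (Incidentally, the degree-$2$ class is the $\la^3$ central cocycle and the degree-$3$ class is the Vandermonde cocycle $(\la_1-\la_2)(\la_1-\la_3)(\la_2-\la_3)$, not ``the Virasoro $\la$-bracket itself.'') Building full cocycles in $C_{\ch}$ directly, as you propose, would require defining them on all of $V^{\otimes n}\otimes\mc O^{\star T}_n$ and proving closedness and non-exactness, none of which your sketch supplies; so as written the argument for (c)--(e) does not go through.
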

In the conclusion of the paper we explain how to use vanishing of the first PVA (respectively, VA) cohomology
in order to prove integrability of classical (respectively, quantum) Hamiltonian PDEs.

Throughout the paper, the base field $\mb F$ is a field of characteristic $0$
and, unless otherwise specified, all vector spaces, their tensor products and Homs 
are over $\mb F$, and the parity of a vector superspace is denoted by $p$.

\subsubsection*{Acknowledgments} 

This research was partially conducted during the authors' visits
to MIT and to the University of Rome La Sapienza.
The first author was supported in part by a Simons Foundation grant 584741.
The second author was partially supported by the national PRIN fund n. 2015ZWST2C$\_$001
and the University funds n. RM116154CB35DFD3 and RM11715C7FB74D63.
All three authors were supported in part by the Bert and Ann Kostant fund.

\section{Basic definitions}\label{sec:basic}

In this section, we review the definition of a vertex algebra and some related constructions. We start by a short discussion of Lie conformal (super)algebras, which are an important part of the vertex algebra structure.
We also review Poisson vertex algebras, which naturally appear as the associated graded of filtered vertex algebras.

\subsection{Lie conformal algebras}
\label{sec:2.1}

\begin{definition}\label{def:lca}
Let $R$ be a vector superspace with parity $p$,
endowed with an even endomorphism $\partial$.
A \emph{Lie conformal superalgebra} (LCA) structure on $R$
is a bilinear, parity preserving $\lambda$-\emph{bracket}
$R\otimes R\to R[\lambda]$, $a\otimes b\mapsto[a_\lambda b]$,
satisfying ($a,b,c\in R$):
\begin{enumerate}[L1\,]
\item
$[\partial a_\lambda b]=-\lambda[a_\lambda b]$, \;
$[a_\lambda\partial b]=(\lambda+\partial)[a_\lambda b]$ \;
(sesquilinearity);
\item
$[a_\lambda b]=-(-1)^{p(a)p(b)}[b_{-\lambda-\partial}a]$ \;
(skewsymmetry);
\item
$[a_\lambda[b_\mu c]]-(-1)^{p(a)p(b)}[b_\mu[a_\lambda c]]
=
[[a_\lambda b]_{\lambda+\mu}c]$ \;
(Jacobi identity).
\end{enumerate}
A \emph{module} over the LCA $R$
is a vector superspace $M$ with an even endomorphism $\partial$,
endowed with a bilinear, parity preserving $\lambda$-action 
$R\otimes M\to M[\lambda]$, $a\otimes m\mapsto a_\lambda m$,
satisfying ($a,b\in R$, $m\in M$):
\begin{enumerate}[M1]
\item
$(\partial a)_\lambda m=-\lambda a_\lambda m$, \;
$a_\lambda(\partial m)=(\lambda+\partial)(a_\lambda m)$;
\item
$a_\lambda(b_\mu m)-(-1)^{p(a)p(b)}b_\mu(a_\lambda m)
=
[a_\lambda b]_{\lambda+\mu}m$.
\end{enumerate}
\end{definition}

\begin{example}[Free superboson LCA]\label{ex:boson-lca}
Let $\mf h$ be a finite-dimensional superspace, with parity $p$, and a supersymmetric nondegenerate bilinear form $(\cdot|\cdot)$. By supersymmetry of the form we mean that $(a|b)=(-1)^{p(a)p(b)} (b|a)$ for $a,b\in\mf h$
and $(a|b)=0$ whenever $p(a)\ne p(b)$.
The \emph{free superboson} LCA corresponding to $\mf h$ is the $\mb F[\partial]$-module
$$
R^b_{\mf h}=\mb F[\partial]\mf h\oplus\mb FK
\,,\qquad\text{where}\quad
\partial K=0 \,, \;\; p(K)=\bar0
\,,
$$
endowed with the $\lambda$-bracket
\begin{equation}\label{eq:boson}
[a_\lambda b]=\lambda (a|b) K
\,\,\text{ for }\,\, a,b\in\mf h\,,\qquad
K \;\text{ central}
\end{equation}
(uniquely extended to $R^b_{\mf h}\otimes R^b_{\mf h}$ by the sesquilinearity axioms). 
In the case when $\mf h$ is purely even, i.e., $p(a)=\bar0$ for all $a\in\mf h$, 
the LCA $R^b_{\mf h}$ is called the \emph{free boson} LCA.
\end{example}
\begin{example}[Free superfermion LCA]\label{ex:fermion-lca}
Let $\mf h$ be a finite-dimensional superspace, with parity $p$, and a super-skewsymmetric nondegenerate bilinear form $(\cdot|\cdot)$. Now we have $(a|b)=-(-1)^{p(a)p(b)} (b|a)$ for $a,b\in\mf h$
and $(a|b)=0$ whenever $p(a)\ne p(b)$.
The \emph{free superfermion} LCA corresponding to $\mf h$ is the $\mb F[\partial]$-module
$$
R^f_{\mf h}=\mb F[\partial]\mf h\oplus\mb F K
\,,\qquad\text{where}\quad
\partial K=0 \,, \;\; p(K)=\bar 0
\,,
$$
endowed with the $\lambda$-bracket
\begin{equation}\label{eq:fermion}
[a_\lambda b]=(a|b) K
\,,\qquad
K \;\text{ central}
\end{equation}
(uniquely extended to $R^f_{\mf h}\otimes R^f_{\mf h}$ by the sesquilinearity axioms). 
In the case when $p(a)=\bar1$ for all $a\in\mf h$, the LCA $R^f_{\mf h}$ is called the \emph{free fermion} LCA.
\end{example}
\begin{example}[Affine LCA]\label{ex:affine-lca}
Let $\mf g$ be a Lie algebra with a nondegenerate invariant symmetric bilinear form $(\cdot\,|\,\cdot)$.
The corresponding \emph{affine} LCA is the purely even $\mb F[\partial]$-module
$$
\cur\mf g=\mb F[\partial]\mf g\oplus\mb FK
\,,\;\;
\text{ where }\;
\partial K=0
\,,
$$
endowed with the $\lambda$-bracket
given on the generators by
\begin{equation}\label{eq:current}
[a_\lambda b]=[a,b]+(a|b)\lambda K
\,,\qquad
a,b\in\mf g
\,,\quad
K \text{ central.}
\end{equation}
Note that, in the special case of an abelian Lie algebra $\mf g$,
we recover the definition of the free boson: $\cur\mf g=R^b_{\mf g}$.
\end{example}
\begin{example}[Virasoro LCA]\label{ex:virasoro-lca}
The \emph{Virasoro} LCA is the purely even $\mb F[\partial]$-module
$$
R^{\vir}=\mb F[\partial]L\oplus\mb FC
\,,\;\;
\text{ where }\;
\partial C=0
\,,
$$
endowed with the $\lambda$-bracket
\begin{equation}\label{eq:vir}
[L_\lambda L]=(\partial+2\lambda)L+\frac{1}{12}\lambda^3 C
\,,\qquad
C \text{ central.}
\end{equation}
\end{example}

The importance of the last two examples stems from the fact that the LCA's $\overline{\cur}\,\mf g = \cur\mf g / \mb F K$ for $\mf g$ simple and $\bar R^{\vir} = R^{\vir} / \mb F C$ exhaust all simple LCA's, which are finitely generated as $\mb F[\partial]$-modules \cite{DAK98}.

\subsection{Poisson vertex algebras}
\label{sec:3.1}

\begin{definition}\label{def:pva}
Let $\mc V$ be a commutative associative unital differential superalgebra with parity $p$,
with an even derivation $\partial$.
A \emph{Poisson vertex superalgebra} (PVA) structure on $\mc V$
is an LCA $\lambda$-bracket
$\mc V\otimes \mc V\to \mc V[\lambda]$, $a\otimes b\mapsto[a_\lambda b]$,
such that the following left Leibniz rule holds ($a,b,c\in R$):
\begin{enumerate}[L1]
\setcounter{enumi}{3}
\item
$[a_\lambda bc]=[a_\lambda b] c+(-1)^{p(b)p(c)}[a_\lambda c] b$.
\end{enumerate}

By the skewsymmetry L2, this axiom is equivalent to the right Leibniz rule
\begin{enumerate}[L1']
\setcounter{enumi}{3}
\item
$[ab_\lambda c]=
(e^{\partial\partial_\lambda}a)[b_\lambda c] 
+(-1)^{p(a)p(b)}(e^{\partial\partial_\lambda}b)[a_\lambda c]$.
\end{enumerate}

A \emph{module} $M$ over the PVA $\mc V$
is a vector superspace endowed 
with a structure of a module over the differential algebra $\mc V$,
denoted by $a\otimes m\mapsto a m$,
and with a structure of a module over the LCA $\mc V$,
denoted by $a\otimes m\mapsto a_\lambda m$,
satisfying 
\begin{enumerate}[M1]
\setcounter{enumi}{2}
\item
$a_\lambda(bm)=[a_\lambda b]m+(-1)^{p(a)p(b)}b(a_\lambda m)$;
\end{enumerate}
\begin{enumerate}[M1']
\setcounter{enumi}{2}
\item
$(ab)_\lambda m=
(e^{\partial\partial_\lambda}a)(b_\lambda m) 
+(-1)^{p(a)p(b)}(e^{\partial\partial_\lambda}b)(a_\lambda m)$.
\end{enumerate}
\end{definition}
A PVA $\mc V$ is called \emph{graded}
if there is a grading by $\mb F[\partial]$-submodules
$$
\mc V=\bigoplus_{n\in\mb Z_+}\mc V_n
\,,
$$
such that ($m,n\in\mb Z_+$)
\begin{equation}\label{eq:grading}
\mc V_m\mc V_n \subset\mc V_{m+n}
\,\,,\qquad
[{\mc V_m}_\lambda\mc V_n]
\subset
\mc V_{m+n-1}[\lambda]
\,.
\end{equation}
If $\mc V$ is a graded PVA,
a $\mc V$-module $M$ is \emph{graded} if there is a grading by $\mb F[\partial]$-submodules
$$
M=\bigoplus_{n\in\mb Z_+} M_n \,,
$$
such that ($m,n\in\mb Z_+$)
\begin{equation}\label{eq:graded-module}
\mc V_m M_n \subset M_{m+n}
\,,\qquad
{\mc V_m}_\lambda M_n
\subset
M_{m+n-1}[\lambda]
\,.
\end{equation}
Note that every PVA is a module over itself, called the adjoint module.

\begin{definition}\label{def:vir}
A PVA $\mc V$ is called \emph{conformal} if it has a \emph{Virasoro element}, namely, 
an even element $L\in\mc V$ such that the following properties hold:
$$
[L_\lambda L]=(\partial+2\lambda)L+\frac{c}{12}\lambda^3
\,,\,\,\text{ for some } c\in\mb F \,\,(\text{the \emph{central charge} of } L)
\,,
$$
\begin{equation*}
L_{(0)}
:=[L_\lambda\,\cdot\,]\big|_{\lambda=0}
=\partial \,,
\end{equation*}
and
$$L_{(1)}
:=\frac{d}{d\lambda}[L_\lambda\,\cdot\,]\big|_{\lambda=0}
\,\in\End\mc V
\,\text{ is diagonalizable.}
$$
One says that $a\in\mc V$ 
has \emph{conformal weight} $\Delta(a)\in\mb F$
if it is an eigenvector of $L_{(1)}$ of eigenvalue $\Delta(a)$.
%
 \end{definition}
 
Given an LCA $R$,
there is the canonical universal PVA $\mc V(R)$ over $R$, constructed as follows.
As a commutative associative superalgebra it is $\mc V(R)=S(R)$,
the symmetric superalgebra over $R$, viewed as a vector superspace.
The endomorphism $\partial\in\End R$ uniquely extends 
to an even derivation of the superalgebra $\mc V(R)$.
The $\lambda$-bracket on $R$ uniquely extends to a PVA $\lambda$-bracket on $\mc V(R)$
by the Leibniz rules L4 and L4'.
Note that the universal PVA $\mc V(R)$ over the LCA $R$
is automatically graded, by the usual symmetric superalgebra degree.

If $C\in R$ is such that $\partial C=0$, then $C$ is central in $R$, i.e., $[C_\la R]=0$. In fact, $C$ acts as zero on any $R$-module. Then for any $c\in\mb F$, we have a PVA ideal $\mc V(R)(C-c) \subset \mc V(R)$, and we can consider the quotient PVA
\begin{equation}\label{eq:vcr}
\mc V^c(R) := \mc V(R) / \mc V(R)(C-c)
\,.
\end{equation}
This PVA is not graded unless $c=0$. As a differential superalgebra, $\mc V^c(R)$ is isomorphic to the symmetric algebra $S(\bar R)$, where $\bar R:=R/\mb FC$.

\subsection{Vertex algebras}
\label{sec:4.1}

\begin{definition}\label{def5}
A \emph{vertex (super)algebra} (VA) is a vector superspace $V$
endowed with an even endomorphism $\partial$,
an even element $\vac$ 
and an {\itshape integral of} $\lambda$-{\itshape bracket},
namely a linear map 
\begin{equation}\label{eq:int-lambda}
V \otimes V \to V[\lambda]
\,\,,\,\,\,\,
u\otimes v\mapsto \int^\lambda d\sigma[u_\sigma v]
\,,
\end{equation}
such that the following axioms hold ($u,v,w\in V$):
\begin{enumerate}[V1]
\item
$\int^\lambda d\sigma[\vac_\sigma v]=\int^\lambda d\sigma[v_\sigma \vac]=v$,
\item
$\int^\lambda\!\! d\sigma\, [\partial u_\sigma v] = -\int^\lambda\!\! d\sigma\,\sigma[u_\sigma v]$,
$\int^\lambda\!\! d\sigma\,[u_\sigma \partial v] = \int^\lambda\!\!d\sigma\, (\partial+\sigma)[u_\sigma v]$,
\item 
$\int^\lambda\!\! d\sigma\,[v_\sigma u] =(-1)^{p(u)p(v)}\int^{-\lambda-\partial}\!\! d\sigma\,[u_\sigma v]$,
\item
$\int^\lambda\!\! d\sigma\!\! \int^\mu\!\! d\tau \Big(
[u_\sigma[v_\tau w]] - (-1)^{p(u)p(v)} [v_\tau[u_\sigma w]] - [[u_\sigma v]_{\sigma+\tau}w] 
\Big) = 0$.
\end{enumerate}
If we do not assume the existence of the unit element $\vac\in V$ and drop axiom V1,
we call $V$ a \emph{non-unital vertex algebra}.
\end{definition}
See \cite{DSK06,BDSHK18} for a discussion on the meaning of these axioms
and their equivalence to other definitions of vertex algebras.

The $\lambda$-\emph{bracket} of $u,v\in V$
is defined as the derivative by $\lambda$ of their integral of $\lambda$-bracket:
\begin{equation}\label{eq:lambda}
[u_\lambda v]=\frac{d}{d\lambda} \int^\lambda\!d\sigma\,[u_\sigma v]
\,,
\end{equation}
while their \emph{normally ordered product}
is defined as the constant term:
\begin{equation}\label{eq:nop}
{:}uv{:}\,=\,\int^0\!d\sigma\,[u_\sigma v]
\,.
\end{equation}
Note that by differentiating axioms V2-V4 of a VA we recover the axioms L1-L3 of an LCA.
Hence, the $\lambda$-bracket of a VA $V$ defines a structure of an LCA on $V$.

\begin{remark}\label{rem:nprod}
One defines \emph{$n$-th products} on a VA $V$ for any integer $n$ by the formulas:
$$
u_{(n)}v = \frac{d^n}{d\lambda^n} [u_\la v] \big|_{\la=0} \,, \quad
u_{(-n-1)}v = \frac1{n!} \, {:}(\partial^n u)v{:} \,, \qquad n\ge0 \,.
$$
Then the definition of a VA can be given equivalently in terms of the identities satisfied by these products (see \cite{B86,K96,BK03,DSK06}).
\end{remark}

The notions of a \emph{conformal vertex algebra} and \emph{conformal weight} are defined in exactly the same way as for PVA; see Definition \ref{def:vir}.

An equivalent definition of a vertex algebra, which we will also use, is as follows \cite{BK03}.
A non-unital VA is a quadruple $(V,\partial,[\,{}_\la\,],{:}\,{:})$, where 
$(V,\partial,[\,{}_\la\,])$ is an LCA,
$(V,\partial,{:}\,{:})$ is a (noncommutative and nonassociative) differential algebra, 
such that the following identities hold:
\begin{align}
\label{eq:qc}
{:}ab{:} &- (-1)^{p(a)p(b)} {:}ba{:} = \int_{-\partial}^0 d\si\, [a_\si b] 
\,, \\ \label{eq:qa}
{:}({:}ab{:}) c{:} &- {:}a({:}bc{:}){:}
= {:}\Bigl( \int_0^\partial d\si \, a \Bigr) [b_\si c] {:}
+ (-1)^{p(a)p(b)} {:}\Bigl( \int_0^\partial d\si \, b \Bigr) [a_\si c] {:}
\,, \\ \label{eq:wi}
[a_\la {:}bc{:}] &= {:}[a_\la b]c{:} + (-1)^{p(a)p(b)} {:}b[a_\la c]{:} + \int_0^\la d\si\, [a_\la [b_\si c]]
\,.
\end{align}
Equations \eqref{eq:qc}, \eqref{eq:qa} and \eqref{eq:wi} are known as the
quasicommutativity, quasiassociativity and non-commutative Wick formula, respectively.
To get the definition of a VA, one requires, in addition, the existence of an even vector $\vac$ such that
${:}a \vac{:} = a$ for every $a$.

Our convention will be that the normally ordered product of more than two elements is defined inductively from right to left; for example,
$$
{:} abc {:} = {:}a({:}bc{:}){:} \,, \qquad
{:} abcd {:} = {:}a({:}b({:}cd{:}){:}){:} \,, \;\text{ etc.}
$$
One says that a (non-unital) VA $V$ is \emph{strongly generated} by an $\mb F[\partial]$-submodule $R\subset V$ if $V$ is the span over $\mb F$ of all elements of the form
$$
{:}a_1 a_2 \cdots a_k{:}
\,, \qquad
k\ge1 \,, \; a_i \in R
$$
(together with $\vac$ when it exists).
A VA $V$ is called \emph{finitely generated},
if it is strongly generated by a finitely-generated $\mb F[\partial]$-submodule $R$ of $V$.
We say that $V$ is \emph{freely generated} by $R$, if $V$ has a \emph{PBW-type basis}, i.e.,
for some ordered $\mb F$-basis $\{a_i\}_{i\in I}$ of $R$, compatible with parity, 
the ordered monomials
\begin{equation}\label{eq:PBW}
\bigl\{
{:} a_{i_1} a_{i_2}\cdots a_{i_s} {:}
\,\big|\,
i_\ell\leq i_{\ell+1} \;\;\forall\ell,\;\text{ and } \; i_\ell< i_{\ell+1}
\;\text{ if }\; p(a_{i_\ell})=\bar1
\bigr\}
\end{equation}
form an $\mb F$-basis of $V$. Then any other ordered $\mb F$-basis of $R$, compatible with parity, gives a PBW-type basis of $V$.

\begin{definition}\label{def-mod}
A left \emph{module} $M$ over a vertex algebra $V$ 
is a $\mb Z /2\mb Z$-graded $\mb F[\partial]$-module
endowed with an {\itshape integral of} $\lambda$-{\itshape action},
\begin{equation}\label{eq:int-lambda2}
V \otimes M \to M[\lambda]
\,\,,\,\,\,\, 
v\otimes m\mapsto \int^\lambda\!d\sigma\,(v_\sigma m)\,,
\end{equation}
preserving the $\mb Z/2\mb Z$-grading,
such that the following axioms hold ($u,v\in V$, $m\in M$):
\begin{enumerate}[VM1]
\item 
$\int^\lambda\!\! d\sigma\,\vac_\sigma m=m$,
\item
$\int^\lambda\!\! d\sigma\, (\partial v_\sigma m) = -\int^\lambda\!\! d\sigma\,\sigma(v_\sigma m)$,
$\int^\lambda\!\! d\sigma\,(v_\sigma \partial m) = \int^\lambda\!\!d\sigma\, (\partial+\sigma)(v_\sigma m)$,
\item
$\int^\lambda\!\! d\sigma\!\! \int^\mu\!\! d\tau \Big(
u_\sigma(v_\tau m) - (-1)^{p(u)p(v)} v_\tau(u_\sigma m) - [u_\sigma v]_{\sigma+\tau}m 
\Big) = 0$.
\end{enumerate}
If $V$ is a non-unital vertex algebra, then axiom VM1 is dropped from the definition of a $V$-module.
In analogy with the notation used for vertex algebras,
we call the \emph{normally ordered action} ${:}vm{:}$ the constant term 
of the integral of $\lambda$-action \eqref{eq:int-lambda2}:
\begin{equation}\label{eq:int-lambda3}
\int^\lambda\!d\sigma\,(v_\sigma m)
=
\,{:}vm{:}\,+\int_0^\lambda\!d\sigma\,(v_\sigma m)
\,,
\end{equation}
and the $\lambda$-\emph{action} $v_\lambda m$ the derivative of \eqref{eq:int-lambda3}.

\end{definition}

\subsection{Filtrations}\label{sec:filva}

Recall that an increasing (resp. decreasing) $\mb Z$-filtration of a vector superspace $V$ by subspaces
\begin{equation}\label{eq:filtration}
\dots\subset\fil^{p-1}V\subset \fil^pV\subset \fil^{p+1}V\subset \cdots
\,\bigl(\text{resp.}
\dots\supset\fil^{p-1}V\supset \fil^pV\supset \fil^{p+1}V\supset \cdots
\bigr)
\end{equation}
is called \emph{exhaustive} if
\begin{equation}\label{eq:exfil}
\bigcup_{m\in\mb Z} \fil^m V = V
\,,
\end{equation}
and is called \emph{separated} if
\begin{equation}\label{eq:sepfil-def}
\bigcap_{m\in\mb Z} \fil^m V = \{0\}
\,.
\end{equation}
An increasing $\mb Z$-filtration is called a $\mb Z_+$-filtration if $\fil^pV=\{0\}$ for $p<0$.
\begin{remark}\label{rem:nonint}
All constructions and arguments of the present paper apply if we replace $\mb Z$-filtrations
by $\frac1N\mb Z$-filtrations, where $N$ is a positive integer.
For simplicity of the exposition, we will assume $N=1$.
\end{remark}

\begin{definition}[\cite{Li04}]\label{def:filtration}
Let $V$ be a (non-unital) VA. 
An increasing $\mb Z_+$-filtration \eqref{eq:filtration} of $V$ 
is called \emph{good} if it is exhaustive, all $\fil^m V$ are $\mb F[\partial]$-submodules of $V$, and
\begin{equation}\label{eq:filtr-cond}
{:}(\fil^m V)(\fil^n V){:}\,\subset \fil^{m+n} V
\,,\qquad
[(\fil^m V)_\lambda (\fil^n V)]\subset (\fil^{m+n-1} V)[\lambda] \,
\end{equation}
for all $m,n\in\mb Z_+$. 
By a \emph{filtered VA}, we mean a VA with a given good $\mb Z_+$-filtration.
\end{definition}
If $V$ is a filtered VA,
a $V$-module $M$ is called \emph{filtered} 
if there is an increasing exhaustive $\mb Z_+$-filtration by $\mb F[\partial]$-submodules
\begin{equation}\label{eq:filtration-M}
\{0\}=\fil^{-1}M\subset \fil^0M\subset \fil^1M\subset \fil^2M\subset\dots\subset M
\,,
\end{equation}
such that ($m,n\in\mb Z_+$)
\begin{equation}\label{eq:filtr-cond-M}
{:}(\fil^m V)(\fil^n M){:}\,\subset \fil^{m+n} M
\,,\qquad
(\fil^m V)_\lambda (\fil^n M)\subset (\fil^{m+n-1} M)[\lambda]
\,.
\end{equation}

\begin{proposition}[{\cite{Li04,DSK05}}]\label{prop:assgr}
Let\/ $V$ be a filtered VA. 
Then the associated graded 
$$
\gr V=\bigoplus_{n\in\mb Z_+} \gr^n V \,, \qquad
\gr^n V := \fil^n V / \fil^{n-1} V \,,
$$
has a natural structure of a graded PVA.
Namely, 
\begin{align*}
\bar u\bar v
&={:}uv{:} + \fil^{m+n-1} V \in \gr^{m+n} V
\,, \\
[\bar u_\lambda \bar v]
&= [u_\lambda v] + (\fil^{m+n-2} V)[\la] \in (\gr^{m+n-1} V)[\la]
\,,
\end{align*}
for\/ $u\in\fil^m V$, $v\in\fil^n V$ such that 
$$
\bar u = u+\fil^{m-1}V \in\gr^m V \,, \qquad
\bar v = v+\fil^{n-1}V \in\gr^n V \,.
$$
\end{proposition}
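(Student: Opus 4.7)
The plan is to show that the formulas defining the product and the $\lambda$-bracket on $\gr V$ are well defined on the quotients $\fil^m V/\fil^{m-1} V$, and then derive the PVA axioms for $\gr V$ from the VA axioms for $V$ by showing that all ``quantum correction'' terms in \eqref{eq:qc}--\eqref{eq:wi} drop in filtration and therefore vanish in the associated graded. The grading compatibility \eqref{eq:grading} is built into the definition via \eqref{eq:filtr-cond}, so the main work is checking commutativity, associativity, and the left Leibniz rule L4.

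First I would verify well-definedness. If $u\in\fil^m V$, $v\in\fil^n V$, $u'\in\fil^{m-1}V$, $v'\in\fil^{n-1}V$, then by \eqref{eq:filtr-cond} we have ${:}(u+u')(v+v'){:}\,-\,{:}uv{:}\in\fil^{m+n-1}V$ and $[(u+u')_\lambda(v+v')]-[u_\lambda v]\in(\fil^{m+n-2}V)[\lambda]$, so both the product $\bar u\bar v$ in $\gr^{m+n}V$ and the bracket $[\bar u_\lambda\bar v]$ in $(\gr^{m+n-1}V)[\lambda]$ are independent of representatives. The derivation $\partial$ descends to $\gr V$ since each $\fil^m V$ is an $\mb F[\partial]$-submodule, and the vacuum $\vac$ (if present) lies in $\fil^0 V$ and becomes the unit.

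Next I would derive commutativity and associativity. By quasicommutativity \eqref{eq:qc}, the defect ${:}uv{:}\,-(-1)^{p(u)p(v)}{:}vu{:}=\int_{-\partial}^0 d\sigma\,[u_\sigma v]$ lies in $\fil^{m+n-1}V$ by \eqref{eq:filtr-cond}, hence vanishes in $\gr^{m+n}V$, giving supercommutativity of the product. Similarly, by quasiassociativity \eqref{eq:qa}, with $u\in\fil^m V$, $v\in\fil^n V$, $w\in\fil^p V$, the right-hand side involves one $\lambda$-bracket, so it lies in $\fil^{m+n+p-1}V$, one below the filtration of the left-hand side; hence associativity holds in $\gr^{m+n+p}V$. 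Sesquilinearity, skewsymmetry, and the Jacobi identity for $[\cdot\,_\lambda\,\cdot]$ on $\gr V$ descend directly from L1--L3 on $V$, since each identity involves a single $\lambda$-bracket (for L1--L2) or two nested brackets (for L3) and the filtration drop is uniform across all terms.

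Finally, the left Leibniz rule L4 follows from the non-commutative Wick formula \eqref{eq:wi}. For $u\in\fil^m V$, $v\in\fil^n V$, $w\in\fil^p V$, both sides of $[u_\lambda{:}vw{:}]={:}[u_\lambda v]w{:}+(-1)^{p(u)p(v)}{:}v[u_\lambda w]{:}+\int_0^\lambda d\sigma\,[u_\lambda[v_\sigma w]]$ naturally live in filtration $m+n+p-1$, but the integral correction term involves two nested $\lambda$-brackets and hence lies in $\fil^{m+n+p-2}V[\lambda]$ by two applications of \eqref{eq:filtr-cond}. Passing to $\gr^{m+n+p-1}V[\lambda]$ kills the correction and yields $[\bar u_\lambda\bar v\bar w]=[\bar u_\lambda\bar v]\bar w+(-1)^{p(u)p(v)}\bar v[\bar u_\lambda\bar w]$, which is precisely L4. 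This makes $\gr V$ a PVA, and the inclusions \eqref{eq:grading} follow tautologically from \eqref{eq:filtr-cond}.

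The proof is essentially bookkeeping rather than conceptually difficult; the only subtle point is the numerology, namely that each occurrence of a $\lambda$-bracket in the quantum correction formulas strictly decreases filtration by one, and that the Wick formula defect contains \emph{two} such brackets (so it drops by two, one more than needed) while the quasicommutativity/associativity defects contain exactly one (so they drop by one, exactly as needed). Verifying that the filtration assumption \eqref{eq:filtr-cond} is tuned precisely so that all corrections vanish in the associated graded is the heart of the argument.
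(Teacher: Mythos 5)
Your proposal is correct and follows exactly the route the paper takes: the paper's proof is the one-line remark that the statement ``follows immediately from formulas \eqref{eq:qc}--\eqref{eq:wi}'', and your argument is simply the careful bookkeeping behind that remark, checking well-definedness from \eqref{eq:filtr-cond} and verifying that each quantum-correction term drops in filtration by at least one more than needed. No gaps.
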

\begin{proof}
Follows immediately from formulas \eqref{eq:qc}--\eqref{eq:wi}.
\end{proof}

A similar result holds for modules:
if $V$ is a filtered VA and $M$ is a filtered $V$-module as in \eqref{eq:filtration-M},
then the associated graded 
$$
\gr M=\bigoplus_{n\in\mb Z_+} \fil^n M / \fil^{n-1} M
$$ 
has a natural structure of a graded module over the graded PVA $\gr V$.

We will impose an additional assumption on a good filtration, 
which will allow us later to apply the main result of \cite{BDSHK19}.

\begin{definition}\label{def:vgfil}
A good filtration of a VA $V$ is called \emph{very good} if 
$V\simeq\gr V$ as $\mb F[\partial]$-modules.
Likewise, a filtration \eqref{eq:filtration-M}-\eqref{eq:filtr-cond-M}
is called \emph{very good} if $M\simeq\gr M$ as $\mb F[\partial]$-module.
\end{definition}

We present here two examples of very good filtrations, which, although trivial, are still useful.

\begin{example}\label{ex:vgfil1}
For a VA $V$, let 
$$
\fil^{-1} V = \fil^0 V = \{0\} \subset \fil^1 V = \fil^2 V = \cdots = V \,.
$$
This is obviously a very good filtration and $\gr V = \gr^1 V = V$. The $\la$-bracket in $\gr V$, defined by Proposition \ref{prop:assgr}, coincides with the $\la$-bracket in $V$, while the commutative associative product in $\gr V$ is zero.
\end{example}

\begin{example}\label{ex:vgfil2}
Consider a VA $V$ with the filtration 
$$
\fil^{-1} V = \{0\} \subset \fil^0 V = \fil^1 V = \cdots = V \,.
$$
This filtration is good if and only if the $\la$-bracket in $V$ is zero, i.e., $V$ is a \emph{commutative} vertex algebra. 
Then the $\la$-bracket in $\gr V$ is also zero.
Moreover, the normally ordered product in $V$ is commutative and associative, and it coincides with the product in $\gr V = \gr^0 V = V$.
\end{example}

\begin{proposition}\label{prop:vgfil}
Let\/ $V$ be a (non-unital) vertex algebra generated by an\/ $\mb F[\partial]$-submodule\/ $R$.
Suppose that\/ $R$ is decomposed in a direct sum of\/ $\mb F[\partial]$-submodules
\begin{equation}\label{RDe1}
R = \bigoplus_{\De\in\mb Z_{>0}} R_\De \,.
\end{equation}
For\/ $s\in\mb Z$, let
\begin{equation}\label{RDe3}
\fil^s V = \Span_{\mb F} \bigl\{ {:} a_1 \cdots a_k {:} 
\,\big|\, a_i \in R_{\De_i} \,, \; \De_1+\dots+\De_k \le s \bigr\} \,,
\end{equation}
where the empty product is included and set equal to\/ $\vac$ when\/ $\vac$ exists.
Assume that
\begin{equation}\label{RDe2}
\bigl[ {R_{\De_1}}_\lambda R_{\De_2} \bigr] \subset (\fil^{\De_1+\De_2-1} V)[\lambda] 
\qquad\text{for all}\quad \De_1 \,, \, \De_2 \,.
\end{equation}
Then the filtration \eqref{RDe3} of\/ $V$ is good, and it is very good if\/ $V$ is freely generated by\/ $R$.
\end{proposition}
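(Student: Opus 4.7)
The plan is to verify the four conditions of Definition~\ref{def:filtration} and then establish the very-good refinement. The $\mb F[\partial]$-invariance of each $\fil^s V$ is immediate from the Leibniz rule for $\partial$ on the differential algebra $V$ and the $\mb F[\partial]$-stability of each $R_\De$: applying $\partial$ to a generator monomial ${:}a_1\cdots a_k{:}$ yields a sum of monomials of the same total weight. Exhaustivity follows once the two compatibility axioms are in place, because $V$ is generated by $R$ via $\partial$, normally ordered products, and $\la$-brackets, and each of these operations preserves $\bigcup_s \fil^s V$.

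The heart of the argument is the simultaneous inductive verification of
\[
{:}(\fil^m V)(\fil^n V){:} \subset \fil^{m+n} V
\quad\text{and}\quad
[(\fil^m V)_\la (\fil^n V)] \subset (\fil^{m+n-1} V)[\la],
\]
which I would prove by induction on $N = m + n$. By linearity reduce to $u = {:}a_1\cdots a_k{:}$ and $v = {:}b_1\cdots b_l{:}$ of weights $m, n$. The unlocking observation is that the product claim for $k = 1$ requires no induction at all: prepending $a_1 \in R_{\De_1}$ to any monomial in $v$ produces a monomial of weight $\De_1 + n$. For the bracket claim with $k = 1$, I expand $[a_{1\la}{:}b_1 v'{:}]$ by the non-commutative Wick formula \eqref{eq:wi}; each of the three summands lies in $\fil^{m+n-1}V[\la]$ by hypothesis \eqref{RDe2} combined with the inductive hypothesis (every intermediate $m'+n'$ is strictly less than $N$). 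For $k \ge 2$, write $u = {:}a_1 w{:}$ with $w = {:}a_2\cdots a_k{:}$, apply quasiassociativity \eqref{eq:qa} for the product claim, and apply skew-symmetry~L2 followed by the Wick formula for the bracket claim; the leading term ${:}a_1({:}wv{:}){:}$ is handled by the $k=1$ shortcut together with the inductive product claim, while every correction term sits in strictly lower filtration by the inductive bracket claim.

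For the very-good refinement under free generation, my plan is to construct an explicit $\mb F[\partial]$-linear isomorphism $V \cong \gr V$ via (super-)sym\-metrization. Fix an ordered $\mb F$-basis $\{a_i\}_{i\in I}$ of $R$ compatible with parity and with the decomposition $R = \bigoplus_\De R_\De$, and define
\[
\Phi \colon S(R) \to V,
\qquad
\Phi(a_{i_1}\cdots a_{i_s}) = \frac{1}{s!}\sum_{\si \in S_s} \varepsilon(\si)\, {:}a_{i_{\si(1)}}\cdots a_{i_{\si(s)}}{:},
\]
where $\varepsilon(\si)$ is the Koszul sign. A direct combinatorial identity, using the Leibniz rule for $\partial$ on $V$ and the symmetry of the sum over $S_s$, yields $\Phi\circ\partial = \partial\circ\Phi$ identically. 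The PBW-type basis hypothesis then implies $\Phi$ is bijective: the expansion of $\Phi(a_{i_1}\cdots a_{i_s})$ in the PBW basis is upper-triangular with a nonzero scalar coefficient on the corresponding ordered monomial, modulo $\fil^{s-1}V$. Since the associated graded PVA $\gr V$ is canonically isomorphic to $S(R)$ as a differential commutative $\mb F[\partial]$-algebra by Proposition~\ref{prop:assgr}, composing $\Phi$ with this identification delivers the desired isomorphism.

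I expect the main obstacle to be the careful bookkeeping in the induction, particularly for the bracket condition with $k \ge 2$: the many correction terms arising from the combination of skew-symmetry with the Wick formula must each be routed into the correct filtration piece using the appropriate instance of the inductive hypothesis. The trivial $k=1$ case of the product condition plays a load-bearing role, breaking what would otherwise be a circular dependence between the two simultaneous claims.
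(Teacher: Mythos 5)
Your verification of goodness is essentially the argument the paper has in mind: the paper's proof just says it is straightforward using \eqref{eq:qc}--\eqref{eq:wi} and cites \cite{Li04,DSK05}, and your simultaneous induction on $m+n$, with quasiassociativity, the non-commutative Wick formula and skewsymmetry routing every correction term into strictly lower filtration, is a correct fleshing-out of exactly that; the observation that the $k=1$ case of the product claim needs no induction is indeed what breaks the apparent circularity.

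For the very-good statement you take a somewhat different device: the paper simply notes that the ordered PBW monomials of total weight exactly $s$ span a complement to $\fil^{s-1}V$ in $\fil^s V$, while you go through a symmetrization map $\Phi\colon S(R)\to V$. That choice is reasonable (and has the virtue that $\partial$-equivariance of $\Phi$ is manifest, whereas the span of exact-weight ordered basis monomials is not obviously $\partial$-stable). However, one step in your write-up is not justified as stated: Proposition \ref{prop:assgr} only equips $\gr V$ with its graded PVA structure; it does \emph{not} give an identification $\gr V\simeq S(R)$. In the paper that identification is established only for universal enveloping vertex algebras (see \eqref{eq:grenv} and Lemma \ref{lem:uvg}); for a general freely generated $V$ with the weight filtration it is precisely the nontrivial point, equivalent to the statement that the symbols of the PBW monomials of exact weight $s$ form a basis of $\gr^s V$, i.e.\ that $\fil^s V$ is the span of PBW basis monomials of weight $\le s$. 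This does follow from the goodness you have just proved (reordering and reassociation corrections drop the weight filtration by at least one), and it is also what makes your triangularity claim for $\Phi$ meaningful; so the clean repair is to prove this compatibility once, deduce that the associated graded map of $\Phi$ is an $\mb F[\partial]$-module isomorphism $S(R)\to\gr V$, and conclude $V\simeq S(R)\simeq\gr V$. A minor correction in the same spirit: your ``modulo $\fil^{s-1}V$'' should read modulo $\fil^{w-1}V$ with $w=\De_{i_1}+\dots+\De_{i_s}$ the total weight, not the number of factors, since the filtration is by weight rather than by length.
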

\begin{proof}
It is straightforward to show that $\{\fil^s V\}$ is a good filtration, using \eqref{eq:qc}--\eqref{eq:wi};
see \cite{Li04,DSK05}. The filtration is very good if $V$ is freely generated by $R$, because the ordered monomials ${:} a_1 \cdots a_k {:}$ with $\De_1+\dots+\De_k=s$ span over $\mb F$ a complementary $\mb F[\partial]$-submodule to $\fil^{s-1} V$ in $\fil^s V$.
\end{proof}

\subsection{Universal enveloping VA of an LCA}\label{sec:4.15}

In analogy to Lie superalgebras,
given an LCA $R$, one constructs the corresponding 
universal enveloping (unital) VA $V(R)$, containing $R$ as an LCA subalgebra.
The VA $V(R)$ is defined by the following universal property:
for every LCA homomorphism $\phi\colon R\to V$
from the LCA $R$ to a VA $V$,
there is a unique VA homomorphism $\widehat{\phi}\colon V(R)\to V$
extending the map $\phi$.

It is known that $V(R)$ is freely generated by $R$ (see \cite{K96,BK03,DSK05}).
If we let $R=R_1$ in \eqref{RDe1}, then \eqref{RDe2} holds and Proposition \ref{prop:vgfil}
defines a very good filtration of $V(R)$, which is called
the \emph{canonical filtration}.
The corresponding associated graded PVA, given by Proposition \ref{prop:assgr},
is isomorphic to the (graded) universal PVA over the LCA $R$:
\begin{equation}\label{eq:grenv}
\gr V(R)\simeq\mc V(R)
\,.
\end{equation}

If $C\in R$ is such that $\partial C=0$, then $C$ is central in $R$. Hence, for any $c\in\mb F$, we have a VA ideal ${:}V(R)(C-c\vac){:} \subset V(R)$, and we can consider the quotient VA
\begin{equation}\label{eq:vcr2}
V^c(R) := V(R) / {:}V(R)(C-c\vac){:}
\,.
\end{equation}
The canonical filtration of $V(R)$ descends to a filtration of $V^c(R)$, which will also be called canonical.

\begin{lemma}\label{lem:uvg}
Let\/ $R$ be an LCA, $C\in R$ be such that\/ $\partial C=0$, and\/ $c\in\mb F$. Then:
\begin{enumerate}[(a)]
\item
The canonical filtration of the VA\/ $V^c(R)$ is very good.

\item
The associated graded PVA of\/ $V^c(R)$ is
\begin{equation*}
\gr V^c(R) \simeq\mc V^0(R) \simeq\mc V(\bar R) 
\,, 
\end{equation*}
where\/ $\bar R = R/\mb FC$ is the quotient LCA.
\end{enumerate}
\end{lemma}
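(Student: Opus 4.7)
The plan is to compute the associated graded in part (b) first, and then deduce part (a) from Proposition \ref{prop:vgfil} applied to $V^c(R)$. The preliminary observation that drives both is that $C-c\vac$ is central in $V(R)$ with $\partial(C-c\vac)=0$: indeed, $\partial C=0$ together with sesquilinearity L1 gives $\lambda[C_\lambda a]=0$, hence $[C_\lambda a]=0$, and $\vac$ is automatically central in any VA. This centrality causes the quasicommutativity \eqref{eq:qc} and quasiassociativity \eqref{eq:qa} corrections to vanish whenever $C-c\vac$ appears as an argument, so $I_c=\{{:}v(C-c\vac){:}\colon v\in V(R)\}$, and the linear map $\mu\colon V(R)\to V(R)$, $v\mapsto {:}v(C-c\vac){:}$, is $\mb F[\partial]$-linear (using $\partial(C-c\vac)=0$) with image $I_c$.

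For (b), I compute $\gr I_c$ inside $\gr V(R)=\mc V(R)$ via \eqref{eq:grenv}. For $v\in\fil^s V(R)\setminus\fil^{s-1}V(R)$, the element $\mu(v)={:}vC{:}-cv\in\fil^{s+1}V(R)$ has symbol $\bar v\cdot C\in\gr^{s+1}V(R)=S^{s+1}(R)$; this is nonzero because $\mc V(R)=S(R)$ is a polynomial domain. Hence $\mu$ is injective, shifts filtration by exactly $+1$, and $\gr\mu$ equals multiplication by $C$ on $\mc V(R)$. Consequently $\gr I_c=\mc V(R)\cdot C$, and
$$\gr V^c(R)=\mc V(R)/\mc V(R)\cdot C=\mc V^0(R)$$
by definition \eqref{eq:vcr}. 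The natural ring isomorphism $S(R)/S(R)\cdot C\simeq S(R/\mb FC)=S(\bar R)$ is $\partial$-equivariant and respects the induced $\lambda$-bracket, so it upgrades to a PVA isomorphism $\mc V^0(R)\simeq\mc V(\bar R)$, completing (b).

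For (a), I would apply Proposition \ref{prop:vgfil} to $V^c(R)$ with generating submodule $\bar R$ placed in weight $1$. Fix an ordered $\mb F$-basis of $R$ compatible with parity and containing $C$; using the PBW theorem in $V(R)$ and the identity ${:}Cw{:}\equiv cw\pmod{I_c}$ (from centrality and the relation $C\equiv c\vac$), the images in $V^c(R)$ of the PBW monomials not involving $C$ form an $\mb F$-basis, which is precisely the PBW-type basis of a VA freely generated by $\bar R$. The canonical filtration of $V^c(R)$ descending from $V(R)$ then coincides with the one given by Proposition \ref{prop:vgfil} for $\bar R$, and that proposition yields the ``very good'' property. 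The main obstacle I anticipate is that applying Proposition \ref{prop:vgfil} literally requires an $\mb F[\partial]$-embedding $\bar R\hookrightarrow V^c(R)$, hence an $\mb F[\partial]$-splitting of $R\twoheadrightarrow\bar R$; this is manifest in all examples of interest (Virasoro, affine, boson, fermion), while in the general case one would construct $\mb F[\partial]$-module complements to $\fil^{s-1}V^c(R)$ in $\fil^s V^c(R)$ by hand, using that $\mu\colon V(R)\to I_c$ is an $\mb F[\partial]$-module isomorphism and that the canonical filtration of $V(R)$ is already very good.
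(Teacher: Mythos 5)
Your route---computing $\gr I_c$ via the symbol of $\mu(v)={:}v(C-c\vac){:}$ to get (b), then deducing (a) from Proposition \ref{prop:vgfil} applied to a lift of $\bar R$---is essentially the argument the paper intends (its printed proof is only ``straightforward from the above discussion''), and most of it is sound: the centrality of $C-c\vac$, the fact that $\mu$ shifts the filtration by exactly one with $\gr\mu$ equal to multiplication by $C$, hence $\gr I_c=\mc V(R)C$ and $\gr V^c(R)\simeq\mc V^0(R)\simeq\mc V(\bar R)$ as PVAs. One small slip: when $R$ has odd part, $S(R)=S(R_{\bar 0})\otimes\Lambda(R_{\bar 1})$ is not a domain; what you actually need (and what is true) is that multiplication by the nonzero even element $C$ is injective on $S(R)$.

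The genuine gap is the last sentence of your treatment of (a). An $\mb F[\partial]$-splitting of $R\twoheadrightarrow\bar R$ is not a technicality one can argue around ``by hand'': without it, statement (a) is false, so no construction of $\mb F[\partial]$-complements can exist. Concretely, take the abelian LCA $R=\mb F a\oplus\mb F\partial a\oplus\mb F\partial^2 a$ with $\partial^3a=0$ and zero $\lambda$-bracket, and set $C=\partial^2a$, $c=1$. Since the bracket is zero, $V(R)$ is the commutative vertex algebra $S(R)=\mb F[a,a',a'']$, and $V^1(R)\simeq\mb F[a,a']$ with $\partial a=a'$, $\partial a'=1$; substituting $u=a-\frac{1}{2}a'^2$, $v=a'$ gives $V^1(R)\simeq\mb F[u,v]$ with $\partial=\partial/\partial v$, so $\partial$ is surjective on $V^1(R)$. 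On the other hand, $\gr V^1(R)\simeq\mb F[\bar a,\bar a']$ with induced derivation $\bar a'\,\partial/\partial\bar a$, whose image misses $\bar a$ and $1$. Surjectivity of $\partial$ is preserved by any $\mb F[\partial]$-module isomorphism, so $V^1(R)\not\simeq\gr V^1(R)$, even though (b) still holds in this example. Thus the existence of a $\partial$-stable complement to $\mb FC$ in $R$ (guaranteed, e.g., when $\bar R$ is a free $\mb F[\partial]$-module, as in every application of the lemma in Sections \ref{sec:uelca}--\ref{sec:ex}) must be treated as a standing hypothesis rather than an obstacle; once it is assumed, your argument for (a)---the PBW basis of $V^c(R)$ on the $C$-free monomials, the identification of the descended filtration with the one of Proposition \ref{prop:vgfil}, and that proposition's very-good conclusion---goes through.
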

\begin{proof}
Straightforward from the above discussion.
\end{proof}

\section{The chiral and classical operads}\label{sec:chcl}

In this section, we recall the definitions of the chiral operad $\mc P^\ch(V)$ 
and classical operad $\mc P^\cl(V)$ from \cite{BDSHK18}. 
The only new material is in Section \ref{sec:trivial}.

\subsection{The spaces $\mc O_{n}^{\star T}$}\label{sec:ostart}

Here and further, we will consider rational functions in the variables $z_1,z_2,\dots$ and use the shorthand notation $z_{ij}=z_i-z_j$.
For a fixed positive integer $n$, we denote by
$$
\mc O_{n}^T
=
\mb F[z_{ij}]_{1\leq i<j\leq n}
$$
the algebra of translation invariant polynomials in $n$ variables.
Let 
$$
\mc O_{n}^{\star T}
=\mb F[z_{ij}^{\pm 1}]_{1\leq i<j\leq n}
$$
be the localization of $\mc O_{n}^T$ with respect to the diagonals $z_i=z_j$ for $i\neq j$.
We set $\mc O_{0}^T = \mc O_{0}^{\star T}=\mb F$, and
note that $\mc O_{1}^T = \mc O_{1}^{\star T}=\mb F$.
%

We introduce an increasing $\mb Z_+$-filtration of $\mc O_{n}^{\star T}$ given by the number of divisors:
\begin{equation}\label{fil1}
\begin{split}
\fil^{-1} \mc O_{n}^{\star T} &= \{0\} \subset \fil^0 \mc O_{n}^{\star T} = \mc O_{n}^T \subset
\fil^1 \mc O_{n}^{\star T} = \sum_{i<j} \mc O_{n}^T [z_{ij}^{-1}] \subset \\
\cdots&\subset \fil^r \mc O_{n}^{\star T} = \sum \mc O_{n}^T [z_{i_1,j_1}^{-1},\dots, z_{i_r,j_r}^{-1}] \subset\cdots
\subset  \fil^{n-1} \mc O_{n}^{\star T} = \mc O_{n}^{\star T}.
\end{split}
\end{equation}
In other words, the elements of $\fil^r \mc O_{n}^{\star T}$ are sums of rational functions with
at most $r$ poles each, not counting multiplicities.
The fact that $\fil^{n-1} \mc O_{n}^{\star T} = \mc O_{n}^{\star T}$ follows from \cite[Lemma 8.4]{BDSHK18}.
%

\subsection{The operad $\mc P^\ch(V)$}\label{sec:wch}

Let $V=V_{\bar 0}\oplus V_{\bar 1}$ be a vector superspace endowed
with an even endomorphism $\partial$. For every $i=1,\dots,n$, we will denote by $\partial_i$ the action of $\partial$ on the $i$-th factor of the tensor power $V^{\otimes n}$:
\begin{equation}\label{ddi}
\partial_i v = v_1 \otimes\cdots\otimes \partial v_i  \otimes\cdots\otimes v_n \quad\text{for}\quad
v = v_1 \otimes\cdots\otimes v_n \in V^{\otimes n}.
\end{equation}
Recall the superspace $V_n$ given by \eqref{eq:1.3}.
The corresponding to $V$ operad $\mc P^\ch(V)=\{\mc P^\ch(n)\,|\,n\in\mb Z_{\geq0}\}$
is defined as follows.
The space of $n$-ary \emph{chiral operations} $\mc P^\ch(n)$ 
is the set of all linear maps \cite[(6.11)]{BDSHK18}
\begin{equation}\label{20160629:eq2-c}
\begin{split}
X\colon
V^{\otimes n}\otimes\mc O_{n}^{\star T}
&\to 
V_n
\,,\\
\vphantom{\Big(}
v_1 \otimes\dots\otimes v_n\otimes f(z_1,\dots,z_n)
&\mapsto
X_{\lambda_1,\dots,\lambda_n} (v_1\otimes\dots\otimes v_n \otimes f)
\\
&=
X_{\lambda_1,\dots,\lambda_n}^{z_1,\dots,z_n} (v_1\otimes\dots\otimes v_n \otimes f(z_1,\dots,z_n))
\,,
\end{split}
\end{equation}
satisfying the following two \emph{sesquilinearity} conditions:
\begin{align}\label{20160629:eq4a}
X_{\lambda_1,\dots,\lambda_n} ( v \otimes \partial_{z_i} f )
&= X_{\lambda_1,\dots,\lambda_n} ( (\partial_i+\la_i) v \otimes f ) \,, \\
\label{20160629:eq4b}
X_{\lambda_1,\dots,\lambda_n} (v \otimes z_{ij}f)
&= (\partial_{\lambda_j}-\partial_{\lambda_i}) X_{\lambda_1,\dots,\lambda_n} (v \otimes f) \,.
\end{align}
For example, we have:
\begin{align}\label{pch0}
\mc P^\ch(0) &= \Hom_{\mb F} (\mb F, V/\langle\dd\rangle) \cong V/\dd V, \\
\label{pch1}
\mc P^\ch(1) &= \Hom_{\mb F[\dd]} (V, V[\la_0]/\langle\dd+\la_0\rangle) \cong \End_{\mb F[\dd]} (V).
\end{align}
The $\mb Z/2\mb Z$-grading of the superspace $\mc P^\ch(n)$ is induced 
by that of the vector superspace $V$, where $\mc O_{n}^{\star T}$ and all
variables $\lambda_i$ are considered even.

The symmetric group $S_{n}$ acts on the right on $\mc P^\ch(n)$ by permuting 
simultaneously the inputs 
$v_1,\dots,v_n$ of $X$, the variables $\la_1,\dots,\la_n$, 
and the corresponding variables $z_1,\dots,z_n$ in $f$. 
Explicitly, for $X\in \mc P^\ch(n)$ and $\sigma \in S_{n}$, we have
\begin{equation}\label{eq:symch}
\begin{split}
(X^\sigma &)^{z_1,\dots,z_n}_{\lambda_1,\dots,\lambda_n}(v_1\otimes\dots\otimes v_n 
\otimes f(z_1,\dots,z_n))
\\
&=
\epsilon_v(\sigma)
X^{z_1,\dots,z_n}_{\lambda_{i_1},\dots,\lambda_{i_n}}(v_{i_1} \otimes\dots\otimes 
v_{i_n} \otimes f(z_{i_1},\dots,z_{i_n})),
\end{split}
\end{equation}
where $i_s=\sigma^{-1}(s)$ and the sign $\epsilon_v(\sigma)$ is given by 
\begin{equation}\label{eq:operad14}
\epsilon_v(\sigma)
=
\prod_{i<j\,|\,\sigma(i)>\sigma(j)}(-1)^{p(v_i)p(v_j)}
\,.
\end{equation}
One can also define compositions of chiral operations, 
turning $\mc P^\ch(V)$ into an operad (see \cite[(6.25)]{BDSHK18}). 

\subsection{Filtration of $\mc P^\ch(V)$}\label{sec:pchfil}

Now suppose that $V$ is equipped with an increasing $\mb Z_+$-filtration
of $\mb F[\partial]$-submodules
\begin{equation}\label{eq:last3}
\fil^{-1}V=\{0\}
\,\subset\,
\fil^0V
\,\subset\,
\fil^1V
\,\subset\,
\fil^2V
\,\subset\,
\cdots
\,\subset\,
V
\,.
\end{equation}
Since $\mc O_{n}^{\star T}$ is also filtered by \eqref{fil1}, we obtain
an increasing $\mb Z_+$-filtration on the tensor products
$$
\fil^s\big(V^{\otimes n}\otimes \mc O_{n}^{\star T}\big)
=
\sum_{s_1+\dots+s_{n}+p\leq s}
\fil^{s_1}V
\otimes
\cdots
\otimes
\fil^{s_n}V
\otimes
\fil^{p}\mc O_{n}^{\star T}
\,\,\text{ if } s\geq0
\,,
$$
and $\fil^s(V^{\otimes n}\otimes \mc O_{n}^{\star T})=\{0\}$ if $s<0$.

This induces a decreasing $\mb Z$-filtration of $\mc P^\ch(n)$, where $\fil^r \mc P^\ch(n)$ for $r\in\mb Z$ is defined 
as the set of all elements $X$ such that
\begin{equation}\label{fil4-ref}
X\bigl( 
\fil^s(V^{\otimes n} \otimes \mc O_{n}^{\star T})
\bigr)
\subset
(\fil^{s-r}V)[\lambda_1,\dots,\lambda_n]/\langle\partial+\lambda_1+\dots+\lambda_n\rangle
\,\,,\,\,\,\,
s\in\mb Z
\,.
\end{equation}
The composition maps in $\mc P^\ch(V)$ and the actions of the symmetric groups
are compatible with the filtration \eqref{fil4-ref} (see \cite[Proposition 8.9]{BDSHK18}); hence, $\mc P^\ch(V)$ is a \emph{filtered operad} as in \cite[Section 3.1]{BDSHK18}.
Then, as usual, the associated graded spaces are defined by
\begin{equation}\label{grr}
\gr^r \mc P^\ch(n)
=
\fil^r \mc P^\ch(n) / \fil^{r+1} \mc P^\ch(n)
\,, \qquad r\in\mb Z \,,
\end{equation}
and we obtain that $\gr \mc P^\ch(V)$ is a \emph{graded operad} (see \cite[Section 3.1]{BDSHK18}).

\begin{lemma}\label{lem:filsep}
If the filtration \eqref{eq:last3} of\/ $V$ is exhaustive $($see \eqref{eq:exfil}$)$, then the filtration of\/ $\mc P^\ch(n)$ is \emph{separated}, see \eqref{eq:sepfil-def}.
\end{lemma}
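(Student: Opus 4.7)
The plan is to take an element $X \in \bigcap_{r\in\mb Z} \fil^r \mc P^\ch(n)$ and show that $X$ vanishes on every generator $v_1 \otimes \dots \otimes v_n \otimes f$ of $V^{\otimes n} \otimes \mc O_n^{\star T}$. The key observation is that the definition \eqref{fil4-ref} becomes trivially restrictive when $s - r < 0$: since $\fil^{-1} V = \{0\}$ by \eqref{eq:last3}, any $X \in \fil^r \mc P^\ch(n)$ must send $\fil^s(V^{\otimes n} \otimes \mc O_n^{\star T})$ to zero whenever $r > s$. So the whole argument reduces to producing, for each fixed input, a filtration level $s$ in which that input lives, and then invoking $\fil^{s+1} \mc P^\ch(n)$-membership of $X$.

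First I would fix an arbitrary element $v_1 \otimes \dots \otimes v_n \otimes f \in V^{\otimes n} \otimes \mc O_n^{\star T}$. By the exhaustiveness assumption \eqref{eq:exfil} applied to the filtration \eqref{eq:last3}, there exist indices $s_1,\dots,s_n \in \mb Z_+$ with $v_i \in \fil^{s_i} V$ for each $i$. By \eqref{fil1}, the element $f$ lies in $\fil^p \mc O_n^{\star T}$ for some $p \le n-1$ (in fact $\fil^{n-1} \mc O_n^{\star T} = \mc O_n^{\star T}$). Setting $s = s_1 + \dots + s_n + p$, the definition of the filtration on the tensor product gives $v_1 \otimes \dots \otimes v_n \otimes f \in \fil^s(V^{\otimes n} \otimes \mc O_n^{\star T})$.

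Next I would apply the hypothesis that $X \in \fil^r \mc P^\ch(n)$ for every $r$, in particular for $r = s+1$. Condition \eqref{fil4-ref} then forces
\[
X_{\lambda_1,\dots,\lambda_n}(v_1 \otimes \dots \otimes v_n \otimes f)
\in (\fil^{-1} V)[\lambda_1,\dots,\lambda_n] / \langle \partial + \lambda_1 + \dots + \lambda_n\rangle = \{0\},
\]
since $\fil^{-1} V = \{0\}$. As the input was arbitrary, $X = 0$, which is exactly the separatedness claim \eqref{eq:sepfil-def}.

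There is no real obstacle here; the content of the lemma is essentially bookkeeping, with the only nontrivial ingredient being the fact that a single rational function $f \in \mc O_n^{\star T}$ lives in some finite level of the filtration \eqref{fil1}, which is guaranteed by $\fil^{n-1} \mc O_n^{\star T} = \mc O_n^{\star T}$ (cited from \cite[Lemma 8.4]{BDSHK18}). Exhaustiveness of the filtration of $V$ is used precisely once, to place each $v_i$ in some $\fil^{s_i} V$; without it one could not reduce a general test element to a finite filtration level, and the argument would break down.
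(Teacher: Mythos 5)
Your argument is correct and is essentially the paper's own proof: both rest on the same three facts, namely $\fil^{n-1}\mc O_n^{\star T}=\mc O_n^{\star T}$ to bound the filtration degree of $f$, exhaustiveness of the filtration of $V$ to place each input in some $\fil^s$, and $\fil^{-1}V=\{0\}$ applied via \eqref{fil4-ref} with $r>s$ to force vanishing. The only difference is presentational (you argue element by element, while the paper phrases the same step as the inclusion $\fil^s(V^{\otimes n})\otimes\mc O_n^{\star T}\subset\fil^{s+n-1}(V^{\otimes n}\otimes\mc O_n^{\star T})$).
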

\begin{proof}
Note that $\fil^{s+n-1}(V^{\otimes n} \otimes \mc O_{n}^{\star T}) \supset \fil^s(V^{\otimes n}) \otimes \mc O_{n}^{\star T}$, because $\fil^{n-1} \mc O_{n}^{\star T} = \mc O_{n}^{\star T}$.
Since $\fil^{-1}V=\{0\}$, we see from \eqref{fil4-ref} that $X(\fil^s(V^{\otimes n}) \otimes \mc O_{n}^{\star T}) = 0$
for all $X\in \fil^{s+n} \mc P^\ch(n)$. If the filtration of $V$ is exhaustive, then the induced filtration of $V^{\otimes n}$ is exhaustive. Hence, $X=0$ for every $X\in\bigcap_{r\in\mb Z} \fil^r \mc P^\ch(n)$.
\end{proof}


\subsection{$n$-graphs}\label{sec:6a.1}

For a positive integer $n$, we define an $n$-\emph{graph}
as a graph $\Gamma$ with $n$ vertices labeled by $1,\dots,n$
and an arbitrary collection $E(\Gamma)$ of oriented edges.
We denote 
by $\mc G(n)$ the collection of all $n$-graphs
without tadpoles,
and by $\mc G_0(n)$ the collection of all \emph{acyclic} $n$-graphs,
i.e., $n$-graphs that have 
no cycles (including tadpoles and multiple edges).
For example, $\mc G_0(1)$ consists of the graph with a single vertex labeled $1$ and no edges,
and $\mc G_0(2)$ consists of three graphs:
\begin{equation}\label{eq:2-graphs}
\begin{array}{l}
\begin{tikzpicture}
\draw (0.5,1) circle [radius=0.07];
\node at (0.5,0.7) {1};
\draw (1.5,1) circle [radius=0.07];
\node at (1.5,0.7) {2};
\node at (2,0.85) {,};
\draw (3.7,1) circle [radius=0.07];
\node at (3.7,0.7) {1};
\draw (4.7,1) circle [radius=0.07];
\node at (4.7,0.7) {2};
\draw[->] (3.8,1) -- (4.6,1);
\node at (5,0.85) {,};
\draw (7,1) circle [radius=0.07];
\node at (7,0.7) {1};
\draw (8,1) circle [radius=0.07];
\node at (8,0.7) {2};
\draw[<-] (7.1,1) --(7.9,1);
\node at (8.5,0.85) {.};
\end{tikzpicture}
\\
E(\Gamma)=\emptyset
\,\,\,\,\,,\,\,\,\,\qquad
E(\Gamma)\!=\!\{1\!\to\!2\}
\,\,,\,\,\qquad
E(\Gamma)\!=\!\{2\!\to\!1\}
\end{array}
\end{equation}
By convention, we also let $\mc G_0(0)=\mc G(0)=\{\emptyset\}$ be the set consisting of a single element (the empty graph with $0$ vertices).

A graph $L$ will be called a \emph{line} if its set of edges is of the form $\{i_1\to i_2,\,i_2\to i_3,\dots,\,i_{n-1}\to i_n\}$ where $\{i_1,\dots,i_n\}$ is a permutation of $\{1,\dots,n\}$:
\begin{equation}\label{eq:line}
\begin{tikzpicture}
\node at (-0.2,1) {$L=$};
\draw (0.5,1) circle [radius=0.07];
\node at (0.5,0.6) {$i_1$};
\draw[->] (0.6,1) -- (0.9,1);
\draw (1,1) circle [radius=0.07];
\node at (1,0.6) {$i_2$};
\draw[->] (1.1,1) -- (1.4,1);
\node at (1.7,1) {$\cdots$};
\draw[->] (1.9,1) -- (2.2,1);
\draw (2.3,1) circle [radius=0.07];
\node at (2.3,0.6) {$i_n$};
\node at  (2.9,0.8) {.};
\end{tikzpicture}
\end{equation}
An \emph{oriented cycle} $C$ in a graph $\Gamma$ is, by definition, 
a collection of edges of $\Gamma$ forming a closed sequence (possibly with self intersections):
\begin{equation}\label{20170823:eq4a}
C=\{i_1\to i_2,\,i_2\to i_3,\dots,\,i_{s-1}\to i_s,\,i_s\to i_1\}\subset E(\Gamma)
\,.
\end{equation}

There is a natural (left) action of the symmetric group $S_n$
on the set $\mc G(n)$ of $n$-graphs, which preserves the subset $\mc G_0(n)$ of acyclic graphs.
Given $\Gamma\in\mc G(n)$ and $\sigma\in S_n$,
we let $\sigma(\Gamma)$ be the same graph as $\Gamma$,
but with the vertex that was labeled $1$ relabeled as $\sigma(1)$,
the vertex $2$ relabeled as $\sigma(2)$,
and so on up to the vertex $n$ now relabeled as $\sigma(n)$.
For example, if $L_0$ is the line with edges $\{1\to2,2\to3,\dots,n-1\to n\}$ and $\sigma\in S_n$,
then $\sigma(L_0)=L$ is the line \eqref{eq:line} where $i_k=\sigma(k)$.

Let $\mc L(n) \subset \mc G(n)$ be the set of graphs that are disjoint unions of lines.
Consider the vector space $\mb F\mc G(n)$ with the set $\mc G(n)$ as a basis over $\mb F$.
The subspace $\mc R(n)\subset\mb F\mc G(n)$ of \emph{cycle relations} is spanned by the following elements:
\begin{enumerate}[(i)]
\item
all graphs $\Gamma\in\mc G(n)$ containing a cycle;
\item
all linear combinations of the form
$\sum_{e\in C}\Gamma\backslash e$,
for all $\Gamma\in\mc G(n)$
and all oriented cycles $C\subset E(\Gamma)$,
where $\Gamma\backslash e\in\mc G(n)$ is the graph obtained from $\Gamma$ 
by removing the edge $e$ and keeping the same set of vertices.
\end{enumerate}
Note that if we reverse an arrow in a graph $\Gamma\in\mc G(n)$,
we obtain, modulo cycle relations, the element $-\Gamma\in\mb F\mc G(n)$.
\begin{lemma}[\cite{BDSHK19}]\label{prop:basis}
The set\/ $\mc L(n)$ spans the space\/ $\mb F\mc G(n)$ modulo the cycle relations.
\end{lemma}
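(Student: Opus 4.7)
The plan is to prove this by induction on the number of edges of $\Gamma$. The base case of zero edges is immediate, as $\Gamma$ is then the graph of $n$ isolated vertices, which is a disjoint union of $n$ trivial (one-vertex) lines and hence lies in $\mc L(n)$.

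For the inductive step, a graph $\Gamma$ containing a directed cycle is already zero modulo $\mc R(n)$ by relation (i), so we may assume $\Gamma$ is acyclic. If $\Gamma \in \mc L(n)$ we are done. Otherwise, some vertex $v$ has in-degree or out-degree at least $2$; by the symmetry provided by edge reversal (which we establish next), we may assume $v$ has two distinct out-edges $v \to a$ and $v \to b$.

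The core reduction uses relation (ii) in its simplest form: consider $\widetilde\Gamma = \Gamma \cup \{a \to v\}$, which contains the $2$-cycle $\{v \to a,\, a \to v\}$. Applying (ii) to this $2$-cycle gives
\[
\Gamma + \Gamma^{\vee} \equiv 0 \pmod{\mc R(n)},
\]
where $\Gamma^{\vee}$ denotes $\Gamma$ with the edge $v \to a$ reversed. If there is a directed path from $v$ to $a$ in $\Gamma$ distinct from the edge $v \to a$ itself, then $\Gamma^{\vee}$ contains a cycle, so $\Gamma^{\vee} \equiv 0$ by (i) and therefore $\Gamma \equiv 0$. Otherwise $\Gamma^{\vee}$ is acyclic with the out-degree at $v$ decreased by one (at the cost of increasing the in-degree at $v$). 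Iterating this reversal --- combined, when necessary, with applications of (ii) to $3$-cycles and longer cycles in suitable extensions of $\Gamma$ --- allows us to express $\Gamma$ modulo $\mc R(n)$ as a linear combination of graphs each of which is either (a) a disjoint union of lines, (b) contains a directed cycle and hence vanishes by (i), or (c) has strictly smaller complexity so that the inductive hypothesis applies.

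The hard part will be choosing a well-founded complexity measure refining the number of edges on which these reductions strictly decrease. Pure edge-reversal only trades out-degree for in-degree at $v$ and does not by itself reduce, say, a ``star'' $\{v \to a_1, \dots, v \to a_k\}$ to a line-union. The essential ingredient is thus the use of longer cycles in relation (ii) together with the observation above, which forces any acyclic graph with a directed edge paralleled by a multi-edge directed path to be zero modulo $\mc R(n)$; this structural constraint is what ultimately lets us drive the induction through, in analogy with the standard reduction of Lie-algebra monomials to left-normed brackets via the Jacobi identity.
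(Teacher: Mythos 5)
The paper itself does not prove this lemma; it is quoted from \cite{BDSHK19} (and the paper explicitly declines to reproduce the argument), so there is no internal proof to compare with, and your proposal has to stand on its own. The parts you actually establish are correct: relation (i) kills graphs with directed cycles; applying (ii) to the $2$-cycle in $\Gamma\cup\{a\to v\}$ shows that reversing an edge changes $\Gamma$ by a sign modulo $\mc R(n)$; and hence an edge $v\to a$ paralleled by a longer directed path from $v$ to $a$ forces $\Gamma\equiv 0$. Combined with reversals, this also shows that any graph whose \emph{underlying undirected} graph contains a cycle vanishes modulo $\mc R(n)$.

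The genuine gap is everything after that, i.e.\ the reduction of the remaining graphs: forests containing a vertex of degree at least $3$. For such a graph --- the simplest example being the star with edges $v\to a_1$, $v\to a_2$, $v\to a_3$ plus isolated vertices --- none of the tools you establish does anything: there are no parallel paths, and edge reversal only permutes orientations up to sign, as you yourself observe. The missing ingredient is the concrete ``edge-sliding'' consequence of (ii) for a $3$-cycle: if $a\to v$ and $v\to b$ are edges of $\Gamma$ (arrange this by reversals), then applying (ii) to the cycle $\{a\to v,\,v\to b,\,b\to a\}$ in $\Gamma\cup\{b\to a\}$ gives
\begin{equation*}
\Gamma \;\equiv\; -\bigl((\Gamma\setminus\{a\to v\})\cup\{b\to a\}\bigr)\;-\;\bigl((\Gamma\setminus\{v\to b\})\cup\{b\to a\}\bigr) \pmod{\mc R(n)}\,,
\end{equation*}
which trades an edge at $v$ for an edge joining two of its neighbors. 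Your proposal never writes down this (or any) instantiation of ``(ii) applied to $3$-cycles and longer cycles in suitable extensions of $\Gamma$,'' and you state explicitly that you have no well-founded complexity measure on which the rewriting strictly decreases. That is not a minor bookkeeping issue: in the displayed relation the degree of $v$ drops by one but the degree of a neighbor rises by one, so naive measures such as the maximal degree or $\sum_i d_i^2$ do not obviously decrease, and one must either choose the pair of neighbors carefully or set up a subtler induction (this is precisely the nontrivial content of the lemma, in analogy with reducing to left-normed Lie monomials via Jacobi). As written, your text is a plan whose decisive step --- the reduction of high-degree vertices together with a termination argument --- is missing, so it does not constitute a proof.
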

In \cite{BDSHK19}, we also give an explicit basis of the space $\mb F\mc G(n)/\mc R(n)$, but it will not be needed here.

\subsection{The operad $\mc P^\cl(V)$}\label{sec:6.2}

Now we will recall the definition 
of the classical operad $\mc P^\cl(V)=\{\mc P^\cl(n)\,|\,n\in\mb Z_{\geq0}\}$ 
from \cite[Section 10]{BDSHK18}.

As before, let $V=V_{\bar 0}\oplus V_{\bar 1}$ be a vector superspace endowed
with an even endomorphism $\partial$.
Set $\mc P^\cl(0) = V/\partial V$, and for $n\ge1$,
define $\mc P^\cl(n)$ as the vector superspace (with the pointwise addition and scalar multiplication)
of all maps 
\begin{align}\label{20170614:eq4}
Y\colon
\mc G(n)\times V^{\otimes n}
&\to
V_n
\,, \\ \label{20170614:eq5}
(\Gamma , v) &\mapsto
Y^{\Gamma}_{\lambda_1,\dots,\lambda_n}(v) \,,
\end{align}
which depend linearly on 
$v 
\in V^{\otimes n}$,
and satisfy
the {cycle relations}
and {sesquilinearity conditions} described below.
The $\mb Z/2\mb Z$-grading of the superspace $\mc P^\cl(n)$ is induced 
by that of the vector superspace $V$,
by letting $\mc G(n)$ and the variables $\lambda_i$ be even.

The \emph{cycle relations} in $\mc P^\cl(n)$ state that if an $n$-graph $\Gamma\in\mc G(n)$ contains an oriented cycle
$C\subset E(\Gamma)$, then:
\begin{equation}\label{eq:cycle}
Y^{\Gamma} =0 
\,,\qquad
\sum_{e\in C} Y^{\Gamma\backslash e} =0 
\,.
\end{equation}
In particular, applying the second cycle relation \eqref{eq:cycle} for an oriented cycle of length $2$,
we see that changing the orientation of a single edge of $\Gamma\in\mc G(n)$
amounts to a change of sign of $Y^{\Gamma}$.

To write the {sesquilinearity conditions}, let us first introduce some notation.
For a graph $G$ with a set of vertices labeled by a subset $I\subset\{1,\dots,n\}$,
we let
\begin{equation}\label{com3}
\la_G = \sum_{i\in I} \la_i \,, 
\qquad \dd_G = \sum_{i\in I} \dd_i \,, 
\end{equation}
where as before $\dd_i$ denotes the action of $\dd$ on the $i$-th factor in $V^{\otimes n}$
(see \eqref{ddi}).
Then for every connected component $G$ of $\Gamma\in\mc G(n)$ with a set of vertices $I$,
we have two \emph{sesquilinearity conditions}:
\begin{align}\label{eq:sesq1}
(\partial_{\lambda_j} - \partial_{\lambda_i})
Y^{\Gamma}_{\lambda_1,\dots,\lambda_n}(v) &= 0
\quad\text{for all}\quad
i,j\in I \,,
\\ \label{eq:sesq2}
Y^{\Gamma}_{\lambda_1,\dots,\lambda_n}
\bigl( (\partial_G+\la_G)v \bigr)
&=0 \,, \qquad v\in V^{\otimes n} \,.
\end{align}
The first condition \eqref{eq:sesq1} means that the polynomial 
$Y^{\Gamma}_{\lambda_1,\dots,\lambda_n}(v)$
is a function of the variables $\lambda_{\Gamma_\alpha}$, where 
the $\Gamma_\alpha$'s are the connected components of $\Gamma$, 
and not of the variables $\lambda_1,\dots,\lambda_n$ separately.

We have a natural right action of the symmetric group $S_n$ on $\mc P^\cl(n)$
by (parity preserving) linear maps:
\begin{equation}\label{20170615:eq1}
(Y^\sigma)^\Gamma_{\lambda_1,\dots,\lambda_n}(v_1\otimes\dots\otimes v_n)
= \epsilon_v(\sigma)
Y^{\sigma(\Gamma)}_{\lambda_{i_1},\dots,\lambda_{i_n}}(v_{i_1} \otimes\dots\otimes v_{i_n} )
\,,
\end{equation}
where $i_s=\sigma^{-1}(s)$, the sign $\epsilon_v(\sigma)$ is given by \eqref{eq:operad14},
and $\sigma(\Gamma)$ is defined in Section \ref{sec:6a.1}.
In \cite[(10.11)]{BDSHK18}, we also defined compositions of maps in $\mc P^\cl(V)$, turning it into an operad. 

\begin{remark}\label{rem:lines}
Due to \eqref{eq:cycle} and Lemma \ref{prop:basis}, any classical operation $Y\in \mc P^\cl(n)$ is uniquely determined by $Y^\Ga$ for graphs $\Ga\in\mc L(n)$ that are disjoint unions of lines.
\end{remark}


Suppose now that $V=\bigoplus_{t\in\mb Z} \gr^t V$ is graded by $\mb F[\partial]$-sub\-modules,
and consider the induced grading of the tensor powers $V^{\otimes n}$:
$$
\gr^t V^{\otimes n}
=
\sum_{t_1+\dots+t_{n}=t}
\gr^{t_1}V
\otimes
\cdots
\otimes
\gr^{t_n}V
\,.
$$
Then $\mc P^\cl(V)$
has a grading defined as follows:
$Y\in\gr^r \mc P^\cl(n)$ if
\begin{equation}\label{pclgrading}
Y^\Gamma_{\lambda_1,\dots,\lambda_n}(\gr^t V^{\otimes n})
\,\subset\,
(\gr^{s+t-r}V)[\lambda_1,\dots,\lambda_n]/
\langle\partial+\lambda_1+\dots+\lambda_n\rangle
\end{equation}
for every graph $\Gamma\in\mc G(n)$ with $s$ edges
(see \cite[Remark 10.2]{BDSHK18}).
%

\subsection{The isomorphism from $\gr \mc P^\ch(V)$ to $\mc P^\cl(\gr V)$}\label{sec:grpchpcl}

For a graph $\Gamma\in\mc G(n)$ with a set of edges $E(\Ga)$, we introduce the function
\begin{equation}\label{gr1}
p_\Ga = p_\Ga(z_1,\dots,z_n) = \prod_{(i\to j) \in E(\Ga)} z_{ij}^{-1} \,, 
\qquad z_{ij}=z_i-z_j \,.
\end{equation}
Note that $p_\Ga \in \fil^s \mc O_{n}^{\star T}$ if $\Ga$ has $s$ edges. 
\begin{lemma}[{\cite[Lemma 8.3]{BDSHK18}}]\label{lem:pgamma}
The space\/ $\fil^s \mc O_{n}^{\star T}$ is generated as an\/ $\mc O_{n}^T$-module by all partial derivatives of the products\/ $p_\Ga$, where\/ $\Ga$ runs over the set of acyclic graphs with\/ $n$ vertices and at most\/ $s$ edges.
\end{lemma}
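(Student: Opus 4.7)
The plan is to argue by induction on $s$. The base $s=0$ is immediate, since $\fil^0\mc O_n^{\star T}=\mc O_n^T$ and the empty graph is acyclic with $p_\emptyset=1$. For the inductive step, the definition \eqref{fil1} shows that $\fil^s\mc O_n^{\star T}$ is generated as an $\mc O_n^T$-module by $\fil^{s-1}\mc O_n^{\star T}$ together with all monomials $\prod_{k=1}^{s} z_{i_k j_k}^{-a_k}$ with distinct unordered pairs $\{i_k,j_k\}$ and exponents $a_k\ge 1$. Picking an orientation for each pair, these monomials are indexed by an $n$-graph $\Ga_0$ with exactly $s$ edges, and I would split the argument according to whether $\Ga_0$ is acyclic.

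If $\Ga_0$ is acyclic, I would enlarge it to a spanning tree $T$ of the complete graph on $\{1,\dots,n\}$. The edge variables $w_e:=z_{ij}$ for $e=(i,j)\in E(T)$ are algebraically independent and identify $\mc O_n^T$ with a polynomial ring in $n-1$ variables. The change of basis from $(w_e)_{e\in E(T)}$ to $(z_v-z_n)_{v<n}$ is the signed incidence matrix of $T$ rooted at $n$, which is integer unimodular; hence the dual derivations $\partial_{w_e}$ extend to $\mc O_n^{\star T}$ and are $\mb F$-linear combinations of $\partial_{z_1},\dots,\partial_{z_n}$. In these coordinates one has $p_{\Ga_0}=\prod_{e\in E(\Ga_0)}w_e^{-1}$, so
\[
\prod_{e\in E(\Ga_0)}\frac{(-1)^{a_e-1}}{(a_e-1)!}\,\partial_{w_e}^{\,a_e-1}p_{\Ga_0}\;=\;\prod_{e\in E(\Ga_0)}w_e^{-a_e},
\]
expressing the given monomial as an $\mb F$-linear combination of iterated $\partial_{z_i}$-derivatives of $p_{\Ga_0}$, as required.

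If $\Ga_0$ contains a cycle, the goal is to show that the monomial already lies in $\fil^{s-1}\mc O_n^{\star T}$, after which the inductive hypothesis closes the argument. Pick an undirected cycle $C\subseteq E(\Ga_0)$; the telescoping sum of edge vectors around $C$ yields a nontrivial relation $\sum_{e\in C}\epsilon_e z_e=0$ with signs $\epsilon_e\in\{\pm 1\}$. The key partial-fraction identity, valid whenever $u+v+w=0$, is
\[
\frac{1}{uw}=-\frac{1}{uv}-\frac{1}{vw}.
\]
Applied with $u,w$ two edges of $C$ incident to a common vertex and $v$ the ``chord'' closing the complementary arc, this rewrites $\prod z_e^{-a_e}$ as a sum of two terms each still supported on $s$ edges, but in which the cycle $C$ is replaced by a cycle of length $|C|-1$. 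Iterating collapses $|C|$ down to $3$, and one further application of the identity on the resulting triangle eliminates one distinct divisor, landing the whole expression in $\fil^{s-1}\mc O_n^{\star T}$.

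The main obstacle I anticipate is the bookkeeping in the cyclic case when some $a_e>1$: the identity above is written for unit powers, and its analogue for higher powers---obtained by multiplying through by $u^{-(a-1)}$---can temporarily raise multiplicities on individual edges before they are eliminated. The cleanest way to organize this is a nested induction: outer on $s$, inner on the cycle rank $|E(\Ga_0)|-|V(\Ga_0)|+(\text{number of connected components})$ with exponents tracked lexicographically, using the partial-fraction identity to reduce the inner invariant and the acyclic formula to terminate.
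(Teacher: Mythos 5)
Your overall strategy (induction on $s$; monomials with acyclic support handled via derivatives of $p_{\Gamma_0}$; monomials whose support contains a cycle pushed into $\fil^{s-1}\mc O_n^{\star T}$) is the right one, and your acyclic case is complete and correct: completing the forest $\Gamma_0$ to a spanning tree $T$, using that $\mc O_n^T=\mb F[w_e]_{e\in E(T)}$, that each $\partial_{w_e}$ agrees on $\mc O_n^T$ with an $\mb F$-linear combination of the $\partial_{z_i}$ and extends uniquely to the localization, the identity $\prod_e\tfrac{(-1)^{a_e-1}}{(a_e-1)!}\,\partial_{w_e}^{a_e-1}p_{\Gamma_0}=\prod_e w_e^{-a_e}$ does exhibit every such monomial as an $\mb F$-linear combination of iterated partial derivatives of $p_{\Gamma_0}$. (The paper gives no proof of this lemma, citing [BDSHK18, Lemma 8.3], so I am judging your argument on its own terms.)

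The gap is in the cyclic case, precisely at the point you flag but do not resolve. Your chord identity $\tfrac1{uw}=\tfrac1{uv'}+\tfrac1{v'w}$ (for $u+w=v'$) removes one power of $u$ or of $w$ and inserts one power of the chord $v'$; when the exponents of $u$ and $w$ are both $\ge2$ and $v'$ is not already an edge of $\Gamma_0$, the two resulting terms are supported on $s+1$ distinct divisors, not $s$. For instance, for the $4$-cycle $z_{12},z_{23},z_{34},z_{41}$ with all exponents $2$, one step produces $z_{12}^{-2}z_{13}^{-1}z_{23}^{-1}z_{34}^{-2}z_{41}^{-2}$ plus a similar term: five divisors and first Betti number $2$. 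So the assertion ``each still supported on $s$ edges'' fails, and with it both of your termination devices: the outer induction on $s$ no longer reaches the new terms (you would need the stronger claim that they lie in $\fil^{s-1}$, two below their support size), and the inner ``cycle rank'' can increase rather than decrease; likewise, on a triangle with exponents $>1$ a single application does not yet remove a divisor. The clean repair is to avoid chords altogether: with the cycle relation $\sum_{e\in C}\epsilon_e z_e=0$, fix $e_0\in C$ and multiply the monomial by $1=z_{e_0}^{-1}\sum_{e\in C\setminus\{e_0\}}(-\epsilon_{e_0}\epsilon_e)z_e$. Each resulting term has the pole order at some $z_e$, $e\in C\setminus\{e_0\}$, lowered by one and that at $z_{e_0}$ raised by one, and no new divisors are created; inducting on $\sum_{e\in C\setminus\{e_0\}}a_e$, every branch eventually loses a divisor and lands in $\fil^{s-1}\mc O_n^{\star T}$, after which your outer induction applies verbatim.
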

\begin{corollary}\label{cor:pgamma}
A chiral operation\/ $Y\in \mc P^\ch(n)$ is uniquely determined by its values\/ $Y(v \otimes p_\Ga)$, where\/ $v\in V^{\otimes n}$ and\/ $\Ga$ runs over the set of connected lines with\/ $n$ vertices.
\end{corollary}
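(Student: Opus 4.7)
My plan is to combine Lemma \ref{lem:pgamma} with the two sesquilinearity axioms \eqref{20160629:eq4a} and \eqref{20160629:eq4b}, supplemented by Arnold-type partial-fraction identities among the rational functions $p_\Ga$, to reduce the determination of $Y$ to knowledge of $Y(v \otimes p_\Ga)$ only for connected spanning lines $\Ga$.

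First, by Lemma \ref{lem:pgamma} applied at $s = n-1$, every $f \in \mc O_n^{\star T}$ is a finite $\mc O_n^T$-linear combination of partial derivatives (in the $z_i$'s) of the products $p_\Ga$ with $\Ga$ acyclic. The sesquilinearity \eqref{20160629:eq4a} lets me trade each derivative $\partial_{z_k}$ for the operator $(\partial_k + \lambda_k)$ acting on the $V^{\otimes n}$-argument, and \eqref{20160629:eq4b} lets me trade each factor $z_{ij}$ coming from the $\mc O_n^T$-factor for the differential operator $(\partial_{\lambda_j} - \partial_{\lambda_i})$ acting on the output in $V_n$. Iterating these two moves shows that $Y(v \otimes f)$ is determined by the collection of values $Y(v' \otimes p_\Ga)$ with $v' \in V^{\otimes n}$ and $\Ga$ an acyclic $n$-graph. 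A further application of \eqref{20160629:eq4b} shrinks this set to spanning trees: if $\Ga$ is a forest with fewer than $n-1$ edges and $i, j$ are vertices in two distinct connected components of $\Ga$, then $\Ga' = \Ga \cup \{i \to j\}$ is still acyclic and $p_\Ga = z_{ij}\, p_{\Ga'}$, whence $Y(v \otimes p_\Ga) = (\partial_{\lambda_j} - \partial_{\lambda_i}) Y(v \otimes p_{\Ga'})$.

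The main obstacle is the final reduction from an arbitrary spanning tree to connected spanning lines. For this I would apply the Arnold partial-fraction identity
\[
\frac{1}{z_{va}\, z_{vb}} = \frac{1}{z_{ab}}\left(\frac{1}{z_{va}} - \frac{1}{z_{vb}}\right)
\]
at a vertex $v$ of degree $\geq 3$ in $\Ga$, for a chosen pair of edges incident to $v$ with other endpoints $a$ and $b$. This rewrites $p_\Ga = p_{\Ga_1} - p_{\Ga_2}$, where $\Ga_1$ and $\Ga_2$ are spanning trees obtained from $\Ga$ by rerouting one of the two chosen edges through the new edge $a \to b$ (in particular, both $\Ga_i$ are still acyclic because removing a bridge of a tree and adding a chord across the two resulting components again produces a tree). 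The combinatorial subtlety, which I anticipate as the hard step of the argument, is to choose $v, a, b$ so that a suitably-chosen complexity statistic on the tree --- for instance the lexicographic order on the decreasing sorted vertex-degree sequence, or a branching invariant such as $\sum_w \binom{\deg w}{2}$ --- strictly decreases at each step. Once this is arranged, iterated application must terminate at a tree in which every vertex has degree at most $2$, which is exactly a connected line on $n$ vertices, completing the proof.
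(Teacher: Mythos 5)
Your first two reductions are correct and coincide with the paper's: by Lemma \ref{lem:pgamma} and the sesquilinearity conditions \eqref{20160629:eq4a}, \eqref{20160629:eq4b}, the operation $Y$ is determined by the values $Y(v\otimes p_\Ga)$ with $\Ga$ acyclic, and a further use of \eqref{20160629:eq4b} (multiplying by $z_{ij}$ across components) reduces forests with fewer than $n-1$ edges to spanning trees. Where you diverge is the last step: the paper does not run an induction on trees at all. It invokes Remark \ref{rem:lines}, i.e.\ Lemma \ref{prop:basis} proved in \cite{BDSHK19}, together with the observation that $\Ga\mapsto p_\Ga$ sends the cycle relations to genuine identities among rational functions, since $\sum_{e\in C}p_{\Ga\backslash e}=p_\Ga\sum_{e\in C}z_e=0$ (the Arnold identities); hence every $p_\Ga$ with $\Ga$ acyclic lies in the span of the $p_L$ with $L$ a disjoint union of lines, and one more application of \eqref{20160629:eq4b} joins the components into a single connected line.

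Your substitute for that citation is precisely where the gap is. The rewriting $p_T=\pm p_{T_1}\pm p_{T_2}$ at a vertex $v$ of degree $\ge 3$ is fine (and both $T_1,T_2$ are indeed spanning trees), but the termination argument is not supplied, and the monovariants you propose provably fail to decrease. Consider the ``spider'' on $7$ vertices: a center $v$ with three legs $v-a_i-b_i$ of length $2$. The only vertex of degree $\ge 3$ is $v$, and all of its neighbors have degree $2$; for any choice $(a,b)=(a_i,a_j)$, both resulting trees again have degree sequence $(3,2,2,2,1,1,1)$, so the lexicographic order on the sorted degree sequence is unchanged, and the change of $\sum_w\binom{\deg w}{2}$ is $-(\deg v-1)+\deg a=0$. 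Thus the induction as stated stalls at the very first step, and no working statistic is exhibited --- this is a genuine missing idea, not a routine verification (the iteration does terminate in this example, but any certificate must see more than the degree sequence, e.g.\ the multiset of leg lengths). The most economical repair is the paper's own route: quote Lemma \ref{prop:basis} and the identity $\sum_{e\in C}p_{\Ga\backslash e}=0$ instead of reproving the spanning-by-lines fact from scratch.
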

\begin{proof}
This follows from Remark \ref{rem:lines}, Lemma \ref{lem:pgamma}, and the sesquilinearity conditions \eqref{20160629:eq4a}, \eqref{20160629:eq4b}.
\end{proof}

Let $V$ be filtered by $\mb F[\partial]$-submodules as in \eqref{eq:last3}. 
Then we have the filtered operad $\mc P^\ch(V)$ associated to $V$ 
and the graded operad $\mc P^\cl(\gr V)$ associated to the graded superspace $\gr V$. 
These two operads are related as follows \cite[Section 8]{BDSHK18}.

Let $X\in\fil^r \mc P^\ch(V)(n)$ and $\Gamma\in\mc G(n)$ be a graph with $s$ edges. 
Then for every $v\in\fil^t V^{\otimes n}$, we have 
$v\otimes p_\Ga \in \fil^{s+t} ( V^{\otimes n} \otimes \mc O_{n}^{\star T} )$ 
and, by \eqref{fil4-ref},
\begin{equation}\label{gr2}
X_{\la_1,\dots,\la_n} (v\otimes p_\Ga) 
\in (\fil^{s+t-r} V)[\lambda_1,\dots,\lambda_n]
/\langle\partial+\lambda_1+\dots+\lambda_n\rangle \,.
\end{equation}
We define $Y\in\gr^r \mc P^\cl(\gr V)(n)$ by:
\begin{equation}\label{gr3}
\begin{split}
Y^\Ga_{\la_1,\dots,\la_n} \bigl( v&+\fil^{t-1} V^{\otimes n} \bigr)
= X_{\la_1,\dots,\la_n} (v\otimes p_\Ga) \\
&+ (\fil^{s+t-r-1} V)[\lambda_1,\dots,\lambda_n]/\langle\partial+\lambda_1+\dots+\lambda_n\rangle \\
&\in (\gr^{s+t-r} V)[\lambda_1,\dots,\lambda_n]/\langle\partial+\lambda_1+\dots+\lambda_n\rangle \,.
\end{split}
\end{equation}
Clearly, the right-hand side depends only 
on the image $\bar v = v+\fil^{t-1} V^{\otimes n} \in \gr^t V^{\otimes n}$ 
and not on the choice of representative $v\in\fil^t V^{\otimes n}$. 
We write \eqref{gr3} simply as
\begin{equation}\label{gr4}
Y^\Ga_{\la_1,\dots,\la_n} (\bar v)
= \overline{ X_{\la_1,\dots,\la_n} (v\otimes p_\Ga) } \,.
\end{equation}
The fact that $Y\in\gr^r \mc P^\cl(\gr V)(n)$ was proved in \cite[Corollary 8.8]{BDSHK18}. 

If $X\in\fil^{r+1} \mc P^\ch(V)(n)$, then the right-hand side of \eqref{gr3} (or \eqref{gr4}) vanishes.
Thus, \eqref{gr3} defines a map
\begin{equation}\label{eq:map}
\gr^r \mc P^\ch(V)(n)\,\to\,\gr^r \mc P^\cl(\gr V)(n)
\,,\qquad 
\bar X=X+\fil^{r+1} \mapsto Y
\,.
\end{equation}
\begin{theorem}[{\cite{BDSHK18,BDSHK19}}] \label{thm:chcl}
The map \eqref{eq:map} is an injective homomorphism of graded operads. 
If\/ $V\simeq\gr V$ as\/ $\mb F[\partial]$-modules, then \eqref{eq:map} is an isomorphism.
\end{theorem}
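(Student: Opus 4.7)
My plan is to prove the two parts separately. Granted that \eqref{gr4} is well defined on associated graded pieces—this is \cite[Corollary 8.8]{BDSHK18}, as noted in the text before the theorem—the verification that \eqref{eq:map} is a homomorphism of graded operads is largely bookkeeping. Right $S_n$-equivariance follows by comparing \eqref{eq:symch} with \eqref{20170615:eq1} and using the identity $p_{\sigma(\Gamma)}(z_1,\dots,z_n) = p_\Gamma(z_{\sigma^{-1}(1)},\dots,z_{\sigma^{-1}(n)})$. Compatibility with the $\circ_i$-compositions is obtained by expanding the chiral composition of \cite[(6.25)]{BDSHK18} applied to tensors of the form $v\otimes p_\Gamma$ and matching the resulting expression, summand by summand, with the classical composition \cite[(10.11)]{BDSHK18}; the gradings match thanks to \eqref{gr2}.

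For injectivity, suppose $X\in\fil^r\mc P^\ch(V)(n)$ has zero image, i.e., for every acyclic graph $\Gamma$ with $s$ edges and every $v\in\fil^t V^{\otimes n}$ one has
$$
X_{\lambda_1,\dots,\lambda_n}(v\otimes p_\Gamma)\in(\fil^{s+t-r-1}V)[\lambda_1,\dots,\lambda_n]/\langle\partial+\lambda_1+\dots+\lambda_n\rangle.
$$
By Lemma \ref{lem:pgamma}, any $f\in\fil^p\mc O_n^{\star T}$ is an $\mc O_n^T$-linear combination of partial derivatives of the $p_\Gamma$'s for $\Gamma$ acyclic with at most $p$ edges. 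Using the sesquilinearity relations \eqref{20160629:eq4a}--\eqref{20160629:eq4b} to absorb multiplication by $z_{ij}$ and differentiation $\partial_{z_i}$ onto the tensor factors of $v$ and the $\lambda_i$'s, the value $X_{\lambda_1,\dots,\lambda_n}(v\otimes f)$ is expressed as a polynomial in the $\lambda_i$'s applied to terms of the form $X_{\lambda_1,\dots,\lambda_n}(v'\otimes p_{\Gamma'})$ with $v'\in\fil^t V^{\otimes n}$ and $\Gamma'$ acyclic of at most $p$ edges. By hypothesis, all such terms land in the required deeper filtration, hence $X\in\fil^{r+1}\mc P^\ch(V)(n)$ and $\bar X=0$.

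The crux is surjectivity under the hypothesis $V\simeq\gr V$ as $\mb F[\partial]$-modules. Fix such an $\mb F[\partial]$-linear isomorphism, giving a decomposition $V=\bigoplus_t V_t$ with $V_t\simeq\gr^t V$. Given $Y\in\gr^r\mc P^\cl(\gr V)(n)$, the strategy is to \emph{define} $X(v\otimes p_\Gamma):=Y^\Gamma(\bar v)$ for homogeneous $v\in V^{\otimes n}$ and acyclic $\Gamma$, and then extend to arbitrary $f\in\mc O_n^{\star T}$ by $\mc O_n^T$-linearity together with the sesquilinearity prescriptions \eqref{20160629:eq4a}--\eqref{20160629:eq4b}. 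The hard step—which is the main result of \cite{BDSHK19}—is to verify that this extension is well defined, i.e., that the linear relations among partial derivatives of the $p_\Gamma$'s inside $\mc O_n^{\star T}$ correspond, under $\Gamma\mapsto p_\Gamma$, precisely to the cycle relations \eqref{eq:cycle} built into the definition of $\mc P^\cl$. Concretely one must identify the kernel of the assignment $\Gamma\mapsto p_\Gamma$ from $\mb F\mc G(n)$ to $\mc O_n^{\star T}$ with the cycle ideal $\mc R(n)$; this is done by exhibiting matching bases of $\mb F\mc G(n)/\mc R(n)$ and of $\fil^r\mc O_n^{\star T}/\fil^{r-1}\mc O_n^{\star T}$ indexed by suitable forests. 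Once this matching is established, the operation $X$ so constructed lies in $\fil^r\mc P^\ch(V)(n)$ and maps under \eqref{eq:map} to the given $Y$, proving surjectivity and completing the theorem.
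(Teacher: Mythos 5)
The paper gives no proof of Theorem \ref{thm:chcl}: it is quoted verbatim from \cite{BDSHK18} (the homomorphism property and injectivity) and \cite{BDSHK19} (the isomorphism when $V\simeq\gr V$), so there is no internal argument to measure your attempt against, and your outline follows exactly that division of labor. Your injectivity argument is sound and self-contained: by Lemma \ref{lem:pgamma} any $f\in\fil^p\mc O_n^{\star T}$ is an $\mc O_n^T$-combination of derivatives of the $p_\Gamma$'s with at most $p$ edges, and the sesquilinearity conditions \eqref{20160629:eq4a}--\eqref{20160629:eq4b} convert multiplication by $z_{ij}$ and $\partial_{z_i}$ into the operators $\partial_{\lambda_j}-\partial_{\lambda_i}$ and $\partial_i+\lambda_i$, which do not lower the filtration of $V^{\otimes n}$ (the $\fil^t V$ being $\mb F[\partial]$-submodules) nor of the target, so vanishing of the image of $X$ on all $v\otimes p_\Gamma$ forces $X\in\fil^{r+1}\mc P^\ch(V)(n)$; this is essentially the argument of \cite{BDSHK18}. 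For surjectivity you correctly isolate the crux — that the $\mc O_n^T[\partial_{z_1},\dots,\partial_{z_n}]$-module relations among the $p_\Gamma$ inside the associated graded of $\mc O_n^{\star T}$ are precisely the cycle relations \eqref{eq:cycle}, established by matching explicit bases — but you do not prove it, deferring to \cite{BDSHK19}; since the paper itself defers the entire theorem to that reference, this is not a gap relative to the paper, though your write-up should be read as a proof sketch conditional on that combinatorial identification rather than a self-contained proof.
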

%

\subsection{The case of trivial filtration and the operad $\mc Chom(V)$}\label{sec:chom}

In this subsection, $V$ will be an $\mb F[\partial]$-module with the trivial filtration and grading:
\begin{equation}\label{eq:trfg1}
\fil^{-1} V = \{ 0 \} \subset \fil^0 V = V \,, \qquad \gr V = \gr^0 V = V \,.
\end{equation}
By \eqref{fil4-ref}, a chiral operation $Y\in \mc P^\ch(n)$ lies in $\fil^r \mc P^\ch(n)$ if and only if
\cite[(8.4)]{BDSHK18}:
\begin{equation}\label{eq:trfg2}
Y\bigl(V^{\otimes n} \otimes \mc \fil^{r-1} \mc O_n^{\star T}\bigr) = 0 \,.
\end{equation}
In this case, the filtration of each $\mc P^\ch(n)$ is finite:
\begin{equation}\label{eq:trfg3}
\mc P^\ch(n) = \fil^0 \mc P^\ch(n) \supset \fil^1 \mc P^\ch(n) \supset\cdots\supset \fil^{n-1} \mc P^\ch(n) \supset \fil^{n} \mc P^\ch(n) = \{0\} \,.
\end{equation}

Similarly, by \eqref{pclgrading}, a classical operation $Y\in \mc P^\cl(n)$ is in $\gr^r \mc P^\cl(n)$ if and only if
$Y^\Ga = 0$ for every graph $\Ga$ with $n$ vertices and number of edges $\ne r$
(see \cite[(10.9)]{BDSHK18}). By Remark \ref{rem:lines}, we can restrict to graphs $\Ga$ that are disjoint unions of lines; such graphs have $\le n-1$ edges. Hence, the grading of $\mc P^\cl(n)$ has the form
\begin{equation}\label{eq:grpcl}
\mc P^\cl(n) = \bigoplus_{r=0}^{n-1} \gr^r \mc P^\cl(n) \,.
\end{equation}
By Theorem \ref{thm:chcl}, we have an isomorphism 
of graded operads $\gr \mc P^\ch(V) \simeq \mc P^\cl(V)$.

Notice that $\gr^0 \mc P^\cl(V)$ is a suboperad of $\mc P^\cl(V)$, which is also denoted $\mc Chom(V)$ and is described explicitly in \cite[Section 5]{BDSHK18}. We have a surjective morphism of operads
(with kernel $\fil^1 \mc P^\ch(V)$)
given by evaluation at the function $1$:
\begin{equation}\label{eq:ch-chom}
\mc P^\ch(V) \to 
\mc Chom(V) \,, \quad
Y \mapsto \bar Y \,, \quad 
\bar Y(v) = Y(v\otimes1) 
\,\,\,\,\,\,
\text{ for } Y\in \mc P^\ch(n) \,,\,\, v\in V^{\otimes n}
\,.
\end{equation}
Let us review the definition of the operad $\mc Chom(V) = \{\mc Chom(n) \,|\, n\in\mb Z_{\ge0} \}$.
Elements of $\mc Chom(n)$ are called $n$-ary \emph{conformal operations}. These are linear maps
\begin{equation}\label{eq:chom}
\begin{split}
Y\colon
V^{\otimes n}
&\to
V_n
\,,\\
\vphantom{\Big(}
v_1 \otimes\dots\otimes v_n &
\mapsto
Y_{\lambda_1,\dots,\lambda_n} (v_1\otimes\dots\otimes v_n)
\,,
\end{split}
\end{equation}
satisfying the \emph{sesquilinearity} conditions:
\begin{equation}\label{eq:chomses}
Y_{\lambda_1,\dots,\lambda_n} ( (\partial_i+\la_i) v) = 0 \,, 
\qquad 1\le i \le n\,, \;\; v\in V^{\otimes n} \,,
\end{equation}
where, as before, $V_n$ is defined by \eqref{eq:1.3} and
$\dd_i$ denotes the action of $\dd$ on the $i$-th factor in $V^{\otimes n}$.
The symmetric group $S_{n}$ acts on the right on $\mc Chom(n)$ by permuting 
simultaneously the inputs 
$v_1,\dots,v_n$ and the variables $\la_1,\dots,\la_n$. 
Explicitly, for $Y\in \mc Chom(n)$ and $\sigma \in S_{n}$, we have
\begin{equation}\label{eq:symch-chom}
(Y^\sigma )_{\lambda_1,\dots,\lambda_n}(v_1\otimes\dots\otimes v_n)
=
\epsilon_v(\sigma) \,
Y_{\lambda_{i_1},\dots,\lambda_{i_n}}(v_{i_1} \otimes\dots\otimes v_{i_n}) \,,
\end{equation}
where $i_s=\sigma^{-1}(s)$ and the sign $\epsilon_v(\sigma)$ is given by 
\eqref{eq:operad14}.
The compositions in the operad $\mc Chom(V)$ are given by \cite[(5.6)]{BDSHK18}.
Note that $\mc Chom(0) = V/\partial V$.
\subsection{The case $\partial=0$}\label{sec:trivial}

Finally, let us consider the case when the action of $\mb F[\partial]$ on $V$ is trivial.
We take the trivial increasing filtration of $V$ given by \eqref{eq:trfg1}.

\begin{theorem}\label{thm:triv}
Let\/ $V$ be a vector superspace equipped with the trivial\/ $\mb F[\partial]$-action and filtration.
Then, for every\/ $n\ge1$, the filtration of\/ $\mc P^\ch(n)$ has the form
$$
\mc P^\ch(n) = \fil^{n-1} \mc P^\ch(n) \supset \fil^{n} \mc P^\ch(n) = \{0\} \,.
$$
Hence,
$$
\mc P^\ch(n) = \gr^{n-1} \mc P^\ch(n) = \gr \mc P^\ch(n) \simeq \mc P^\cl(n) = \gr^{n-1} \mc P^\cl(n) \,.
$$
\end{theorem}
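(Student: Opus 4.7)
The plan is to combine the characterization of the filtration on $\mc P^\ch(n)$ from \eqref{eq:trfg2} with the generating set for $\fil^s\mc O_n^{\star T}$ provided by Lemma \ref{lem:pgamma}. The easy half, $\fil^n\mc P^\ch(n)=\{0\}$, is immediate since $\fil^{n-1}\mc O_n^{\star T}=\mc O_n^{\star T}$. The heart of the statement is $\mc P^\ch(n)=\fil^{n-1}\mc P^\ch(n)$, i.e., every chiral operation must vanish on $V^{\otimes n}\otimes\fil^{n-2}\mc O_n^{\star T}$.

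By Lemma \ref{lem:pgamma}, $\fil^{n-2}\mc O_n^{\star T}$ is generated as an $\mc O_n^T$-module by partial derivatives of the functions $p_\Ga$ with $\Ga$ acyclic and having at most $n-2$ edges. I would first use the two sesquilinearity axioms to reduce to showing $X_{\la_1,\dots,\la_n}(v\otimes p_\Ga)=0$ for such $\Ga$: iterated application of \eqref{20160629:eq4b} converts multiplication of $f$ by elements of $\mc O_n^T$ into differential operators in the $\la$'s acting on $X(v\otimes f)$, and \eqref{20160629:eq4a} with $\partial=0$ converts each $\dd_{z_i}f$ into $\la_i\cdot X(v\otimes f)$. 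The crucial combinatorial input is that an acyclic graph on $n$ vertices with at most $n-2$ edges has at least two connected components. Choose any such component $G$ with vertex set $I\subsetneq\{1,\dots,n\}$. Because every edge of $\Ga$ lies inside a single component, $p_\Ga$ is invariant under the simultaneous shift of the coordinates $\{z_i\}_{i\in I}$, so $\sum_{i\in I}\dd_{z_i}p_\Ga=0$. Applying \eqref{20160629:eq4a} with $\partial=0$ then yields
\begin{equation*}
\la_I\cdot X_{\la_1,\dots,\la_n}(v\otimes p_\Ga)=0 \qquad\text{in}\qquad V_n\,,\quad \la_I:=\sum_{i\in I}\la_i\,.
\end{equation*}
After eliminating $\la_n$ via $\la_1+\dots+\la_n\equiv0$, the space $V_n$ becomes the free polynomial module $V[\la_1,\dots,\la_{n-1}]$, and $\la_I$ becomes a nonzero linear polynomial, hence a non-zero-divisor. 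Therefore $X(v\otimes p_\Ga)=0$.

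The remaining equalities follow formally: the collapse of the filtration to one nonzero step gives $\mc P^\ch(n)=\gr^{n-1}\mc P^\ch(n)=\gr\mc P^\ch(n)$, and Theorem \ref{thm:chcl} applied to the trivially graded $V$ delivers the isomorphism $\gr\mc P^\ch(n)\simeq\mc P^\cl(n)$. To close with $\mc P^\cl(n)=\gr^{n-1}\mc P^\cl(n)$, I would run the same component argument on the classical side using \eqref{eq:sesq2} with $\partial=0$, which forces $\la_G Y^\Ga=0$ for each component $G$ of $\Ga$; combined with Remark \ref{rem:lines} this leaves only the graphs consisting of a single connected line, all of which have exactly $n-1$ edges. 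The main technical point to handle carefully is the non-zero-divisor claim for $\la_I$ in the quotient $V_n$, but everything else is routine bookkeeping once the sesquilinearity reductions are in place.
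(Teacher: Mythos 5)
Your proposal is correct and follows essentially the same route as the paper's proof: reduce via sesquilinearity and Lemma \ref{lem:pgamma} to $f=p_\Ga$ with $\Ga$ acyclic and at most $n-2$ edges, use translation invariance of a connected component to get $\la_G\, X(v\otimes p_\Ga)=0$, conclude vanishing since $\la_G$ is a non-zero-divisor in $V_n$, and handle the classical side analogously via \eqref{eq:sesq2} before invoking Theorem \ref{thm:chcl}. The only differences are that you spell out details the paper leaves implicit (the component count for acyclic graphs with $\le n-2$ edges and the non-zero-divisor justification), which is fine.
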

\begin{proof}
To prove the first claim, we need to check that $X(v \otimes f) = 0$ for every
$X\in \mc P^\ch(n)$, $v\in V^{\otimes n}$ and $f\in\fil^{n-2} \mc O_n^{\star T}$.
By the sesquilinearity \eqref{20160629:eq4a}, \eqref{20160629:eq4b}
and Lemma \ref{lem:pgamma}, we can assume that $f=p_\Ga$
where $\Ga$ is an acyclic graph with $n$ vertices and $\le n-2$ edges.
Let $G$ be a connected component of $\Ga$. Then the set $I$ of vertices of $G$ is a proper subset of $\{1,\dots,n\}$. Since, by translation invariance, 
$\sum_{i\in I} \partial_{z_i} p_\Ga = 0$, the sesquilinearity \eqref{20160629:eq4a} implies
$$
\la_G X_{\lambda_1,\dots,\lambda_n}(v \otimes f) = 0 \in V[\la_1,\dots,\la_n] / \langle \la_1+\cdots+\la_n \rangle \,.
$$
Hence, $X(v \otimes f) = 0$.

A similar argument, using \eqref{eq:sesq2}, proves that $\mc P^\cl(n) = \gr^{n-1} \mc P^\cl(n)$.
The second claim of the theorem then follows immediately from the first and from Theorem \ref{thm:chcl}.
\end{proof}

In the case when $V=\mb F$ is a $1$-dimensional even vector space with a trivial action of $\partial$, the operad 
$\mc P^\ch(V)$ is isomorphic to the operad $\mc Lie$ (see \cite[3.1.5]{BD04} and \cite[Theorem 5.2]{BDSHK19}).
In order to state the general result, let us recall the operad $\mc Hom(V)$, defined by 
\begin{equation}\label{eq:hom}
\mc Hom(V)(n) = \Hom(V^{\otimes n},V) \,, \qquad n\ge 0\,.
\end{equation}
Also recall that the tensor product of two operads $\mc P_1$ and $\mc P_2$ is given by
$$
(\mc P_1 \otimes \mc P_2)(n) = \mc P_1(n) \otimes \mc P_2(n) \,, \qquad n\ge0 \,,
$$
with component-wise compositions and actions of the symmetric groups.

\begin{theorem}\label{thm:triv2}
Let\/ $V$ be a vector superspace equipped with the trivial action of\/ $\mb F[\partial]$.
Then
\begin{equation*}
\mc P^\ch(V) \simeq \mc P^\cl(V) \simeq \mc Hom(V) \otimes \mc Lie \,.
\end{equation*}
\end{theorem}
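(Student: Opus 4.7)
The plan is to leverage Theorem \ref{thm:triv}, which gives $\mc P^\ch(V) \simeq \mc P^\cl(V)$, together with the known case $V=\mb F$ (one-dimensional, trivial $\dd$) in which $\mc P^\cl(\mb F) \simeq \mc Lie$ by \cite[Theorem 5.2]{BDSHK19} while $\mc Hom(\mb F)(n)=\mb F$. It then suffices to ``extend scalars'' from $\mb F$ to $V$ and establish the operadic isomorphism $\mc P^\cl(V) \simeq \mc Hom(V)\otimes\mc P^\cl(\mb F)$.

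To this end, I would introduce the morphism of operads
$$\mu\colon \mc Hom(V)\otimes\mc P^\cl(\mb F)\to\mc P^\cl(V)\,,\qquad
\mu(\phi\otimes Z)^\Ga(v) \;=\; \phi(v)\cdot Z^\Ga \;\in\; V_n\,,$$
where $\phi\in\Hom(V^{\otimes n},V)$, $Z\in\mc P^\cl(\mb F)(n)$, and the right-hand side uses the natural scalar multiplication $V\otimes\mb F_n \to V_n$ (which is available because $\dd=0$ on $V$). I would check in turn that $\mu(\phi\otimes Z)$ satisfies the sesquilinearity conditions \eqref{eq:sesq1}--\eqref{eq:sesq2} and the cycle relations \eqref{eq:cycle}; that $\mu$ is $S_n$-equivariant (with the diagonal action on the source, cf.\ \eqref{20170615:eq1}); and that $\mu$ respects operadic compositions, using the explicit formula \cite[(10.11)]{BDSHK18}.

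To prove that $\mu$ is bijective, I would combine Theorem \ref{thm:triv} with Remark \ref{rem:lines}: any $Y\in\mc P^\cl(V)(n)$ vanishes on graphs with $\ne n-1$ edges, hence is determined by its values $Y^L$ on connected lines $L$ on $n$ vertices (the only disjoint unions of lines with $n-1$ edges). For such an $L$, condition \eqref{eq:sesq1} forces $Y^L_{\lambda_1,\dots,\lambda_n}(v)$ to depend on the $\lambda_i$'s only through $\la_L=\lambda_1+\cdots+\lambda_n$, which vanishes in $V_n$ because $\dd=0$; therefore $Y^L\in\Hom(V^{\otimes n},V)$, and condition \eqref{eq:sesq2} is automatically satisfied for the same reason. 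Since the cycle relations \eqref{eq:cycle} among the various $Y^L$ are linear combinations that do not depend on $V$, this construction yields a two-sided inverse of $\mu$. Combined with $\mc P^\cl(\mb F)\simeq\mc Lie$ and $\mc P^\ch(V)\simeq\mc P^\cl(V)$, the theorem follows.

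The hard part will be the operadic composition compatibility for $\mu$. The composition formula \cite[(10.11)]{BDSHK18} involves sums over graph insertions, and one must verify that the $\Hom$-valued data (the $\phi$'s) and the $\mb F$-valued data (the $Z$'s) factor through the tensor product---that is, that composing in $\mc P^\cl(V)$ amounts to separately composing the multilinear pieces in $\mc Hom(V)$ and the graph-combinatorial pieces in $\mc P^\cl(\mb F)$. Since the graph combinatorics is manifestly independent of $V$ and the composition is linear in the values on graphs, this should factor cleanly, but the bookkeeping is the most technical step.
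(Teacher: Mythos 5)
Your proposal is correct and follows essentially the same route as the paper: both construct the morphism $\mc Hom(V)\otimes\mc P^\cl(\mb F)\to\mc P^\cl(V)$, $\phi\otimes Z\mapsto\bigl(Y^\Ga(v)=Z^\Ga\,\phi(v)\bigr)$, and both prove bijectivity by observing that for $\partial=0$ any $Y\in\mc P^\cl(V)(n)$ vanishes on non-connected (equivalently, non-top-degree) graphs and takes $\la$-independent values in $V$ on connected ones, reducing everything to a $V$-independent, finite-dimensional space of graph data, with $\mc P^\cl(\mb F)\simeq\mc Lie$ and $\mc P^\ch(V)\simeq\mc P^\cl(V)$ quoted exactly as in the paper. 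The paper phrases the final step as the linear-algebra isomorphism $\Hom(A_1,B_1)\otimes\Hom(A_2,B_2)\simeq\Hom(A_1\otimes A_2,B_1\otimes B_2)$, which is the same bookkeeping you describe via lines and the cycle relations.
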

\begin{proof}
We saw in the proof of Theorem \ref{thm:triv} that for $Y\in \mc P^\cl(V)(n)$ we have $Y^\Ga=0$ unless the graph $\Ga$ is connected. For a connected graph $\Ga$, the sesquilinearity \eqref{eq:sesq1} implies that 
$$
Y^\Ga(v) \in V[\la_1+\dots+\la_n] / \langle\la_1+\dots+\la_n\rangle \simeq V \,, \qquad v\in V^{\otimes n} \,.
$$
Hence, we can view $Y$ as a collection of linear maps $Y^\Ga\colon V^{\otimes n} \to V$ satisfying the cycle relations \eqref{eq:cycle}. In the case $V=\mb F$, this is a collection of scalars $y^\Ga \in \mb F$.
Thus, we have a morphism of operads
\begin{equation}\label{eq:bojko1}
\mc Hom(V) \otimes \mc P^\cl(\mb F) \to \mc P^\cl(V) \,,
\end{equation}
which sends $\phi\otimes y$ to $Y$ given by $Y^\Ga(v) = y^\Ga \phi(v)$ for $v\in V^{\otimes n}$.
Let us check that the map \eqref{eq:bojko1} is an isomorphism.
By definition we have 
\begin{align*}
\mc Hom(V)(n)&=\Hom(V^{\otimes n},V)
\,, \\
\mc P^\cl(\mb F)(n)
&=
\Hom\bigl((\mb F\mc G(n)/\mc R(n))\otimes \mb F^{\otimes n},\mb F\bigr)
\,, \\
\mc P^\cl(V)(n)
&=
\Hom\bigl((\mb F\mc G(n)/\mc R(n))\otimes V^{\otimes n},V\bigr)
\,.
\end{align*}
The fact that \eqref{eq:bojko1} is an isomorphism
follows from the linear algebra isomorphisms
$$
\Hom(A_1,B_1)\otimes\Hom(A_2,B_2)
\simeq
\Hom(A_1\otimes A_2,B_1\otimes B_2)
\,,\qquad
\mb F\otimes A\simeq A
\,.
$$
\end{proof}

\section{LCA, VA and PVA cohomology}\label{sec:vapva}

In this section, we review the definitions of the cohomology of Lie conformal algebras, vertex algebras, and Poisson vertex algebras, following \cite{BDSHK18}.

\subsection{Lie superalgebra associated to an operad}\label{sec:lieoper}

We start by reviewing a general construction, which goes back to \cite{Tam02} (see also \cite{BDSHK18}).
For a linear superoperad $\mc P$, we define
$$
W_{\mc P} = \bigoplus_{k=-1}^\infty W^k_{\mc P} \,, \qquad
W^k_{\mc P} := \mc P(k+1)^{S_{k+1}} \,,
$$ 
where $\mc P(k+1)^{S_{k+1}}$ denotes the subspace of $\mc P(k+1)$ consisting of elements invariant under the action of the symmetric group. One defines a Lie superalgebra bracket on $W_{\mc P}$ only in terms of the compositions and symmetric group actions in the operad $\mc P$ (see \cite[Theorem 3.4]{BDSHK18}).
We thus get a functor $\mc P \mapsto W_{\mc P}$ from the category of linear superoperads to the category of $\mb Z$-graded Lie superalgebras.

\begin{remark}\label{Wfilgr}
\begin{enumerate}[(a)]
\item 
If $\mc P$ is filtered operad with a filtration $\{\fil^r \mc P\}_{r\in\mb Z}$, 
then $W_{\mc P}$ is a filtered Lie superalgebra
with a filtration $\fil^r W^k_{\mc P} := (\fil^r \mc P(k+1))^{S_{k+1}}$, so that
$$
[\fil^r W^k_{\mc P}, \fil^s W^\ell_{\mc P}] \subset \fil^{r+s} W^{k+\ell}_{\mc P}
\,, \qquad r,s\in\mb Z \,, \; k,\ell \ge -1 \,.
$$
\item
If $\mc P$ is graded operad with a grading $\{\gr^r \mc P\}_{r\in\mb Z}$, 
then $W_{\mc P}$ is a graded Lie superalgebra
with a grading $\gr^r W^k_{\mc P} := (\gr^r \mc P(k+1))^{S_{k+1}}$, so that
$$
[\gr^r W^k_{\mc P}, \gr^s W^\ell_{\mc P}] \subset \gr^{r+s} W^{k+\ell}_{\mc P}
\,, \qquad r,s\in\mb Z \,, \; k,\ell \ge -1 \,.
$$

\end{enumerate}
\end{remark}

If $X\in W^1_{\mc P}$ is an odd element such that $[X,X]=0$, then $d=\ad_X$ is a differential on $W_{\mc P}$, i.e., $d^2=0$ and $d$ is an odd endomorphism of degree $1$. We obtain a cochain complex
$$
C_{\mc P} = \bigoplus_{n=0}^\infty C^n_{\mc P} \,, \qquad
C^n_{\mc P} := W^{n-1}_{\mc P} = \mc P(n)^{S_n} \,, \qquad
d\colon C^n_{\mc P} \to C^{n+1}_{\mc P} \,.
$$
There are several examples of operads $\mc P$ that give rise to interesting algebraic structures and  corresponding cohomology theories (see \cite{BDSHK18}).

First, consider the operad $\mc Hom(V)$ given by \eqref{eq:hom}, for a fixed vector superspace $V$.
Then odd elements $X\in W^1_{\mc Hom(V)}$ such that $[X,X]=0$ correspond bijectively to Lie superalgebra structures on $\Pi V$, the superspace $V$ with opposite parity $\bar p=1-p$, where $p$ denotes the parity of $V$. 
Consequently, an odd element $X\in W^1_{\mc Hom(\Pi V)}$ with $[X,X]=0$ is the same as a Lie superalgebra bracket on $V$, given explicitly by
$$
[a,b] = (-1)^{p(a)} X(a \otimes b) \,, \qquad a,b \in V \,.
$$
Indeed, one checks that the symmetry of $X$ with respect to $\bar p$ corresponds to the skewsymmetry of $[\cdot,\cdot]$ with respect to $p$, and the Jacobi identity for $[\cdot,\cdot]$ corresponds to the identity $[X,X]=0$ (see \cite{NR67,DSK13}).
Taking the complex $C_{\mc Hom(\Pi V)}$, we get the Chevalley--Eilenberg
cohomology complex of the Lie superalgebra $V$ with coefficients in the adjoint representation. 

The cohomology with coefficients in a module can be obtained by a reduction procedure as follows.
If $M$ is a module over the Lie superalgebra $V$, then $V \oplus M$ is a Lie superalgebra such that $V$ is a subalgebra, $[M,M]=0$ and $[a,m] = am$ for $a\in V$, $m\in M$. For each $n\ge0$, consider the subspaces
$$
U^n := \Hom(\Pi (V \oplus M)^{\otimes n}, \Pi M) 
\subset C^n_{\mc Hom(\Pi V \oplus \Pi M)}
$$
and
$$
K^n := \{ Y \in U^n \,|\, Y((\Pi V)^{\otimes n}) = 0 \} 
\subset U^n \,.
$$
Using the restriction map, we get a short exact sequence
$$
0 \to K^n \to U^n \to \Hom((\Pi V)^{\otimes n}, \Pi M) \to 0 \,.
$$
It is easy to check that $dU^n\subset U^{n+1}$ and $dK^n\subset K^{n+1}$.
Then the cohomology complex of $V$ with coefficients in $M$ is defined as the subquotient 
of the complex $C_{\mc Hom(\Pi V \oplus\Pi M)}$, which in degree $n$ is $U^n/K^n$.

\subsection{LCA cohomology}\label{sec:lca-coh}

Now let
$V$ be a vector superspace (with parity $p$), which is equipped with an $\mb F[\partial]$-module structure, where $\partial$ is an even endomorphism of $V$.
Consider the operad $\mc Chom(\Pi V)$ introduced in \cite{BDSHK18}
and briefly discussed in Section \ref{sec:chom}.
Consider the corresponding Lie superalgebra
$$
W_{\mc Chom(\Pi V)} = \bigoplus_{k=-1}^\infty W_{\mc Chom(\Pi V)}^k \,, \qquad
W_{\mc Chom(\Pi V)}^{k-1} = (\mc Chom(\Pi V))(k)^{S_{k}} \,.
$$
Then structures of a Lie conformal algebra on $V$ are in bijection with odd elements $X\in W_{\mc Chom}^1(\Pi V)$ 
such that $[X,X]=0$, so that
\begin{equation}\label{eq:Xbra}
[a_\la b] = (-1)^{p(a)}  X_{\la,-\la-\partial} (a \otimes b) \,, \qquad a,b \in V
\end{equation}
(see \cite[Section 4.3]{DSK13}).
As a result, we get the cohomology complex of the LCA $V$, defined by $X$, 
with coefficients in the adjoint module:
$$
C_{\mc Chom}(V) = \bigoplus_{n=0}^\infty C^n_{\mc Chom}(\Pi V) \,, \qquad
C^n_{\mc Chom}(V) := W_{\mc Chom(\Pi V)}^{n-1} = (\mc Chom(\Pi V))(n)^{S_n} \,,
$$
with the differential $d=\ad_X$ (cf. \cite{BKV99,DSK13}).
Given a module $M$ over the LCA $V$, the cohomology complex $C_{\mc Chom}(V,M)$ of $V$ 
with coefficients in $M$ can be obtained by a reduction procedure as in Section \ref{sec:lieoper} above. 
An explicit expression for the differential $d$ can be found in \cite[Eq.\ (4.19)]{DSK13}.

\subsection{Vertex algebra cohomology}
\label{sec:4.2}


Let again
$V$ be a vector superspace (with parity $p$), equipped with an $\mb F[\partial]$-module structure.
Consider the operad $\mc P^\ch(\Pi V)$ and the corresponding Lie superalgebra
$$
W_{\ch}(\Pi V) = \bigoplus_{k=-1}^\infty W_{\ch}^k(\Pi V) \,, \qquad
W_{\ch}^k(\Pi V) = (\mc P^\ch(\Pi V))(k+1)^{S_{k+1}} \,.
$$
Then structures of a non-unital vertex algebra on $V$ are in bijection with odd elements $X\in W_{\ch}^1(\Pi V)$ 
such that $[X,X]=0$ (see \cite[Theorem 6.12]{BDSHK18}). Explicitly, for $a,b\in V$,
\begin{equation}\label{eq:Xint}
(-1)^{p(a)} X^{z,w}_{\la,-\la-\partial} \Bigl(a \otimes b \otimes \frac1{w-z} \Bigr) 
= \int^\la d\si [a_\si b] = {:}ab{:} + \int_0^\la d\si [a_\si b] \,.
\end{equation}
We obtain the cohomology complex of a non-unital VA $V$ with coefficients in $V$:
\begin{equation}\label{eq:cva}
C_{\ch}(V) = \bigoplus_{n=0}^\infty C^n_{\ch}(V) \,, \qquad
C^n_{\ch}(V) := W_{\ch}^{n-1}(\Pi V) = \mc P^\ch(\Pi V)(n)^{S_n} \,,
\end{equation}
with the differential $d=\ad_X$.
Given a module $M$ over $V$, the cohomology complex $C_{\ch}(V,M)$ of $V$ with coefficients in $M$ is given by a reduction procedure as in Section \ref{sec:lieoper}. Namely, we consider the non-unital VA $V \oplus M$ such that $V$ is a subalgebra and
$$
\int^\la d\si [a_\si m] = \int^\la d\si (a_\si m) \,, \qquad
\int^\la d\si [m_\si m'] = 0 \,, \qquad a\in V \,, \; m,m' \in M \,.
$$
Then $C^n_{\ch}(V,M) = U^n/K^n$, where $U^n$ and $K^n$ are defined as in Section \ref{sec:lieoper}. 

Now we will give a more explicit description of this complex.
Elements of $C^n_{\ch}(V,M)$ are linear maps
\begin{equation}\label{eq:Ymap}
Y\colon
V^{\otimes n}\otimes\mc O^{\star T}_{n}\to
M[\lambda_1,\dots,\lambda_n]/\langle\partial+\lambda_1+\dots+\lambda_n\rangle
\,,
\end{equation}
satisfying the \emph{sesquilinearity} conditions
($v_1,\dots,v_n\in V$, $f\in\mc O^{\star T}_{n}$, $i=1,\dots,n$):
\begin{equation}\label{20160629:eq4}
\begin{array}{l}
\displaystyle{
\vphantom{\Big(}
Y_{\lambda_1,\dots,\lambda_n}(v_1\otimes\dots\otimes (\partial+\la_i) v_i \otimes\dots\otimes v_n\otimes f)
=Y_{\lambda_1,\dots,\lambda_n}\Bigl(v_1 \otimes\dots\otimes v_n \otimes\frac{\partial f}{\partial z_i}\Bigr)
\,,} \\
\displaystyle{
\vphantom{\Big(}
Y_{\lambda_1,\dots,\lambda_n}(v_1\otimes\dots\otimes v_n\otimes z_{ij}f)
=\Bigl(\frac{\partial}{\partial\lambda_j}-\frac{\partial}{\partial\lambda_i}\Bigr)
Y_{\lambda_1,\dots,\lambda_n}(v_1\otimes\dots\otimes v_n\otimes f)
\,,}
\end{array}
\end{equation}
and the \emph{symmetry} conditions ($1\leq i< n$):
\begin{equation}\label{20160629:eq5}
\begin{split}
Y&_{\lambda_1,\dots,\lambda_i,\lambda_{i+1},\dots,\lambda_n}
(v_1\otimes\dots\otimes v_i\otimes v_{i+1} \otimes\dots\otimes v_n\otimes f(z_1,\dots, z_i, z_{i+1},\dots,z_n))
\\
&=
(-1)^{\bar p(v_i) \bar p(v_{i+1})}
Y_{\lambda_1,\dots,\lambda_{i+1},\lambda_i,\dots,\lambda_n}
(v_1\otimes\cdots\otimes v_{i+1}\otimes v_i \otimes\cdots
\\
& \qquad\qquad
\cdots\otimes v_n\otimes f(z_1,\dots, z_{i+1}, z_i,\dots,z_n))
\,.
\end{split}
\end{equation}
In \eqref{20160629:eq5}, as before, $\bar p =1-p$ is the opposite parity to the parity $p$ of $V$.
(Note that when $V$ is purely even, $\bar p = \bar 1$ and \eqref{20160629:eq5} becomes a
\emph{skewsymmetry} condition on $Y$.)


We can describe explicitly the spaces $C_{\ch}^n(V,M)$ for $n=0,1,2$.
We have
$$
C_{\ch}^{0}(V,M)=M/\partial M
\,,\qquad
C_{\ch}^1(V,M)=\Hom_{\mb F[\partial]}(V,M)
\,.
$$
For $n=2$ we can identify  
$M[\lambda_1,\lambda_2]/\langle\partial+\lambda_1+\lambda_2\rangle\simeq M[\lambda_1]$.
Moreover, a map $Y$ as in \eqref{eq:Ymap} is uniquely determined by its value 
on the function $z_{21}^{-1}$.
Indeed, its values on the negative powers of $z_{21}$ are determined 
by the first sesquilinearity condition \eqref{20160629:eq4},
while its values on the non-negative powers of $z_{21}$ are determined 
by the second sesquilinearity condition \eqref{20160629:eq4}.
Hence, as in \eqref{eq:Xint}, $C_{\ch}^2(V,M)$ can then be identified with the superspace of 
integrals of $\lambda$-brackets
$$
V\otimes V\to M[\lambda]
\,,\qquad
u\otimes v\mapsto \int^\lambda d\sigma[u_\si v]^Y
\,,
$$
satisfying axiom V2 of sesquilinearity under integration,
and axiom V3 of symmetry under integration with respect to the opposite parity $\bar p=1-p$.

Next, we write explicitly the differential $d$ of the cohomology complex $C_{\ch}(V,M)$
(see \cite[Eq.\ (7.6)]{BDSHK18}).
In order to do so, we need to introduce some notation.
For every $n\in\mb Z$, we define the map
$V\otimes V\to V[\lambda]$, that sends $u\otimes v$ to
\begin{equation}\label{eq:notation-lambda}
\nabla_\lambda^{n}[u_\lambda v]
=
\left\{
\begin{array}{ll}
\displaystyle{
\frac{d^{n+1}}{d\lambda^{n+1}}\int^\lambda d\sigma[u_\sigma v] 
}
&\text{ for } n\geq0\,,\\
\displaystyle{
\frac1{m!}\int^\lambda d\sigma[(\partial+\lambda)^mu_\sigma v]
}
&\text{ for } n=-m-1\leq-1
\end{array}
\right.
\,.
\end{equation}
In particular, for $n=-1$ we recover the integral of $\lambda$-bracket \eqref{eq:int-lambda}
defining the vertex algebra $V$.
The reason for this notation is that, as it can be easily checked,
\begin{equation}\label{eq:vnumber}
\frac{d}{d\lambda}\big(\nabla_\lambda^n[u_\lambda v]\big)
=
\nabla_\lambda^{n+1}[u_\lambda v]
\;\;\text{ for all }\, n\in\mb Z\,.
\end{equation}
Likewise, for $u\in V$, $v\in M$ and $n\in\mb Z$, we let
\begin{equation}\label{eq:notation-lambda2}
\nabla_\lambda^{n}(u_\lambda v)
=
\left\{
\begin{array}{ll}
\displaystyle{
\frac{d^{n+1}}{d\lambda^{n+1}}\int^\lambda d\sigma(u_\sigma v) 
}
&\text{ for } n\geq0\,,\\
\displaystyle{
\frac1{m!}\int^\lambda d\sigma((\partial+\lambda)^mu_\sigma v)
}
&\text{ for } n=-m-1\leq-1
\end{array}
\right.
\,.
\end{equation}
In particular, for $n=-1$ we recover the integral of $\lambda$-action \eqref{eq:int-lambda2}
defining the $V$-module $M$.
We also extend the notations \eqref{eq:notation-lambda}--\eqref{eq:notation-lambda2} 
by linearity, thus making sense 
of $P(\nabla_\lambda)[u_\lambda v]$ and $P(\nabla_\lambda)(v_\lambda m)$,
where $P(z)\in\mb F[z,z^{-1}]$ is an arbitrary Laurent polynomial in $z$.

Next, 
let $h(z_0,\dots,z_k)\in\mc O^{\star T}_{k+1}$.
For every $i=0,\dots,k$, we decompose $h$ as
\begin{equation}\label{eq:dec1}
h(z_0,\dots,z_k)
=
f_i(z_0,\stackrel{i}{\check{\dots}},z_k)g_i(z_0,\dots,z_k)
\,,
\end{equation}
where $f_i\in\mc O^{\star T}_{k}$, 
$g_i\in\mc O^{\star T}_{k+1}$,
and $g_i$ may have poles only at $z_j=z_i$ for $j\neq i$.
Here and further, the notation $\stackrel{i}{\check{\dots}}$ means that the $i$-th term is omitted.
Moreover, for every $0\leq i<j\leq k$, 
we also decompose $h$ as
\begin{equation}\label{eq:dec2}
h(z_0,\dots,z_k)
=
f_{ij}(z_{ij})g_{ij}(z_0,\dots,z_k)
\,,
\end{equation}
where 
$f_{ij}\in\mb F[z_{ij},z_{ij}^{-1}]=\mc O^{\star T}_2$,
$g_{ij}\in\mc O^{\star T}_{k+1}$,
and $g_{ij}$ has no poles at $z_i=z_j$.

Using the notations \eqref{eq:notation-lambda}--\eqref{eq:notation-lambda2}
and the decompositions \eqref{eq:dec1}--\eqref{eq:dec2},
we can write the cohomology differential of $Y\in C_{\ch}^k(V,M)$,
as follows:
\begin{equation}\label{eq:dY}
\begin{split}
& (dY)^{z_0,\dots,z_k}_{\lambda_0,\dots,\lambda_k}(v_0\otimes\dots\otimes v_k \otimes h(z_0,\dots,z_k))
\\
& = 
\sum_{i=0}^k (-1)^{\gamma_i}
g_i(-\nabla_{\lambda_0},\dots,-\nabla_{\lambda_k})\,
{v_i}_{\lambda_i}
Y^{z_0,\stackrel{i}{\check{\dots}},z_k}_{\lambda_0,\stackrel{i}{\check{\dots}},\lambda_k}
(v_0\otimes\stackrel{i}{\check{\dots}}\otimes v_k\otimes 
f_i(z_0,\stackrel{i}{\check{\dots}},z_k))
\\
& +
\sum_{0\leq i<j\leq k} (-1)^{\gamma_{ij}}
Y^{w,z_0,\stackrel{i}{\check{\dots}}\stackrel{j}{\check{\dots}},z_k}_{\lambda_i+\lambda_j,
\lambda_0,\stackrel{i}{\check{\dots}}\stackrel{j}{\check{\dots}},\lambda_k}
\Big(
e^{-\partial_{z_i}\partial_{\lambda_i}}
\big(\\
& \qquad\qquad
f_{ij}(-\nabla_{\lambda_i})[{v_i}_{\lambda_i}{v_j}]
\otimes
v_0\otimes\stackrel{i}{\check{\dots}}\,\stackrel{j}{\check{\dots}}\otimes v_k\otimes
g_{ij}(z_0,\dots,z_k)
\big)\big|_{z_i=z_j=w}
\Big)\,,
\end{split}
\end{equation}
where $\gamma_i,\gamma_{i,j}\in\mb Z/2\mb Z$ are given by 
\begin{equation}\label{eq:gammai}
\begin{split}
\gamma_i &=
\bar p(v_i) (\bar p(Y)+\bar p(v_0)+\dots+\bar p(v_{i-1})+1)+1 \,,
\\
\gamma_{ij} &=
\bar p(Y)+\bar p(v_i)(\bar p(v_0)+\dots+\bar p(v_{i-1})+1)
+ \bar p(v_j)(\bar p(v_0)+\stackrel{i}{\check{\dots}}+\bar p(v_{j-1}))
\,.
\end{split}
\end{equation}

A few words of explanation are needed to clarify the meaning of formula \eqref{eq:dY}.
By construction, the rational function $g_i(z_0,\dots,z_k)$ 
has no poles at $z_j=z_\ell$ for $j$ and $\ell\neq i$.
If it has a pole at $z_i=z_j$ for some $j\neq i$,
we expand the operator 
$$
g_i(-\nabla_{\lambda_0},\dots,-\nabla_{\lambda_k})
$$
by geometric expansion in the domain $|z_i|>|z_j|$.
We then get non-negative powers $\nabla_{\lambda_j}^n=\frac{d^n}{d\lambda_j^n}$ which,
when applied to the polynomial
$$
Y^{z_0,\stackrel{i}{\check{\dots}},z_k}_{\lambda_0,\stackrel{i}{\check{\dots}},\lambda_k}
(v_0\otimes\stackrel{i}{\check{\dots}}\otimes v_k\otimes 
f_i(z_0,\stackrel{i}{\check{\dots}},z_k)) \,,
$$
vanish for $n$ large enough.
As a result, we are left with a Laurent polynomial of $\nabla_{\lambda_i}$
which can be ``applied'' to 
$$
{v_i}_{\lambda_i}Y(\cdots)
$$
according to the notation \eqref{eq:notation-lambda2}.

As for the second summand in \eqref{eq:dY},
when we expand the exponential $e^{-\partial_{z_i}\partial_{\lambda_i}}$
and apply it to the polynomial
$$
f_{ij}(-\nabla_{\lambda_i})[{v_i}_{\lambda_i}{v_j}]
$$
(defined by notation \eqref{eq:notation-lambda}),
only finitely many (non-negative) powers of $-\partial_{z_i}\partial_{\lambda_i}$ survive.
We can then apply the resulting operator to the rational function
$$
g_{ij}(z_0,\dots,z_k)
\,.
$$
Evaluating, as instructed, at $z_i=z_j=w$,
we get a function in $\mc O^{\star T}_k$
in the variables ${w,z_0,\stackrel{i}{\check{\dots}}\stackrel{j}{\check{\dots}},z_k}$
(on which we evaluate the map $Y$).

It is straightforward to check that the differential $d$ defined by formula \eqref{eq:dY} coincides 
with the differential given by \cite[Eq.\ (7.6)]{BDSHK18}.

\begin{definition}\label{def:va-coho}
Given a module $M$ over the VA $V$,
the cohomology of the complex $(C_{\ch}(V,M),d)$
is called the \emph{VA cohomology} of $V$ with coefficients in $M$:
\begin{equation}\label{eq:h-va}
H_{\ch}(V,M)
=
\bigoplus_{n=0}^\infty
H_{\ch}^n(V,M)
\,.
\end{equation}
\end{definition}

The following lemma will be useful for computing the cohomology of a vertex algebra.

\begin{lemma}\label{lem:rvm}
Let\/ $V$ be a VA, $M$ be a\/ $V$-module, and\/ $R$ be an LCA, which is a subalgebra of the LCA\/ $(V,[\,{}_\la\,])$.
Then the restriction map 
\begin{equation}\label{eq:rvm1}
Y\in C^n_{\ch}(V,M) \mapsto
Y|_{R^{\otimes n} \otimes 1} \in C^n_{\mc Chom}(R,M)
\end{equation}
is a morphism of complexes, i.e., it commutes with the differentials.
\end{lemma}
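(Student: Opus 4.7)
The plan is to verify two things: (a) the restriction map is well-defined, landing in $C^n_{\mc Chom}(R,M)$, and (b) the chiral differential \eqref{eq:dY} on $C_{\ch}(V,M)$ reduces, on the subspace $R^{\otimes n}\otimes 1$, to the LCA cohomology differential on $C_{\mc Chom}(R,M)$. The point is that with $f=1$, the rational-function machinery of the chiral differential collapses, and all ``quantum corrections'' in \eqref{eq:dY} (the operators $g_i(-\nabla_\la)$ and $e^{-\partial_{z_i}\partial_{\lambda_i}}$, the residues encoded in the decompositions \eqref{eq:dec1}--\eqref{eq:dec2}) become identities.

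For well-definedness, setting $f=1$ in the chiral sesquilinearity \eqref{20160629:eq4a} gives $Y_{\lambda_1,\dots,\lambda_n}((\partial_i+\lambda_i)v)=0$, which is exactly \eqref{eq:chomses}; the second sesquilinearity \eqref{20160629:eq4b} is vacuous since $z_{ij}\cdot 1$ is not in the subspace spanned by the constant $1$. The symmetry condition \eqref{20160629:eq5} with $f(z_1,\dots,z_n)=1$ specializes to \eqref{eq:symch-chom}. Finally, the reduction defining $C^n_{\ch}(V,M)$ (which restricts values to $M_n$ and vanishes on tensors of pure $V$-components) passes cleanly to the subspace $R^{\otimes n}\otimes 1\subset V^{\otimes n}\otimes \mc O^{\star T}_n$, so restriction gives an element of $C^n_{\mc Chom}(R,M)$.

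For the differential compatibility, I would apply \eqref{eq:dY} with $h(z_0,\dots,z_k)=1$ and $v_0,\dots,v_k\in R$. In the decompositions \eqref{eq:dec1}--\eqref{eq:dec2}, take $f_i=g_i=1$ and $f_{ij}=g_{ij}=1$. Then $g_i(-\nabla_{\lambda_0},\dots,-\nabla_{\lambda_k})$ acts as the identity, and in the second sum the operator $e^{-\partial_{z_i}\partial_{\lambda_i}}$ acts as the identity on the constant $g_{ij}=1$ and on $[{v_i}_{\lambda_i}v_j]$ (which is $z$-independent). Because $R$ is an LCA subalgebra of $V$, each bracket $[{v_i}_{\lambda_i}v_j]$ lies in $R[\lambda_i]$, so the inner $Y$-evaluations remain within the restricted domain $R^{\otimes(k)}\otimes 1$. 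The formula \eqref{eq:dY} thus collapses to
\begin{equation*}
(dY)_{\lambda_0,\dots,\lambda_k}(v_0\otimes\dots\otimes v_k)
=\sum_{i=0}^k(-1)^{\gamma_i}{v_i}_{\lambda_i}Y_{\lambda_0,\stackrel{i}{\check{\dots}},\lambda_k}(v_0\otimes\stackrel{i}{\check{\dots}}\otimes v_k)
\end{equation*}
\begin{equation*}
{}+\sum_{0\leq i<j\leq k}(-1)^{\gamma_{ij}}Y_{\lambda_i+\lambda_j,\lambda_0,\stackrel{i}{\check{\dots}}\stackrel{j}{\check{\dots}},\lambda_k}\bigl([{v_i}_{\lambda_i}v_j]\otimes v_0\otimes\stackrel{i}{\check{\dots}}\stackrel{j}{\check{\dots}}\otimes v_k\bigr),
\end{equation*}
which is precisely the LCA cohomology differential of \cite[Eq.~(4.19)]{DSK13}, computed with respect to the $R$-module structure on $M$ inherited by restriction from $V$.

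There is no deep obstruction in this argument; everything is a routine specialization of \eqref{eq:dY}. The only points needing care are: (i) correctly identifying the $V$-module $\lambda$-action on $M$ with the restricted $R$-module action when $v_i\in R$, and (ii) invoking the hypothesis that $R$ is an LCA subalgebra so that $[{v_i}_{\lambda_i}v_j]$ stays in $R$ and the restricted cochain can be evaluated on it. Once these are noted, the identification of the two differentials is immediate.
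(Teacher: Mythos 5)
Your proposal is correct and follows essentially the same route as the paper's own proof: evaluate the explicit differential \eqref{eq:dY} at $h=1$, observe that the decompositions \eqref{eq:dec1}--\eqref{eq:dec2} trivialize so all the operators $g_i(-\nabla)$ and $e^{-\partial_{z_i}\partial_{\lambda_i}}$ act as identities, and recognize the result as the LCA differential of \cite[Eq.\ (4.19)]{DSK13}, using that $R$ is an LCA subalgebra so the brackets stay in $R$. The added well-definedness check is a harmless elaboration of what the paper leaves implicit.
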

\begin{proof}
The special case $R=V$ follows from \eqref{eq:ch-chom}.
In general, the claim can be deduced directly from the definitions.
Indeed, formula \eqref{eq:dY} for the differential in $C^n_{\ch}(V,M)$,
evaluated at the function $h(z_0,\dots,z_k)=1$,
reduces to formula \cite[Eq.\ (4.19)]{DSK13} for the differential in $C^n_{\mc Chom}(R,M)$.
\end{proof}

Now we review the description of the low degree cohomology $H_{\ch}^n(V,M)$ ($n=0,1,2$) in terms of Casimirs, derivations, and extensions \cite{BDSHK18}.
We denote by $\int\colon M \to M/\partial M$ the canonical projection, and say that
$\int m\in M/\partial M$ is a \emph{Casimir element} if $V_{-\partial}m=0$.
Let $\Cas(V,M)\subset M/\partial M$ be the space of Casimir elements.
Note that
$$
\Cas(V,V) = \bigl\{ \tint a \in V/\partial V \,\big|\, a_{(0)} V = 0 \bigr\} \,.
$$

A \emph{derivation} from $V$ to $M$
is an $\mb F[\partial]$-module homomorphism $D\colon V\to M$ such that 
\begin{equation}\label{eq:derv}
\begin{split}
D\Big(\int^\lambda\!d\sigma[{u}_\sigma{v}]\Big)
& =
(-1)^{p(D) p(u)}
\int^\lambda\!d\sigma
({u}_\sigma{D(v)}) \\
& +
(-1)^{(p(D)+p(u))p(v)}
\int^{-\lambda-\partial}\!d\sigma
(v_\sigma{D(u)})
\,.
\end{split}
\end{equation}
Note that $D\in C^1_{\ch}(V,M)$ is closed if and only if $D$ is a derivation $V\to M$,
and it is exact if and only if this derivation is \emph{inner}, 
i.e. it has the form 
\begin{equation}\label{eq:inner}
D_{\tint m}(a)=(-1)^{1+p(m)p(a)}a_{-\partial}m
\,\quad\text{ for some }\quad \tint m\in M/\partial M
\,.
\end{equation}
In the special case when $V=M$,
we have the usual definition of a derivation and an inner derivation of the vertex algebra $V$
(an inner derivation is of the form $v_{(0)}$ for some $v\in V$).
Denote by $\Der(V,M)$ the space of derivations from $V$ to $M$,
and by $\Inder(V,M)$ the subspace of inner derivations.

\begin{proposition}[{\cite{BDSHK18}}]\label{prop:lowcoho}
Let\/ $V$ be a (non-unital) VA and let\/ $M$ be a\/ $V$-module. Then:
\begin{enumerate}[(a)]
\item
$H_\ch^0(V,M)=\Cas(V,M)$.
\item\medskip
$H_\ch^1(V,M)=\Der(V,M)/\Inder(V,M)$.
\item\medskip
$H_\ch^2(V,M)$ is the space of isomorphism classes of\/ $\mb F[\partial]$-split extensions
of the vertex algebra\/ $V$ by the\/ $V$-module\/ $M$, where\/ $M$ is
viewed as a non-unital vertex algebra with zero integral of\/ $\lambda$-bracket.
\end{enumerate}
\end{proposition}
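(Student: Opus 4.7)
The plan is to unpack the explicit differential \eqref{eq:dY} in degrees $k=0,1,2$ and read off the three statements directly; part (c) will then follow from a standard correspondence between $2$-cocycles and $\mb F[\dd]$-split extensions.

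For (a), $C_\ch^0(V,M) = M/\dd M$. Applying \eqref{eq:dY} with $k=0$ (so $h=1\in\mc O_1^{\star T}$, the second sum is empty, and only the $i=0$ term of the first sum survives with $f_0=g_0=1$) to an element $\tint m\in M/\dd M$, the coboundary is the map $v\mapsto \pm\,v_{\la_0}m$, which, after imposing the reduction $\la_0=-\dd$ built into $M_1$, equals $\pm\,v_{-\dd}m$. Hence $\ker d = \{\tint m : V_{-\dd}m=0\} = \Cas(V,M)$, and there are no coboundaries, so this equals $H_\ch^0(V,M)$.

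For (b), $C_\ch^1(V,M)=\Hom_{\mb F[\dd]}(V,M)$ is already identified in the text. Applying \eqref{eq:dY} with $k=1$ and the test function $h(z_0,z_1)=(z_1-z_0)^{-1}$, and using the identification of $C_\ch^2(V,M)$ with integrals of $\la$-brackets via \eqref{eq:Xint}, the cocycle equation $dD=0$ unpacks to exactly the derivation identity \eqref{eq:derv}; hence $\ker d|_{C^1}=\Der(V,M)$. The coboundary $d(\tint m)$ from part (a), rewritten in the $\Hom_{\mb F[\dd]}(V,M)$ picture, becomes precisely the inner derivation $D_{\tint m}$ of \eqref{eq:inner}, so $\im d|_{C^0}=\Inder(V,M)$ and $H_\ch^1(V,M)=\Der(V,M)/\Inder(V,M)$.

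For (c), given a $2$-cocycle $Y\in C_\ch^2(V,M)$, I define a non-unital VA structure on $\tilde V := V\oplus M$ (as $\mb F[\dd]$-module) in which $V$ is a subalgebra, $M$ is an abelian ideal carrying its given $V$-module structure, and
\begin{equation*}
\tint^\la d\si\,[u_\si v]_{\tilde V} := \tint^\la d\si\,[u_\si v]_V + \tint^\la d\si\,[u_\si v]^Y
\qquad\text{for } u,v\in V.
\end{equation*}
The sesquilinearity V2 is built into the definition of $\mc P^\ch$, the symmetry V3 follows from the $S_2$-invariance of $Y$ with respect to $\bar p$, and the Jacobi-type axiom V4 for $\tilde V$ is equivalent to $dY=0$. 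Two cocycles $Y,Y'$ satisfy $Y'-Y=dD$ for some $D\in\Hom_{\mb F[\dd]}(V,M)$ precisely when the shear $u+m\mapsto u+D(u)+m$ is a VA isomorphism $\tilde V_Y \to \tilde V_{Y'}$ that is the identity modulo the splitting. Conversely, every $\mb F[\dd]$-split extension determines a $2$-cochain once a splitting is chosen, its cocycle property is again V4, and a change of splitting alters the cochain by a coboundary. This yields the bijection $H_\ch^2(V,M) \leftrightarrow \{\mb F[\dd]\text{-split extensions}\}/{\sim}$.

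The main obstacle will be the last verification: that \eqref{eq:dY} with $k=2$, evaluated on a test function such as $(z_1-z_0)^{-1}(z_2-z_1)^{-1}\in\mc O_3^{\star T}$, unpacks literally to the noncommutative Jacobi axiom V4 for $\tilde V$. This is a direct but sign- and bookkeeping-heavy computation using the decompositions \eqref{eq:dec1}--\eqref{eq:dec2} and the signs $\gamma_i,\gamma_{ij}$ from \eqref{eq:gammai} with $\bar p$ in place of $p$; the remainder of the proof is formal manipulation of definitions already set up in the excerpt and in the reduction procedure of Section \ref{sec:lieoper}.
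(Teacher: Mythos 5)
Your plan is essentially correct, and it matches the way this proposition is actually established: the paper itself does not prove it (it is quoted from \cite{BDSHK18}), but everything you use in degrees $0$ and $1$ is already contained in the surrounding text — the identifications $C^0_{\ch}(V,M)=M/\partial M$, $C^1_{\ch}(V,M)=\Hom_{\mb F[\partial]}(V,M)$, the fact that a $2$-cochain is determined by its value on $z_{21}^{-1}$, and the statement that $D\in C^1_{\ch}(V,M)$ is closed iff it is a derivation \eqref{eq:derv} and exact iff it is inner \eqref{eq:inner}. Your reduction of $dD=0$ to its evaluation at $(z_1-z_0)^{-1}$ is legitimate precisely because $C^2_{\ch}(V,M)$ is identified with integrals of $\lambda$-brackets, so parts (a) and (b) are fine.

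For part (c), the one step you flag as ``sign- and bookkeeping-heavy'' — that V4 for $\tilde V_Y=V\oplus M$ is equivalent to $dY=0$ — does not need to be checked by unpacking \eqref{eq:dY} on test functions. It is cleaner, and closer in spirit to the operadic setup of Section \ref{sec:lieoper}, to argue inside the Lie superalgebra $W_{\ch}(\Pi(V\oplus M))$: let $\tilde X$ be the odd element encoding the semidirect product structure (with $M$ an abelian ideal), and let $\hat Y$ be the lift of your cocycle $Y$ that vanishes whenever an argument lies in $M$. Since $\hat Y$ takes values in $M$ and kills $M$-arguments, one has $[\hat Y,\hat Y]=0$, and moreover $[\tilde X,\hat Y]$ again vanishes on any argument from $M$, so $[\tilde X,\hat Y]=0$ in $W_{\ch}$ is equivalent to $dY=0$ in the reduced complex $U^3/K^3$. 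Hence $[\tilde X+\hat Y,\tilde X+\hat Y]=2[\tilde X,\hat Y]$, and by the bijection between square-zero odd elements of $W^1_{\ch}$ and non-unital VA structures (used in \eqref{eq:Xint}), $\tilde V_Y$ is a vertex algebra iff $dY=0$; all the VA axioms V2--V4, including the mixed-argument cases you would otherwise have to treat separately, come for free. The rest of your argument for (c) — extracting a $2$-cochain from a choice of $\mb F[\partial]$-splitting, and identifying change of splitting with adding a coboundary $dD$, $D\in\Hom_{\mb F[\partial]}(V,M)$ — is the standard correspondence and is correct as stated.
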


\subsection{Classical and variational PVA cohomology}\label{sec:clpva}

Let $\mc V$ be a vector superspace (with parity $p$) equipped with an $\mb F[\partial]$-module structure.
Consider the operad $\mc P^\cl(\Pi \mc V)$ and the corresponding Lie superalgebra
$$
W_{\cl}(\Pi \mc V) = \bigoplus_{k=-1}^\infty W_{\cl}^k(\Pi \mc V) \,, \qquad
W_{\cl}^k(\Pi \mc V) := W_{\mc P^\cl(\Pi \mc V)}^k = \mc P^\cl(\Pi \mc V)(k+1)^{S_{k+1}} \,.
$$
The structures of Poisson vertex algebra on $\mc V$ are in bijection with odd elements $X\in W_{\cl}^1(\Pi \mc V)$ such that $[X,X]=0$ (see \cite[Theorem 10.7]{BDSHK18}). Explicitly, for $a,b\in V$, we have
\begin{equation}\label{eq:Xpva}
ab=(-1)^{p(a)}
X^{\bullet\!-\!\!\!\!\to\!\bullet}(a\otimes b)
\,,\quad
[a_\lambda b]=(-1)^{p(a)}X^{\bullet\,\,\bullet}_{\lambda,-\lambda-\partial}(a\otimes b)
\,.
\end{equation}
As before, we get a complex
\begin{equation}\label{eq:Ccl}
C_{\cl}(\mc V) = \bigoplus_{n=0}^\infty C_{\cl}^n(\mc V) \,, \qquad
C_{\cl}^n(\mc V) := W_{\cl}^{n-1}(\Pi \mc V) \,,
\end{equation}
with the differential $d=\ad_X$. The cohomology of this complex will be called the \emph{classical PVA cohomology} of the PVA $\mc V$ with coefficients in $\mc V$, and will be denoted by
$$
H_{\cl}(\mc V) = \bigoplus_{n=0}^\infty H_{\cl}^n(\mc V) \,.
$$

For the rest of this subsection, we consider $\mc V$ with its trivial grading: $\mc V = \gr^0\mc V$.
Then, by \eqref{pclgrading}, $Y\in \mc P^\cl(n)$ is in $\gr^p \mc P^\cl(n)$ if and only if
$Y^\Ga = 0$ for every graph $\Ga$ with $n$ vertices and number of edges $\ne p$
(see \cite[(10.9)]{BDSHK18}). Since the symmetric group $S_n$ preserves this grading,
the Lie superalgebra $W_{\cl}(\Pi\mc V)$ and the complex $C_{\cl}(\mc V)$ are graded by:
$$
W_{\cl}^k(\Pi \mc V) = \bigoplus_{p=0}^{k} \gr^p W_{\cl}^k(\Pi \mc V) \,, \qquad
C_{\cl}^n(\mc V) = \bigoplus_{p=0}^{n-1} \gr^p C_{\cl}^n(\mc V)
$$
(see \eqref{eq:grpcl}).
We can write our element $X\in W_{\cl}^1(\Pi \mc V)$ as $X=X_0+X_1$ with 
$X_r \in \gr^r W_{\cl}^1(\Pi \mc V)$ ($r=0,1$), so that
$$
(X_0)^{\bullet\!-\!\!\!\!\to\!\bullet} = 0 \,, \quad
(X_0)^{\bullet\,\,\bullet} = X^{\bullet\,\,\bullet} \,, \quad
(X_1)^{\bullet\!-\!\!\!\!\to\!\bullet} = X^{\bullet\!-\!\!\!\!\to\!\bullet} \,, \quad
(X_1)^{\bullet\,\,\bullet} = 0 \,.
$$
Comparing terms of different degrees in the equation $[X,X]=0$, we obtain
$$
[X_0,X_0]=[X_0,X_1]=[X_1,X_1]=0 \,.
$$
Hence, $d=\ad_X$ is a sum of two anticommuting differentials 
$d_r=\ad_{X_r}$ ($r=0,1$), such that $d_r\colon\gr^p C_{\cl}^n(\mc V) \to \gr^{p+r} C_{\cl}^{n+1}(\mc V)$.

Note that $\gr^0 \mc P^\cl(\Pi\mc V) = \mc Chom(\Pi\mc V)$ and the Lie superalgebra $W_{\mc Chom}(\Pi \mc V)$ from Section \ref{sec:lca-coh} coincides with the subalgebra $\gr^0 W_{\cl}(\Pi \mc V)$ of $W_{\cl}(\Pi \mc V)$.
The odd element $X_0\in W_{\mc Chom}^1(\Pi \mc V)$ corresponds to the $\la$-bracket in $\mc V$ considered as an LCA (see \eqref{eq:Xbra}). Then $d_0$ gives the differential in the LCA cohomology complex
$C_{\mc Chom}(\mc V) = \gr^0 C_{\cl}(\mc V)$.

Since $d_1$ is a derivation of the Lie superalgebra $W_{\cl}(\Pi \mc V)$, its kernel is a subalgebra. Hence,
the subspace
$$
W_{\PV}(\Pi \mc V) := \gr^0 W_{\cl}(\Pi \mc V) \cap \ker d_1
$$
is a subalgebra of $W_{\mc Chom}(\Pi \mc V)$. As $X_0\in W_{\PV}^1(\Pi \mc V)$, 
the differential $d_0$ restricts to $W_{\PV}(\Pi \mc V)$ and gives a complex
$$
C_{\PV}(\mc V) = \bigoplus_{n=0}^\infty C_{\PV}^n(\mc V) \,, \qquad
C_{\PV}^n(\mc V) := W_{\PV}^{n-1}(\Pi \mc V) \,,
$$
called the \emph{variational PVA complex} of $\mc V$ (see \cite{DSK13}).
The cohomology of this complex is called the \emph{variational PVA cohomology} of the PVA $\mc V$ with coefficients in $\mc V$, and is denoted by
$$
H_{\PV}(\mc V) = \bigoplus_{n=0}^\infty H_{\PV}^n(\mc V) \,.
$$
Explicitly, by \cite[Lemma 11.3(c)]{BDSHK18},
$C_{\PV}^n(\mc V)$ can be described as the subspace of $C_{\mc Chom}^n(\mc V)$ consisting 
of cochains $Y$ 
that satisfy the following \emph{Leibniz rules}:
\begin{align}
\notag
& 
Y_{\lambda_1,\dots,\lambda_n}
(a_1\otimes\cdots\otimes b_ic_i \otimes\cdots\otimes a_n) \\
\label{eq:leib}
& =
(-1)^{p(b_i)(\bar p(Y)+\bar p(a_1)+\dots+\bar p(a_{i-1}))}
(e^{\partial\partial_{\lambda_i}}b_i)
Y_{\lambda_1,\dots,\lambda_n}
(a_1\otimes\cdots\otimes c_i \otimes\cdots\otimes a_n) \\
\notag
& +
(-1)^{p(c_i)(p(b_i)+\bar p(Y)+\bar p(a_1)+\dots+\bar p(a_{i-1}))}
(e^{\partial\partial_{\lambda_i}}c_i)
Y_{\lambda_1,\dots,\lambda_n}
(a_1\otimes\cdots\otimes b_i \otimes\cdots\otimes a_n)
\,,
\end{align}
for all $i=1,\dots,n$ and $a_j,b_i,c_i \in\mc V$.

\begin{remark}\label{rem:pva-vac}
As a consequence of the Leibniz rules \eqref{eq:leib} and the sesquilinearity \eqref{eq:chomses},
any cochain $Y \in C_{\PV}^n(\mc V)$ vanishes whenever one of its arguments is the unit $1\in\mc V$
(see \cite[Lemma 3.7(b)]{BDSK19}).
\end{remark}

The relationship between the classical and variational PVA cohomology is clarified in the next theorem.

\begin{theorem}\label{thm:cl-pv}
Let\/ $\mc V$ be a Poisson vertex algebra.
\begin{enumerate}[(a)]
\item
The embedding of complexes\/ $(C_{\PV}(\mc V),d_0) \hookrightarrow (C_{\cl}(\mc V),d)$
induces an injective homomorphism of cohomology\/
\begin{equation}\label{eq:pv-cl}
H_{\PV}(\mc V) \hookrightarrow H_{\cl}(\mc V) \,.
\end{equation}
\item
Suppose that\/ $\mc V$, as a differential superalgebra, is isomorphic to an algebra of differential polynomials in finitely many even or odd variables. Then the map \eqref{eq:pv-cl} is an isomorphism.
\end{enumerate}
\end{theorem}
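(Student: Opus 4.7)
The plan is to realize $(C_{\cl}(\mc V), d)$ as the total complex of a first-quadrant bicomplex and extract part (a) as an edge-map statement. Set
\[
A^{p,q} = \gr^p C_{\cl}^{p+q}(\mc V) \,, \qquad p \ge 0 \,, \; q \ge 1 \,,
\]
so that $d_0 \colon A^{p,q} \to A^{p,q+1}$ and $d_1 \colon A^{p,q} \to A^{p+1,q}$ anticommute (Section \ref{sec:clpva}) and sum to $d$ on the total complex, which is exactly $C_{\cl}(\mc V)$. Consider the decreasing filtration
\[
F^q C_{\cl}^n = \bigoplus_{p'+q' = n,\, q' \ge q} A^{p', q'} \,,
\]
for which $d_1$ is the filtration-preserving part of $d$ and $d_0$ strictly raises the filtration.

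For part (a), the associated spectral sequence has $E_0$-differential $d_1$, hence
\[
E_1^{a,b} = H^b(A^{*,a}, d_1) \,, \qquad E_1^{a,0} = \ker\bigl(d_1 \colon A^{0,a} \to A^{1,a}\bigr) = C_{\PV}^a \,,
\]
and the induced $E_1$-differential along the edge column $b = 0$ is $d_0$. Therefore $E_2^{n,0} = H^n(C_{\PV}^*, d_0) = H_{\PV}^n$. Since $A^{p,q}$ is concentrated in $p \ge 0$, $q \ge 1$, the higher differentials $d_r \colon E_r^{n,0} \to E_r^{n+r, 1-r}$ vanish for $r \ge 2$, because their target has $b = 1-r < 0$. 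Hence $E_2^{n,0} = E_\infty^{n,0}$, and because $F^{n+1} C_{\cl}^n = 0$, this identifies with $F^n H_{\cl}^n$ as a subspace of $H_{\cl}^n$. Tracing through the identifications shows the composite $H_{\PV}^n \xrightarrow{\sim} E_\infty^{n,0} \hookrightarrow H_{\cl}^n$ coincides with the map induced by $C_{\PV} \hookrightarrow C_{\cl}$, proving injectivity.

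For part (b), surjectivity is equivalent to $F^n H_{\cl}^n = H_{\cl}^n$, which follows from the vanishing $E_\infty^{a,b} = 0$ for $b \ge 1$; in turn this reduces to the stronger acyclicity
\[
E_1^{a,b} = H^b(A^{*,a}, d_1) = 0 \quad \text{for all } b \ge 1 \,.
\]
This is precisely the main theorem of \cite{BDSHKV20}: when $\mc V$ is an algebra of differential polynomials, the complex $(A^{*,a}, d_1)$ is a resolution of its degree-zero cohomology $C_{\PV}^a$. Feeding this into the spectral sequence collapses it to the edge column and yields $H_{\PV}^n \cong H_{\cl}^n$.

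The main obstacle is the acyclicity needed for (b); the proof in \cite{BDSHKV20} builds an explicit contracting homotopy exploiting the polynomial generators of $\mc V$, and this is the nontrivial input that cannot be bypassed. Part (a), by contrast, is formal from the first-quadrant shape of $A^{*,*}$: the edge terms simply have no room to support nonzero higher differentials.
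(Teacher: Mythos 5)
Your treatment of part (b) is in line with the paper, which likewise just cites \cite{BDSHKV20}: granted the acyclicity of the rows $(\gr^{\bullet}C_{\cl}(\mc V),d_1)$ in positive degree, your collapse argument is correct. The genuine gap is in part (a), which you declare to be formal; it is not, and the paper invokes the nontrivial result \cite[Theorem 11.4]{BDSHK18} for it. With your filtration (decreasing in the $q$-index, so that the $E_0$-differential is $d_1$), the differentials run $d_r\colon E_r^{a,b}\to E_r^{a+r,\,b-r+1}$. You correctly observe that the differentials \emph{leaving} the edge row $b=0$ vanish for $r\ge2$, but you ignore those \emph{entering} it, $d_r\colon E_r^{n-r,\,r-1}\to E_r^{n,0}$, whose sources sit in the region $b=r-1\ge1$ and need not vanish for an arbitrary PVA $\mc V$. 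Hence $E_{r+1}^{n,0}=E_r^{n,0}/\im d_r$, so $E_\infty^{n,0}$ is only a quotient of $E_2^{n,0}=H^n_{\PV}(\mc V)$, not equal to it. What your argument actually shows is that the natural map $H^n_{\PV}(\mc V)\to H^n_{\cl}(\mc V)$ surjects onto the deepest filtration piece $\fil^n H^n_{\cl}(\mc V)$, with kernel generated by the images of these incoming differentials; injectivity, which is the entire content of (a) and is asserted for \emph{every} PVA, is precisely the vanishing of those images, and the support region of the bicomplex gives no reason for that.

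The gap is also visible at the cochain level: suppose $Y\in C^n_{\PV}(\mc V)$ satisfies $d_0Y=0$ and $Y=dZ$ for some $Z=\sum_p Z_p\in C^{n-1}_{\cl}(\mc V)$ with $Z_p\in\gr^p C^{n-1}_{\cl}(\mc V)$. Comparing graded components gives $Y=d_0Z_0$ and $d_1Z_0=-d_0Z_1$, so $Z_0$ is a $d_0$-primitive of $Y$ whose $d_1$-image is merely $d_0$-exact; one must still correct $Z_0$ to an element of $\ker d_1=C^{n-1}_{\PV}(\mc V)$ without destroying $d_0Z_0=Y$, and that correction is the substance of \cite[Theorem 11.4]{BDSHK18}, not a consequence of the shape of the bicomplex. (When $\mc V$ is an algebra of differential polynomials, the row-acyclicity you quote from \cite{BDSHKV20} kills the sources $E_2^{n-r,\,r-1}$, so your spectral sequence does yield both injectivity and surjectivity in that case; but part (a) concerns arbitrary $\mc V$, where this input is unavailable.)
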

\begin{proof}
Part (a) is \cite[Theorem 11.4]{BDSHK18}, while part (b) is in \cite{BDSHKV20}.
\end{proof}

In the case of a universal PVA over an LCA, the variational PVA cohomology is related to the LCA cohomology as follows (see \cite[Theorem 3.13]{BDSK19}).

\begin{proposition}\label{prop:pv-lc}
Let\/ $R$ be an LCA and\/ $\mc V=\mc V(R)$ be its universal PVA.
\begin{enumerate}[(a)]
\item
If\/ $M$ is a\/ $\mc V$-module, then\/ $M$ is also an\/ $R$-module by restriction, and the restriction of cochains
\begin{equation}\label{eq:rvm2}
Y\in C^n_{\PV}(\mc V,M) \mapsto
Y|_{R^{\otimes n}} \in C^n_{\mc Chom}(R,M)
\end{equation}
gives an isomorphism of complexes\/ $C_{\PV}(\mc V,M) \simeq C_{\mc Chom}(R,M)$.

\item
Let\/ $C\in R$ be such that\/ $\partial C=0$, let\/ $\bar R=R/\mb F C$ be the quotient LCA, and\/ 
$\mc V^c :=\mc V^c(R) = \mc V(R) / \mc V(R) (C-c)$ be the corresponding quotient PVA. Then every\/ $\mc V^c$-module\/ $M$ is an\/ $\bar R$-module by restriction, we have natural embeddings of complexes
\begin{equation}\label{eq:rvm3}
C_{\mc Chom}(\bar R,M) \subset C_{\mc Chom}(R,M) \,, \qquad
C_{\PV}(\mc V^c,M) \subset C_{\PV}(\mc V,M) \,,
\end{equation}
and the isomorphism \eqref{eq:rvm2} restricts to an isomorphism of complexes
\begin{equation}\label{eq:rvm4}
C_{\mc Chom}(\bar R,M) \simeq C_{\PV}(\mc V^c,M) \,.
\end{equation}
\end{enumerate}
\end{proposition}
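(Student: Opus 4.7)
For part (a), the plan is to leverage the universal property of $\mc V(R) = S(R)$ as the free commutative superalgebra on $R$: any variational cochain $Y \in C^n_{\PV}(\mc V, M)$ is completely determined by its restriction $Y|_{R^{\otimes n}}$ via the Leibniz rule \eqref{eq:leib}, and conversely every $Y_0 \in C^n_{\mc Chom}(R, M)$ should extend uniquely to a map on $\mc V^{\otimes n}$ satisfying Leibniz. First, I would exhibit this extension explicitly on pure tensors: for monomials $a_i = r_{i,1} \cdots r_{i,k_i}$ with $r_{i,j} \in R$ in each slot, one applies \eqref{eq:leib} slot by slot, reducing everything to values of $Y_0$ on $R^{\otimes n}$. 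Well-definedness, i.e., independence of the parsing of each monomial into factors, follows from the sign symmetry built into \eqref{eq:leib} combined with the supercommutativity of $\mc V$. I would then check that sesquilinearity \eqref{eq:chomses} and $S_n$-invariance pass from $R$ to $\mc V$: the former uses that $\partial$ is a derivation of the commutative product of $\mc V$, and the latter uses the symmetry of both the commutative product and of the Leibniz rule.

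Next, I would verify compatibility with the differential. Both differentials equal $\ad_{X_0}$, where on the LCA side $X_0$ corresponds to the $\lambda$-bracket of $R$ via \eqref{eq:Xbra}, and on the PVA side $X_0$ corresponds to the $\lambda$-bracket of $\mc V$ via the second identity in \eqref{eq:Xpva}. Since $\mc V(R)$ is the universal PVA over $R$, the PVA $\lambda$-bracket on $\mc V$ is the unique Leibniz extension of the LCA $\lambda$-bracket on $R$, so the two $X_0$'s agree after restriction. More concretely, the formula \eqref{eq:dY} for $d$ simplifies considerably in the $\PV$ setting and, when evaluated on $R^{\otimes(n+1)}$, reduces to the LCA differential formula of \cite[Eq.\ (4.19)]{DSK13}. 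Combined with the uniqueness of the Leibniz extension, this makes the bijection of (a) an isomorphism of complexes.

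For part (b), I would apply (a) to both $\mc V$ and $\mc V^c$, yielding isomorphisms $C_{\PV}(\mc V, M) \simeq C_{\mc Chom}(R, M)$ and $C_{\PV}(\mc V^c, M) \simeq C_{\mc Chom}(\bar R, M)$. The embeddings in \eqref{eq:rvm3} come from precomposition with the surjections $R \twoheadrightarrow \bar R$ and $\mc V \twoheadrightarrow \mc V^c$, and the task is to show they correspond under the isomorphisms of (a). A variational cochain on $\mc V$ descends to $\mc V^c$ if and only if, in each slot, it vanishes on the ideal $\mc V(R)(C - c\cdot 1)$. Applying the Leibniz rule \eqref{eq:leib} to a product $b(C - c\cdot 1)$, together with $\partial C = 0$ (so $e^{\partial \partial_{\lambda}} C = C$), Remark \ref{rem:pva-vac} (so $Y$ vanishes on the unit), and the hypothesis that $C$ acts on $M$ as multiplication by $c$ (because $M$ is a $\mc V^c$-module), one sees that the descent condition is equivalent to $Y|_{R^{\otimes n}}$ vanishing whenever an argument is $C$, which is precisely the condition that $Y|_{R^{\otimes n}}$ factors through $\bar R^{\otimes n}$. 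This yields \eqref{eq:rvm4}.

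The main obstacle I expect is the careful bookkeeping required for well-definedness of the Leibniz extension and for the compatibility of differentials. Both boil down to inductions on the length of monomials in $\mc V$ and careful sign tracking in the opposite parity $\bar p$, but neither introduces any new conceptual difficulty beyond the universality of $\mc V(R)$ over $R$.
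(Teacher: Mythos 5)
Your proposal is correct in substance, and it is worth noting that the paper itself offers no argument here beyond citing \cite[Theorem 3.13]{BDSK19}; your Leibniz-extension proof of (a) is essentially a reconstruction of that cited proof. Injectivity of \eqref{eq:rvm2} because \eqref{eq:leib}, sesquilinearity and Remark \ref{rem:pva-vac} reduce any evaluation to $R^{\otimes n}$; surjectivity via the slot-by-slot extension (whose well-definedness rests on the supersymmetry of \eqref{eq:leib} and the identity $(e^{\partial\partial_{\lambda_i}}a)(e^{\partial\partial_{\lambda_i}}b)=e^{\partial\partial_{\lambda_i}}(ab)$ as operators on $M_n$); and compatibility of differentials because $R\subset\mc V(R)$ is an LCA subalgebra, so the PV differential evaluated on $R^{\otimes(n+1)}$ involves only $[a_\lambda b]\in R[\lambda]$ and $\lambda$-actions of elements of $R$, reproducing the formula of \cite[Eq.\ (4.19)]{DSK13} --- this is exactly the right chain of verifications.

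One caution about the opening sentence of your part (b): for $c\neq0$ the quotient $\mc V^c(R)$ is \emph{not} the universal PVA over $\bar R$ (its $\lambda$-bracket on $\bar R$ does not close in $\bar R[\lambda]$; as noted after \eqref{eq:vcr}, $\mc V^c(R)\simeq S(\bar R)$ only as a differential algebra), so part (a) cannot simply be ``applied to $\mc V^c$'' to yield $C_{\PV}(\mc V^c,M)\simeq C_{\mc Chom}(\bar R,M)$ --- that isomorphism is precisely \eqref{eq:rvm4}, the statement to be proved. Fortunately your actual argument does not rely on this shortcut: identifying $C_{\PV}(\mc V^c,M)$ inside $C_{\PV}(\mc V,M)$ as the cochains vanishing in each slot on the ideal $\mc V(R)(C-c)$, and then using \eqref{eq:leib} on $b(C-c1)$ together with $\partial C=0$, Remark \ref{rem:pva-vac}, and the fact that $C-c1$ acts as zero on the $\mc V^c$-module $M$, correctly shows that under \eqref{eq:rvm2} this subcomplex corresponds to the cochains on $R$ that kill $C$ in each argument, i.e.\ to $C_{\mc Chom}(\bar R,M)$ (the reduction of the remaining slots from $\mc V$ to $R$ being exactly the mechanism of part (a)). For completeness you should also record that the pullbacks in \eqref{eq:rvm3} commute with the differentials; this is a one-line check using that $C$ is central in $R$ and that its $\lambda$-action on any module vanishes.
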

\begin{proof}
This is, in a different notation, the same as \cite[Theorem 3.13]{BDSK19}.
\end{proof}

\section{Wick-type formula for VA cocycles and bounded VA cohomology}\label{sec:wick}

\subsection{Wick-type formula}\label{subsec:wick}

The main result of this section
is a Wick-type formula for cocycles in $C_{\ch}(V,M)$.
This formula should help in proving that a good filtration on $V$ induces an exhaustive filtration
in cohomology.

\begin{proposition}\label{prop:wick-formula}
Let\/ $V$ be a vertex algebra, $M$ be a $V$-module, and\/
$Y\in C^n_{\ch}(V,M)$ be a closed element of the VA cohomology complex: $dY=0$. 
Then, the following Wick-type formula holds, for every $\ell=1,\dots,n$ 
\begin{equation}\label{eq:wick11}
\begin{split}
& 
Y_{\lambda_0,\dots \lambda_{\ell-1}+\lambda_\ell \dots,\lambda_n}
\big(
v_0\otimes\dots\otimes {:}v_{\ell-1}v_\ell{:} \otimes\dots \otimes v_n\otimes
1
\big) \\
& =
(-1)^{p_{\ell-1}\bar p_{-1,{\ell-1}}}
{:}
{(e^{\partial\partial_{\lambda_{\ell-1}}}v_{\ell-1})}
Y_{\lambda_0,\dots\lambda_{\ell-1}+\lambda_\ell\dots,\lambda_n}
(v_0\otimes\stackrel{\ell-1}{\check{\dots}}\otimes v_n\otimes 1) 
{:} \\
& + (-1)^{p_\ell(1+\bar p_{-1,\ell})}
{:}
{(e^{\partial\partial_{\lambda_\ell}} v_\ell)}
Y_{\lambda_0,\dots\lambda_{\ell-1}+\lambda_\ell\dots,\lambda_n}
(v_0\otimes\stackrel{\ell}{\check{\dots}}\otimes v_n\otimes 1) 
{:}
\\
& +
(-1)^{p_{\ell-1}\bar p_{-1,{\ell-1}}}
\int_0^{\lambda_{\ell-1}} \!\!\! d\sigma \,
{v_{\ell-1}}_{\sigma}
Y_{\lambda_0,\dots\lambda_{\ell-1}+\lambda_\ell-\sigma\dots,\lambda_n}
(v_0\otimes\stackrel{\ell-1}{\check{\dots}}\otimes v_n\otimes 1) \\
& + 
(-1)^{p_\ell(1+\bar p_{-1,\ell})}
\int_0^{\lambda_\ell}
\!d\sigma\,
{v_\ell}_{\sigma}
Y_{\lambda_0,\dots\lambda_{\ell-1}+\lambda_\ell-\sigma\dots,\lambda_n}
(v_0\otimes\stackrel{\ell}{\check{\dots}}\otimes v_n\otimes 1) \\
& - \sum_{\substack{ i=0 \\ i\neq\ell-1,\ell }}^{n}
(-1)^{
p_i\bar p_{-1,i}
+
\bar p_{i,\ell-1}
}
{v_i}_{\lambda_i}
Y^{z_0,\stackrel{i}{\check{\dots}},z_n}_{\lambda_0,\stackrel{i}{\check{\dots}},\lambda_n}
\Big(v_0\otimes\stackrel{i}{\check{\dots}}\otimes v_n\otimes 
\frac1{z_{\ell-1}-z_\ell}\Big) \\
& - 
\int_0^{\lambda_{\ell-1}}\!\!\!\!\!\!\!\!d\sigma\,
Y_{\lambda_0,\dots \lambda_{\ell-1}\!+\!\lambda_\ell \dots,\lambda_n}
\big(
v_0\otimes \dots\otimes [{{v_{\ell-1}}_{\sigma}{v_\ell}}] \otimes\dots \otimes v_n\otimes
1
\big) \\
& - \!\!\! \sum_{\substack{ i=0 \\ i\neq\ell-1,\ell }}^{n} \!\!\!
(-1)^{ p_{\ell-1}(1+\bar p_{i,{\ell-1}}) }
Y^{z_0,\stackrel{\ell-1}{\check{\dots}},z_n}_{
\lambda_0,\dots \lambda_i+\lambda_{\ell-1} \stackrel{\ell-1}{\check{\dots}},\lambda_n}
\Big(
v_0\otimes \cdots [{v_i}_{\lambda_i}{v_{\ell-1}}] \stackrel{\ell-1}{\check{\dots}} \otimes v_n\otimes
\frac1{z_i-z_\ell}
\Big) \\
& + \sum_{\substack{ i=0 \\ i\neq\ell-1,\ell }}^{n} 
(-1)^{p_\ell\bar p_{i,\ell}}
Y^{z_0,\stackrel{\ell}{\check{\dots}},z_n}_{
\lambda_0,\dots \lambda_i+\lambda_\ell \stackrel{\ell}{\check{\dots}},\lambda_n}
\Big(
v_0\otimes \cdots [{v_i}_{\lambda_i}{v_\ell}] \stackrel{\ell}{\check{\dots}} \otimes v_n\otimes
\frac1{z_{\ell-1}-z_i}
\Big) \\
& + \!\!\!\! \sum_{\stackrel{ 0\leq i<j \leq n }{ i,j\not\in\{\ell-1,\ell\} }} \!\!\!\!
(-1)^{
p_j\bar p_{i,j}+\bar p_{j-1,\ell-1}
}
Y^{z_0,\stackrel{j}{\check{\dots}},z_n}_{
\lambda_0,\dots \lambda_i+\lambda_j \stackrel{j}{\check{\dots}},\lambda_n}
\Big(
v_0\otimes\cdots [{v_i}_{\lambda_i}{v_j}]\stackrel{j}{\check{\dots}} \otimes v_n\otimes
\frac1{z_{\ell-1}-z_\ell}
\Big) 
\,,
\end{split}
\end{equation}
where we let $p_i=p(v_i)$, $\bar p_i=\bar p(v_i)$, and
\begin{equation}\label{eq:pibar}
\bar p_{i,j}
=
\left\{\begin{array}{ll}
\vphantom{\Big(}
\bar p(Y)+\bar p(v_0)+\dots+\bar p(v_j) & \text{ if }\; i=-1 \text{ and }\, j\geq0
\,, \\
\vphantom{\Big(}
\bar p(v_{i+1})+\dots+\bar p(v_j) & \text{ if }\; 0\leq i\leq j 
\,, \\
\vphantom{\Big(}
\bar p(v_{j+1})+\dots+\bar p(v_i) & \text{ if }\; 0\leq j\leq i
\,.
\end{array}\right.
\end{equation}
\end{proposition}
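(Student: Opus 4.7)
My plan is to derive \eqref{eq:wick11} from the single cocycle equation
$(dY)^{z_0,\dots,z_n}_{\la_0,\dots,\la_n}(v_0\otimes\cdots\otimes v_n\otimes h)=0$
with $h(z_0,\dots,z_n)=(z_{\ell-1}-z_\ell)^{-1}\in\mc O_{n+1}^{\star T}$, using the explicit formula \eqref{eq:dY} for the cohomology differential $d$. Among the many summands produced, exactly one will contain the normally ordered product ${:}v_{\ell-1}v_\ell{:}$, coming from the $(i,j)=(\ell-1,\ell)$ term of the second sum in \eqref{eq:dY}; solving the cocycle equation for that summand will yield \eqref{eq:wick11}.

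I will use the following natural decompositions of $h$: for $i\notin\{\ell-1,\ell\}$ take $g_i=1$ and $f_i=h$; for $i\in\{\ell-1,\ell\}$ take $f_i=1$ and $g_i=h$; for the pair $(i,j)=(\ell-1,\ell)$ take $f_{\ell-1,\ell}(z_{\ell-1,\ell})=(z_{\ell-1}-z_\ell)^{-1}$ and $g_{\ell-1,\ell}=1$; and for all other pairs take $f_{ij}=1$ and $g_{ij}=h$. With these choices, each term of the first sum of \eqref{eq:dY} with $i\notin\{\ell-1,\ell\}$ directly reproduces, after sign comparison, the corresponding term in the fifth line of \eqref{eq:wick11}. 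For $i\in\{\ell-1,\ell\}$, the operator $g_i(-\nabla_{\la_0},\dots,-\nabla_{\la_n})$ is (up to sign) the geometric expansion of $(z_{\ell-1}-z_\ell)^{-1}$ with $z_{\ell-1}\to-\nabla_{\la_{\ell-1}}$, $z_\ell\to-\nabla_{\la_\ell}$; applied to ${v_i}_{\la_i}Y(\cdots\otimes 1)$ and interpreted via \eqref{eq:notation-lambda2}, it produces $\int^{\la_i}d\sigma\,{v_i}_\sigma Y(\cdots)$, which by \eqref{eq:int-lambda3} splits into ${:}(e^{\partial\partial_{\la_i}}v_i)Y(\cdots){:}$ and $\int_0^{\la_i}d\sigma\,{v_i}_\sigma Y(\cdots)$; these are the contributions in lines 1--4 on the right-hand side of \eqref{eq:wick11}.

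The crucial $(i,j)=(\ell-1,\ell)$ term in the second sum of \eqref{eq:dY} evaluates, via
$f_{\ell-1,\ell}(-\nabla_{\la_{\ell-1}})[{v_{\ell-1}}_{\la_{\ell-1}}v_\ell]=-\nabla_{\la_{\ell-1}}^{-1}[{v_{\ell-1}}_{\la_{\ell-1}}v_\ell]=-\int^{\la_{\ell-1}}d\sigma\,[{v_{\ell-1}}_\sigma v_\ell]$
together with the trivial action of $e^{-\partial_{z_{\ell-1}}\partial_{\la_{\ell-1}}}$ (since $g_{\ell-1,\ell}=1$), to $-Y(\cdots\otimes\int^{\la_{\ell-1}}d\sigma\,[{v_{\ell-1}}_\sigma v_\ell]\otimes\cdots\otimes 1)$; splitting the integral via \eqref{eq:int-lambda3} produces the summand containing ${:}v_{\ell-1}v_\ell{:}$, which after transposition becomes the left-hand side of \eqref{eq:wick11}, plus the integral in line 6 of the right-hand side. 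Finally, for the remaining pairs $(i,j)$ in the second sum---those with exactly one of $i,j$ in $\{\ell-1,\ell\}$ and those with neither in $\{\ell-1,\ell\}$---the decomposition $f_{ij}=1$, $g_{ij}=h$ reproduces the three $\la$-bracket sums in lines 7--9 of \eqref{eq:wick11}.

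The main obstacle will be the meticulous bookkeeping of signs: one must verify that $\gamma_i,\gamma_{ij}$ from \eqref{eq:gammai}, combined with the symmetry \eqref{20160629:eq5} used to reorder arguments of $Y$ after the suppression of $v_{\ell-1}$ or $v_\ell$, exactly reproduce the exponents $\bar p_{i,j}$ defined in \eqref{eq:pibar}. A secondary difficulty is the resummation in step above: the operator-valued series $\sum_{k\geq 0}(-\nabla_{\la_{\ell-1}})^{-k-1}(-\nabla_{\la_\ell})^k$ applied to ${v_{\ell-1}}_{\la_{\ell-1}}Y(\cdots)$, after rewriting via \eqref{eq:notation-lambda2} and the sesquilinearity \eqref{20160629:eq4} of $Y$, must collapse into the clean exponential form ${:}(e^{\partial\partial_{\la_{\ell-1}}}v_{\ell-1})Y(\cdots){:}$ appearing in the first line of \eqref{eq:wick11}.
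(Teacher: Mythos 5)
Your plan is exactly the paper's proof: evaluate $dY=0$ at $h=(z_{\ell-1}-z_\ell)^{-1}$ using \eqref{eq:dY} with precisely the decompositions $f_i,g_i,f_{ij},g_{ij}$ you list, resum the geometric expansion of $(\nabla_{\lambda_\ell}-\nabla_{\lambda_{\ell-1}})^{-1}$ via \eqref{eq:notation-lambda2} and split with \eqref{eq:int-lambda3} to get the normally ordered and integral terms, extract the ${:}v_{\ell-1}v_\ell{:}$ term from the $(\ell-1,\ell)$ pair, and handle the remaining pairs (and the sign bookkeeping through \eqref{eq:gammai}, the symmetry \eqref{20160629:eq5}, sesquilinearity and skewsymmetry) just as the paper does in \eqref{eq:wick1}--\eqref{eq:wick10}. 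The proposal is correct and essentially identical in route to the published argument.
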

\begin{proof}
The function $h=(z_{\ell-1}-z_\ell)^{-1}$
decomposes as in \eqref{eq:dec1} with 
\begin{align*}
& f_i=h\,\,,\,\,\,\, g_i=1
\,\,\text{ for }\,\, i\neq\ell-1,\ell\,, \\
& f_i=1 \,\,,\,\,\,\, g_i=h
\,\,\text{ for }\,\, i\in\{\ell-1,\ell\}\,,
\end{align*}
and as in \eqref{eq:dec2} with 
\begin{align*}
& f_{ij}=1 \,\,,\,\,\,\, g_{ij}=h \,\,\text{ for } (i,j)\neq(\ell-1,\ell) \,, \\
& f_{\ell-1,\ell}=h\,\,,\,\,\,\, g_{\ell-1,\ell}=1 
\,.
\end{align*}
Hence, if we evaluate equation \eqref{eq:dY} at $h$, we get
\begin{equation}\label{eq:wick1}
\begin{split}
& (dY)^{z_0,\dots,z_k}_{\lambda_0,\dots,\lambda_k}\Big(v_0\otimes\dots\otimes v_k \otimes\frac1{z_{\ell-1}-z_\ell}\Big) \\
& = (-1)^{\gamma_{\ell-1}}
(\nabla_{\lambda_\ell}-\nabla_{\lambda_{\ell-1}})^{-1}
{v_{\ell-1}}_{\lambda_{\ell-1}}
Y_{\lambda_0,\stackrel{\ell-1}{\check{\dots}},\lambda_n}
(v_0\otimes\stackrel{\ell-1}{\check{\dots}}\otimes v_n\otimes 1) \\
& + (-1)^{\gamma_\ell}
(\nabla_{\lambda_\ell}-\nabla_{\lambda_{\ell-1}})^{-1}
{v_\ell}_{\lambda_\ell}
Y_{\lambda_0,\stackrel{\ell}{\check{\dots}},\lambda_n}
(v_0\otimes\stackrel{\ell}{\check{\dots}}\otimes v_n\otimes 1) \\
& + \sum_{i\neq\ell-1,\ell} (-1)^{\gamma_i}
{v_i}_{\lambda_i}
Y^{z_0,\stackrel{i}{\check{\dots}},z_n}_{\lambda_0,\stackrel{i}{\check{\dots}},\lambda_n}
\Big(v_0\otimes\stackrel{i}{\check{\dots}}\otimes v_n\otimes 
\frac1{z_{\ell-1}-z_\ell}\Big) \\
& +
(-1)^{\gamma_{\ell-1,\ell}}
Y_{\lambda_{\ell-1}+\lambda_\ell,
\lambda_0,\stackrel{\ell-1,\ell}{\check{\dots}},\lambda_n}
\Big(
(-\nabla_{\lambda_{\ell-1}})^{-1}[{{v_{\ell-1}}_{\lambda_{\ell-1}}{v_\ell}}]
\otimes
v_0\otimes\stackrel{\ell-1,\ell}{\check{\dots}} \otimes v_n\otimes
1
\Big) \\
& +
\sum_{\substack{ 0\leq i<j\leq n \\ (i,j)\neq(\ell-1,\ell) }} (-1)^{\gamma_{ij}}
Y^{w,z_0,\stackrel{i}{\check{\dots}}\stackrel{j}{\check{\dots}},z_n}_{\lambda_i+\lambda_j,
\lambda_0,\stackrel{i}{\check{\dots}}\stackrel{j}{\check{\dots}},\lambda_n}
\Big(
e^{-\partial_{z_i}\partial_{\lambda_i}}
\Big (\\
& \qquad\qquad
[{v_i}_{\lambda_i}{v_j}]
\otimes
v_0\otimes\stackrel{i}{\check{\dots}}\stackrel{j}{\check{\dots}} \otimes v_n\otimes
\frac1{z_{\ell-1}-z_\ell}
\Big )\Big |_{z_i=z_j=w}
\Big) \,,
\end{split}
\end{equation}
where $\gamma_i,\gamma_{i,j}\in\mb Z/2\mb Z$ are as in \eqref{eq:gammai}.
According to the explanation of equation \eqref{eq:dY},
in the first term in the right-hand side of \eqref{eq:wick1} we need 
to expand $(\nabla_{\lambda_\ell}-\nabla_{\lambda_{\ell-1}})^{-1}$
by geometric series in non-negative powers of $\nabla_{\lambda_\ell}$
and use the notation \eqref{eq:notation-lambda2}.
Recalling \eqref{eq:int-lambda3}, we get as a result
\begin{equation}\label{eq:wick2}
\begin{split}
& -
(-1)^{\gamma_{\ell-1}}
\sum_{n=0}^\infty
\frac1{n!}
\int^{\lambda_{\ell-1}} \!\!\! d\sigma \,
{((\partial+\lambda_{\ell-1})^nv_{\ell-1})}_{\sigma}
\Big(\frac{\partial^n}{\partial{\lambda_\ell}^{n}}
Y_{\lambda_0,\stackrel{\ell-1}{\check{\dots}},\lambda_n}
(v_0\otimes\stackrel{\ell-1}{\check{\dots}}\otimes v_n\otimes 1) 
\Big) \\
& = -
(-1)^{\gamma_{\ell-1}}
{:}
{(e^{\partial\partial_{\lambda_{\ell-1}}}v_{\ell-1})}
Y_{\lambda_0,\dots\lambda_{\ell-1}+\lambda_\ell\dots,\lambda_n}
(v_0\otimes\stackrel{\ell-1}{\check{\dots}}\otimes v_n\otimes 1) 
{:} \\
& -
(-1)^{\gamma_{\ell-1}}
\int_0^{\lambda_{\ell-1}} \!\!\! d\sigma \,
{v_{\ell-1}}_{\sigma}
Y_{\lambda_0,\dots\lambda_{\ell-1}+\lambda_\ell-\sigma\dots,\lambda_n}
(v_0\otimes\stackrel{\ell-1}{\check{\dots}}\otimes v_n\otimes 1) 
\,.
\end{split}
\end{equation}
Similarly, the second term in the right-hand side of \eqref{eq:wick1} is
\begin{equation}\label{eq:wick3}
\begin{split}
& (-1)^{\gamma_\ell}
{:}
{(e^{\partial\partial_{\lambda_\ell}} v_\ell)}
Y_{\lambda_0,\dots\lambda_{\ell-1}+\lambda_\ell\dots,\lambda_n}
(v_0\otimes\stackrel{\ell}{\check{\dots}}\otimes v_n\otimes 1) 
{:}
\\
& + 
(-1)^{\gamma_\ell}
\int_0^{\lambda_\ell}
\!d\sigma\,
{v_\ell}_{\sigma}
Y_{\lambda_0,\dots\lambda_{\ell-1}+\lambda_\ell-\sigma\dots,\lambda_n}
(v_0\otimes\stackrel{\ell}{\check{\dots}}\otimes v_n\otimes 1) 
\,.
\end{split}
\end{equation}
The fourth term 
is, by the notation \eqref{eq:notation-lambda2}
and the symmetry conditions \eqref{20160629:eq5},
\begin{equation}\label{eq:wick5}
\begin{split}
& - (-1)^{\bar p(Y)+\bar p(v_0)+\dots+\bar p(v_{\ell-1})}
Y_{\lambda_0,\dots \lambda_{\ell-1}+\lambda_\ell \dots,\lambda_n}
\big(
v_0\otimes\cdots {:}v_{\ell-1}v_\ell{:} \cdots \otimes v_n\otimes
1
\big) \\
& - (-1)^{\bar p(Y)+\bar p(v_0)+\dots+\bar p(v_{\ell-1})}
\!\!\!\!
\int_0^{\lambda_{\ell-1}}\!\!\!\!\!\!\!\!d\sigma\,
Y_{\lambda_0,\dots \lambda_{\ell-1}\!+\!\lambda_\ell \dots,\lambda_n}
\big(
v_0\otimes \cdots {[{{v_{\ell-1}}_{\sigma}{v_\ell}}]} \cdots \otimes v_n\otimes
1
\big) 
\,.
\end{split}
\end{equation}
Let us consider now the last, fifth term of the right-hand side of \eqref{eq:wick1}.
We sum over all pairs of indices $i,j$ such that $1\leq i<j\leq n$ and $(i,j)\neq(\ell-1,\ell)$.
The terms with $i<j=\ell-1$ give
\begin{equation}\label{eq:wick6}
\begin{split}
\sum_{i=0}^{\ell-2} &
(-1)^{\bar p(Y)+\bar p(v_0)+\dots+\bar p(v_i)+\bar p(v_{\ell-1})(\bar p(v_{i+1})+\dots+\bar p(v_{\ell-2}))}
\\
& \times Y^{z_0,\stackrel{\ell-1}{\check{\dots}},z_n}_{
\lambda_0,\dots \lambda_i+\lambda_{\ell-1} \stackrel{\ell-1}{\check{\dots}},\lambda_n}
\Big (
v_0\otimes \cdots [{v_i}_{\lambda_i}{v_{\ell-1}}] \stackrel{\ell-1}{\check{\dots}} \otimes v_n\otimes
\frac1{z_i-z_\ell}
\Big ) \,.
\end{split}
\end{equation}
The terms with $i<\ell-1,\,j=\ell$ give
\begin{equation}\label{eq:wick7}
\begin{split}
\sum_{i=0}^{\ell-2} &
(-1)^{
\bar p(Y)
+
\bar p(v_0)+\dots+\bar p(v_i)
+
\bar p(v_\ell)(\bar p(v_{i+1})+\dots+\bar p(v_{\ell-1}))
} \\
& \times
Y^{z_0,\stackrel{\ell}{\check{\dots}},z_n}_{
\lambda_0,\dots \lambda_i+\lambda_\ell \stackrel{\ell}{\check{\dots}},\lambda_n}
\Big (
v_0\otimes \cdots [{v_i}_{\lambda_i}{v_\ell}] \stackrel{\ell}{\check{\dots}} \otimes v_n\otimes
\frac1{z_{\ell-1}-z_i}
\Big ) \,.
\end{split}
\end{equation}
The terms with $i=\ell-1,\,j>\ell$ give, by the first sesquilinearity condition \eqref{20160629:eq4} 
and the skewsymmetry of the $\lambda$-bracket,
\begin{equation}\label{eq:wick8}
\begin{split}
\sum_{j=\ell+1}^n &
(-1)^{
\bar p(Y)
+
\bar p(v_0)+\stackrel{\ell-1}{\check{\dots}}+\bar p(v_j)
+
\bar p(v_{\ell-1})(\bar p(v_\ell)+\dots+\bar p(v_j))
} \\
& \times
Y^{z_0,\stackrel{\ell-1}{\check{\dots}},z_n}_{
\lambda_0,\stackrel{\ell-1}{\check{\dots}} \lambda_{\ell-1}+\lambda_j \dots,\lambda_n}
\Big(
v_0\otimes \stackrel{\ell-1}{\check{\dots}} [{v_j}_{\lambda_j}{v_{\ell-1}}] \dots \otimes v_n\otimes
\frac1{z_{j}-z_\ell}
\Big) \,.
\end{split}
\end{equation}
Similarly, the terms with $i=\ell<j$ give
\begin{equation}\label{eq:wick9}
\begin{split}
\sum_{j=\ell+1}^n &
(-1)^{
\bar p(Y)
+
\bar p(v_0)+ \stackrel{\ell}{\check{\dots}} + \bar p(v_j)
+
\bar p(v_\ell)(\bar p(v_{\ell+1})+\dots+\bar p(v_j))
} \\
& \times
Y^{z_0,\stackrel{\ell}{\check{\dots}},z_n}_{
\lambda_0,\stackrel{\ell}{\check{\dots}} \lambda_\ell+\lambda_j \dots,\lambda_n}
\Big(
v_0\otimes\stackrel{\ell}{\check{\dots}} [{v_j}_{\lambda_j}{v_\ell}] \dots \otimes v_n\otimes
\frac1{z_{\ell-1}-z_j}
\Big) \,.
\end{split}
\end{equation}
Finally, the terms with $\{i,j\}\cap\{\ell-1,\ell\}=\emptyset$ give
\begin{equation}\label{eq:wick10}
\begin{split}
& \sum_{\stackrel{ 0\leq i<j \leq n }{ i,j\not\in\{\ell-1,\ell\} }}
(-1)^{\gamma_{ij}}
Y^{w,z_0,\stackrel{i}{\check{\dots}}\stackrel{j}{\check{\dots}},z_n}_{\lambda_i+\lambda_j,
\lambda_0,\stackrel{i}{\check{\dots}}\stackrel{j}{\check{\dots}},\lambda_n}
\Big(
[{v_i}_{\lambda_i}{v_j}]
\otimes
v_0\otimes\stackrel{i}{\check{\dots}}\!\!\stackrel{j}{\check{\dots}} \otimes v_n\otimes
\frac1{z_{\ell-1}-z_\ell}
\Big) \,,
\end{split}
\end{equation}
Combining all equations \eqref{eq:wick1}--\eqref{eq:wick10}, 
and using the assumption that $dY=0$, we obtain \eqref{eq:wick11}.
\end{proof}

We can compute more explicitly formula \eqref{eq:wick11} for $n=1$ and $2$. 
First, if $Y$ is a closed element in $C^1_{\ch}(V,M)=\Hom_{\mb F[\partial]}(V,M)$, we get:
\begin{align*}
Y({:}ab{:}) 
&=
(-1)^{p(a)\bar p(Y)}
{:} a Y(b) {:} 
+ 
(-1)^{p(a)p(b) + p(b)\bar p(Y)}
{:} b Y(a) {:}
\\
& +
(-1)^{p(a)\bar p(Y)}\!\!\!
\int_0^{\mu} \!\!\! d\sigma \,
{a}_{\sigma}
Y(b) 
- 
\int_0^{\mu}\!\!d\sigma\,
Y([{a}_{\sigma}{b}])
\\
&+ 
(-1)^{ p(a)p(b) + p(b)\bar p(Y) }\!\!\!
\int_0^{-\mu-\partial}
\!\!\!\!\!\!d\sigma\,
{b}_{\sigma}
Y(a)
\,.
\end{align*}
The above equation holds for every $\mu$, hence it gives
\begin{align*}
Y({:}ab{:}) 
&=
(-1)^{p(a)\bar p(Y)}
{:} a Y(b) {:} 
+ 
(-1)^{p(a)p(b) + p(b)\bar p(Y)}
{:} b Y(a) {:}
\\
& -
(-1)^{ p(a)p(b) + p(b)\bar p(Y) }
\int_{-\partial}^0
\!d\sigma\,
{b}_{\sigma}
Y(a)
\end{align*}
and
\begin{equation*}
Y([{a}_{\lambda}{b}])
=
(-1)^{p(a)\bar p(Y)}
{a}_{\lambda}Y(b) 
-
(-1)^{ p(a)p(b) + p(b)\bar p(Y) }
{b}_{-\lambda-\partial}
Y(a)
\,.
\end{equation*}
If we take the adjoint representation $M=V$ and use the skewsymmetry of the $\lambda$-bracket
and the quasicommutativity of the normally ordered product, the above equations become
\begin{equation*}
\begin{split}
Y({:}ab{:}) 
&=
{:}Y(a)b{:}
+
(-1)^{p(a)\bar p(Y)}
{:} a Y(b) {:} \,,
\\
Y([{a}_{\lambda}{b}])
&=
(-1)^{p(a)\bar p(Y)}
[{a}_{\lambda}Y(b)]
+
[Y(a)_\lambda b]
\,,
\end{split}
\end{equation*}
which means that $Y$ is a derivation (of parity $\bar p(Y)$) of the vertex algebra $V$.

Next, we discuss the special case of formula \eqref{eq:wick11} for $n=2$.
For simplicity, we consider the case of the adjoint module $M=V$.
Recall that an element $Y\in C^2_{\ch}(V)$
is a map 
\begin{equation*}
Y_\lambda=Y_{\lambda}^{z,w} \colon V^{\otimes2}\otimes\mb F[(z-w)^{\pm1}]
\to V[\lambda,\mu]/\langle\partial+\lambda+\mu\rangle\simeq V[\lambda]
\,.
\end{equation*}
Let us denote, according to \eqref{eq:Xint},
the normally ordered product and the $\lambda$-bracket
corresponding to $Y$ by
\begin{equation}\label{eq:Ynot}
\begin{split}
{:}ab{:}^Y
\, &=
(-1)^{p(a)} 
Y^{z,w}_{0} \Big( a \otimes b \otimes \frac1{w-z} \Big) 
\,, \\
[a_\lambda b]^Y
&=
(-1)^{p(a)} 
Y_{\lambda} ( a \otimes b \otimes 1 ) 
\,,
\end{split}
\end{equation}
so that
\begin{equation}\label{eq:Ynot2}
(-1)^{p(a)} Y^{z,w}_{\la} \Big(a \otimes b \otimes \frac1{w-z} \Big) 
=\,
{:}ab{:}^Y + \int_0^\la d\si [a_\si b]^Y \,.
\end{equation}
Then we have the following corollary.
\begin{corollary}\label{cor:wick12}
Let\/ $V$ be a vertex algebra and\/
$Y\in C^2_{\ch}(V)$ be a closed element of the VA cohomology complex: $dY=0$. 
Then with the notation \eqref{eq:Ynot}, we have:
\smallskip
\begin{align}
\notag
[a_\lambda {:}bc{:}]^Y
&+ 
(-1)^{
p(a)(\bar p(Y)+1)
}
[ a_{\lambda} {:}bc{:}^Y ] 
=
{:} [a_{\lambda}b] c {:}^Y
+
{:} [a_\lambda b]^Y c {:} \\
\label{eq:wick1a} 
& +
(-1)^{p(a)p(b)}
{:} b [a_{\lambda}c] {:}^Y
+
(-1)^{
p(b)(p(a)+\bar p(Y)+1)
}
{:} b [a_\lambda c]^Y {:} \\
\notag
& + 
\int_{0}^{\lambda}
d\tau
\big[
{[a_\lambda b]^Y}_{\tau} c
\big]
+
\int_0^\lambda d\sigma 
[ [a_{\lambda}b]_\sigma c]^Y
\,, 
\\[10pt]
\notag
[a_\lambda [{b_{\mu}c}] ]^Y
&+
(-1)^{
p(a)(\bar p(Y)+1)
}
[ a_{\lambda}  [b_\mu c]^Y ]
\\ \label{eq:wick1b}
& =
(-1)^{ p(a)p(b) }
[ b_\mu [a_{\lambda}c] ]^Y
+
(-1)^{
p(b)(p(a)+\bar p(Y)+1)
}
[ b_{\mu} [a_\lambda c]^Y ] 
\\ \notag
& +
[ {[a_\lambda b]^Y}_{\lambda+\mu} c ]
+
[ [a_{\lambda}b]_{\lambda+\mu} c]^Y
\,,
\\[10pt] \notag
[ {:}ab{:} _\lambda c ]^Y
&-
[ {{:}ab{:}^Y}_\lambda c ]
\\ \label{eq:wick2a} 
& =
{:} (e^{\partial\partial_\lambda}a) [b_{\lambda}c] {:}^Y
+
(-1)^{ p(a)(\bar p(Y)+1) }
{:} {(e^{\partial\partial_{\lambda}}a)} [b_\lambda c]^Y {:} 
\\ \notag
& +
(-1)^{ p(a)p(b) }
{:} (e^{\partial\partial_\lambda}b) [a_{\lambda}c] {:}^Y
+ 
(-1)^{ p(b)(p(a)+\bar p(Y)+1) }
{:} (e^{\partial\partial_{\lambda}} b) [a_\lambda c]^Y {:}
\\ \notag
& +
(\!-1)^{ p(a)p(b) }
\int_0^\lambda
[ b_{\sigma} [a_{\lambda-\sigma}c] ]^Y
+
(\!-1)^{
p(b)(p(a)+\bar p(Y)+1)
}
\int_0^{\lambda}
\!d\sigma\,
[ b_{\sigma} [a_{\lambda-\sigma} c]^Y ]
\,, 
\\[10pt] \notag
[ [{a_{\mu}b}]_\lambda c ]^Y
&+
[ {[a_\mu b]^Y}_\lambda c ]
\\ \label{eq:wick2b}
& =
[ a_\mu [b_{\lambda-\mu}c] ]^Y
+
(-1)^{
p(a)(\bar p(Y)+1)
}
[ a_{\mu} [b_{\lambda-\mu} c]^Y ] 
\\ \notag
& -
(-1)^{ 
p(a)p(b)
}
[ b_{\lambda-\mu} [a_{\mu}c] ]^Y
-
(-1)^{
p(b)(p(a)+\bar p(Y)+1)
}
[ b_{\lambda-\mu} [a_\mu c]^Y ]
\,.
\end{align}
\end{corollary}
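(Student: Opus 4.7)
The strategy is to derive the four identities from two applications of the hypothesis $dY=0$: the Wick-type formula of Proposition \ref{prop:wick-formula} with $n=2$ (which handles the normally-ordered-product identities), and the direct evaluation of the cocycle condition $dY(a\otimes b\otimes c\otimes 1)=0$ via \eqref{eq:dY} (which handles the pure bracket identities).

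For \eqref{eq:wick1b} and \eqref{eq:wick2b}, which contain no normally-ordered-product terms and no integrals, I would use the cocycle condition $dY(a\otimes b\otimes c\otimes 1)=0$. With $h=1$, the decompositions \eqref{eq:dec1}--\eqref{eq:dec2} give $f_i=g_i=f_{ij}=g_{ij}=1$, so all operators $g_i(-\nabla_\bullet)$ and exponentials $e^{-\partial_{z_i}\partial_{\lambda_i}}$ in \eqref{eq:dY} collapse to the identity. The result is a sum of six terms: three of the form $\pm [v_i\,_{\lambda_i} Y(v_j\otimes v_k\otimes 1)]$ and three of the form $\pm Y([v_i\,_{\lambda_i}v_j]\otimes v_k\otimes 1)$. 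Applying \eqref{eq:Ynot} to translate each $Y$-evaluation into $[\cdot_\bullet\cdot]^Y$ and using the skew-symmetry of both $[\cdot_\lambda\cdot]$ and $[\cdot_\lambda\cdot]^Y$ to move the term involving $c$ onto the left-hand side, one recovers \eqref{eq:wick1b}. The identity \eqref{eq:wick2b} is then obtained from the same computation by instead solving for $[[a_\mu b]_\lambda c]^Y + [[a_\mu b]^Y_\lambda c]$.

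For \eqref{eq:wick1a} and \eqref{eq:wick2a}, I would apply the Wick-type formula \eqref{eq:wick11} with $(n,\ell)=(2,2)$ and $(2,1)$, respectively, specialized to $v_0=a$, $v_1=b$, $v_2=c$ in the adjoint module $M=V$. Each of the nine terms on the right-hand side of \eqref{eq:wick11} translates via \eqref{eq:Ynot}--\eqref{eq:Ynot2}: $Y(\cdot\otimes\cdot\otimes 1)$ becomes a $Y$-deformed $\lambda$-bracket, while $Y(\cdot\otimes\cdot\otimes(z_i-z_j)^{-1})$ splits as ${:}\cdot\cdot{:}^Y+\int_0^\bullet d\sigma\,[\cdot_\sigma\cdot]^Y$. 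The prefactors $e^{\partial\partial_{\lambda_{\ell-1}}}v_{\ell-1}$ and $e^{\partial\partial_{\lambda_\ell}}v_\ell$ simplify differently in each case: for $\ell=2$ the relevant $Y$-output is, after quotienting by $\partial+\lambda_0+\lambda_1+\lambda_2=0$, independent of the corresponding $\lambda$-variable, so the exponentials collapse to the identity and one recovers the form of \eqref{eq:wick1a}; for $\ell=1$ the exponentials remain nontrivial and produce the displayed $e^{\partial\partial_\lambda}$ factors of \eqref{eq:wick2a}. The remaining terms of \eqref{eq:wick11} involving brackets $[v_i\,_{\lambda_i}v_j]$ inside $Y$ produce precisely the integral terms $\int_0^\lambda d\sigma\,[[a_\lambda b]_\sigma c]^Y$ (and their images under the $(z_i-z_j)^{-1}$ translation) appearing in \eqref{eq:wick1a} and \eqref{eq:wick2a}.

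The main obstacle will be the careful tracking of signs. The cochain $Y$ lies in the complex built on $\mc P^\ch(\Pi V)$, so the signs $\gamma_i$, $\gamma_{ij}$ from \eqref{eq:gammai} and the parity exponents $\bar p_{i,j}$ from \eqref{eq:pibar} involve the opposite parity $\bar p = 1-p$, whereas the identities \eqref{eq:wick1a}--\eqref{eq:wick2b} are stated in terms of the natural parity $p$. Each factor $(-1)^{p(\text{first argument})}$ from \eqref{eq:Ynot} must be carefully combined with these signs, and the translation $\bar p(v)=p(v)+1$ applied consistently to each $v_i$, in order to produce the exact signs displayed in the corollary. A secondary subtlety is that each evaluation at $(z_i-z_j)^{-1}$ contributes \emph{both} a normally-ordered-product term and an integral-of-bracket term via \eqref{eq:Ynot2}, and these must be correctly distributed among the right-hand sides of \eqref{eq:wick1a} (respectively \eqref{eq:wick2a}) to match the stated expressions.
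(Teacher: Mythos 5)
Your proposal is essentially correct, and it splits the work slightly differently from the paper. The paper obtains all four identities from the single source you use for the first two: it evaluates the Wick-type formula \eqref{eq:wick11} at $(n,\ell)=(2,2)$ and $(2,1)$, notes that each resulting identity holds for every value of the auxiliary variable $\mu$ (the left-hand side being independent of it), and then extracts from each evaluation \emph{both} a normally-ordered-product identity and a pure bracket identity — the latter essentially by the $\mu$-dependence, the former by specializing $\mu$ and invoking the symmetry and sesquilinearity of $Y$ together with skewsymmetry and quasicommutativity \eqref{eq:qc} of $V$. Your alternative derivation of \eqref{eq:wick1b} and \eqref{eq:wick2b} directly from the cocycle condition $dY(a\otimes b\otimes c\otimes 1)=0$ is legitimate and arguably cleaner: with $h=1$ all the operators in \eqref{eq:dY} collapse as you say, one gets exactly the six terms of \eqref{eq:wick1b} after using the $S_2$-symmetry \eqref{20160629:eq5} of $Y$ and skewsymmetry of the two brackets, and \eqref{eq:wick2b} is indeed the same identity after the substitution $\lambda\to\mu$, $\mu\to\lambda-\mu$; this is consistent with Lemma \ref{lem:rvm}, which says the restriction to $h=1$ computes the LCA differential on $[\cdot\,_\lambda\,\cdot]^Y$.

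The one place where your plan is thinner than the paper is the passage from the raw $(n,\ell)=(2,2)$ and $(2,1)$ evaluations of \eqref{eq:wick11} to \eqref{eq:wick1a} and \eqref{eq:wick2a}. Those evaluations still contain terms depending on the extra variable $\mu$ separately (e.g.\ $\int_0^{\mu}d\sigma\, b_\sigma Y_\lambda(a\otimes c\otimes1)$, $\int_0^{\mu}d\sigma\,Y_\lambda(a\otimes[b_\sigma c]\otimes1)$, and $Y_{\lambda+\mu}([a_\lambda b]\otimes c\otimes\tfrac1{z-w})$), whereas \eqref{eq:wick1a}--\eqref{eq:wick2a} do not; to eliminate them one must use that the identity holds for all $\mu$ (e.g.\ set $\mu=0$), and then reorganize using quasicommutativity and skewsymmetry — this is precisely how the term ${:}c\,[a_\lambda b]^Y{:}$ and the $\int_0^{-\lambda-\mu-\partial}$ contribution recombine into ${:}[a_\lambda b]^Y c{:}+\int_0^\lambda d\tau\,[{[a_\lambda b]^Y}_\tau c]$ in \eqref{eq:wick1a} (cf.\ \eqref{eq:wick-n=2-l=2a}). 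Your remark about the exponentials $e^{\partial\partial_\lambda}$ collapsing for $\ell=2$ but surviving for $\ell=1$ is correct, and the sign bookkeeping through $\bar p=1-p$ and \eqref{eq:Ynot}--\eqref{eq:Ynot2} is, as you anticipate, the main labor; but you should add the $\mu$-elimination/reorganization step explicitly, since without it the $(2,\ell)$ evaluations do not yet have the shape of \eqref{eq:wick1a} and \eqref{eq:wick2a}.
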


The above equations explain the name ``Wick-type formulas'' for \eqref{eq:wick11}.
Indeed, for $Y=X$, \eqref{eq:wick1a} and \eqref{eq:wick2a}
become the left and right Wick formula for the vertex algebra $V$,
while \eqref{eq:wick1b} and \eqref{eq:wick2a}
become two equivalent forms of the Jacobi identity for $V$.

\begin{proof}[Proof of Corollary \ref{cor:wick12}]
Equation \eqref{eq:wick11} gives for $\ell=2$
\begin{align*}
& 
Y_{\lambda}
\big(
a\otimes{:}bc{:}\otimes
1
\big) \\
& =
(-1)^{p(b)(p(a)+\bar p(Y)+1)}
{:}
b Y_{\lambda}(a\otimes c\otimes 1) 
{:} 
+ (-1)^{p(c)(p(a)+p(b)+\bar p(Y)+1)}
{:}
c Y_{\lambda}(a\otimes b\otimes 1) 
{:} \\
& +
(-1)^{p(b)(p(a)+\bar p(Y)+1)}
\int_0^{\mu} \!\!\! d\sigma \,
\big[
b_{\sigma} Y_{\lambda}(a\otimes c\otimes 1) 
\big] \\
& + 
(-1)^{p(c)(p(a)+p(b)+\bar p(Y)+1)}
\int_0^{-\lambda-\mu-\partial}
\!d\sigma\,
\big[
c_{\sigma} Y_{\lambda}(a\otimes b\otimes 1) 
\big] \\
& +
(-1)^{
p(a)\bar p(Y)
+
p(b)
}
\Big [
a_{\lambda} Y^{z,w}_{\mu}\Big(b\otimes c\otimes \frac1{z-w}\Big) 
\Big ]
- 
\int_0^{\mu}\!d\sigma\,
Y_{\lambda}
\big(
a\otimes [{b_{\sigma}c}] \otimes 1
\big) \\
& - 
(-1)^{p(b)}
Y^{z,w}_{\lambda+\mu}
\Big (
[a_{\lambda}b] \otimes c\otimes \frac1{z-w}
\Big ) 
-
(-1)^{p(c)+p(b)p(c)}
Y^{z,w}_{-\mu-\partial}
\Big (
[a_{\lambda}c] \otimes b\otimes \frac1{z-w}
\Big ) 
\,,
\end{align*}
while for $\ell=1$ it gives
\begin{align*}
& 
Y_{\lambda}
\big(
{:}ab{:} \otimes c\otimes 1
\big) \\
& =
(-1)^{p(a)\bar p(Y)}
{:}
{(e^{\partial\partial_{\lambda}}a)}
Y_{\lambda}
(b\otimes c\otimes 1) 
{:} 
+ (-1)^{p(a)p(b)+p(b)\bar p(Y)}
{:}
{(e^{\partial\partial_{\lambda}} b)}
Y_{\lambda}
(a\otimes c\otimes 1) 
{:}
\\
& +
(-1)^{p(a)\bar p(Y)}
\int_0^{\mu} \! d\sigma \,
\big[
a_{\sigma} Y_{\lambda-\sigma}(b\otimes c\otimes 1) 
\big] 
- 
\int_0^{\mu}\!d\sigma\,
Y_{\lambda}
\big(
[{a_{\sigma}b}] \otimes c\otimes
1
\big) 
\\
& + 
(-1)^{p(a)p(b)+p(b)\bar p(Y)}
\int_0^{\lambda-\mu}
\!d\sigma\,
\big[
b_{\sigma} Y_{\lambda-\sigma}(a\otimes c\otimes 1) 
\big]
\\
& - 
(-1)^{
p(c)(p(a)+p(b)+\bar p(Y))
+
p(b)+p(c)
}
\Big [
c_{-\lambda-\partial}
Y^{z,w}_{\mu}
\Big(a\otimes b\otimes \frac1{z-w}\Big) 
\Big ]
\\
& -
(-1)^{ p(a)(1+p(b)) }
Y^{z,w}_{\lambda-\mu}
\Big (
(e^{\partial\partial_\lambda}b) \otimes [a_{\lambda}c] \otimes \frac1{z-w}
\Big ) \\
& -
(-1)^{p(b)}
Y^{z,w}_{\mu}
\Big (
(e^{\partial\partial_\lambda}a) \otimes [b_{\lambda}c] \otimes \frac1{z-w}
\Big ) 
\,.
\end{align*}
The above equations hold for every $\mu$.
Hence, 
by the sesquilinearity and symmetry conditions on $Y$ 
and the vertex algebra axioms, we obtain:
\begin{align}
\notag
Y_{\lambda}
( a &\otimes{:}bc{:}\otimes 1 ) 
\\ \notag
& =
(-1)^{p(b)(p(a)+\bar p(Y)+1)}
{:}
b Y_{\lambda}(a\otimes c\otimes 1) 
{:} 
+ 
{:}
Y_{\lambda}(a\otimes b\otimes 1) c
{:} \\ \label{eq:wick-n=2-l=2a}
& + 
(-1)^{p(c)(p(a)+p(b)+\bar p(Y)+1)}
\int_{-\partial}^{-\lambda-\partial}
\!d\sigma\,
[ c_{\sigma} Y_{\lambda}(a\otimes b\otimes 1) ] 
\\ \notag
& -
(-1)^{
p(a)\bar p(Y)
+
p(b)
}
\Big [ a_{\lambda} Y^{z,w}_0\Big (b\otimes c\otimes \frac1{w-z}\Big ) \Big ] 
\\ \notag
& +
(-1)^{p(b)}
Y^{z,w}_{\lambda}
\Big (
[a_{\lambda}b] \otimes c\otimes \frac1{w-z}
\Big ) 
\\ \notag
&-
(-1)^{(p(a)+1)(p(b)+1)}
Y^{z,w}_{0}
\Big (
b \otimes [a_{\lambda}c] \otimes \frac1{w-z}
\Big ) 
\,,
\end{align}

\begin{align}
\notag
Y_{\lambda}
( a &\otimes [{b_{\mu}c}] \otimes 1 ) 
+
(-1)^{
p(a)\bar p(Y)
+
p(b)
}
[ a_{\lambda}  Y_{\mu}(b\otimes c\otimes 1) ]
\\ \notag
& =
(-1)^{p(b)(p(a)+\bar p(Y)+1)}
[ b_{\mu} Y_{\lambda}(a\otimes c\otimes 1) ]
\\ \label{eq:wick-n=2-l=2b}
&+
(-1)^{
p(a)p(b)+p(a)+p(b)
}
Y_{\mu}
( b \otimes [a_{\lambda}c] \otimes 1 )
\\ \notag
& +
[ {Y_{\lambda}(a\otimes b\otimes 1)}_{\lambda+\mu} c ]
+
(-1)^{p(b)}
Y_{\lambda+\mu}
( [a_{\lambda}b] \otimes c\otimes 1 ) 
\,,
\end{align}

\begin{align}
\notag
& 
Y_{\lambda}
\big(
{:}ab{:} \otimes c\otimes 1
\big) 
\\ \notag
& =
(-1)^{p(a)\bar p(Y)}
{:}
{(e^{\partial\partial_{\lambda}}a)}
Y_{\lambda}
(b\otimes c\otimes 1) 
{:} 
+ (-1)^{p(b)(p(a)+\bar p(Y))}
{:}
{(e^{\partial\partial_{\lambda}} b)}
Y_{\lambda}
(a\otimes c\otimes 1) 
{:}
\\ \label{eq:wick-n=2-l=1a}
& + 
(-1)^{p(b)(p(a)+\bar p(Y))}
\int_0^{\lambda}
\!d\sigma\,
\big[
b_{\sigma} Y_{\lambda-\sigma}(a\otimes c\otimes 1) 
\big]
\\ \notag
& -
(-1)^{ p(b) }
\Big [
Y^{z,w}_{0}
\Big(a\otimes b\otimes \frac1{w-z}\Big) _\lambda c
\Big ]
\\ \notag
& +
(-1)^{ p(a)(1+p(b)) }
Y^{z,w}_{\lambda}
\Big (
(e^{\partial\partial_\lambda}b) \otimes [a_{\lambda}c] \otimes \frac1{w-z}
\Big ) 
\\ \notag
& +
(-1)^{p(b)}
Y^{z,w}_{0}
\Big (
(e^{\partial\partial_\lambda}a) \otimes [b_{\lambda}c] \otimes \frac1{w-z}
\Big ) 
\,,
\end{align}
and
\begin{align}
\notag
& Y_{\lambda}( [{a_{\mu}b}] \otimes c\otimes 1 ) 
+
(-1)^{ p(b) }
[ {Y_{\mu}(a\otimes b\otimes 1)}_\lambda c ]
\\ \label{eq:wick-n=2-l=1b}
& =
(-1)^{p(b)}
Y_{\mu}( a \otimes [b_{\lambda-\mu}c] \otimes 1 ) 
+
(-1)^{p(a)\bar p(Y)}
[ a_{\mu} Y_{\lambda-\mu}(b\otimes c\otimes 1) ] 
\\ \notag
& -
(-1)^{ p(a)(1+p(b)) }
Y_{\lambda-\mu}( b \otimes [a_{\mu}c] \otimes 1 )
-
(-1)^{p(b)(p(a)+\bar p(Y))}
[ b_{\lambda-\mu} Y_{\mu}(a\otimes c\otimes 1) ]
\,.
\end{align}
Rewriting \eqref{eq:wick-n=2-l=2a}--\eqref{eq:wick-n=2-l=1b} using \eqref{eq:Ynot}, we get \eqref{eq:wick1a}--\eqref{eq:wick2b}.
\end{proof}

\subsection{Bounded VA cohomology}\label{subsec:bounded}

Let $V$ be a filtered VA and $M$ be a filtered $V$-module.
Recall that we have the induced decreasing $\mb Z$-filtration $\fil^p C^n_{\ch}(V,M)$
of the superspace $C^n_{\ch}(V,M)$, defined in Section \ref{sec:pchfil}.

\begin{proposition}\label{thm:exhcoc}
\begin{enumerate}[(a)]
\item
The filtration on\/ $C^0_{\ch}(V,M)=M/\partial M$ is exhaustive.
\item\medskip
If\/ $V$ is finitely strongly generated and\/ $Y \in C^1_{\ch}(V,M)$ is such that\/ $dY=0$, 
then\/ $Y \in \fil^p C^1_{\ch}(V,M)$ for some\/ $p\in\mb Z$.
Consequently, the decreasing\/ $\mb Z$-filtration on the space of closed elements of\/ $C^1_{\ch}(V,M)$
is exhaustive.
\end{enumerate}
\end{proposition}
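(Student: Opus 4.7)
Part (a) is immediate: any $\bar m\in C^0_{\ch}(V,M)=M/\partial M$ has a representative $m\in M$, and by exhaustiveness of the filtration on $M$, $m\in\fil^q M$ for some $q\ge0$; hence $\bar m\in\fil^{-q}C^0_{\ch}(V,M)$ by definition \eqref{fil4-ref}.

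For part (b), the starting point is that a closed element $Y\in C^1_{\ch}(V,M)=\Hom_{\mb F[\partial]}(V,M)$ is a derivation of $V$ into $M$; this is the $n=1$ specialization of Proposition \ref{prop:wick-formula}, as already spelled out in the discussion following Corollary \ref{cor:wick12}. Fix a finite-rank $\mb F[\partial]$-submodule $R\subset V$ that strongly generates $V$, with $\mb F[\partial]$-generators $a_1,\dots,a_k$. By exhaustiveness of the filtrations on $V$ and $M$, there exist non-negative integers $s_1,\dots,s_k$ and $t$ such that $a_i\in\fil^{s_i}V$ and $Y(R)\subset\fil^t M$; let $s_0:=\min_i s_i$. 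The plan is to prove that $Y(\fil^N V)\subset\fil^{N-s_0+t}M$ for every $N\ge0$, which places $Y$ in $\fil^{s_0-t}C^1_{\ch}(V,M)$.

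I would establish this bound in two steps. \emph{Step 1.} By Proposition \ref{prop:assgr}, the associated graded $\gr V$ is a graded commutative PVA, which is strongly generated as a PVA by the images $\bar a_i\in\gr^{s_i}V$; hence $\gr^N V$ is $\mb F$-spanned by commutative products $\partial^{n_1}\bar a_{i_1}\cdots\partial^{n_r}\bar a_{i_r}$ with $s_{i_1}+\cdots+s_{i_r}=N$. Lifting these termwise to normally ordered products in $\fil^N V$ and inducting on $N$, one obtains that $\fil^N V$ itself is $\mb F$-spanned by ${:}\partial^{n_1}a_{i_1}\cdots\partial^{n_r}a_{i_r}{:}$ with $s_{i_1}+\cdots+s_{i_r}\le N$. \emph{Step 2.} Iterating the derivation identity on such a product, each resulting summand has the form ${:}\cdots\partial^{n_j}Y(a_{i_j})\cdots{:}$ plus ``integral'' corrections in strictly lower filtration; by the filtered module axioms \eqref{eq:filtr-cond-M}, each summand lies in $\fil^{(\sum_{k\ne j}s_{i_k})+t}M\subset\fil^{N-s_0+t}M$, using $s_{i_j}\ge s_0$ and $\sum_k s_{i_k}\le N$. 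The ``consequently'' clause then follows at once, since every closed $1$-cochain thus lies in some $\fil^p C^1_{\ch}(V,M)$.

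The principal obstacle is Step 1, the lifting argument: one must check that the chosen normal-ordered lifts genuinely span $\fil^N V$ modulo $\fil^{N-1}V$, which relies on the good-filtration hypothesis together with finite strong generation of $V$, mediated by the graded PVA structure on $\gr V$. Step 2 is a direct propagation using the derivation identity and the module-filtration axioms, and should be routine once Step 1 is in place.
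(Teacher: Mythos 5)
Part (a) coincides with the paper's argument. For part (b), your starting point is also the paper's: since $dY=0$, the $n=1$ case of Proposition \ref{prop:wick-formula} shows that $Y$ is a derivation of the normally ordered product, hence it is determined by its values on the finitely many strong generators, and these generators together with their images lie in finite filtration levels; the paper's proof essentially stops there. The genuine gap in your proposal is Step 1. From the hypotheses actually available --- a good filtration in the sense of Definition \ref{def:filtration} and finite strong generation --- it does \emph{not} follow that $\gr V$ is generated as a differential algebra by the classes $\bar a_i\in\gr^{s_i}V$, nor that $\fil^N V$ is spanned by normally ordered monomials in the $a_i$ of total weight $\le N$. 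For a concrete failure, let $V$ be the commutative vertex algebra of differential polynomials in one even variable $a$ (zero $\la$-bracket), strongly generated by $a$, with the filtration $\fil^0V=\mb F\vac$ and $\fil^NV=\{$polynomials of degree $\le 2^N$ in the $a^{(n)}\}$ for $N\ge1$: this filtration is exhaustive, $\mb F[\partial]$-stable and satisfies \eqref{eq:filtr-cond}, and $a\in\fil^1V$ with minimal $s_1=1$; but $\bar a\,\bar a=0$ in $\gr^2V$ because ${:}aa{:}$ already lies in $\fil^1V$, so the differential subalgebra of $\gr V$ generated by $\bar a$ is only $\mb F\oplus\mb F[\partial]\bar a$, while $\fil^NV$ contains monomials of degree $2^N$, far outside the span of monomials of weight $\le N$. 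So the lifting argument of Step 1 cannot be carried out from the stated hypotheses.

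Step 1 does hold --- by the very definition of the filtration --- when the filtration is adapted to the generators as in \eqref{RDe3} (e.g.\ the canonical filtration of Section \ref{sec:4.15}, or more generally one produced by Proposition \ref{prop:vgfil}); in that case your Step 2 is correct (it uses only the derivation identity together with \eqref{eq:filtr-cond} and \eqref{eq:filtr-cond-M}) and yields the uniform estimate $Y(\fil^NV)\subset\fil^{N-s_0+t}M$, hence $Y\in\fil^{s_0-t}C^1_\ch(V,M)$. Note that the sensitivity to the shape of the filtration is not an artifact of your method: by decoupling the filtration level of certain products of generators from that of their $Y$-images (for instance, with two commuting generators $a,b$, the derivation determined by $a\mapsto 0$, $b\mapsto a$, and a good filtration in which the powers $b^{2^j}$ are inserted at low weight), one can arrange that no uniform shift exists at all, so some compatibility between the filtration and the generating set must enter any complete argument. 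The paper's own proof does not attempt this bookkeeping --- it asserts the conclusion once the generators and their images are located in finite filtration pieces --- so your Step 2 is a genuine and correct supplement; but Step 1, which you yourself flagged as the principal obstacle, is exactly where the proposal breaks down as written.
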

\begin{proof}
Part (a) is obvious: if $m\in \fil^rM$,
then the corresponding map $Y\colon\mb C\to M/\partial M$ given by $Y(1)=\bar m$
lies in $\fil^{-r}C^0_{\ch}(V,M)$.
For part (b), recall that an element of $C^1_{\ch}(V,M)$ is an $\mb F[\partial]$-module homomorphism
$Y\colon V\to M$. Suppose that such $Y$ is closed, i.e., $dY=0$.
This, in particular, means that $Y$ is a derivation of the normally ordered products:
\begin{equation}\label{eq:wickn1}
Y
\big({:}uv{:}\big) 
=
(-1)^{p(u)\bar p(Y)}
{:}{u}Y(v) {:} \\
+ 
{:} Y(u){v} {:}
\end{equation}
(which is Wick formula \eqref{eq:wick11} in the special case $n=1$).
Hence, such $Y$ is uniquely determined by its values on the finite set of (strong) 
generators $v_1,\dots,v_s$.
Furthermore, since the filtration on $V$ is exhaustive,
each generator $v_i$ lies in some member of the filtration of $V$,
and since the filtration on $M$ is exhaustive,
the image $Y(v_i)$ lies in some member of the filtration of $M$.
The claim follows.
\end{proof}

We define the \emph{space of bounded} $n$-\emph{cochains} as
\begin{equation}\label{eq:bounded}
C^n_{\chb}(V,M)
=
\bigcup_{p\in\mb Z}\fil^p C^n_{\ch}(V,M)
\,\,,\,\,\,\,
n\in\mb Z_+
\,.
\end{equation}
Note that the differential $d$ on $C^n_{\ch}(V,M)$ shifts the filtration by $1$:
\begin{equation}\label{eq:522}
d(\fil^rC^n_{\ch}(V,M))\subset\fil^{r+1}C^{n+1}_{\ch}(V,M)
\,.
\end{equation}
To see this for the adjoint module,
just observe that 
the element $X$ defining the VA structure lies in $\fil^1C^2_{\ch}(V)$,
hence, by Remark \ref{Wfilgr}(a), 
$d=\ad X$ shifts the filtration by $1$.
For an arbitrary $V$-module $M$,
we obtain the same result by considering the VA $V\oplus M$
and taking the subquotient defining $C^n_{\ch}(V,M)$.
Alternatively,
\eqref{eq:522} can be checked directly from the explicit formula \eqref{eq:dY}
of the differential $d$.
As a consequence of \eqref{eq:522}, 
\begin{equation}\label{eq:bounded2}
C_{\chb}(V,M)=\bigoplus_{n\in\mb Z_+} C^n_{\chb}(V,M)
\end{equation}
is invariant with respect to the action of the differential $d$.
\begin{definition}\label{def:bounded}
Let $V$ be a filtered VA and let $M$ be a filtered $V$-module.
The \emph{bounded} cohomology of $V$ with coefficients in $M$,
denoted by $H_{\chb}(V,M)$, is defined as the cohomology of the complex $(C_{\chb}(V,M),d)$.
\end{definition}

By Proposition \ref{thm:exhcoc}, we have
\begin{equation}\label{eq:H0b}
H^0_{\ch}(V,M)=H^0_{\chb}(V,M)
\,,
\end{equation}
and, provided that $V$ is finitely strongly generated,
\begin{equation}\label{eq:H1b}
H^1_{\ch}(V,M)=H^1_{\chb}(V,M)
\,.
\end{equation}

\begin{remark}
Note that the decreasing $\mb Z$-filtration of $C_{\ch}(V,M)$ induces 
a decreasing $\mb Z$-filtration $\fil^pH^n_{\ch}(V,M)$ in $n$-th cohomology.
Clearly, we have a canonical surjective map 
$\bigcup_{p\in\mb Z}\fil^pH^n_{\ch}(V,M)\twoheadrightarrow H^n_{\chb}(V,M)$;
this does not need to be an isomorphism, since $(dC^{n-1}_{\ch})\cap \fil^pC^n_{\ch}$
might be larger than $(dC^{n-1}_{\chb})\cap \fil^pC^n_{\ch}$.
\end{remark}

\begin{remark}
By Lemma \ref{lem:filsep}, the decreasing $\mb Z$-filtration of $C^n_{\ch}(V,M)$ is separated.
By Proposition \ref{thm:exhcoc}, it is also exhaustive for $n=0$ and $1$
under natural assumptions on $V$ and $M$.
We were unable to prove that it is also exhaustive for $n>1$
under the same assumptions.
This is the reason for considering the bounded VA cohomology $H_{\chb}(V,M)$
in place of the VA cohomology $H_{\ch}(V,M)$.
\end{remark}
%

\section{A spectral sequence for VA cohomology}\label{sec:ss}

Throughout this section, $V$ will be a vertex algebra with a very good filtration \eqref{eq:filtration}
(cf.\ Definition \ref{def:vgfil}).
For simplicity, we consider vertex algebra cohomology with coefficients in the adjoint module $M=V$;
the same results for an arbitrary $V$-module $M$ can be derived by the standard reduction procedure.

\subsection{Spectral sequence from the classical PVA cohomology to the VA cohomology}
\label{sec:ss1}

As in Sections \ref{sec:pchfil} and \ref{subsec:bounded}, an exhaustive increasing $\mb Z_+$-filtration of $V$ 
induces a separated decreasing $\mb Z$-filtration on the cohomology complex $C_{\ch}(V)$,
$\{\fil^p C_{\ch}(V)\}_{p\in\mb Z}$, and the differential $d=\ad_X$ satisfies (cf. \eqref{eq:522})
\begin{equation}\label{eq:dfpcva}
d\colon \fil^p C^n_{\ch}(V) \to \fil^{p+1} C^{n+1}_{\ch}(V) 
\,, \qquad p\in\mb Z \,, \; n\ge 0\,.
\end{equation}
This implies, in particular, that $d \fil^p C^n_{\ch}(V) \subset \fil^p C^{n+1}_{\ch}(V)$.
Moreover, we have a separated, exhaustive, decreasing $\mb Z$-filtration 
on the bounded cohomology subcomplex $C_{\chb}(V)$,
with the differential $d$ given by restriction.

Consider the spectral sequence $\{(E_r,d_r)\}_{r\ge0}$
associated to the filtered complex $(C_{\ch}(V),d)$
or to the bounded subcomplex $(C_{\chb}(V),d)$.
Its definition is reviewed in Appendix \ref{seg:ssfil}.
By Remark \ref{rem:dfil+1}, we have $d_0=0$ and
$$
E_1^{p,q} = E_0^{p,q} = \gr^p C^{p+q}_{\ch} \,, \qquad p,q \in\mb Z \,.
$$
The differential
\begin{equation}\label{eq:d1}
d_1 \colon E_1^{p,q} \to E_1^{p+1,q}
\end{equation}
is induced by the restriction of $d$ to $\fil^p C^{p+q}_{\ch}(V)$.

Recall the definitions \eqref{eq:cva} and \eqref{eq:Ccl} of the complexes $C_{\ch}(V)$ and $C_{\cl}(\mc V)$ in terms of the operads $\mc P^\ch(\Pi V)$ and $\mc P^\cl(\Pi\mc V)$, respectively. 
Since, by assumption, the filtration of $V$ is very good, by Theorem \ref{thm:chcl},
the operads $\gr \mc P^\ch(\Pi V)$ and $\mc P^\cl(\Pi\mc V)$ are isomorphic, where $\mc V = \gr V$.
Therefore, the Lie superalgebras $\gr W_{\ch}(\Pi V)$ and $W_{\cl}(\Pi \mc V)$ are isomorphic.
Note that $\mc V$ inherits a PVA structure from the VA structure of $V$ (see Proposition \ref{prop:assgr}). Let us denote by $\bar X \in W^1_{\cl}(\Pi \mc V)$ the odd element with $[\bar X,\bar X]=0$ that corresponds to the PVA structure via \eqref{eq:Xpva} (with $X$ replaced by $\bar X$). Then $\bar X$ is the image of $X+\fil^2 W_{\ch}^1(\Pi V) \in\gr^1 W_{\ch}(\Pi V)$ under the isomorphism $\gr W_{\ch}(\Pi V)\simeq W_{\cl}(\Pi \mc V)$ (see \cite[Theorem 10.13]{BDSHK18}). The differential in the classical complex $C_{\cl}(\mc V)$ is given by $\bar d=\ad_{\bar X}$ (see Section \ref{sec:clpva}).
Recall that each $C^n_{\cl}(\mc V) = \mc P^\cl(\Pi\mc V)(n)^{S_n}$ inherits a grading $\gr^p C^n_{\cl}(\mc V)$ from the grading \eqref{pclgrading} of $\mc P^\cl(\Pi\mc V)$.

\begin{lemma}\label{lem:d1}
The bigraded complexes\/ $(E_1,d_1)$ and\/ $(C_{\cl}(\mc V),\bar d)$ are isomorphic, i.e.,
\begin{equation}\label{eq:d2}
E_1^{p,q} = \gr^p C^{p+q}_{\ch}(V) \simeq \gr^p C^{p+q}_{\cl}(\mc V)
\,, \qquad p,q \in\mb Z \,,
\end{equation}
and the isomorphisms \eqref{eq:d2} are compatible with the actions of\/ $d_1$ and\/ $\bar d$.
\end{lemma}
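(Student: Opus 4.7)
The plan is to assemble three ingredients already set up in the paper: the isomorphism of graded operads provided by Theorem \ref{thm:chcl}, the functorial compatibility of $\mc P \mapsto W_{\mc P}$ with filtrations (Remark \ref{Wfilgr}), and the identification of $\bar X$ as the principal symbol of $X$ mentioned in the paragraph preceding the lemma.

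First I would identify the $E_1$ page. By \eqref{eq:dfpcva}, the differential $d = \ad_X$ shifts filtration degree strictly by $+1$, so by Remark \ref{rem:dfil+1} we have $d_0 = 0$ and
\begin{equation*}
E_1^{p,q} \;=\; E_0^{p,q} \;=\; \gr^p C^{p+q}_{\ch}(V),
\end{equation*}
with the differential $d_1$ on the right-hand side induced by $d$ via passage to the associated graded.

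Next I would identify the underlying bigraded space. Since the filtration of $V$ is very good, $\Pi V \simeq \gr \Pi V = \Pi \mc V$ as $\mb F[\partial]$-modules, so Theorem \ref{thm:chcl} applied to $\Pi V$ yields an isomorphism of graded operads $\gr \mc P^\ch(\Pi V) \simeq \mc P^\cl(\Pi \mc V)$. Because this isomorphism is $S_n$-equivariant in each arity, taking $S_n$-invariants gives
\begin{equation*}
\gr^p C^n_{\ch}(V) \;=\; \bigl(\gr^p \mc P^\ch(\Pi V)(n)\bigr)^{S_n} \;\simeq\; \bigl(\gr^p \mc P^\cl(\Pi\mc V)(n)\bigr)^{S_n} \;=\; \gr^p C^n_{\cl}(\mc V),
\end{equation*}
which with $n = p+q$ is exactly \eqref{eq:d2}.

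Finally I would check that this isomorphism intertwines $d_1$ and $\bar d$. The functor $\mc P \mapsto W_{\mc P}$ is constructed purely from the $\circ_i$-products and the symmetric group actions in $\mc P$, so the isomorphism above induces an isomorphism of graded Lie superalgebras $\gr W_{\ch}(\Pi V) \simeq W_{\cl}(\Pi \mc V)$ compatible with the bigraded brackets in the sense of Remark \ref{Wfilgr}(b). Since $X \in \fil^1 W^1_{\ch}(\Pi V)$ and its image $X + \fil^2 \in \gr^1 W^1_{\ch}(\Pi V)$ corresponds under this isomorphism to $\bar X$ (by \cite[Theorem 10.13]{BDSHK18}, as recalled right before the lemma), the induced action of $d_1 = [X+\fil^2, \,\cdot\,]$ on $\gr^p C^{p+q}_{\ch}(V)$ matches $\bar d = \ad_{\bar X}$ on $\gr^p C^{p+q}_{\cl}(\mc V)$.

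The main obstacle is not a subtle calculation but careful bookkeeping: one must confirm that the filtration of $\mc P^\ch(\Pi V)$ defined by \eqref{fil4-ref} is compatible with the $\circ_i$-products and the $S_n$-actions, which was already established in \cite[Section 8]{BDSHK18}, and that the isomorphism of Theorem \ref{thm:chcl} preserves all of these structures. Once these compatibilities are invoked, the entire lemma is a purely formal consequence of applying $\mc P \mapsto W_{\mc P}$ to the graded-operad isomorphism, passing to $S_n$-invariants, and observing that the principal symbol of $X$ in $\gr^1$ is $\bar X$.
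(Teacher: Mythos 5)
Your proposal is correct and follows essentially the same route as the paper: the isomorphism \eqref{eq:d2} comes from Theorem \ref{thm:chcl} applied to the very good filtration (passing to $S_n$-invariants via the functor $\mc P \mapsto W_{\mc P}$), and compatibility of $d_1$ with $\bar d$ follows because $\gr W_{\ch}(\Pi V) \simeq W_{\cl}(\Pi \mc V)$ is an isomorphism of Lie superalgebras sending the symbol $X+\fil^2$ to $\bar X$, exactly as in the paper's proof.
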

\begin{proof}
We already know \eqref{eq:d2} from the above discussion. 
For $Y \in \fil^p C^{p+q}_{\ch}(V)$, consider the element
$$
Y+\fil^{p+1} C^{p+q}_{\ch}(V) \in \gr^p C^{p+q}_{\ch}(V) = \gr^p W^{p+q-1}_{\ch}(\Pi V) 
= E_1^{p,q}
$$
and its image
$$
\bar Y \in \gr^p C^{p+q}_{\cl}(\mc V) = \gr^p W^{p+q-1}_{\cl}(\Pi \mc V)
$$
under the isomorphism \eqref{eq:d2}.
By definition,
\begin{equation}\label{eq:d3}
d_1(Y+\fil^{p+1} C^{p+q}_{\ch}(V)) 
= [X,Y] + \fil^{p+2} C^{p+q+1}_{\ch}(V) \in \gr^{p+1} W^{p+q}_{\ch}(\Pi V) = E_1^{p+1,q} \,,
\end{equation}
and
\begin{equation}\label{eq:d4}
\bar d(\bar Y) = [\bar X,\bar Y] \in \gr^{p+1} W^{p+q}_{\cl}(\Pi \mc V) \,. 
\end{equation}
Then \eqref{eq:d3} is mapped to \eqref{eq:d4}, since $\gr W_{\ch}(\Pi V) \simeq W_{\cl}(\Pi \mc V)$ is an isomorphism of Lie superalgebras, by Theorem \ref{thm:chcl}.
\end{proof}

Recall that the cohomology of every filtered complex has an induced filtration, given explicitly by \eqref{eq:fphn}. 
In our case, $\fil^p H^n_{\ch}(V)$ is the image of $\fil^p C^n_{\ch}(V) \cap \ker d$ 
under the canonical projection 
$C^n_{\ch}(V) \cap \ker d \twoheadrightarrow H^n_{\ch}(V)$, namely
$$
\fil^p H^n_{\ch}(V)
=
(\fil^pC^n_{\ch}(V) \cap \ker d)/(dC^{n-1}_{\ch}(V)\cap\fil^pC^n_{\ch}(V))
\,.
$$
Similarly, the filtration $\fil^p H^n_{\chb}(V)$ on bounded $n$-th cohomology 
is the image of the same space $\fil^p C^n_{\ch}(V) \cap \ker d$ 
under the canonical projection 
$C^n_{\chb}(V) \cap \ker d \twoheadrightarrow H^n_{\chb}(V)$, namely
$$
\fil^p H^n_{\chb}(V)
=
(\fil^pC^n_{\ch}(V) \cap \ker d)/(dC^{n-1}_{\chb}(V)\cap\fil^pC^n_{\ch}(V))
\,.
$$
Since $C^{n-1}_{\chb}(V)\subset C^{n-1}_{\ch}(V)$,
we have, in particular, a canonical surjective map 
$\fil^p H^n_{\chb}(V)\twoheadrightarrow\fil^p H^n_{\ch}(V)$.

From Appendix \ref{seg:ssfil} and Lemma \ref{lem:d1}, we obtain the following theorem.

\begin{theorem}\label{thm:ss1}
Let\/ $V$ be a vertex algebra with a very good filtration, and\/ $\mc V=\gr V$ be its associated graded Poisson vertex algebra.
Then there exists a spectral sequence\/ $\{(E_r,d_r)\}_{r\ge1}$, 
whose first term is the classical PVA cohomology complex\/ $(C_{\cl}(\mc V),\bar d)$
and whose limit is\/ $E_\infty^{p,q} \simeq \gr^p H^{p+q}_{\chb}(V)$.
Furthermore, $\gr^p H^{p+q}_{\ch}(V)$ is a quotient space of\/ $E_\infty^{p,q}$.
\end{theorem}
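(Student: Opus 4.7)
The plan is to apply the spectral sequence construction for a filtered cochain complex recalled in Appendix \ref{seg:ssfil} to the bounded subcomplex $C_{\chb}(V)$, identify its $E_1$-page with the classical PVA cohomology complex via Lemma \ref{lem:d1}, and then compare with $H_{\ch}(V)$ by means of the inclusion $C_{\chb}(V)\hookrightarrow C_{\ch}(V)$.

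First I would verify the hypotheses needed for convergence of the spectral sequence. The decreasing $\mb Z$-filtration $\{\fil^p C^n_{\ch}(V)\}_{p\in\mb Z}$ restricts to a filtration of $C_{\chb}(V)$ which is exhaustive by the very definition \eqref{eq:bounded} of bounded cochains, and separated by Lemma \ref{lem:filsep}. Since $d$ shifts the filtration by one (property \eqref{eq:522}), Appendix \ref{seg:ssfil} together with Remark \ref{rem:dfil+1} produces a spectral sequence $\{(E_r,d_r)\}_{r\ge 0}$ with $d_0=0$ and
$$
E_1^{p,q}=E_0^{p,q}=\gr^p C^{p+q}_{\ch}(V)\,,
$$
the differential $d_1$ being induced by $d$ on associated gradeds; exhaustiveness and separatedness yield the convergence $E_\infty^{p,q}\simeq\gr^p H^{p+q}_{\chb}(V)$. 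Invoking Lemma \ref{lem:d1} then identifies $(E_1,d_1)$ with the classical PVA cohomology complex $(C_{\cl}(\mc V),\bar d)$, which establishes the first half of the theorem.

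For the final assertion, the inclusion $C^{n-1}_{\chb}(V)\hookrightarrow C^{n-1}_{\ch}(V)$ forces $dC^{n-1}_{\chb}(V)\subset dC^{n-1}_{\ch}(V)$, so from the formulas for $\fil^p H^n_{\chb}(V)$ and $\fil^p H^n_{\ch}(V)$ recalled just before the theorem one reads off a natural surjection $\fil^p H^n_{\chb}(V)\twoheadrightarrow\fil^p H^n_{\ch}(V)$ that is compatible with the filtrations. Passing to associated gradeds produces the desired surjection $E_\infty^{p,q}\simeq\gr^p H^{p+q}_{\chb}(V)\twoheadrightarrow\gr^p H^{p+q}_{\ch}(V)$, exhibiting the latter as a quotient of the former.

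The only substantive issue, and the reason the clean convergence statement has to be phrased in terms of bounded cohomology, is that the decreasing filtration on $C_{\ch}(V)$ is not known to be exhaustive in degrees $n\ge 2$, so the spectral sequence built from it need not converge to $\gr H_{\ch}(V)$. Passing to $C_{\chb}(V)$ restores exhaustiveness by fiat, at the possible cost of enlarging the object one computes relative to $H_{\ch}(V)$, which is exactly why the theorem can only promise a quotient in that case.
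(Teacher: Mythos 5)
Your argument is correct and matches the paper's proof: the paper likewise runs the Appendix spectral sequence on the filtration of $C_{\chb}(V)$ (equivalently of $C_{\ch}(V)$, whose terms $E_r^{p,q}$ coincide since the filtration pieces $\fil^p C^n_{\ch}(V)$ are the same), identifies $E_1$ with $(C_{\cl}(\mc V),\bar d)$ via Remark \ref{rem:dfil+1} and Lemma \ref{lem:d1}, and obtains $E_\infty^{p,q}\simeq\gr^p H^{p+q}_{\chb}(V)$ from \eqref{eq:egrh1} using exhaustiveness and separatedness. Your derivation of the surjection onto $\gr^p H^{p+q}_{\ch}(V)$ from the canonical map $\fil^p H^n_{\chb}(V)\twoheadrightarrow\fil^p H^n_{\ch}(V)$ is just a repackaging of the paper's appeal to \eqref{eq:egrh}, so the two proofs are essentially identical.
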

\begin{proof}
The isomorphism $E_\infty^{p,q} \simeq \gr^p H^{p+q}_{\chb}(V)$ follows from \eqref{eq:egrh1},
while the surjection $E_\infty^{p,q} \twoheadrightarrow \gr^p H^{p+q}_{\ch}(V)$ follows from \eqref{eq:egrh}.
\end{proof}

We can therefore apply Lemma \ref{lem:fdh}(b) 
to the complex $(C_{\chb}(V),d)$
to obtain upper bounds on the bounded cohomology of $V$.

\begin{corollary}\label{cor:ss}
Let\/ $V$ be a vertex algebra with a very good filtration, and\/ $\mc V=\gr V$ be its associated graded Poisson vertex algebra. Then
$$
\dim H^n_{\chb}(V) \le \dim H^n_{\cl}(\mc V) 
\,, \qquad n\ge0 \,.
$$
\end{corollary}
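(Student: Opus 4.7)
My plan is to chain two inequalities: a filtration-bookkeeping inequality for $H^n_{\chb}(V)$ and a spectral-sequence monotonicity inequality comparing the associated graded of $H^n_{\chb}(V)$ with $H^n_{\cl}(\mc V)$. Both steps rely on Theorem \ref{thm:ss1}, and the argument is otherwise routine spectral sequence formalism.

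First, I would invoke Theorem \ref{thm:ss1} to obtain a spectral sequence $\{(E_r, d_r)\}_{r\ge1}$ with first page $E_1 = C_{\cl}(\mc V)$, $d_1 = \bar d$, and $E_\infty^{p,q} \simeq \gr^p H^{p+q}_{\chb}(V)$. The second page is then
$$
E_2^{p,q} = \gr^p H^{p+q}_{\cl}(\mc V),
$$
and the decomposition $H^n_{\cl}(\mc V) = \bigoplus_{p+q=n} E_2^{p,q}$ holds because $\bar d = \ad_{\bar X}$ shifts the grading $\gr^p$ on $C_{\cl}(\mc V)$ by exactly $1$ (recall $\bar X \in \gr^1 W_{\cl}^1(\Pi\mc V)$), so that cocycles and coboundaries split along $\gr^p$. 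In particular,
$$
\dim H^n_{\cl}(\mc V) = \sum_{p+q=n} \dim E_2^{p,q}.
$$

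Next, since each $E_{r+1}^{p,q}$ is a subquotient of $E_r^{p,q}$, dimensions are weakly decreasing along the pages, so $\dim E_\infty^{p,q} \le \dim E_2^{p,q}$. Summing along the anti-diagonal $p+q = n$ and using the identification $E_\infty^{p,q} \simeq \gr^p H^{p+q}_{\chb}(V)$ gives
$$
\sum_{p} \dim \gr^p H^{n}_{\chb}(V) \;\le\; \dim H^n_{\cl}(\mc V).
$$

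Finally, I would apply Lemma \ref{lem:fdh}(b) to the filtered complex $(C_{\chb}(V), d)$ to obtain
$$
\dim H^n_{\chb}(V) \le \sum_p \dim \gr^p H^n_{\chb}(V),
$$
whose hypotheses hold because the filtration on $C^n_{\chb}(V)$ is exhaustive by construction (indeed $C^n_{\chb}(V) = \bigcup_p \fil^p C^n_{\ch}(V)$) and separated by Lemma \ref{lem:filsep}. Combining the two displayed inequalities yields the claim. I anticipate no real obstacle: the entire argument rests on Theorem \ref{thm:ss1} and standard spectral sequence formalism. The only point worth a moment's attention is the grading compatibility of $\bar d$ used to split $H^n_{\cl}(\mc V)$ along $\gr^p$, and this is immediate from the way $\bar X$ sits in $\gr^1 W_{\cl}^1(\Pi\mc V)$.
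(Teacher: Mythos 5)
Your steps (1)--(3) are fine: the identification $E_2^{p,q}\simeq\gr^p H^{p+q}_{\cl}(\mc V)$ and the splitting $H^n_{\cl}(\mc V)=\bigoplus_{p+q=n}E_2^{p,q}$ are correct because $\bar d$ is homogeneous of degree $1$ for the grading of $C_{\cl}(\mc V)$, and $E_\infty^{p,q}$ is a subquotient of $E_2^{p,q}$ (as in the proof of Lemma \ref{lem:epqinf}), so you do get $\sum_p\dim\gr^p H^n_{\chb}(V)\le\dim H^n_{\cl}(\mc V)$. The gap is in step (4). Lemma \ref{lem:fdh}(b) does not assert $\dim H^n(C,d)\le\sum_p\dim\gr^p H^n(C,d)$; it asserts $\dim H^n(C,d)\le\dim H^n(\gr C,\bar d)$, and only under the hypothesis $\dim H^n(\gr C,\bar d)<\infty$, which you never address. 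Moreover the inequality you attribute to it is not automatic for a separated exhaustive filtration: for instance $H=\prod_{k\ge0}\mb F$ with $\fil^pH=\{x \mid x_k=0 \text{ for } k<p\}$ ($p\ge0$) is separated and exhaustive, yet $\dim H=2^{\aleph_0}$ while $\sum_p\dim\gr^pH=\aleph_0$. To close the gap you must argue as inside the proof of Lemma \ref{lem:fdh}(b): if $\dim H^n_{\cl}(\mc V)<\infty$, then by your step (3) only finitely many $\gr^pH^n_{\chb}(V)$ are nonzero and each is finite-dimensional, so the induced filtration of $H^n_{\chb}(V)$ --- which is separated and exhaustive by Remarks \ref{rem:hfilsep} and \ref{rem:hfilex} --- is finite, whence $\dim H^n_{\chb}(V)=\sum_p\dim\gr^pH^n_{\chb}(V)$; if instead $\dim H^n_{\cl}(\mc V)=\infty$, the corollary is vacuous.

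Note also that once Lemma \ref{lem:fdh}(b) is invoked at all, the spectral-sequence detour is unnecessary: the paper's proof is simply to apply Lemma \ref{lem:fdh}(b) to the filtered complex $(C_{\chb}(V),d)$, whose associated graded complex is $(C_{\cl}(\mc V),\bar d)$ by Lemma \ref{lem:d1} (this is where the very good filtration and Theorem \ref{thm:chcl} enter), so that $H^n(\gr C,\bar d)=H^n_{\cl}(\mc V)$ and the bound follows at once, the infinite-dimensional case being trivial. Your steps (1)--(3) essentially unwind the content of Lemma \ref{lem:fdh}(a); they are not wrong, but that work is already packaged in the lemma, and the one step you actually cite it for is the one it does not state.
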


Under additional assumptions, we derive that the bounded cohomology of $V$ is finite-dimensional
and bounded by the variational PVA cohomology of $\mc V$.

\begin{theorem}\label{thm:ss2}
Let\/ $V$ be a freely finitely generated vertex algebra with a very good filtration.
Assume that the associated graded Poisson vertex algebra\/ $\mc V=\gr V$ 
is conformal and generated by elements of positive conformal weight.
Then
\begin{equation}\label{eq:bound}
\dim H^n_{\chb}(V) \le \dim H^n_{\PV}(\mc V) < \infty
\,, \qquad n\ge0 \,.
\end{equation}
\end{theorem}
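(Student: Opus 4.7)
The plan is to deduce both parts of the inequality \eqref{eq:bound} from tools already developed in the paper. The chain $\dim H^n_{\chb}(V) \le \dim H^n_{\PV}(\mc V)$ will be a direct concatenation of Corollary \ref{cor:ss} and Theorem \ref{thm:cl-pv}(b), while the finiteness statement $\dim H^n_{\PV}(\mc V) < \infty$ will require additional input from the conformal grading on $\mc V$.

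For the first inequality, I would begin by verifying that $\mc V = \gr V$ is an algebra of differential polynomials in finitely many even or odd variables. Since $V$ is freely and finitely generated, it admits a PBW-type basis \eqref{eq:PBW} built from a finitely generated $\mb F[\partial]$-submodule $R$, and the very good filtration gives $V \simeq \gr V$ as $\mb F[\partial]$-modules; therefore, as a differential superalgebra, $\mc V \simeq S(R)$ is a differential polynomial algebra on finitely many variables. Theorem \ref{thm:cl-pv}(b) then yields the isomorphism $H^n_{\cl}(\mc V) \simeq H^n_{\PV}(\mc V)$, while Corollary \ref{cor:ss} supplies $\dim H^n_{\chb}(V) \le \dim H^n_{\cl}(\mc V)$, combining to prove the first inequality in \eqref{eq:bound}.

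For the finiteness statement, I would exploit the conformal grading induced by the Virasoro element $L \in \mc V$. Since the generators $W_i$ have positive conformal weights and $\mc V$ is a differential polynomial algebra in those generators, the decomposition $\mc V = \bigoplus_{\Delta \ge 0} \mc V_\Delta$ has finite-dimensional weight spaces. Declaring each $\lambda_i$ to have weight $1$, this grading lifts to a grading of $C_{\PV}(\mc V)$, and the differential $\bar d = \ad_{\bar X}$ preserves conformal weight since $\bar X$ is conformally invariant. The Leibniz rules \eqref{eq:leib} and the sesquilinearity condition \eqref{eq:chomses} show that any $Y \in C^n_{\PV}(\mc V)$ is determined by its values on the finitely many $n$-tuples $W_{i_1}\otimes\cdots\otimes W_{i_n}$ of generators; for $Y$ of conformal weight $\Delta$, the output on such a tuple lies in the weight-$(\sum_j \Delta(W_{i_j})+\Delta)$ component of $\mc V[\lambda_1,\dots,\lambda_n]/\langle\partial+\lambda_1+\cdots+\lambda_n\rangle$, which is finite-dimensional. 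Hence each weight component $C^n_{\PV}(\mc V)_\Delta$, and therefore each $H^n_{\PV}(\mc V)_\Delta$, is finite-dimensional.

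The main obstacle is to show that only finitely many conformal weights $\Delta$ contribute to non-trivial cohomology. The positivity of the generator weights already forces $C^n_{\PV}(\mc V)_\Delta = 0$ for $\Delta$ sufficiently negative (specifically, whenever $\Delta < \Delta_{\min} - n\max_i \Delta(W_i)$, where $\Delta_{\min}$ is the smallest positive weight occurring in $\mc V$), because then every possible output would sit in a negative weight component of $\mc V$ and thus vanish. The delicate point is to kill cohomology for $\Delta$ in arbitrarily large positive weights, which I would attempt by constructing a contracting homotopy on the weight-$\Delta$ subcomplex for $\Delta\gg 0$, built from the action of $L$ via the variational calculus tools of \cite{BDSK19}; alternatively, one can invoke directly the finite-dimensionality results of \cite{BDSK19} for variational PVA cohomology of free conformal PVAs with positive-weight generators. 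This is the technical heart of the argument and is where I expect to lean most heavily on the companion paper.
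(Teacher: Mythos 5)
Your argument is essentially the paper's proof: the first inequality is obtained exactly as in the paper, by observing that the free finite generation together with the very good filtration makes $\mc V=\gr V$ a differential polynomial superalgebra in finitely many variables, then applying Theorem \ref{thm:cl-pv}(b) to identify $H_{\cl}(\mc V)\simeq H_{\PV}(\mc V)$ and Corollary \ref{cor:ss} for $\dim H^n_{\chb}(V)\le\dim H^n_{\cl}(\mc V)$. For the finiteness $\dim H^n_{\PV}(\mc V)<\infty$ the paper does not attempt your weight-space/contracting-homotopy argument but simply invokes \cite[Theorem 3.29]{BDSK19}, which is precisely the fallback you name, so your proposal matches the paper's route.
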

\begin{proof}
Since $V$ is freely finitely generated and its increasing $\mb Z_+$-filtration is very good,
$\mc V=\gr V$ is isomorphic, as a differential algebra,  
to an algebra of differential polynomials in finitely many (even or odd) variables. 
By Theorem \ref{thm:cl-pv}, $H_{\PV}(\mc V) \simeq H_{\cl}(\mc V)$. Then the claim follows from Corollary \ref{cor:ss} and \cite[Theorem 3.29]{BDSK19}.
\end{proof}

\begin{theorem}\label{thm:ss21}
Let\/ $V$ be a vertex algebra freely generated by its\/ $\mb F[\partial]$-submodule
$
R= \bigoplus_{j=0}^s \mb F[\partial] W_j
$,
where\/ $L:=W_0$ is a Virasoro element in\/ $V$ and each\/ $W_j$ 
has positive conformal weight\/ $\De_j$ with respect to\/ $L$
$($where\/ $\De_j\in\mb Q$ or\/ $\mb R)$.
Then
$$
\dim H^n_{\chb}(V) 
< \infty
\,, \qquad n\ge0 \,.
$$
\end{theorem}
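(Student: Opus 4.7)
The plan is to apply Theorem \ref{thm:ss2} by equipping $V$ with a very good filtration built from the conformal weights of the generators. Specifically, I would set $R_\Delta := \bigoplus_{j\,:\,\Delta_j = \Delta} \mb F[\partial] W_j$, giving a decomposition $R = \bigoplus_{\Delta > 0} R_\Delta$ by positive conformal weights. When the $\Delta_j$ are non-integral but rational, I would clear denominators and invoke Remark \ref{rem:nonint} to work with $\frac1N\mb Z$-filtrations; the real case is handled by the same argument with $\mb R$-filtrations. Proposition \ref{prop:vgfil} then provides a candidate filtration $\fil^s V$, reducing the task to verifying the bracket bound \eqref{RDe2}.

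For the key step, I would show that $[a_\lambda b] \in \fil^{\Delta_1 + \Delta_2 - 1} V[\lambda]$ for $a \in R_{\Delta_1}$ and $b \in R_{\Delta_2}$. By LCA sesquilinearity it suffices to treat $a = W_i$ and $b = W_j$. Writing $[W_{i\,\lambda} W_j] = \sum_{n \ge 0} \frac{\lambda^n}{n!} W_{i(n)} W_j$, the standard commutation relation $[L_{(1)}, a_{(n)}] = (\Delta_a - n - 1) a_{(n)}$ implies that each $W_{i(n)} W_j$ has conformal weight $\Delta_i + \Delta_j - n - 1 \le \Delta_i + \Delta_j - 1$. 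Every PBW monomial ${:}(\partial^{m_1} W_{j_1}) \cdots (\partial^{m_k} W_{j_k}){:}$ has filtration degree $\sum_i \Delta_{j_i}$ and conformal weight $\sum_i (\Delta_{j_i} + m_i)$, so filtration degree is bounded above by conformal weight. Consequently any element of conformal weight at most $w$ lies in $\fil^w V$, and therefore $W_{i(n)} W_j \in \fil^{\Delta_i + \Delta_j - 1} V$ for all $n \ge 0$, establishing \eqref{RDe2}.

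Proposition \ref{prop:vgfil} then yields a very good filtration of $V$, and $\gr V$ is the algebra of differential polynomials on the classes $\bar W_0, \dots, \bar W_r$ with the inherited positive grading from the filtration. This places us in the hypotheses of Theorem \ref{thm:ss2}, giving $\dim H^n_{\chb}(V) \le \dim H^n_{\PV}(\gr V) < \infty$. The main technical point is disentangling the two natural gradings on $\gr V$---the filtration grading used in Proposition \ref{prop:vgfil} and the conformal weight grading inherited from $V$; they coincide on generators and differ only by $\partial$-degree, so positivity transfers to both and the finite-dimensionality of $H^n_{\PV}(\gr V)$ cited in Theorem \ref{thm:ss2} applies without modification.
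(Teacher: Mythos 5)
Your verification of the bracket condition \eqref{RDe2} for the filtration with $R_\Delta=\bigoplus_{j:\Delta_j=\Delta}\mb F[\partial]W_j$ is fine (the bound ``filtration degree $\le$ conformal weight'' on PBW monomials is correct, and Proposition \ref{prop:vgfil} does produce a very good filtration). The gap is in the last step: with your assignment, $L$ sits in filtration degree $\Delta(L)=2$, and then the Virasoro structure does \emph{not} survive to $\gr V$. Indeed, by Proposition \ref{prop:assgr}, for $L\in\fil^2 V$ the graded bracket $[\bar L_\la\bar L]$ is computed in $\gr^3 V$, i.e.\ modulo $\fil^2 V[\la]$; but $[L_\la L]=(\partial+2\la)L+\tfrac c{12}\la^3\vac$ lies entirely in $\fil^2 V[\la]$, so $[\bar L_\la\bar L]=0$, and likewise $[\bar L_\la\bar W_j]=0$. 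Hence $\bar L_{(0)}=0\neq\partial$ and $\gr V$ is not a conformal PVA in the sense of Definition \ref{def:vir}, so the hypotheses of Theorem \ref{thm:ss2} fail and you cannot invoke it ``without modification.'' This is not a cosmetic issue about comparing two gradings: for $V=\Vir^c$ your filtration gives $\gr V$ equal to the differential polynomial algebra $\mb F[\bar L,\bar L',\dots]$ with zero $\la$-bracket, whose variational PVA cohomology is the full (infinite-dimensional) space of cochains (cf.\ Theorem \ref{thm:ss3}), so the bound you would get is vacuous and the finiteness claim of Theorem \ref{thm:ss2} is simply unavailable.

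The paper's proof fixes exactly this point by choosing the decomposition $R_\De=\delta_{\De,1}\,\mb F[\partial]L\,\oplus\bigoplus_{j\ge1:\De_j=\De}\mb F[\partial]W_j$, i.e.\ placing the Virasoro generator in degree $1$ rather than in its conformal weight $2$. One checks \eqref{RDe2} for this assignment just as you did (filtration degree is still bounded by conformal weight, since $1\le\De(L)$), but now $[\bar L_\la\bar L]=(\partial+2\la)\bar L$ and $[\bar L_\la\bar W_j]=(\partial+\De_j\la)\bar W_j$ survive in $\gr V$, so $\gr V$ is conformal with generators of positive conformal weight and Theorem \ref{thm:ss2} applies. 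So your route is salvageable, but only after changing the filtration degree of $L$; as written, the argument breaks at the application of Theorem \ref{thm:ss2}. (A minor additional caveat: Remark \ref{rem:nonint} covers $\frac1N\mb Z$-filtrations, so the case of irrational real weights needs at least a word of justification, though the paper is equally brief on this.)
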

\begin{proof}
Recall that the conformal weights have the property
$$
\De(a_{(n)} b) = \De(a)+\De(b)-n-1 \,, \qquad a,b \in V \,, \;\; n \in\mb Z
$$
(see Remark \ref{rem:nprod} and \cite{K96}). Moreover, by Definition \ref{def:vir},
$$
[L_\la a] = \bigl(\partial+\la\De(a)\bigr) a + \sum_{n\ge2} \frac{\la^n}{n!} L_{(n)} a \,,
$$
for every $a\in V$ of conformal weight $\De(a)$. Then it is easy to check that, if we let
$$
R_\De = \delta_{\De,1} \mb F[\partial]L \,\oplus \bigoplus_{1\le j\le s \,:\, \De_j=\De} \mb F[\partial] W_j
\,,
$$
the condition \eqref{RDe2} holds. Hence, by Proposition \ref{prop:vgfil}, the filtration \eqref{RDe3} of $V$ is very good. In the associated graded PVA $\mc V=\gr V$, we have:
$$
[\bar L_\la \bar L] = (\partial+2\la) \bar L \,, \qquad
[\bar L_\la \bar W_j] = (\partial+\De_j\la) \bar W_j \,, \qquad 1\le j \le s \,,
$$
where $\bar L=L+\fil^2 V \in \gr^1 V$ and 
$\bar W_j = W_j + \fil^{\De_j+1} V \in \gr^{\De_j} V$.
Since $\mc V$ is generated by $\bar L=\bar W_0,\bar W_1\dots,\bar W_s$ as a differential algebra, it follows that $\mc V$ is a conformal PVA. Hence, we can apply Theorem \ref{thm:ss2}.
\end{proof}

Recall that, for any simple finite-dimensional Lie algebra (or more generally, a basic Lie superalgebra)
$\mf g$, any nonzero nilpotent element $f\in\mf g$, and any non-critical level $k\in\mb F$ (i.e., $k\ne-h^\vee$, where $h^\vee$ is the dual Coxeter number of $\mf g$), we have the \emph{universal $W$-algebra} $W^k(\mf g,f)$, which satisfies the conditions of Theorem \ref{thm:ss21} (see \cite{KW04,DSK06}).

\begin{corollary}\label{cor:ss21}
For the universal\/ $W$-algebra\/ $V=W^k(\mf g,f)$ with\/ $k\ne-h^\vee$, we have
$$
\dim H^n_{\chb}(V) < \infty
\,, \qquad n\ge0 \,.
$$
\end{corollary}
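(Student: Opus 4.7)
The plan is to verify that $V=W^k(\mf g,f)$ with $k\neq -h^\vee$ satisfies the hypotheses of Theorem \ref{thm:ss21}, and then invoke that theorem directly. The argument is essentially a transcription of the structure theory of universal $W$-algebras developed in \cite{KW04, DSK06}.

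First, I would recall that the quantum Hamiltonian reduction construction endows $W^k(\mf g,f)$ with a canonical Virasoro element $L$ for every non-critical level $k\neq -h^\vee$. Furthermore, there is an explicit PBW-type theorem for $W$-algebras: $V$ is freely generated (in the sense of \eqref{eq:PBW}) by an $\mb F[\partial]$-submodule $R=\mb F[\partial]L\oplus\bigoplus_{j=1}^s \mb F[\partial]W_j$, where the generators $W_1,\dots,W_s$ are in bijection with a basis of the centralizer $\mf g^f\subset\mf g$ of $f$.

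Second, I would verify positivity of the conformal weights. Fix an $\mf{sl}_2$-triple $\{e,2x,f\}$ extending $f$ and consider the corresponding $\tfrac12\mb Z$-grading $\mf g=\bigoplus_j\mf g_j$ with $\mf g_j=\{a\in\mf g\mid [x,a]=ja\}$. Since $\mf g^f\subset\bigoplus_{j\leq 0}\mf g_j$, each generator $W_\ell$ arising from a basis element in $\mf g^f\cap\mf g_{-j}$ (with $j\geq 0$) is an eigenvector of $L_{(1)}$ with conformal weight $\De_\ell=1+j$, so $\De_\ell\geq 1>0$. In particular, all conformal weights lie in $\tfrac12\mb Z_{>0}$ (or $\mb Q$, as allowed by the statement of Theorem \ref{thm:ss21}; see also Remark \ref{rem:nonint}).

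With these two ingredients in place, Theorem \ref{thm:ss21} applies verbatim to $V$, yielding $\dim H^n_{\chb}(V)<\infty$ for all $n\geq 0$. The ``hard part'' is not the cohomological argument (which is a direct quotation) but rather the verification of freeness and positivity of conformal weights for $W^k(\mf g,f)$; both, however, are standard consequences of the Kac--Wakimoto construction and require no new input beyond the cited references.
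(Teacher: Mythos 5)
Your proposal is correct and follows exactly the paper's route: the paper also deduces the corollary by noting that $W^k(\mf g,f)$ with $k\ne-h^\vee$ satisfies the hypotheses of Theorem \ref{thm:ss21} (free generation by a Virasoro element $L$ and fields $W_j$ of positive conformal weight, indexed by a basis of $\mf g^f$), citing \cite{KW04,DSK06}. Your extra verification of the weights $\De_\ell=1+j\ge1$ via the $\ad x$-grading, and the appeal to Remark \ref{rem:nonint} for half-integer weights, is just a more explicit spelling-out of the same standard facts.
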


In the next two subsections, we will consider special cases, in which the spectral sequence from Theorem \ref{thm:ss1} collapses. 

\subsection{The case of commutative VA}\label{sec:commva}

In this subsection, $V$ will be a commutative vertex algebra, i.e., such that $[V_\la V]=0$.
Then, by \eqref{eq:qc}, \eqref{eq:qa}, $V$ is a commutative associative algebra with respect to the normally ordered product with a derivation $\partial$.
We consider $V$ with the trivial filtration \eqref{eq:trfg1}, as in Example \ref{ex:vgfil2} and Section \ref{sec:chom}. Then $\mc V = \gr V = V$ with the same commutative associative product as $V$.

The filtration of $\mc P^\ch(\Pi V)(n)$ is given by \eqref{eq:trfg3}. Hence, the filtration of $C^n_{\ch}(V)$ has the form
$$
C^n_{\ch}(V) = \fil^0 C^n_{\ch}(V) \supset \fil^1 C^n_{\ch}(V) \supset\cdots\supset \fil^{n-1} C^n_{\ch}(V) \supset \fil^{n} C^n_{\ch}(V) = \{0\} \,.
$$
Since this filtration is finite for every $n$, it follows that the spectral sequence from Theorem \ref{thm:ss1} converges: for every $p,q\in\mb Z$, there exists $s\ge1$ such that $E^{p,q}_r = E^{p,q}_\infty = \gr^p H^{p+q}_{\ch}(V)$ for all $r\ge s$.

Here is a similar example, in which we have convergence.

\begin{example}\label{ex:vgfil3}
Consider an arbitrary VA $V$ with the filtration
$$
\fil^{-1} V = \fil^0 V = \{0\} \subset \fil^1 V = \fil^2 V = \cdots = V \,,
$$
as in Example \ref{ex:vgfil1}.
Then $\mc V = \gr V = V$ with the same $\la$-bracket as $V$, but with the zero product.
The filtration of $\mc P^\ch(\Pi V)(n)$ has the form (cf.\ \eqref{eq:trfg3}):
$$
\mc P^\ch(n) = \fil^{n-1} \mc P^\ch(n) \supset \fil^n \mc P^\ch(n) \supset\cdots\supset \fil^{2n-2} \mc P^\ch(n) \supset \fil^{2n-1} \mc P^\ch(n) = \{0\} \,.
$$
Hence, the filtration of $C^n_{\ch}(V)$ is finite for every $n$, and the spectral sequence converges to $E^{p,q}_\infty = \gr^p H^{p+q}_{\ch}(V)$.
\end{example}

Let us go back to the case when the VA $V$ is commutative. If we assume, in addition, that $V$ is an algebra of differential polynomials, then the spectral sequence collapses and we can completely determine $H_{\ch}(V)$.

\begin{theorem}\label{thm:ss3}
Let\/ $V$ be a superalgebra of differential polynomials in finitely many even or odd variables,
$$
V=\mb F\bigl[u_i^{(k)} \,\big|\, 1\le i \le N, \; k\in\mb Z_+ \bigr] 
\,, \qquad u_i^{(k)} = \partial^k u_i \,,
$$
considered as a vertex algebra with the zero\/ $\la$-bracket. Then
$$
H^n_{\ch}(V) \simeq H^n_{\PV}(V) \simeq C^n_{\PV}(V) \,, \qquad n\ge0 \,.
$$
Explicitly, this is the space of all collections of polynomials
$$
P^{i_1,\dots,i_n}_{\la_1,\dots,\la_n} 
\in V[\la_1,\dots,\la_n] / \langle\partial+\la_1+\dots+\la_n\rangle 
\,, \qquad 1\le i_s \le N \,,
$$
satisfying the symmetry conditions
$$
P_{\lambda_1,\dots,\lambda_s,\lambda_{s+1},\dots,\lambda_n}^{i_1,\dots,i_s,i_{s+1},\dots,i_n}
= (-1)^{\bar p(u_{i_s}) \bar p(u_{i_{s+1}})}
P_{\lambda_1,\dots,\lambda_{s+1},\lambda_s,\dots,\lambda_n}^{i_1,\dots,i_{s+1},i_s,\dots,i_n}
\,, \qquad 1\le s\le n-1 \,.
$$
Such a collection determines a unique (up to coboundary) cocycle\/ $Y\in C^n_{\ch}(V)$ by
$$
Y_{\lambda_1,\dots,\lambda_n}(u_{i_1} \otimes\dots\otimes u_{i_n} \otimes 1)
= P^{i_1,\dots,i_n}_{\la_1,\dots,\la_n} \,.
$$
\end{theorem}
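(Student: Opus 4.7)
The plan is to compute both sides of the claimed isomorphism separately using the spectral sequence of Theorem \ref{thm:ss1} together with the fact that the $\la$-bracket of $V$ is identically zero.

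First I would handle the variational PVA side. Since $[\,{}_\la\,] = 0$ on $V$, the odd element $X_0 \in W^1_{\PV}(\Pi V)$ encoding the $\la$-bracket vanishes, so the differential $d_0 = \ad_{X_0}$ on $C_{\PV}(V)$ is identically zero and $H^n_{\PV}(V) = C^n_{\PV}(V)$. To make the description explicit, I would combine the sesquilinearity \eqref{eq:chomses}, which forces $Y(\dots \otimes \partial^k u_i \otimes \dots) = (-\la_i)^k Y(\dots \otimes u_i \otimes \dots)$, with iterated applications of the Leibniz rule \eqref{eq:leib}, which expresses the value of $Y$ on any polynomial in the $u_i^{(k)}$'s as an $\mb F[\la_1,\dots,\la_n]$-linear combination of its values on the $u_i$'s. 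Because $V$ is the free commutative differential algebra on $u_1,\dots,u_N$, the resulting extension of the symbols $P^{i_1,\dots,i_n}_{\la_1,\dots,\la_n} := Y(u_{i_1}\otimes\dots\otimes u_{i_n})$ is automatically consistent, and the only remaining condition is the symmetry inherited from \eqref{20160629:eq5}.

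Next, for the VA cohomology I would apply Theorem \ref{thm:ss1} to the trivial very good filtration of Example \ref{ex:vgfil2}, namely $\fil^{-1}V = \{0\} \subset \fil^0 V = V$, for which $\gr V = V$ carries the same commutative product and zero $\la$-bracket. By \eqref{eq:trfg3} the induced decreasing filtration of $C^n_{\ch}(V)$ is finite, so $C_{\chb}(V) = C_{\ch}(V)$ (hence $H^n_{\chb}(V) = H^n_{\ch}(V)$) and the spectral sequence is bounded and converges to $E^{p,q}_\infty = \gr^p H^{p+q}_{\ch}(V)$. Its $E_1$-page is $(C_{\cl}(V),\bar d)$ graded by number of edges, and since $[\,{}_\la\,]=0$ on $\gr V=V$ the component $\bar X_0$ vanishes, so $\bar d = \bar d_1$ and $E_2$ is identified with $H_{\cl}(V)$ as a bigraded space.

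Finally, I would invoke Theorem \ref{thm:cl-pv}(b), which applies because $V = \mb F[u_i^{(k)}]$ is an algebra of differential polynomials, to deduce $H_{\cl}(V) \simeq H_{\PV}(V) = C_{\PV}(V)$. The inclusion $C_{\PV}(V) \hookrightarrow C_{\cl}(V)$ lands in $\gr^0 C_{\cl}(V) = C_{\mc Chom}(V)$, so $E_2$ is concentrated on the horizontal line $p = 0$ with $E_2^{0,n} = C^n_{\PV}(V)$. The higher differentials $d_r\colon E_r^{p,q} \to E_r^{p+r,q-r+1}$ for $r \ge 2$ then vanish for bidegree reasons (source or target is zero), so $E_2 = E_\infty$. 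Unwinding gives $\gr^0 H^n_{\ch}(V) = C^n_{\PV}(V)$ and $\gr^p H^n_{\ch}(V) = 0$ for $p > 0$, hence $H^n_{\ch}(V) \simeq C^n_{\PV}(V)$; tracing through the identification shows that every cohomology class is represented by a cocycle $Y\in C^n_{\ch}(V)$ determined up to coboundary by $Y(u_{i_1}\otimes\dots\otimes u_{i_n}\otimes 1) = P^{i_1,\dots,i_n}_{\la_1,\dots,\la_n}$. The main obstacle in this plan is the appeal to Theorem \ref{thm:cl-pv}(b): once $E_2$ is known to be concentrated at $p=0$, the collapse of the spectral sequence and the bookkeeping for the filtration on $H^n_{\ch}(V)$ become automatic.
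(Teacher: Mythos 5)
Your proposal is correct and follows essentially the same route as the paper: take the trivial (very good) filtration, identify the $E_2$ term of the spectral sequence of Theorem \ref{thm:ss1} with $\gr^p H^{p+q}_{\cl}(V)$, use Theorem \ref{thm:cl-pv}(b) and the vanishing of the $\la$-bracket to get $H_{\cl}(V)\simeq H_{\PV}(V)=C_{\PV}(V)$ concentrated in degree $p=0$, and conclude collapse and $H^n_{\ch}(V)\simeq C^n_{\PV}(V)$. The only cosmetic difference is that you spell out the convergence (finiteness of the filtration, so $C_{\chb}=C_{\ch}$) and the explicit description of $C_{\PV}(V)$ via sesquilinearity and the Leibniz rule, which the paper handles in Section \ref{sec:commva} and by citation to \cite{BDSK19}.
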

\begin{proof}
Notice that $\mc V=\gr V=V$ is both a VA and a PVA with the same product and with the zero $\la$-bracket. We have
$$
E_2^{p,q} \simeq \gr^p H^{p+q}_{\cl}(V) \simeq \gr^p H^{p+q}_{\PV}(V) 
\simeq \gr^p C^{p+q}_{\PV}(V) \,.
$$
The second isomorphism holds by Theorem \ref{thm:cl-pv}, and the third because the $\la$-bracket in $V$ is zero, which implies that the differential in $C_{\PV}(V)$ is zero.
Recall from Section \ref{sec:clpva} that the grading of $C_{\cl}(V)$ is given by the number of edges of the graph, and $C_{\PV}(V) = \gr^0 C_{\cl}(V)$.
Hence, $E_2^{p,q} = 0$ for $p\ne0$ and $E_2^{0,q} \simeq C^q_{\PV}(V)$.
This implies the collapse of the spectral sequence: $d_r=0$ for all $r\ge 2$ and
$$
\gr^p H^{p+q}_{\ch}(V) = E_2^{p,q} \,.
$$
Therefore, $H^q_{\ch}(V) \simeq C^q_{\PV}(V)$.
The explicit description of all cochains in the variational complex of $V$ follows from
\cite[Remark 3.16 and Lemma 3.17(b)]{BDSK19}.
\end{proof}

\subsection{Relation between VA cohomology and LCA cohomology}\label{sec:uelca}

Let $R$ be a Lie conformal algebra such that $R=\bar R \oplus \mb F C$ as an $\mb F[\partial]$-module,
where $\partial C=0$ and $\bar R$ is a free finitely-generated $\mb F[\partial]$-module. 
Note that $C$ is central in $R$ and it acts trivially on any $R$-module. 
Hence, $\bar R \simeq R/\mb FC$ is an LCA, and any $R$-module is naturally an $\bar R$-module.

Fix $c\in\mb F$, and consider $V=V^c(R)$ with its canonical filtration (see Section \ref{sec:4.15}):
\begin{equation}\label{canfilvcr}
\fil^{-1} V = \{0\} \subset \fil^0 V = \mb F\vac \subset \fil^1 V = \bar R + \mb F\vac \subset \fil^2 V \subset\cdots \,.
\end{equation}
Then $\fil^1 V$ is an LCA subalgebra of $(V,[\,{}_\la\,])$, which is isomorphic to $R$. From now on, we will identify $R=\fil^1 V$ as an LCA. All $\fil^p V$ are $R$-modules; hence also $\bar R$-modules.
The filtration \eqref{canfilvcr} is very good and the associated graded Poisson vertex algebra $\mc V=\gr V$ is isomorphic to the universal PVA $\mc V(\bar R)$ over the LCA $\bar R$ (see Lemma \ref{lem:uvg} and Section \ref{sec:3.1}). As an $\bar R$-module, $\mc V$ is isomorphic to the symmetric algebra $S(\bar R)$ over the adjoint representation, which is a direct sum of the $\bar R$-modules $\gr^q \mc V = S^q \bar R$ ($q\ge0$). Therefore, by Proposition \ref{prop:pv-lc}(a), we have isomorphisms of complexes
$$
C_{\PV}(\mc V) \simeq C_{\mc Chom}(\bar R,\mc V) 
\simeq \bigoplus_{q=0}^\infty C_{\mc Chom}(\bar R,S^q \bar R) \,.
$$

Recall that the complex $C_{\PV}(\mc V)$ is graded, so that
$Y \in C^n_{\PV}(\mc V)$ has degree $p$ if and only if, for all $t\in\mb Z$,
$$
Y(\gr^t (\mc V^{\otimes n})) \subset 
(\gr^{t-p} \mc V)[\la_1,\dots,\la_n] / \langle\partial+\la_1+\dots+\la_n \rangle
$$
(cf.\ \eqref{pclgrading}).
Restricting $Y$ to $\bar R^{\otimes n} \subset \gr^n (\mc V^{\otimes n})$
and using Remark \ref{rem:pva-vac},
we see that
$$
\gr^p C^{p+q}_{\PV}(\mc V) = \gr^p C^{p+q}_{\mc Chom}(\bar R,\mc V) 
= C^{p+q}_{\mc Chom}(\bar R,S^q \bar R) \,.
$$
We also have the corresponding grading of the cohomology:
$$
\gr^p H^{p+q}_{\PV}(\mc V) = \gr^p H^{p+q}_{\mc Chom}(\bar R,\mc V) 
= H^{p+q}_{\mc Chom}(\bar R,S^q \bar R) \,.
$$

\begin{proposition}\label{prop:hlcrv}
With the above notation, suppose that\/ 
$H_{\PV}(\mc V) \simeq H_{\mc Chom}(\bar R,\mb F)$, where the isomorphism is induced from the restriction \eqref{eq:rvm2} and the inclusion\/ $\mb F \simeq \mb F 1 \hookrightarrow \mc V$.
Then\/ $H_{\mc Chom}(\bar R,V) \simeq H_{\mc Chom}(\bar R,\mb F)$, where the isomorphism is induced from the inclusion\/ $\mb F \simeq \mb F \vac \hookrightarrow V$.
\end{proposition}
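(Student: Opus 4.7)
The plan is to use the canonical filtration $\{\fil^p V\}_{p \ge 0}$ of $V=V^c(R)$ to reduce the computation of $H_{\mc Chom}(\bar R, V)$ to LCA cohomology with coefficients in the symmetric powers $S^p\bar R$ via long exact sequences. First, I would check that each $\fil^p V$ is an $\bar R$-submodule of $V$: since the canonical filtration is very good, $(\fil^1 V)_{(n)}\fil^p V \subset \fil^p V$ for all $n \ge 0$, and since the central element $C \in R$ acts trivially on $V$, the LCA action of $R = \fil^1 V$ descends to an $\bar R$-action preserving the filtration. By Lemma \ref{lem:uvg}(b), the graded pieces are then $\bar R$-module isomorphisms $\gr^p V \simeq S^p\bar R$, where the latter carries the $\bar R$-action coming from the PVA $\mc V(\bar R) = S(\bar R)$.

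Next, each short exact sequence of $\bar R$-modules
\begin{equation*}
0 \to \fil^{p-1} V \to \fil^p V \to S^p\bar R \to 0
\end{equation*}
yields a long exact sequence in LCA cohomology:
\begin{equation*}
\cdots \to H^{n-1}_{\mc Chom}(\bar R, S^p\bar R) \to H^n_{\mc Chom}(\bar R, \fil^{p-1} V) \to H^n_{\mc Chom}(\bar R, \fil^p V) \to H^n_{\mc Chom}(\bar R, S^p\bar R) \to \cdots .
\end{equation*}
The key input is to extract from the hypothesis the vanishing $H^n_{\mc Chom}(\bar R, S^p\bar R) = 0$ for all $p \ge 1$ and $n \ge 0$. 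By Proposition \ref{prop:pv-lc}(b), $H_{\PV}(\mc V) \simeq H_{\mc Chom}(\bar R, \mc V)$; and since the PVA bracket in $\mc V = S(\bar R)$ is the Leibniz extension of the LCA bracket on $\bar R$, the $\bar R$-action on $\mc V$ respects the decomposition $\mc V = \bigoplus_{q\ge 0} S^q\bar R$. Hence $H_{\mc Chom}(\bar R, \mc V) = \bigoplus_q H_{\mc Chom}(\bar R, S^q\bar R)$, and the hypothesis that the inclusion $\mb F = S^0\bar R \hookrightarrow \mc V$ induces the given isomorphism forces all higher summands to vanish.

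Substituting the vanishing into the long exact sequences produces isomorphisms $H^n_{\mc Chom}(\bar R, \fil^{p-1} V) \xrightarrow{\sim} H^n_{\mc Chom}(\bar R, \fil^p V)$ for every $p \ge 1$, induced by inclusion. Composing these over all $p$ and passing to the filtered colimit as $p \to \infty$ gives
\begin{equation*}
H^n_{\mc Chom}(\bar R, \mb F) \simeq H^n_{\mc Chom}(\bar R, \fil^0 V) \xrightarrow{\sim} H^n_{\mc Chom}(\bar R, V),
\end{equation*}
the composite being induced by the inclusion $\mb F \simeq \mb F\vac \hookrightarrow V$, as required. The main technical point to verify is that cohomology commutes with this colimit, which reduces to the exhaustion $C_{\mc Chom}(\bar R, V) = \bigcup_p C_{\mc Chom}(\bar R, \fil^p V)$; this uses the finite generation of $\bar R$ as an $\mb F[\partial]$-module, so any cochain is determined by its polynomial values on finitely many generators, and those values, having finitely many coefficients in $V$, lie collectively in $\fil^p V$ for $p$ large enough.
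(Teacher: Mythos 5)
Your proof is correct, but it takes a different technical route than the paper. You replace the spectral sequence by elementary long exact sequences: having checked that each $\fil^p V$ is an $\bar R$-submodule with $\gr^p V\simeq S^p\bar R$, you run the long exact sequences of $0\to\fil^{p-1}V\to\fil^pV\to S^p\bar R\to 0$, feed in the vanishing $H_{\mc Chom}(\bar R,S^q\bar R)=0$ for $q\ge1$ (extracted from the hypothesis exactly as in the paper, via Proposition \ref{prop:pv-lc} and the degree decomposition of $C_{\mc Chom}(\bar R,\mc V)$), and pass to the filtered colimit using $C_{\mc Chom}(\bar R,V)=\bigcup_p C_{\mc Chom}(\bar R,\fil^pV)$. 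The paper instead filters the single complex $C_{\mc Chom}(\bar R,V)$ by $\fil^p C^{p+q}_{\mc Chom}(\bar R,V)=C^{p+q}_{\mc Chom}(\bar R,\fil^qV)$, identifies $E_1^{p,q}\simeq C^{p+q}_{\mc Chom}(\bar R,S^q\bar R)$, and concludes by collapse at $E_2$ together with separatedness and exhaustiveness of the filtration (Appendix \ref{seg:ssfil}); the exhaustiveness there is justified by the same finite-generation argument you give at the end. Your route is more elementary and makes it transparent that the resulting isomorphism is induced by the inclusion $\mb F\vac\hookrightarrow V$, simply by functoriality of the connecting inclusions and the colimit; its one implicit ingredient, which you should state, is that the functor $M\mapsto C^\bullet_{\mc Chom}(\bar R,M)$ is exact (so that a short exact sequence of coefficient modules gives a short exact sequence of complexes) — this holds here because $\bar R$ is a free finitely generated $\mb F[\partial]$-module, $M\mapsto M[\la_1,\dots,\la_n]/\langle\partial+\la_1+\dots+\la_n\rangle$ is exact, and taking $S_n$-invariants is exact in characteristic $0$. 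What the paper's spectral-sequence formulation buys is uniformity: the same machinery handles Theorem \ref{thm:ss4}, Proposition \ref{prop:hlcrv2} and Theorem \ref{thm:ss5}, where the relevant graded pieces are not submodules (or the sequence only collapses at $E_3$), so the long-exact-sequence shortcut would not apply as directly; your argument exploits the special feature of the present situation that all positive-degree coefficients have vanishing cohomology in every degree.
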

\begin{proof}
The canonical filtration of $V$ induces a filtration of the complex $C_{\mc Chom}(\bar R,V)$, so that
$Y \in\fil^p C^n_{\mc Chom}(\bar R,V)$ if and only if, for all $s\in\mb Z$,
$$
Y(\fil^s (\bar R^{\otimes n})) \subset 
(\fil^{s-p} V)[\la_1,\dots,\la_n] / \langle\partial+\la_1+\dots+\la_n \rangle
$$
(cf.\ \eqref{fil4-ref}, \eqref{eq:ch-chom} and Lemma \ref{lem:rvm}). 
Here $\bar R$ is equipped with the trivial filtration 
$$
\fil^0\bar R = \{0\} \subset \fil^1\bar R = \bar R \,,
$$
hence, $\bar R^{\otimes n} = \fil^n \bar R^{\otimes n}$. Therefore, 
$$
\fil^p C^{p+q}_{\mc Chom}(\bar R,V) = C^{p+q}_{\mc Chom}(\bar R, \fil^q V)
\,, \qquad p,q \in\mb Z \,.
$$
This filtration is separated and exhaustive, because so is the filtration of $V$.

Since each $\fil^q V$ is an $\bar R$-submodule of $V$, the differential in $C_{\mc Chom}(\bar R,V)$ sends
$\fil^p C^{p+q}_{\mc Chom}(\bar R,V)$ to $\fil^{p+1} C^{p+q}_{\mc Chom}(\bar R,V)$.
As in Appendix \ref{seg:ssfil}, we obtain a spectral sequence $\{(E_r,d_r)\}_{r\ge1}$, whose first term is
$$
E_1^{p,q} = \gr^p C^{p+q}_{\mc Chom}(\bar R,V) \simeq C^{p+q}_{\mc Chom}(\bar R, \gr^q V)
= C^{p+q}_{\mc Chom}(\bar R,S^q \bar R) \,.
$$
Hence
$$
E_2^{p,q} \simeq H^{p+q}_{\mc Chom}(\bar R,S^q \bar R) \,.
$$

But, by assumption, $H^{p+q}_{\mc Chom}(\bar R,S^q \bar R) = 0$ for $q>0$.
Therefore, $E_2^{p,q} = 0$ for $q>0$ and $E_2^{p,0} \simeq H^p_{\mc Chom}(\bar R,\mb F)$.
By Remark \ref{rem:col}, the spectral sequence collapses at the second term: all $d_r=0$ for $r\ge 2$.
Since the filtration of $C_{\mc Chom}(\bar R,V)$ is separated and exhaustive, we get
$$
\gr^p H^{p+q}_{\mc Chom}(\bar R,V) \simeq E_\infty^{p,q} = E_2^{p,q}
\,, \qquad p,q\in\mb Z \,.
$$
This implies that
$$
H^n_{\mc Chom}(\bar R,V) = \gr^n H^{n}_{\mc Chom}(\bar R,V) \simeq H^n_{\mc Chom}(\bar R,\mb F)
\,, \qquad n\ge0 \,,
$$
completing the proof.
\end{proof}

Next, we compare the cohomology of the LCA $\bar R$ to that of its central extension $R$.

\begin{lemma}\label{lem:cext}
Suppose that the LCA\/ $R=\bar R+\mb F C$ is a nontrivial central extension of an LCA\/ $\bar R$ by an element\/ $C$ such that\/ $\partial C=0$. Let\/ $M$ be an\/ $R$-module such that\/ $\dim\ker\partial|_M = 1$. Then we have an injective morphism of complexes\/ $C_{\mc Chom}(\bar R,M) \hookrightarrow C_{\mc Chom}(R,M)$, induced by the projection\/ $R \twoheadrightarrow \bar R$, which is an isomorphism on\/ $C^n_{\mc Chom}(\bar R,M)$ for\/ $n\ne 1$. It induces surjective linear maps
\begin{equation}\label{eq:cext}
H^n_{\mc Chom}(\bar R,M) \twoheadrightarrow H^n_{\mc Chom}(R,M)
\,, \qquad n\ge 0\,,
\end{equation}
which are isomorphisms for all\/ $n\ne2$. For\/ $n=2$, the kernel of the map \eqref{eq:cext} is\/ $1$-dimensional and is spanned by the image of the\/ $2$-cocycle $\al\in H^2_{\mc Chom}(\bar R,\mb F)$, giving the central extension\/ $R$, under the map induced by the inclusion\/ 
$\mb F \simeq \ker\partial|_M \hookrightarrow M$.
\end{lemma}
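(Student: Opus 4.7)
The plan is to build a short exact sequence of complexes from the LCA projection $\pi\colon R\twoheadrightarrow\bar R$ and run the induced long exact sequence in cohomology. Since $\pi$ is an LCA homomorphism, pullback yields an injective morphism of complexes $\pi^*\colon C_{\mc Chom}(\bar R,M)\hookrightarrow C_{\mc Chom}(R,M)$ commuting with the LCA cohomology differential, so the task reduces to identifying the cokernel degree by degree. For any $Y\in C^n_{\mc Chom}(R,M)$ with $C$ in slot $i$, the sesquilinearity condition \eqref{eq:chomses} combined with $\partial C=0$ forces $\la_i\,Y_{\la_1,\dots,\la_n}(\cdots\otimes C\otimes\cdots)=0$. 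Identifying $V_n=M[\la_1,\dots,\la_n]/\langle\partial+\la_1+\dots+\la_n\rangle$ with $M[\la_1,\dots,\la_{n-1}]$ via $\la_n\mapsto-\partial-\la_1-\cdots-\la_{n-1}$, each $\la_i$ acts injectively as soon as $n\ge 2$; hence $Y$ vanishes on all such inputs and factors through $\bar R^{\otimes n}$, so $\pi^*$ is an isomorphism in those degrees. For $n=1$ the same computation gives only $\partial Y(C)=0$, so $Y(C)\in\ker\partial|_M=\mb F v_0$, yielding a $1$-dimensional cokernel by hypothesis; for $n=0$ both spaces equal $M/\partial M$.

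One thus obtains a short exact sequence of complexes
\begin{equation*}
0\to C_{\mc Chom}(\bar R,M)\xrightarrow{\pi^*} C_{\mc Chom}(R,M)\to Q\to 0,
\end{equation*}
with $Q$ concentrated in degree $1$, equal to $\mb F v_0$, and with zero differentials (since the neighbouring degrees of $Q$ vanish). The long exact sequence then yields $H^n_{\mc Chom}(\bar R,M)\simeq H^n_{\mc Chom}(R,M)$ for every $n\ne 1,2$, together with the five-term segment
\begin{equation*}
0\to H^1_{\mc Chom}(\bar R,M)\to H^1_{\mc Chom}(R,M)\to \mb F v_0\xrightarrow{\delta} H^2_{\mc Chom}(\bar R,M)\to H^2_{\mc Chom}(R,M)\to 0.
\end{equation*}

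To compute $\delta(v_0)$, I would lift $v_0$ to $Y_0\in C^1_{\mc Chom}(R,M)$ with $Y_0(C)=v_0$ and $Y_0|_{\bar R}=0$, and apply the differential. Writing the central-extension bracket as $[a_\la b]_R=\widetilde{[\pi(a)_\la\pi(b)]_{\bar R}}+\al(\pi(a),\pi(b))(\la)\,C$ with $\al\in C^2_{\mc Chom}(\bar R,\mb F)$ the extension $2$-cocycle, and using the identity $a_\la v_0=0$ for all $a\in R$ (a consequence of $\partial v_0=0$ together with sesquilinearity, since $(\la+\partial)(a_\la v_0)=a_\la(\partial v_0)=0$ and multiplication by $\la+\partial$ is injective on $M[\la]$), a direct calculation yields $dY_0=-\al\cdot v_0\in C^2_{\mc Chom}(\bar R,M)$, where $\al\cdot v_0$ denotes the image of $\al$ under the inclusion $\mb F\hookrightarrow M$, $1\mapsto v_0$. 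Hence $\im\delta=\mb F[\al\cdot v_0]$, and exactness identifies the kernel of $H^2_{\mc Chom}(\bar R,M)\twoheadrightarrow H^2_{\mc Chom}(R,M)$ with $\mb F[\al\cdot v_0]$, provided $\delta$ is injective.

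The main obstacle is showing $[\al\cdot v_0]\ne 0$ in $H^2_{\mc Chom}(\bar R,M)$: this single claim forces both the $n=1$ isomorphism (by making $H^1_{\mc Chom}(R,M)\to\mb F v_0$ vanish) and the $1$-dimensionality of the kernel for $n=2$. I would argue by contradiction: if $\al\cdot v_0=dZ$ for some $Z\in C^1_{\mc Chom}(\bar R,M)$, then the $\mb F[\partial]$-module map $Y\colon R\to M$ defined by $Y|_{\bar R}=Z$ and $Y(C)=v_0$ is a derivation with $Y(C)=v_0\ne 0$, and the goal is to conclude that $\al$ is itself a coboundary in $C_{\mc Chom}(\bar R,\mb F)$, contradicting the hypothesis $[\al]\ne 0$. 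The planned route exploits that $\mb F v_0\subset M$ is a trivial $R$-submodule (since $a_\la v_0=0$) and that the assumption $\dim\ker\partial|_M=1$ pins down $v_0$ up to scalar, in order to construct an $\bar R$-equivariant $\mb F$-linear retraction $M\to\mb F$ sending $v_0\mapsto 1$; composing with $Z$ then produces the desired $1$-cochain trivializing $\al$. Making this retraction rigorous is the delicate step, and is precisely the place where both the one-dimensionality of $\ker\partial|_M$ and the nontriviality of the extension are essential.
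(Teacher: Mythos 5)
Your reduction is sound, and in several places more detailed than what the paper records: the cochain-level comparison (isomorphism in degrees $n\ne1$ by applying sesquilinearity to the slot containing $C$ and using injectivity of multiplication by $\la_i$, one-dimensional cokernel in degree $1$, equality in degree $0$), the short exact sequence of complexes with quotient concentrated in degree $1$, and the identification $\delta(v_0)=\pm[\al\cdot v_0]$ of the connecting map are all correct and reproduce the skeleton of the argument. However, the entire lemma hinges on the one claim you leave open, namely that $[\al\cdot v_0]\ne 0$ in $H^2_{\mc Chom}(\bar R,M)$: without it you obtain neither the isomorphism in degree $1$ nor the one-dimensionality of the kernel in degree $2$. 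So the proposal is incomplete at exactly the decisive point, and you acknowledge as much.

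More seriously, the route you sketch for closing the gap --- an $\bar R$-equivariant retraction $M\to\mb F$ onto $\ker\partial|_M$ --- cannot be made to work from the two hypotheses you are using (nontriviality of the extension and $\dim\ker\partial|_M=1$). Take $\bar R=\mb F[\partial]a$ abelian, $R=R^b_{\mf h}$ the free boson LCA with cocycle $\al_{\la,\mu}(a\otimes a)=\la$, and $M=B_{\mf h}$ the free boson vertex algebra viewed as an $R$-module: then $\dim\ker\partial|_M=1$ and the extension is nontrivial, yet the $1$-cochain $Z(a)=\tfrac12 a$ satisfies $dZ=\pm\,\al\cdot\vac$ (because $a_\la a=\la\vac$ in $B_{\mf h}$ and $\bar R$ is abelian), so $[\al\cdot\vac]=0$ in $H^2_{\mc Chom}(\bar R,B_{\mf h})$. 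Hence the needed nonvanishing is not a formal consequence of the stated hypotheses, and no retraction argument can produce it. Note that the paper does not prove this step from scratch either: its proof imports it from \cite{BDSK19} (Proposition 2.11 and the proof of Proposition 3.15), and in the application (Theorem \ref{thm:ss4}) it is supplied by hypothesis \eqref{eq:ss5}, which together with Proposition \ref{prop:hlcrv} gives $H_{\mc Chom}(\bar R,M)\simeq H_{\mc Chom}(\bar R,\mb F)$ induced by $1\mapsto v_0$, so that $[\al\cdot v_0]\ne0$ follows from $[\al]\ne0$. To complete your argument you should either invoke those same results or add such an identification as an explicit hypothesis and deduce the nonvanishing from it, rather than attempt the retraction.
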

\begin{proof}
This follows from Proposition 2.11 and the proof of Proposition 3.15 from \cite{BDSK19}.
The reason why $H^2_{\mc Chom}(\bar R,\mb F)$ and $H^2_{\mc Chom}(R,\mb F)$ differ is that there is a $1$-cochain $\be \in H^1_{\mc Chom}(R,\mb F)$, defined by
$$
\be_\la(C)=1+\langle\partial+\la\rangle \,, \qquad
\be_\la(a)=\langle\partial+\la\rangle 
\,, \qquad a\in\bar R \,,
$$
such that the image of $\al$ in $H^2_{\mc Chom}(R,\mb F)$ is $d\be$.
On the other hand, for $n\ge2$, any $n$-cochain vanishes when one of its arguments is $C$, by the sesquilinearity \eqref{eq:chomses}.
\end{proof}

\begin{remark}\label{rem:cext}
Suppose that the assumptions of Lemma \ref{lem:cext} hold, except that the central extension $R=\bar R\oplus\mb F C$ is trivial. Then $H^n_{\mc Chom}(\bar R,M) \simeq H^n_{\mc Chom}(R,M)$ for all $n\ne 1$, and we have an inclusion $H^1_{\mc Chom}(\bar R,M) \hookrightarrow H^1_{\mc Chom}(R,M)$ whose image has codimension $1$.
\end{remark}

Under the above assumptions on the LCA, we can determine the VA cohomology of 
its universal enveloping vertex algebra $V$.
This result will be applied, in particular, to the universal Virasoro VA (see Section \ref{sec:vir} below).

\begin{theorem}\label{thm:ss4}
Suppose that the Lie conformal algebra\/ $R=\bar R+\mb F C$ is a non\-trivial central extension of a Lie conformal algebra\/ $\bar R$ by an element\/ $C$ such that\/ $\partial C=0$, where\/
$\bar R$ is a free finitely-generated\/ $\mb F[\partial]$-module. For\/ $c\in\mb F$, consider the vertex algebra\/ $V^c(R)$ and its LCA subalgebra\/ $\bar R+\mb F\vac \simeq R$. Assume that
\begin{equation}\label{eq:ss5}
H_{\PV}(\mc V(\bar R)) \simeq H_{\mc Chom}(\bar R,\mb F) \,,
\end{equation}
where the isomorphism is induced from the restriction \eqref{eq:rvm2} and the inclusion\/ $\mb F \simeq \mb F 1 \hookrightarrow \mc V(\bar R)$. Then
\begin{equation}\label{eq:ss6}
H_{\chb}(V^c(R)) \simeq H_{\mc Chom}(\bar R,\mb F) \,, 
\end{equation}
where the isomorphism is induced from the restriction \eqref{eq:rvm1} and the inclusion\/ $\mb F \simeq \mb F \vac \hookrightarrow V^c(R)$.
\end{theorem}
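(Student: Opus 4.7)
The plan is to apply the spectral sequence from Theorem~\ref{thm:ss1} to $V = V^c(R)$ with its canonical filtration, which is very good and has $\gr V^c(R) \simeq \mc V(\bar R)$ by Lemma~\ref{lem:uvg}. This gives $E_2^{p,q} = \gr^p H^{p+q}_{\cl}(\mc V(\bar R))$. Since $\bar R$ is a free finitely-generated $\mb F[\partial]$-module, $\mc V(\bar R) \simeq S(\bar R)$ is an algebra of differential polynomials, so Theorem~\ref{thm:cl-pv}(b) supplies a bidegree-preserving isomorphism $H_{\cl}(\mc V(\bar R)) \simeq H_{\PV}(\mc V(\bar R))$.

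Next, I would use the hypothesis to show that $E_2$ is concentrated on the row $q=0$. Combining Proposition~\ref{prop:pv-lc}(a) with the decomposition $\mc V(\bar R) = \bigoplus_{q\ge0} S^q \bar R$ of $\bar R$-modules identifies
\[
\gr^p H^{p+q}_{\PV}(\mc V(\bar R)) \simeq H^{p+q}_{\mc Chom}\bigl(\bar R,\, S^q \bar R\bigr) \,.
\]
Under the hypothesis~\eqref{eq:ss5}, the map $H_{\mc Chom}(\bar R,\mb F) \to H_{\PV}(\mc V(\bar R))$ induced by the inclusion $\mb F \simeq S^0 \bar R \hookrightarrow \mc V(\bar R)$ is an isomorphism onto the whole target, which forces $H^n_{\mc Chom}(\bar R, S^q \bar R) = 0$ for all $q > 0$. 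Hence $E_2^{p,q} = 0$ for $q>0$ and $E_2^{p,0} \simeq H^p_{\mc Chom}(\bar R, \mb F)$.

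Since only the row $q=0$ is nonzero, each differential $d_r$ for $r\ge 2$ has either a zero source or a zero target, so the spectral sequence collapses at $E_2$: $E_\infty^{p,q} \simeq \gr^p H^{p+q}_{\chb}(V^c(R))$ vanishes unless $p=n$, in which case $\gr^n H^n_{\chb}(V^c(R)) \simeq H^n_{\mc Chom}(\bar R, \mb F)$. The induced filtration on $H^n_{\chb}(V^c(R))$ is exhaustive by the very definition of $C_{\chb}$, and separated by Lemma~\ref{lem:filsep}, so the fact that only one graded piece is nonzero forces $H^n_{\chb}(V^c(R)) \simeq H^n_{\mc Chom}(\bar R,\mb F)$ as claimed.

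The main obstacle I expect is checking that this abstract isomorphism agrees with the one induced by the restriction~\eqref{eq:rvm1} and the inclusion $\mb F \simeq \mb F\vac \hookrightarrow V^c(R)$. This requires tracking those chain-level maps through each page of the spectral sequence and verifying, by naturality, that on $E_1$ the composition reduces to the restriction isomorphism from Proposition~\ref{prop:pv-lc}(a), so that on $E_2$ it matches the isomorphism supplied by the hypothesis~\eqref{eq:ss5}, and hence propagates to the asserted isomorphism on $E_\infty$ and on $H_{\chb}(V^c(R))$.
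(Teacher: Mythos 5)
Your first half matches the paper's own argument: the canonical filtration of $V^c(R)$ is very good with $\gr V^c(R)\simeq\mc V(\bar R)$, the spectral sequence of Theorem \ref{thm:ss1} has $E_2^{p,q}\simeq\gr^p H^{p+q}_{\cl}(\mc V(\bar R))\simeq\gr^p H^{p+q}_{\PV}(\mc V(\bar R))\simeq H^{p+q}_{\mc Chom}(\bar R,S^q\bar R)$, the hypothesis \eqref{eq:ss5} kills all rows with $q>0$, the sequence collapses at $E_2$, and the exhaustive separated filtration on $H^n_{\chb}$ with a single nonzero graded piece gives an abstract isomorphism $H^n_{\chb}(V^c(R))\simeq H^n_{\mc Chom}(\bar R,\mb F)$. (Minor point: separatedness of the induced filtration on cohomology is Remark \ref{rem:hfilsep}, not Lemma \ref{lem:filsep}, which only concerns the cochain spaces.)

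The genuine gap is exactly the step you defer to ``naturality.'' The theorem asserts that the isomorphism is the one induced by the restriction \eqref{eq:rvm1} and the inclusion $\mb F\simeq\mb F\vac\hookrightarrow V^c(R)$, and this occupies the entire second half of the paper's proof; moreover your proposed naturality argument cannot work as stated in degree $2$. The restriction map lands in $H^2_{\mc Chom}(R,V^c(R))$ (with $R\simeq\fil^1 V^c(R)$), and by Lemma \ref{lem:cext} the cocycle $\al$ defining the nontrivial central extension --- a nonzero class in $H^2_{\mc Chom}(\bar R,\mb F)$ --- becomes a coboundary $d\be$ there; so the composite through the restriction map has a kernel in degree $2$ and cannot by itself match the spectral-sequence isomorphism. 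The paper closes this in two steps: (i) Proposition \ref{prop:hlcrv} (a second spectral sequence showing $H_{\mc Chom}(\bar R,V)\simeq H_{\mc Chom}(\bar R,\mb F)$) together with Lemma \ref{lem:cext} and the surjectivity statement \eqref{eq:ss8} coming from Lemma \ref{lem:fdh2} identify the natural map as an isomorphism in every degree $n\ne2$; and (ii) a separate degree-$2$ argument: $\al$ lifts via \eqref{eq:ss8} to a cocycle $Y\in\fil^2 C^2_{\ch}(V^c(R))$, and one checks $Y\ne dZ$ for every $Z\in C^1_{\chb}\simeq\End_{\mb F[\partial]}(V^c(R))$ --- if $Z(\vac)=0$ the restriction of $Z$ lies in $C^1_{\mc Chom}(\bar R,\mb F)$ and cannot bound $\al$, while if $Z(\vac)=a\vac$ with $a\ne0$ then $(dZ)\bigl(\vac\otimes\vac\otimes z_{21}^{-1}\bigr)=-(-1)^{p(Z)}a\vac\ne0$, whereas $Y\bigl(\vac\otimes\vac\otimes z_{21}^{-1}\bigr)=0$ since $Y\in\fil^2$. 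Without (i) and (ii), or some substitute, your argument proves only an abstract isomorphism of graded vector spaces, not the statement as formulated.
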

\begin{proof}
As above, we consider $V:=V^c(R)$ with its canonical filtration \eqref{canfilvcr}, which is very good and satisfies $\mc V:=\gr V \simeq \mc V(\bar R)$. Then we have the spectral sequence $\{(E_r,d_r)\}_{r\ge1}$ from Theorem \ref{thm:ss1}. 
Recall that, as a differential algebra, $\mc V \simeq S(\bar R)$ is isomorphic to an algebra of differential polynomials in finitely many (even or odd) variables. Hence, by Theorem \ref{thm:cl-pv}, 
\begin{equation}\label{eq:ss7}
E_2^{p,q} \simeq \gr^p H^{p+q}_{\cl}(\mc V) 
\simeq \gr^p H^{p+q}_{\PV}(\mc V) \,.
\end{equation}

As in the proof of Proposition \ref{prop:hlcrv}, we have, by assumption,
$\gr^p H^{p+q}_{\PV}(\mc V) 
= 0$ for $q>0$.
Therefore, $E_2^{p,q} = 0$ for $q>0$ and $E_2^{p,0} \simeq H^p_{\mc Chom}(\bar R,\mb F)$.
By Remark \ref{rem:col}, the spectral sequence collapses at the second term: all $d_r=0$ for $r\ge 2$.
Lemma \ref{lem:fdh2} then gives
\begin{equation}\label{eq:ss8}
\pi(\fil^p C^n_{\ch}(V) \cap \ker d) = \gr^p C^n_{\cl}(\mc V) \cap \ker\bar d \,,
\end{equation}
where $\pi$ denotes the canonical projection  
$\fil^p C^n_{\ch}(V) \twoheadrightarrow \gr^p C^n_{\ch}(V) \simeq \gr^p C^n_{\cl}(\mc V)$.

Recall that the LCA $R$ is identified as the subalgebra $\fil^1 V = \bar R+\mb F\vac$ of the LCA $(V,[\,{}_\la\,])$. By Lemma \ref{lem:rvm}, the restriction map \eqref{eq:rvm1} induces a morphism of complexes
$C_{\chb}(V) \to C_{\mc Chom}(R,V)$ and the corresponding linear maps $H^n_{\chb}(V) \to H^n_{\mc Chom}(R,V)$.
Due to Proposition \ref{prop:hlcrv} and Lemma \ref{lem:cext}, we have
$$
H^n_{\mc Chom}(R,V) \simeq H^n_{\mc Chom}(\bar R,V) \simeq H^n_{\mc Chom}(\bar R,\mb F) \simeq 
H^n_{\PV}(\mc V)
\,, \qquad n\ne 2 \,.
$$
Thus, we obtain linear maps $H^n_{\chb}(V) \to H^n_{\mc Chom}(\bar R,\mb F)$ for all $n\ne 2$, which are surjective by \eqref{eq:ss7} and \eqref{eq:ss8}.
On the other hand, by \eqref{eq:egrh1}, we have isomorphisms
$$
E_\infty^{p,q} = E_2^{p,q} \simeq \gr^p H^{p+q}_{\PV}(\mc V) 
\simeq \gr^p H^{p+q}_{\chb}(V) \,.
$$
We conclude that
$$
H^n_{\chb}(V) \simeq H^n_{\mc Chom}(R,\mb F) 
\,, \qquad n\ne2 \,.
$$

To finish the proof, it remains to show that $H^2_{\chb}(V) \simeq H^2_{\mc Chom}(\bar R,\mb F)$.
Recall from Lemma \ref{lem:cext} that $H^2_{\mc Chom}(R,\mb F)$ and $H^2_{\mc Chom}(\bar R,\mb F)$ only differ by the $2$-cocycle $\al \in C^2_{\mc Chom}(\bar R,\mb F)$, which gives the central extension $R$ of $\bar R$.
We know from \eqref{eq:ss8} that $\al$ can be lifted to a $2$-cocycle $Y \in \fil^2 C^2_{\ch}(V)$ such that
$$
Y_{\la_1,\la_2}^{z_1,z_2} (v \otimes 1) = \al_{\la_1,\la_2}(v) \,, \qquad v\in \bar R^{\otimes 2} \,.
$$
We need to check that $Y \ne dZ$ for any $Z \in C^1_{\chb}(V) \simeq \End_{\mb F[\partial]} V$.
Indeed, note that $Z(\vac) \in\mb \ker\partial = \mb F\vac$. If $Z(\vac)=0$, then the image of $Z$ under the
the restriction map \eqref{eq:rvm2} is in $C^1_{\mc Chom}(\bar R,\mb F)$, but $\al$ is nontrivial in 
$H^2_{\mc Chom}(\bar R,\mb F)$. Thus, $Z(\vac) = a\vac$ for some nonzero $a\in\mb F$.
By \cite[Eq.\ (7.6)]{BDSHK18}, we find for $v_1,v_2 \in V$ and
$h\in \mc O_2^{\star T} = \mb F[z_{12}^{\pm 1}]$:
\begin{align*}
(dZ)_{\la_1,\la_2}^{z_1,z_2} &(v_1 \otimes v_2 \otimes h)
= (-1)^{\bar p(v_1) \bar p(v_2)} X_{\la_2,\la_1}^{z_2,z_1} \bigl(Z(v_1) \otimes v_2 \otimes h \bigr)
\\
&+ X_{\la_1,\la_2}^{z_1,z_2} \bigl(v_1 \otimes Z(v_2) \otimes h \bigr)
- (-1)^{p(Z)} Z\bigl( X_{\la_1,\la_2}^{z_1,z_2}(v_1 \otimes v_2 \otimes h) \bigr) \,.
\end{align*}
Setting $v_1=v_2=\vac$ and $h=z_{21}^{-1}$, we obtain from \eqref{eq:Xint},
$$
(dZ)_{\la_1,\la_2}^{z_1,z_2} \bigl(\vac \otimes \vac \otimes z_{21}^{-1} \bigr) 
= - (-1)^{p(Z)} a\vac \ne 0 \,.
$$
On the other hand, since $\vac\in\fil^0 V$, $z_{21}^{-1} \in \fil^1 \mc O_2^{\star T}$ and $Y \in \fil^2 C^2_{\ch}(V)$, we have
$$
Y_{\la_1,\la_2}^{z_1,z_2} \bigl(\vac \otimes \vac \otimes z_{21}^{-1} \bigr) 
\subset (\fil^{-1} V)[\la_1,\la_2] / \langle\partial+\la_1+\la_2\rangle = 0 \,.
$$
This completes the proof of the theorem.
\end{proof}

\subsection{The case when the LCA $\bar R$ is abelian}\label{sec:rab}

Suppose that $\bar R$ is an abelian LCA (i.e., with zero $\la$-bracket), which is freely finitely-generated as an $\mb F[\partial]$-module, and let $R=\bar R+\mb F C$ be a nontrivial central extension of $\bar R$ by an element $C$ such that $\partial C=0$. Then the $\la$-bracket in $R$ has the form
$$
[C_\la R] = 0 \,, \qquad
[a_\la b] = \al_{\la,-\la-\partial}(a \otimes b) \, C \,, \qquad a,b \in\bar R \,,
$$
for some nonzero $2$-cocycle $\al\in H^2_{\mc Chom}(\bar R,\mb F) = C^2_{\mc Chom}(\bar R,\mb F)$.

Fix $c\in\mb F$, and consider $V=V^c(R)$ with its canonical filtration \eqref{canfilvcr}.
As before, we identify $R$ with the LCA subalgebra $\fil^1 V = \bar R+\mb F\vac$ of $(V,[\,{}_\la\,])$.
In this subsection, we will derive analogs of the results of Section \ref{sec:uelca}, with similar proofs.
Now the associated graded Poisson vertex algebra $\mc V=\gr V \simeq V(\bar R) \simeq S(\bar R)$ is a superalgebra of differential polynomials in finitely many (even or odd) variables, with the zero $\la$-bracket.
The PVA $\mc V^c := \mc V^c(R)$ is isomorphic to it as a differential algebra, but its $\la$-bracket is given by
\begin{equation}\label{eq:ss11}
[a_\la b] = \al_{\la,-\la-\partial}(a \otimes b) \, c \,, \qquad a,b \in\bar R
\end{equation}
(which is then extended to the whole PVA by the Leibniz rules L4 and L4').
Recall that, by Proposition \ref{prop:pv-lc}(b), we have an isomorphism of complexes
\begin{equation}\label{eq:ss12}
C_{\PV}(\mc V^c,\mc V^c) \simeq C_{\mc Chom}(\bar R,\mc V^c) \,.
\end{equation}
As an $\bar R$-module, $\mc V^c$ is filtered by its $\bar R$-submodules
$$
\fil^p \mc V^c := \sum_{q=0}^p S^q \bar R \,, \qquad p\ge -1 \,;
$$
however, $S^q \bar R$ are not $\bar R$-submodules of $\mc V^c$. In particular, $\fil^1 \mc V^c = \bar R+\mb F 1 \simeq R$ is the adjoint representation of $\bar R$ on $R$, given by \eqref{eq:ss11} and by $[a_\la 1] = 0$.

\begin{proposition}\label{prop:hlcrv2}
Let the LCA\/ $R=\bar R+\mb F C$ be a nontrivial central extension of an abelian LCA\/ $\bar R$, where\/ $\partial C=0$ and\/ $\bar R$ is freely finitely-generated as an\/ $\mb F[\partial]$-module.
Suppose that for some\/ $c\in\mb F$, we have\/ 
$H_{\PV}(\mc V^c,\mc V^c) \simeq H_{\mc Chom}(\bar R,R)$, where the isomorphism is induced from the restriction \eqref{eq:rvm2} and the inclusion\/ $R \simeq \fil^1 \mc V^c \hookrightarrow \mc V^c$.
Then\/ $H_{\mc Chom}(\bar R,V) \simeq H_{\mc Chom}(\bar R,R)$, where\/ $V=V^c(R)$ and the isomorphism is induced from the inclusion\/ $R \simeq \fil^1 V \hookrightarrow V$.
\end{proposition}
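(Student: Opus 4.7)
The plan is to mirror the proof of Proposition \ref{prop:hlcrv}. I equip the cochain complex $C_{\mc Chom}(\bar R, V)$ with the decreasing filtration
\[
F^p C^n_{\mc Chom}(\bar R, V) := C^n_{\mc Chom}(\bar R, \fil^{n-p} V),
\]
induced by the canonical filtration of $V$; it is exhaustive, separated, and preserved by the differential because each $\fil^q V$ is an $\bar R$-submodule of $V$. Since $\bar R$ is abelian, the PVA $\gr V \simeq \mc V(\bar R) = S(\bar R)$ has zero $\la$-bracket, so the $\la$-action of $\bar R$ on $V$ actually drops the canonical filtration by exactly one level, and hence the induced $\bar R$-action on each $\gr^q V \simeq S^q \bar R$ is trivial. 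Combined with the abelianness of $\bar R$, this forces $d_1=0$ and
\[
E_2^{p,q} = E_1^{p,q} \simeq C^{p+q}_{\mc Chom}(\bar R, S^q \bar R)
\]
with trivial coefficients. The identical construction, with $\mc V^c$ in place of $V$, gives a spectral sequence for $C_{\mc Chom}(\bar R, \mc V^c)$ with the very same $E_1$ and $E_2$ pages.

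By Proposition \ref{prop:pv-lc}(b), the hypothesis translates into an isomorphism $H_{\mc Chom}(\bar R, \mc V^c) \simeq H_{\mc Chom}(\bar R, R)$ induced by the inclusion $R = \fil^1 \mc V^c \hookrightarrow \mc V^c$, so the $\mc V^c$-spectral sequence converges to the graded quotient of $H_{\mc Chom}(\bar R, R)$. The differential $d_2$ for $\mc V^c$ is induced by the leading-order $\la$-action $\bar R \otimes \gr^q \mc V^c \to \gr^{q-1} \mc V^c$, namely the PVA bracket of $\mc V^c$ extended by the Leibniz rule. A direct comparison of the non-commutative Wick formula in $V$ against the Leibniz rule in $\mc V^c$ shows that the leading-order $\la$-action of $\bar R$ on $V$ is given by the same formula; hence the $d_2$ differentials in the two spectral sequences coincide.

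To conclude, I need the higher differentials $d_r$ ($r \ge 3$) in the two spectral sequences to coincide as well, which would imply $E_\infty(V) \simeq E_\infty(\mc V^c)$ and hence $H_{\mc Chom}(\bar R, V) \simeq H_{\mc Chom}(\bar R, R)$, the isomorphism being induced from $R = \fil^1 V \hookrightarrow V$. My plan is to construct a filtered, $\bar R$-equivariant quasi-isomorphism $C_{\mc Chom}(\bar R, V) \to C_{\mc Chom}(\bar R, \mc V^c)$ starting from an $\mb F[\partial]$-linear isomorphism $V \to \mc V^c$ (which exists because the canonical filtration of $V$ is very good and the associated gradeds of both $V$ and $\mc V^c$ are isomorphic to $S(\bar R)$ as $\mb F[\partial]$-modules), and then correcting it order by order in the filtration degree to absorb the quantum corrections produced by the non-commutative Wick formula in $V$. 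The hard part will be this inductive correction: at each stage, the residual non-equivariance must be shown to lie in strictly lower filtration and to be absorbable by adjusting the map using the Wick-type identities from Section \ref{subsec:wick}.
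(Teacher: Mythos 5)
Your setup coincides with the paper's: you filter $C_{\mc Chom}(\bar R,V)$ by the canonical filtration of $V$, get $E_1^{p,q}=E_2^{p,q}\simeq C^{p+q}_{\mc Chom}(\bar R,S^q\bar R)$, and note $d_1=0$. The gap is in the endgame. You only argue that the $d_2$ differentials of the two spectral sequences (for $V$ and for $\mc V^c$) agree "to leading order," and you leave the agreement of all higher $d_r$ — equivalently, the construction of a filtered $\bar R$-equivariant quasi-isomorphism $V\to\mc V^c$ corrected order by order — as an unproven plan, explicitly flagged as "the hard part." As written this is not a proof. Moreover, the inductive correction you envisage is unnecessary, and recognizing why is the key step you are missing: since $\bar R$ is abelian, for $a,b\in\bar R\subset V$ the bracket $[a_\la b]$ is a polynomial in $\la$ times $\vac$, and $\vac$ has zero $\la$-bracket with everything, so the correction term $\int_0^\la d\si\,[[a_\la b]_\si v]$ in the non-commutative Wick formula \eqref{eq:wi} vanishes identically and ${:}[a_\la b]v{:}$ is just a scalar multiple of $v$. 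Hence the $\la$-action of $\bar R$ on $V$ satisfies the Leibniz rule exactly, and the PBW identification gives $\fil^q V\simeq\fil^q\mc V^c$ as $\bar R$-modules on the nose, not merely modulo lower filtration.

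With this exact module isomorphism, no comparison of higher differentials with a second spectral sequence is needed. One identifies $E_3^{p,q}=\gr^p H^{p+q}_{\mc Chom}(\bar R,\fil^q V)\simeq \gr^p H^{p+q}_{\mc Chom}(\bar R,\fil^q \mc V^c)$, and the hypothesis $H_{\PV}(\mc V^c,\mc V^c)\simeq H_{\mc Chom}(\bar R,R)$ (via Proposition \ref{prop:pv-lc}(b)) forces $E_3^{p,q}=0$ for $q>1$. Remark \ref{rem:col} then gives collapse at the third term, $d_r=0$ for $r\ge3$, so $E_\infty^{p,q}=E_3^{p,q}$; since the filtration of $C_{\mc Chom}(\bar R,V)$ is separated and exhaustive, $\gr^p H^{p+q}_{\mc Chom}(\bar R,V)\simeq E_3^{p,q}$, and summing the two surviving columns gives $H^n_{\mc Chom}(\bar R,V)\simeq E_3^{n,0}\oplus E_3^{n-1,1}\simeq H^n_{\mc Chom}(\bar R,\fil^1 V)=H^n_{\mc Chom}(\bar R,R)$, induced by the inclusion $R\simeq\fil^1 V\hookrightarrow V$. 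So your first two pages match the paper, but the step you defer is exactly where the argument must be closed, and it is closed by the exact $\bar R$-module identification plus the collapse criterion, not by the correction scheme you propose.
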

\begin{proof}
As in the proof of Proposition \ref{prop:hlcrv},
the canonical filtration \eqref{canfilvcr} of $V$ induces a 
separated exhaustive filtration of the complex $C_{\mc Chom}(\bar R,V)$.
This gives a spectral sequence $\{(E_r,d_r)\}_{r\ge1}$, whose first term is
$$
E_1^{p,q} = \gr^p C^{p+q}_{\mc Chom}(\bar R,V) \simeq C^{p+q}_{\mc Chom}(\bar R, \gr^q V)
= C^{p+q}_{\mc Chom}(\bar R,S^q \bar R) \,.
$$
Since the $\la$-bracket in $\bar R$ is zero, the differential $d_1=0$ and hence
$$
E_2^{p,q} = E_1^{p,q} \simeq C^{p+q}_{\mc Chom}(\bar R,S^q \bar R) \,.
$$
By construction, the differential 
$$
d_2 \colon E_2^{p,q} \to E_2^{p+2,q-1}
$$
is induced from the LCA differential
$$
d\colon C^{p+q}_{\mc Chom}(\bar R,\fil^q V) \to C^{p+q+1}_{\mc Chom}(\bar R,\fil^q V) \,.
$$
Indeed, the image of $C^{p+q}_{\mc Chom}(\bar R,\fil^q V)$ under $d$ lies in
$C^{p+q+1}_{\mc Chom}(\bar R,\fil^{q-1} V)$, because $[\bar R_\la\bar R]=0$.

Notice that $\fil^q V \simeq \fil^q \mc V^c$ as $\bar R$-modules, again due to the vanishing of the $\la$-bracket in $\bar R$. Therefore,
$$
E_3^{p,q} = \gr^p H^{p+q}_{\mc Chom}(\bar R,\fil^q V) \simeq \gr^p H^{p+q}_{\mc Chom}(\bar R,\fil^q \mc V^c)
$$
is the image of $H^{p+q}_{\mc Chom}(\bar R,\fil^q \mc V^c)$ under the linear map induced by the projection
$\fil^q \mc V^c \twoheadrightarrow S^q \bar R \simeq \fil^q \mc V^c / \fil^{q-1} \mc V^c$.
By assumption, $E_3^{p,q} = 0$ for $q>1$.
Hence, by Remark \ref{rem:col}, the spectral sequence collapses at the third term: all $d_r=0$ for $r\ge 3$.
As a result,
$$
\gr^p H^{p+q}_{\mc Chom}(\bar R,V) \simeq E_\infty^{p,q} = E_3^{p,q}
\,, \qquad p,q\in\mb Z \,.
$$
This implies, for every $n\geq0$, that
\begin{align*}
H^n_{\mc Chom}(\bar R,V) \simeq \bigoplus_{p=n-1}^n \gr^p H^n_{\mc Chom}(\bar R,V) 
\simeq E_3^{n,0} \oplus E_3^{n-1,1} 
\simeq H^n_{\mc Chom}(\bar R,\fil^1 V)
\,,
\end{align*}
which completes the proof.
\end{proof}

Under the above assumptions, we can determine the bounded VA cohomology of $V$.
This result will be applied to the free superfermion and free superboson vertex algebras (see Sections \ref{sec:ferm} and \ref{sec:bos} below).

\begin{theorem}\label{thm:ss5}
Suppose that the Lie conformal algebra\/ $R=\bar R+\mb F C$ is a non\-trivial central extension of an abelian Lie conformal algebra\/ $\bar R$ by an element\/ $C$ such that\/ $\partial C=0$, where\/
$\bar R$ is a free finitely-generated\/ $\mb F[\partial]$-module. For\/ $c\in\mb F$, consider the vertex algebra\/ $V^c(R)$ and its LCA subalgebra\/ $\bar R+\mb F\vac \simeq R$. Assume that
\begin{equation}\label{eq:ss9}
H_{\PV}(\mc V^c(R)) \simeq H_{\mc Chom}(\bar R,R) \,,
\end{equation}
where the isomorphism is induced from the restriction \eqref{eq:rvm2} and the inclusion\/ 
$R \simeq \fil^1 \mc V^c(R) \hookrightarrow \mc V^c(R)$. Then
\begin{equation}\label{eq:ss10}
H_{\chb}(V^c(R)) \simeq H_{\mc Chom}(\bar R,R) \,, 
\end{equation}
where the isomorphism is induced from the restriction \eqref{eq:rvm1} and the inclusion\/ 
$R \simeq \fil^1 V^c(R) \hookrightarrow V^c(R)$.
\end{theorem}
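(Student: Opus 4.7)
The proof is structurally analogous to that of Theorem \ref{thm:ss4}, but because $\bar R$ is abelian, the relevant spectral sequence collapses at $E_3$ rather than $E_2$, and Proposition \ref{prop:hlcrv2} replaces Proposition \ref{prop:hlcrv}.

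First, I apply the spectral sequence of Theorem \ref{thm:ss1} to $V:=V^c(R)$ with its canonical filtration \eqref{canfilvcr}, which is very good by Lemma \ref{lem:uvg}, so that $\mc V:=\gr V \simeq \mc V(\bar R) \simeq S(\bar R)$ as differential superalgebras. Since $\bar R$ is abelian, $\mc V(\bar R)$ carries the zero $\la$-bracket, so by Theorems \ref{thm:ss3} and \ref{thm:cl-pv} combined with Proposition \ref{prop:pv-lc}(a),
\begin{equation*}
E_2^{p,q} \simeq \gr^p H^{p+q}_{\cl}(\mc V(\bar R)) \simeq C^{p+q}_{\mc Chom}(\bar R, S^q\bar R),
\end{equation*}
which is nonzero for $q>0$, so collapse at $E_2$ fails.

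Next, I analyze the differential $d_2\colon E_2^{p,q}\to E_2^{p+2,q-1}$. By construction, $d_2$ records the next-to-leading contribution to $\ad_X$ beyond $\ad_{\bar X}$; as $\bar X$ encodes only the commutative product in $\mc V(\bar R)$, this contribution comes from the nontrivial $\la$-bracket $c\al$ of $\mc V^c(R)$. Under the identifications above, $d_2$ agrees with the graded pieces (shifting polynomial degree by $-1$) of the LCA differential on $C_{\mc Chom}(\bar R,\mc V^c(R))$ induced by the $\bar R$-action \eqref{eq:ss11} and the Leibniz rule. A secondary spectral sequence argument analogous to that in Proposition \ref{prop:hlcrv2}, together with Proposition \ref{prop:pv-lc}(b), yields
\begin{equation*}
E_3^{p,q} \simeq \gr^q H^{p+q}_{\mc Chom}(\bar R,\mc V^c(R)) \simeq \gr^q H^{p+q}_{\PV}(\mc V^c(R)).
\end{equation*}
By the hypothesis, $H_{\PV}(\mc V^c(R))$ is exhausted by cohomology classes supported in filtration degree $\le 1$, so $E_3^{p,q}=0$ for $q\ge 2$. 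The higher differentials $d_r$ for $r\ge 3$ vanish for degree reasons (their targets lie in bidegree $(p+r,q-r+1)$ with $q-r+1<0$), and the spectral sequence collapses at $E_3$, giving $\gr^p H^{p+q}_{\chb}(V)\simeq E_3^{p,q}$, which is supported on $q\in\{0,1\}$.

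To obtain the final identification, I compose the restriction map $H_{\chb}(V)\to H_{\mc Chom}(R,V)$ from Lemma \ref{lem:rvm}, the isomorphism $H_{\mc Chom}(\bar R,V)\simeq H_{\mc Chom}(\bar R,R)$ from Proposition \ref{prop:hlcrv2}, and Lemma \ref{lem:cext} applied to the module $V$ (which has $\dim\ker\partial|_V=1$), assembling these into a morphism of spectral sequences that identifies the $E_3$-page computed above with its counterpart for $H_{\mc Chom}(\bar R,R)$, thereby producing the desired isomorphism $H_{\chb}(V^c(R))\simeq H_{\mc Chom}(\bar R,R)$. The principal obstacle will be making the identification of $d_2$ with the LCA differential on $C_{\mc Chom}(\bar R,\mc V^c(R))$ fully rigorous at the cochain level, and, as in the proof of Theorem \ref{thm:ss4}, exhibiting an explicit lift of the defining cocycle $c\al$ in low degree to verify it is not a coboundary in $C_{\chb}(V)$, so that the comparison map is genuinely an isomorphism and not merely a surjection.
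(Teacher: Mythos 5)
Your proposal is correct and follows essentially the same route as the paper's proof: the canonical filtration of $V^c(R)$, the spectral sequence of Theorem \ref{thm:ss1} with $E_2^{p,q}\simeq C^{p+q}_{\mc Chom}(\bar R,S^q\bar R)$, identification of $d_2$ with the variational differential of $\mc V^c(R)$ so that $E_3\simeq\gr H_{\PV}(\mc V^c)$, vanishing for $q\ge2$ from hypothesis \eqref{eq:ss9} and collapse at $E_3$ (Remark \ref{rem:col}), and then the same endgame as Theorem \ref{thm:ss4} via Lemma \ref{lem:rvm}, Proposition \ref{prop:hlcrv2}, Lemma \ref{lem:cext}, and the explicit low-degree lift of the cocycle. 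The two points you flag as needing care (rigorous identification of $d_2$ and nontriviality of the lifted cocycle) are exactly the details the paper either omits or handles verbatim as in the proof of Theorem \ref{thm:ss4}.
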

\begin{proof}
The proof is similar to the proofs of Theorem \ref{thm:ss4} and Proposition \ref{prop:hlcrv2}, so some details will be omitted.
As before, we consider $V:=V^c(R)$ with its canonical filtration \eqref{canfilvcr}, which is very good and satisfies $\mc V:=\gr V \simeq \mc V(\bar R) \simeq S(\bar R)$. Then we have the spectral sequence $\{(E_r,d_r)\}_{r\ge1}$ from Theorem \ref{thm:ss1}, and
$$
E_2^{p,q} \simeq \gr^p H^{p+q}_{\cl}(\mc V) 
\simeq \gr^p H^{p+q}_{\PV}(\mc V) = \gr^p C^{p+q}_{\PV}(\mc V) \,.
$$
We get
$$
E_3^{p,q} \simeq \gr^p H^{p+q}_{\PV}(\mc V^c) \,,
$$
where $\mc V^c :=\mc V^c(R)$.
By assumption, $E_3^{p,q} = 0$ for $q>1$ and 
$$
E_3^{n,0} \oplus E_3^{n-1,1} \simeq H^n_{\PV}(\mc V^c, \fil^1 \mc V^c)
\simeq H^n_{\mc Chom}(\bar R,R) \,.
$$
By Remark \ref{rem:col}, the spectral sequence collapses at the third term: all $d_r=0$ for $r\ge 3$.
The rest of the proof is as in the proof of Theorem \ref{thm:ss4}.
\end{proof}

\section{Examples of computations of VA cohomology}\label{sec:ex}

\subsection{Vertex algebra with $\partial=0$}\label{sec:comtriv}

Let $V$ be a vertex algebra with $\partial=0$. Then, by the sesquilinearity V2 (which implies L1),
the $\la$-bracket in $V$ is zero. By the quasicommutativity and quasiassociativity \eqref{eq:qc}, \eqref{eq:qa}, the normally ordered product is commutative and associative. 
We endow $V$ with the trivial increasing filtration \eqref{eq:trfg1}. Hence, $\gr V = \gr^0 V = V$, which is a PVA with the same product as $V$ and zero $\la$-bracket.

\begin{theorem}\label{thm:triv3}
Let\/ $V$ be a commutative associative\/ superalgebra and\/ $M$ be a\/ $V$-module.
Consider\/ $V$ as a (non-unital) vertex algebra with\/ $\partial=0$ and\/ $M$ with\/ $\partial=0$ as a module over the VA\/ $V$. Then the VA cohomology complex of\/ $V$ with coefficients in\/ $M$ is isomorphic to the Harrison cohomology complex of\/ $V$ with coefficients in\/ $M$. Explicitly,
a Harrison cochain $\ga\colon V^{\otimes n} \to M$ corresponds to a VA cochain\/
$Y \in C^n_{\ch}(V,M)$ such that
$$
Y_{\la_1,\dots,\la_n}^{z_1,\dots,z_n} 
\Bigl( v \otimes \frac1{z_{12} z_{23} \cdots z_{n-1,n}} \Bigr) 
= \ga(v) + \langle\la_1+\dots+\la_n\rangle
\,, \qquad v \in V^{\otimes n} \,.
$$
Consequently,
$
H^n_{\ch}(V,M) \simeq H^n_{\Har}(V,M) 
$
for all\/ $n\ge0$,
where the subscript\/ $\Har$ stands for the Harrison cohomology \cite{Har62}.
\end{theorem}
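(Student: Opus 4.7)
The plan is to deduce the theorem from the operadic description of $\mc P^\ch(V)$ established in Section \ref{sec:trivial}, together with the classical fact that, in characteristic zero, Harrison cohomology is the $\mc Lie$-operad invariant piece of Hochschild cohomology. First, by the standard reduction procedure of Section \ref{sec:lieoper} applied to the supercommutative algebra $V \oplus M$ in which $M$ is an ideal of square zero, it suffices to treat the adjoint case $M = V$, and I will focus throughout on $H^n_{\ch}(V,V)$.

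Since $\partial = 0$, Theorem \ref{thm:triv2} applied to $\Pi V$ yields an isomorphism of operads
$$ \mc P^\ch(\Pi V) \simeq \mc Hom(\Pi V) \otimes \mc Lie, $$
and hence
$$ C^n_{\ch}(V,V) = \mc P^\ch(\Pi V)(n)^{S_n} \simeq \bigl( \Hom((\Pi V)^{\otimes n}, \Pi V) \otimes \mc Lie(n) \bigr)^{S_n}. $$
Under this identification, the odd element $X \in W^1_{\ch}(\Pi V)$ encoding the VA structure (with zero $\la$-bracket and commutative normally ordered product) corresponds to $\mu \otimes \beta$, where $\mu$ is the supercommutative product of $V$ twisted by parity reversal and $\beta$ is the Lie operad generator in $\mc Lie(2)$.

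Next I would invoke the classical Koszul-duality/Hodge-type identification (Barr, Gerstenhaber--Schack, Loday): in characteristic zero, the $S_n$-invariant complex above, with differential $\ad_{\mu \otimes \beta}$, is naturally isomorphic to the Harrison complex $C^{\ast}_{\Har}(V,V)$, defined as the subcomplex of the Hochschild complex consisting of cochains that vanish on all shuffles, with its usual differential. The explicit form of the bijection is dictated by Corollary \ref{cor:pgamma}: a chiral operation $Y$ is determined by its values on $v \otimes p_L$ for $L$ running over connected line graphs on $n$ vertices, and for the canonical line $L_0 = 1\to 2 \to \cdots \to n$ one has $p_{L_0} = 1/(z_{12}\cdots z_{n-1,n})$, which is precisely the function appearing in the theorem. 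Given $\gamma \in C^n_{\Har}(V,V)$, define $Y$ on this function to be $\gamma(v) + \langle\la_1+\cdots+\la_n\rangle$, extend to other line graphs using the $S_n$-symmetry \eqref{eq:symch} of chiral operations, and then to all of $V^{\otimes n} \otimes \mc O_n^{\star T}$ via sesquilinearity \eqref{20160629:eq4a}--\eqref{20160629:eq4b} and Lemma \ref{lem:pgamma}. Well-definedness of the extension reduces to checking that the shuffle-vanishing condition on $\gamma$ matches exactly the cycle relations among lines from Section \ref{sec:6a.1} under the isomorphism $\mb F\mc G(n)/\mc R(n) \simeq \mc Lie(n)^\ast$ recalled in Theorem \ref{thm:triv2}.

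It then remains to verify compatibility of the differentials. In formula \eqref{eq:dY} with $\partial = 0$ and zero $\la$-bracket, the entire double sum indexed by pairs $(i,j)$ vanishes (since $[{v_i}_{\la_i} v_j] = 0$), and in the remaining single sum each $\la$-action ${v_i}_{\la_i}(\cdots)$ collapses to multiplication by $v_i$ in $V$. Evaluating on $h = 1/(z_{01}z_{12}\cdots z_{n-1,n})$ and decomposing as in \eqref{eq:dec1} (the only nontrivial $g_i$'s come from $i = 0$ and $i = n$, giving the boundary terms $v_0\gamma(v_1,\dots,v_n)$ and $\pm\gamma(v_0,\dots,v_{n-1})v_n$, while the intermediate $i$'s use sesquilinearity to produce the product $v_{i-1}v_i$), the resulting expression reproduces exactly the alternating Hochschild differential
$$ v_0\,\gamma(v_1\otimes\cdots\otimes v_n) + \sum_{i=1}^{n}(-1)^i\,\gamma(v_0\otimes\cdots\otimes v_{i-1}v_i\otimes\cdots\otimes v_n) + (-1)^{n+1}\gamma(v_0\otimes\cdots\otimes v_{n-1})\,v_n, $$
restricted to the Harrison subcomplex. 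The main technical obstacle is precisely this final combinatorial matching: tracking the parity signs $\gamma_i$ from \eqref{eq:gammai} together with the geometric expansions prescribed after \eqref{eq:dY}, and verifying that they assemble into the Hochschild signs, and moreover that the shuffle-vanishing condition is preserved. The Koszul-duality identification is classical at the conceptual level, but making the explicit formula in the theorem compatible with the abstract isomorphism, and in particular controlling the behavior under the insertion of the extra vertex $z_0$, is where the bulk of the work lies.
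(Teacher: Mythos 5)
Your skeleton is the same as the paper's: reduce to $M=V$ via $V\oplus M$, use the $\partial=0$ structure results (Theorems \ref{thm:triv}, \ref{thm:triv2}) to see that an $n$-cochain is determined by its values on $v\otimes p_L$ for the connected line $L$, and then identify the induced differential with the Harrison one. The genuine gap is in that last identification, which you yourself call ``the bulk of the work'' and leave undone. The classical references (Barr, Gerstenhaber--Schack, Loday) give the Hodge decomposition of Hochschild cohomology and the description of Harrison cochains as the shuffle-vanishing ($\mc Lie$-isotypic) part, but they do not by themselves identify the differential $\ad_{\mu\otimes\beta}$ on $\bigl(\mc Hom(\Pi V)(n)\otimes\mc Lie(n)\bigr)^{S_n}$ --- with the paper's super-sign conventions and the parity shift $\Pi$ --- with the Harrison differential; this is exactly the verification the paper outsources to \cite[Theorem 4.1]{BDSKV19} after passing through $C_{\cl}(V)$. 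Without that citation or a completed computation, the proof is not finished.

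Moreover, the computation you sketch in its place is wrong in a way that matters. With zero $\lambda$-bracket the double sum in \eqref{eq:dY} does \emph{not} vanish: for an adjacent pair $(i,i+1)$ along the line one has $f_{i,i+1}=z_{i,i+1}^{-1}$, so the relevant expression is $-\nabla_{\lambda_i}^{-1}[{v_i}_{\lambda_i}v_{i+1}]=-\int^{\lambda_i}d\sigma\,[{v_i}_\sigma v_{i+1}]$, whose constant term is the nonzero product $v_iv_{i+1}$ by \eqref{eq:notation-lambda} and \eqref{eq:nop}; these adjacent-pair terms are precisely what produce the inner Hochschild summands $\gamma(\cdots\otimes v_{i-1}v_i\otimes\cdots)$. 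Only the non-adjacent pairs, where $f_{ij}=1$ and one gets $\nabla^0[\,\cdot\,_\lambda\,\cdot\,]=[\,\cdot\,_\lambda\,\cdot\,]=0$, drop out. Conversely, the intermediate terms of the first sum vanish not ``by sesquilinearity'' but because for an internal index $i$ the function $f_i$ is $p_\Gamma$ for a disconnected graph with $n-2$ edges, hence lies in $\fil^{n-2}\mc O^{\star T}_n$, and by Theorem \ref{thm:triv} (this is where $\partial=0$ enters) every $n$-ary chiral operation annihilates $V^{\otimes n}\otimes\fil^{n-2}\mc O^{\star T}_n$; only the endpoint terms $i=0,n$ survive, giving the outer summands $v_0\,\gamma(v_1\otimes\cdots\otimes v_n)$ and $\pm\gamma(v_0\otimes\cdots\otimes v_{n-1})\,v_n$. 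So all pieces of the Harrison differential do appear, but from different terms than you assign them to, and the remaining sign and shuffle/cycle-relation bookkeeping still has to be carried out (or replaced by the citation above).
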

\begin{proof}
Note that, by considering the algebra $V\oplus M$ as in Sections \ref{sec:lieoper}, \ref{sec:4.2},
it is enough to prove the theorem for $M=V$.
In this case, by Theorems \ref{thm:triv} and \ref{thm:triv2},
we have an isomorphism of operads $\mc P^\ch \simeq \mc P^\cl$
and $\mc P^\cl(n) = \gr^{n-1} \mc P^\cl(n)$ for all $n\ge1$.
As a consequence, we get an isomorphism of complexes $C_{\ch}(V) \simeq C_{\cl}(V)$.
Explicitly, $Y \in C^n_{\ch}(V)$ corresponds to $Z \in C^n_{\cl}(V)$ such that
$$
Z^L(v) = Y(v \otimes p_L) \,,
$$
where $L$ is the line $1\to2\to3\to\cdots\to n$, and
$$
Z^\Ga(v) = Y(v \otimes p_\Ga) = 0 
\,, \qquad v \in V^{\otimes n} \,,
$$
for every graph $\Ga$ with $n$ vertices and $\le n-2$ edges.
For graphs $\Ga$ with $n-1$ edges, $Z^\Ga$ is determined by Remark \ref{rem:lines} and the symmetry of $Z$. 

Note that
$$
p_L = \frac1{z_{12} z_{23} \cdots z_{n-1,n}} \,,
$$
and $Y$ is uniquely determined by its values $Y(v \otimes p_L)$,
by Corollary \ref{cor:pgamma} and the symmetry conditions \eqref{20160629:eq5}.
Since $L$ is connected, $Z^L(v) \in V$ is independent of $\la_1,\dots,\la_n$.
Therefore,
$$
C^n_{\ch}(V) \simeq C^n_{\cl}(V) \simeq \Hom(V^{\otimes n},V) \,.
$$
Finally, the differential in $C_{\cl}(V)$ and the Harrison differential
coincide, as proved in \cite[Theorem 4.1]{BDSKV19}.
\end{proof}

\subsection{Virasoro VA}\label{sec:vir}

The \emph{universal Virasoro VA}  of central charge $c\in\mb F$ is defined as
$$
\Vir^c=V(R^{\vir})/ {:} V(R^{\vir})(C-c\vac) {:} \,,
$$
where $R^\vir$ is the Virasoro LCA from Example \ref{ex:virasoro-lca}.
It is freely generated by $\bar R^{\vir} = \mb F[\partial]L$,
and in $\Vir^c$ the $\lambda$-bracket of $L$ with itself is given by
$$
[L_\lambda L]=(\partial+2\lambda)L+\frac{c}{12}\lambda^3
\,.
$$
The canonical filtration of $\Vir^c$ is very good, and the associated graded is isomorphic to the PVA $\mc V^0(R^\vir) \simeq \mc V(\bar R^\vir)$ (see Lemma \ref{lem:uvg}).

By \cite[Theorem 4.17]{BDSK19}, $V=\Vir^c$ satisfies the conditions of Theorem \ref{thm:ss4}. 
Since $H_{\mc Chom}(\bar R^\vir,\mb F)$ is known by \cite{BKV99},
as an immediate consequence we obtain an explicit description of the bounded cohomology of $\Vir^c$ with coefficients in its adjoint representation.

\begin{theorem}\label{thm:vir}
For every central charge\/ $c\in\mb F$, we have
$$
\dim H^n_{\chb}(\Vir^c) = \begin{cases}
1, \quad\text{for} \;\; n=0,2,3, \\
0, \quad\text{otherwise.}
\end{cases}
$$
Explicitly, $H^n_{\chb}(\Vir^c,\Vir^c)$ for\/ $n=0,2,3$ is spanned over\/ $\mb F$ by, respectively, $\vac+\partial V \in V/\partial V$, and by unique (up to coboundary) cocycles\/ $Y$, $Z$ such that
$$
Y_{\la_1,\la_2}^{z_1,z_2}(L \otimes L \otimes 1) 
= \la_1^3 + \langle\partial+\la_1+\la_2\rangle
$$
and
$$
Z_{\la_1,\la_2,\la_3}^{z_1,z_2,z_3}(L \otimes L \otimes L \otimes 1) 
= (\la_1-\la_2)(\la_1-\la_3)(\la_2-\la_3) + \langle\partial+\la_1+\la_2+\la_3\rangle \,.
$$
\end{theorem}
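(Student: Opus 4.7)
The plan is to derive Theorem \ref{thm:vir} as a direct application of Theorem \ref{thm:ss4} to the vertex algebra $V = \Vir^c = V^c(R^\vir)$. Recall that $R^\vir = \bar R^\vir \oplus \mb F C$ is a nontrivial central extension (by an element $C$ with $\partial C = 0$) of the centerless Virasoro LCA $\bar R^\vir = \mb F[\partial]L$, which is free of rank one as an $\mb F[\partial]$-module. Thus the structural hypotheses of Theorem \ref{thm:ss4} are satisfied.

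To invoke Theorem \ref{thm:ss4}, I must verify condition \eqref{eq:ss5}, namely the isomorphism $H_{\PV}(\mc V(\bar R^\vir)) \simeq H_{\mc Chom}(\bar R^\vir, \mb F)$, where the map is induced by restriction \eqref{eq:rvm2} together with the inclusion $\mb F \hookrightarrow \mc V(\bar R^\vir)$. This is precisely the content of \cite[Theorem 4.17]{BDSK19}. Granting this, Theorem \ref{thm:ss4} produces a canonical isomorphism
\begin{equation*}
H^n_{\chb}(\Vir^c) \simeq H^n_{\mc Chom}(\bar R^\vir, \mb F)\,, \qquad n\ge 0\,,
\end{equation*}
induced by the restriction map \eqref{eq:rvm1} to $\bar R^\vir \simeq \fil^1 V$ composed with evaluation at the function $1$.

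The LCA cohomology on the right-hand side was computed in \cite{BKV99}: the space $H^n_{\mc Chom}(\bar R^\vir, \mb F)$ is one-dimensional for $n = 0, 2, 3$ and zero otherwise. This yields the dimension count in the theorem. For the explicit representatives: in degree $0$, the class is the constant $1\in\mb F$, which under Proposition \ref{prop:lowcoho}(a) corresponds to the Casimir $\vac + \partial V$; in degree $2$, the unique (up to scalar) nontrivial class is $\al_{\la_1,\la_2}(L\otimes L) = \la_1^3$, which is exactly the LCA $2$-cocycle defining the central extension $R^\vir \to \bar R^\vir$; in degree $3$, one explicit representative is $\be_{\la_1,\la_2,\la_3}(L\otimes L\otimes L) = (\la_1-\la_2)(\la_1-\la_3)(\la_2-\la_3)$, the unique totally antisymmetric polynomial of degree $3$ in three variables modulo $\la_1+\la_2+\la_3$.

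The main step requiring care is exhibiting explicit VA cocycles $Y, Z$ whose restrictions to $(\bar R^\vir)^{\otimes n} \otimes 1$ realize the above LCA classes. The existence of such lifts is guaranteed by the surjectivity in \eqref{eq:ss8} established in the proof of Theorem \ref{thm:ss4}, while the identification of the isomorphism with the restriction map is provided by Lemma \ref{lem:rvm}. Together these determine $Y$ and $Z$, up to coboundary, by the displayed values $Y(L\otimes L\otimes 1) = \la_1^3$ and $Z(L\otimes L\otimes L\otimes 1) = (\la_1-\la_2)(\la_1-\la_3)(\la_2-\la_3)$, completing the proof.
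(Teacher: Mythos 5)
Your proposal is correct and follows essentially the same route as the paper: the paper's proof likewise consists of checking that $\Vir^c$ satisfies the hypotheses of Theorem \ref{thm:ss4} via \cite[Theorem 4.17]{BDSK19}, and then reading off $H_{\mc Chom}(\bar R^{\vir},\mb F)$ from \cite{BKV99}, with the explicit representatives $\la_1^3$ and $(\la_1-\la_2)(\la_1-\la_3)(\la_2-\la_3)$ lifted to VA cocycles exactly as you describe. The only difference is presentational: you spell out the lifting step via \eqref{eq:ss8} and Lemma \ref{lem:rvm}, which the paper leaves implicit in the proof of Theorem \ref{thm:ss4}.
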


\subsection{Free superfermion VA}\label{sec:ferm}

Let $\mf h$ be a finite-dimensional superspace, with parity $p$, and a super-skewsymmetric nondegenerate bilinear form $(\cdot|\cdot)$, as in Example \ref{ex:fermion-lca}.
The \emph{free superfermion VA} is defined as 
$$
F_{\mf h}
:= V^1(R^f_{\mf h})
= V(R^f_{\mf h}) / {:} V(R^f_{\mf h})(K-\vac) {:}
\,.
$$
It is freely generated by $\mb F[\partial]\mf h$, where the $\lambda$-bracket of $a,b\in\mf h$ is as in \eqref{eq:fermion} but with $K=1$. The canonical filtration of $F_{\mf h}$ is very good, and the associated graded is isomorphic to the PVA $\mc V^0(R^f_{\mf h})$ (see Lemma \ref{lem:uvg}). The latter is a superalgebra of differential polynomials with the zero $\la$-bracket.
The \emph{free superfermion PVA} is (cf.\ \eqref{eq:vcr}):
$$
\mc F_{\mf h}
:= \mc V^1(R^f_{\mf h})
= \mc V(R^f_{\mf h}) / \mc V(R^f_{\mf h})(K-1)
\,.
$$
We showed in \cite[Theorem 4.7]{BDSK19} that the cohomology of $\mc F_{\mf h}$ with coefficients in itself is trivial.
The same is true for $F_{\mf h}$, due to Theorem \ref{thm:ss5}.

\begin{theorem}\label{thm:fer}
We have
$$
H^0_{\ch}(F_{\mf h}) = \mb F \tint\vac \,, \qquad
H^1_{\ch}(F_{\mf h}) = 0 \,, \qquad
H^n_{\chb}(F_{\mf h}) = 0 \,, \qquad n\ge 2 \,.
$$
\end{theorem}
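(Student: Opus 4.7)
The proof is a direct application of the spectral sequence machinery of Theorem~\ref{thm:ss5}, together with the computation of the variational PVA cohomology of $\mc F_{\mf h}$ already carried out in \cite{BDSK19}. The plan is to take $R = R^f_{\mf h}$ with central element $K$ and central charge $c=1$, so that $V^c(R) = F_{\mf h}$ and $\mc V^c(R) = \mc F_{\mf h}$, each equipped with its canonical filtration (which is very good by Lemma~\ref{lem:uvg}). The quotient $\bar R = R/\mb F K \simeq \mb F[\partial]\mf h$ is an abelian LCA that is a free finitely generated $\mb F[\partial]$-module, and the central extension $R = \bar R \oplus \mb F K$ is nontrivial because the super-skewsymmetric form $(\cdot|\cdot)$ on $\mf h$ is nondegenerate. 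All structural hypotheses of Theorem~\ref{thm:ss5} are thus in place.

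The essential input is \cite[Theorem~4.7]{BDSK19}, which establishes that the restriction map of Proposition~\ref{prop:pv-lc}(a) combined with the inclusion $R \simeq \fil^1 \mc F_{\mf h} \hookrightarrow \mc F_{\mf h}$ induces an isomorphism $H_{\PV}(\mc F_{\mf h}) \simeq H_{\mc Chom}(\bar R, R)$ and computes both sides to be trivial, namely $\mb F$ in degree $0$ and $0$ in all positive degrees. This is precisely the hypothesis~\eqref{eq:ss9} of Theorem~\ref{thm:ss5}. Applying that theorem I would conclude $H_{\chb}(F_{\mf h}) \simeq H_{\mc Chom}(\bar R, R)$, so that $H^0_{\chb}(F_{\mf h})$ is $1$-dimensional, generated by the class of $\tint\vac$, and $H^n_{\chb}(F_{\mf h}) = 0$ for every $n \geq 1$; in particular $H^n_{\chb}(F_{\mf h}) = 0$ for $n \geq 2$, which is the bounded part of the statement.

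It remains to promote the statements for $n = 0,1$ from bounded to full cohomology. Since $F_{\mf h}$ is freely strongly generated by the finite-dimensional space $\mf h$, it is finitely strongly generated, so Proposition~\ref{thm:exhcoc}(a,b) gives $H^0_{\ch}(F_{\mf h}) = H^0_{\chb}(F_{\mf h}) = \mb F\tint\vac$ and $H^1_{\ch}(F_{\mf h}) = H^1_{\chb}(F_{\mf h}) = 0$. The only point that is not purely formal is checking that the isomorphism appearing in~\eqref{eq:ss9} really is the one produced in \cite[Theorem~4.7]{BDSK19} with the correct identification of $R$ with $\fil^1\mc F_{\mf h}$; once this identification is matched, the argument is a clean invocation of the theorems already proved. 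The main conceptual obstacle was the construction of the spectral sequence of Theorem~\ref{thm:ss5} itself, which has already been dealt with earlier in the paper.
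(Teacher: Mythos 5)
Your proposal is correct and follows essentially the same route as the paper: verify hypothesis \eqref{eq:ss9} from the triviality of $H_{\PV}(\mc F_{\mf h})$ computed in \cite[Theorem 4.7]{BDSK19} (noting all cocycles reduce to ones valued in $\mb F 1\subset R\subset\mc F_{\mf h}$), apply Theorem \ref{thm:ss5} to get $H_{\chb}(F_{\mf h})\simeq H_{\mc Chom}(\bar R,R)$, and pass from bounded to ordinary cohomology in degrees $0,1$ via Proposition \ref{thm:exhcoc}. The only difference is that you make the last reduction step explicit, which the paper leaves implicit.
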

\begin{proof}
Recall that, by \eqref{eq:ss12},
$$
H_{\PV}(\mc F_{\mf h},\mc F_{\mf h}) \simeq H_{\mc Chom}(\bar R,\mc F_{\mf h}) \,,
$$
where $\bar R = R^f_{\mf h} / \mb F K = \mb F[\partial]\mf h$. 
Since, by \cite[Theorem 4.7]{BDSK19}, 
$$
H^0_{\PV}(\mc F_{\mf h},\mc F_{\mf h}) = \mb F \tint 1 \,, \qquad
H^n_{\PV}(\mc F_{\mf h},\mc F_{\mf h}) = 0 \,, \qquad n\ge 1 \,,
$$
all cocycles are equivalent to cocycles that take values in $\mb F 1\subset R \subset \mc F_{\mf h}$. Hence, \eqref{eq:ss9} holds, which implies \eqref{eq:ss10} and completes the proof.
\end{proof}

\subsection{Free superboson VA}\label{sec:bos}

Let $\mf h$ be a finite-dimensional superspace, with parity $p$, and a supersymmetric nondegenerate bilinear form $(\cdot|\cdot)$, as in Example \ref{ex:boson-lca}.
The \emph{free superboson} VA is defined as
$$
B_{\mf h}
:=
V^1(R^b_{\mf h})
= V(R^b_{\mf h}) / {:} V(R^b_{\mf h}) (K-\vac) {:}
\,.
$$
It is freely generated by $\mb F[\partial]\mf h$, where the $\lambda$-bracket of $a,b\in\mf h$ is as in  \eqref{eq:boson} with $K=1$. The canonical filtration of $F_{\mf h}$ is very good, and the associated graded is isomorphic to the PVA $\mc V^0(R^b_{\mf h})$ (see Lemma \ref{lem:uvg}). The latter is a superalgebra of differential polynomials with the zero $\la$-bracket.
The \emph{free superboson PVA} is
$$
\mc B_{\mf h}
:= \mc V^1(R^b_{\mf h})
= \mc V(R^b_{\mf h}) / \mc V(R^b_{\mf h})(K-1)
$$
 (cf.\ \eqref{eq:vcr}).
Its cohomology was found in \cite[Theorem 4.2]{BDSK19}. As a consequence, we determine the bounded cohomology of $B_{\mf h}$.
In order to state the result, fix a basis $\{u_1,\dots,u_N\}$ for $\mf h$ homogeneous with respect to parity, 
and let $\{u^1,\dots,u^N\}$ be its dual basis, so that $(u_i|u^j)=\delta_i^j$. 

\begin{theorem}\label{thm:bos}
For the free superboson VA\/ $B_{\mf h}$, we have
$$
H_{\chb}^n(B_{\mf h}) \simeq
H_{\PV}^n(\mc B_{\mf h}) \simeq (S^n(\Pi\mf h))^* \oplus (S^{n+1}(\Pi\mf h))^* 
\,, \qquad n\ge0 \,.
$$
Explicitly, 
an element $\alpha+\beta\in (S^n(\Pi\mf h))^* \oplus (S^{n+1}(\Pi\mf h))^*$
corresponds under this isomorphism to the unique (up to coboundary) $n$-cocycle\/ 
$Y \in C_{\ch}^{n}(B_{\mf h})$ such that
$$
Y_{\la_1,\dots,\la_n}^{z_1,\dots,z_n}(u \otimes 1)
= \al(u) + \sum_{j=1}^N \be(u\otimes u^j) u_j 
+ \langle\partial+\la_1+\dots+\la_n\rangle 
\,,\qquad u\in\mf h^{\otimes n}
\,.
$$\end{theorem}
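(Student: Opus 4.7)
\bigskip

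\noindent\textbf{Proof proposal for Theorem \ref{thm:bos}.}
The plan is to apply Theorem \ref{thm:ss5} to $R = R^b_{\mf h}$, $\bar R = R^b_{\mf h}/\mb F K = \mb F[\partial]\mf h$, $C = K$, and $c=1$, and then identify the resulting group $H_{\mc Chom}(\bar R,R)$ with the already-computed variational PVA cohomology of $\mc B_{\mf h}$.

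First I would verify the structural hypotheses of Theorem \ref{thm:ss5}: by the $\la$-bracket formula \eqref{eq:boson} the induced bracket on $\bar R$ vanishes, so $\bar R$ is an abelian LCA which is free and finitely generated as an $\mb F[\partial]$-module, and the central extension $R = \bar R + \mb F K$ is nontrivial because the bilinear form $(\cdot|\cdot)$ on $\mf h$ is nondegenerate. These are precisely the standing assumptions of Theorem \ref{thm:ss5}.

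The key step is the verification of hypothesis \eqref{eq:ss9}, namely that every class in $H^n_{\PV}(\mc B_{\mf h})$ admits a representative with values in $\fil^1 \mc B_{\mf h} \simeq R$. For this I would invoke the explicit form of the PVA cocycles given in \cite[Theorem 4.2]{BDSK19}: representatives have values $\alpha(u) + \sum_{j} \beta(u\otimes u^j)\, u_j$, which manifestly lie in $\mb F\cdot 1 + \mf h \subset R$. Combined with Proposition \ref{prop:pv-lc}(b) applied to the $\mc B_{\mf h}$-module $R = \fil^1 \mc B_{\mf h}$, this yields the required isomorphism $H_{\PV}(\mc B_{\mf h}) \simeq H_{\mc Chom}(\bar R, R)$.

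With \eqref{eq:ss9} in hand, the conclusion of Theorem \ref{thm:ss5} gives
$$
H^n_{\chb}(B_{\mf h}) \simeq H^n_{\mc Chom}(\bar R, R) \simeq H^n_{\PV}(\mc B_{\mf h}) \simeq \bigl(S^n(\Pi\mf h)\bigr)^* \oplus \bigl(S^{n+1}(\Pi\mf h)\bigr)^* \,,
$$
the last isomorphism being \cite[Theorem 4.2]{BDSK19}. Finally, to produce the explicit VA cocycle, I would trace through the isomorphism of Theorem \ref{thm:ss5}: it is induced by restriction to $\bar R^{\otimes n}$ followed by evaluation at $f=1$, composed with the inclusion $R \simeq \fil^1 V^1(R^b_{\mf h}) \hookrightarrow B_{\mf h}$ coming from Lemma \ref{lem:uvg}. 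Under this dictionary the PVA representative transports verbatim to the formula in the statement.

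The one non-routine input is the verification that PVA cocycles can always be chosen with values in $R$; once this is granted via the explicit PVA computation, the theorem follows as a direct application of the spectral sequence machinery of Theorem \ref{thm:ss5}, with no further cohomological obstruction to overcome.
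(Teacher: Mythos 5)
Your proposal is correct and follows essentially the same route as the paper: verify hypothesis \eqref{eq:ss9} by combining the isomorphism \eqref{eq:ss12} (i.e.\ Proposition \ref{prop:pv-lc}(b) with $M=\mc B_{\mf h}$) with the explicit cocycle representatives of \cite[Theorem 4.2]{BDSK19}, which take values in $R\simeq\fil^1\mc B_{\mf h}$, and then apply Theorem \ref{thm:ss5} and read off the explicit VA cocycle. The only slip is phrasing this as applying Proposition \ref{prop:pv-lc}(b) to ``the $\mc B_{\mf h}$-module $R$'' --- $\fil^1\mc B_{\mf h}$ is only an $\bar R$-module, not a $\mc B_{\mf h}$-module; the correct reading is to apply it with $M=\mc B_{\mf h}$ and then observe the representatives land in $R$, which is what your argument actually uses.
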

\begin{proof}
By \eqref{eq:ss12} and \cite[Theorem 4.2]{BDSK19}, we have
$$
H_{\PV}(\mc B_{\mf h}) \simeq H_{\mc Chom}(\bar R,\mc B_{\mf h}) \simeq H_{\mc Chom}(\bar R,R) \,,
$$
where $\bar R = R^b_{\mf h} / \mb F K = \mb F[\partial]\mf h$ and $R$ is identified with $\bar R+\mb F 1 \subset \mc B_{\mf h}$.
Hence, we can apply Theorem \ref{thm:ss5} to conclude that
$H_{\chb}^n(B_{\mf h}) \simeq H_{\PV}^n(\mc B_{\mf h})$.
The explicit description of cocycles follows from \cite[Theorem 4.2]{BDSK19}.
\end{proof}

\subsection{Universal affine VA}\label{sec:aff}

Let $\mf g$ be a simple finite-dimensional Lie algebra.
The \emph{universal affine} VA at \emph{level} $k\in\mb F$ is defined as
$$
V^k_{\mf g} := V^k(\cur\mf g) = V(\cur\mf g) / {:} V(\cur\mf g)(K-k\vac) {:} \,,
$$
where $\cur\mf g$ is the affine LCA from Example \ref{ex:affine-lca}.
It is freely generated by $\mb F[\partial]\mf g$, where the $\lambda$-bracket of $a,b\in\mf g$ is as in  \eqref{eq:current} with $K=k$. The canonical filtration of $V^k_{\mf g}$ is very good, and the associated graded is isomorphic to the PVA 
$\mc V^0(\cur\mf g) \simeq \mc V(\overline{\cur}\,\mf g)$ 
(see Lemma \ref{lem:uvg}). 
\begin{conjecture}\label{thm:affine}
Let\/ $V^k_{\mf g}$ be the universal affine VA of\/ $\mf g$ at level\/ $k\neq -h^\vee$.
Then
$$
\dim H^n_{\chb}(V^k_{\mf g})
\,\leq\, 
\dim \Big(\bigwedge{}^n\mf g\oplus\bigwedge{}^{n+1}\mf g\Big)\,\,,\,\,\,\,n\geq0
\,.
$$
\end{conjecture}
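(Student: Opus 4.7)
The plan is to chain the spectral-sequence inequalities of Section \ref{sec:ss} with a Lie conformal algebra cohomology computation. Since $V := V^k_{\mf g}$ is freely generated by $\mb F[\partial]\mf g$ placed in degree $1$, Proposition \ref{prop:vgfil} makes the canonical filtration very good, and Lemma \ref{lem:uvg}(b) identifies
$$
\mc V := \gr V \simeq \mc V(\overline{\cur}\,\mf g),
$$
where $\overline{\cur}\,\mf g = \mb F[\partial]\mf g$ is the quotient current LCA with $[a_\la b] = [a,b]$ for $a,b\in\mf g$. As a differential algebra, $\mc V$ is a polynomial ring in $\dim\mf g$ even differential variables.

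Then Corollary \ref{cor:ss} gives $\dim H^n_{\chb}(V) \leq \dim H^n_{\cl}(\mc V)$, Theorem \ref{thm:cl-pv}(b) identifies $H^n_{\cl}(\mc V) \simeq H^n_{\PV}(\mc V)$, and Proposition \ref{prop:pv-lc}(a) yields an isomorphism of complexes $C_{\PV}(\mc V, \mc V) \simeq C_{\mc Chom}(\overline{\cur}\,\mf g, \mc V)$. Each symmetric power $S^q(\overline{\cur}\,\mf g) \subset \mc V$ is an $\overline{\cur}\,\mf g$-submodule under the adjoint $\la$-action, since the Leibniz rule preserves symmetric degree when $[\overline{\cur}\,\mf g{}_\la \overline{\cur}\,\mf g] \subset \overline{\cur}\,\mf g[\la]$. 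Hence the problem decomposes as
$$
H^n_{\PV}(\mc V) \simeq \bigoplus_{q \geq 0} H^n_{\mc Chom}\bigl(\overline{\cur}\,\mf g,\, S^q(\overline{\cur}\,\mf g)\bigr).
$$

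The main task, and the principal obstacle, is then to establish the isomorphism
$$
\bigoplus_{q\ge 0} H^n_{\mc Chom}\bigl(\overline{\cur}\,\mf g,\, S^q(\overline{\cur}\,\mf g)\bigr) \simeq \Bigl(\bigwedge{}^n \mf g\Bigr)^{\!*} \oplus \Bigl(\bigwedge{}^{n+1}\mf g\Bigr)^{\!*},
$$
which would in fact upgrade the conjectured inequality to an equality. This is the non-abelian counterpart of the free superboson computation in \cite[Theorem 4.2]{BDSK19} underlying Theorem \ref{thm:bos}: since $\mf g$ is purely even, $S^n(\Pi\mf g) \simeq \bigwedge^n\mf g$, and the two summands mirror those appearing there. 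The hard part is handling the non-triviality of the $\la$-bracket on $\overline{\cur}\,\mf g$, which couples different $S^q$ via the Leibniz rule and makes the explicit cocycle analysis of \cite{BDSK19} inapplicable as written. Two natural strategies present themselves: (a) a Hochschild--Serre-type spectral sequence separating the $\mb F[\partial]$-factor from the finite-dimensional action of the simple $\mf g$, invoking Whitehead's vanishing theorems to collapse the $E_2$-page to its $\mf g$-invariants; or (b) a direct adaptation of the sesquilinearity and $S_n$-symmetry analysis of \cite{BDSK19}, using the invariant bilinear form on $\mf g$ to normalize the $\la$-dependence of cocycles and cancel the current-bracket contributions by explicit coboundaries. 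Either route requires substantial new work beyond the abelian case.
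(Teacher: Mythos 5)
Your reduction chain is sound and is exactly the one the paper's machinery suggests: the canonical filtration of $V^k_{\mf g}$ is very good, $\gr V^k_{\mf g}\simeq\mc V(\overline{\cur}\,\mf g)$ (Lemma \ref{lem:uvg}), Corollary \ref{cor:ss} gives $\dim H^n_{\chb}\le\dim H^n_{\cl}$, Theorem \ref{thm:cl-pv}(b) applies because $\mc V(\overline{\cur}\,\mf g)$ is a differential polynomial algebra, and Proposition \ref{prop:pv-lc}(a) together with the $S^q$-decomposition reduces everything to $\bigoplus_{q}H^n_{\mc Chom}\bigl(\overline{\cur}\,\mf g,S^q(\overline{\cur}\,\mf g)\bigr)$, exactly as in Section \ref{sec:uelca}. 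But be aware that the statement you are addressing is stated in the paper as a \emph{conjecture}, precisely because the step you identify as ``the main task'' is open: the computation (or even just the upper bound $\dim\le\dim(\bigwedge^n\mf g\oplus\bigwedge^{n+1}\mf g)$) of the variational PVA cohomology of the level-zero classical affine PVA, equivalently of $H_{\mc Chom}(\overline{\cur}\,\mf g,S(\overline{\cur}\,\mf g))$, is not established in the paper or in the cited results of \cite{BDSK19}. Neither of your two proposed strategies is carried out, so the proposal is a plan, not a proof; as it stands the genuine gap is the entire non-abelian cohomology computation.

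Two further cautions. First, you cannot fall back on Theorem \ref{thm:ss2} to at least get finite-dimensionality: its hypothesis requires $\gr V$ to be conformal with generators of positive conformal weight, and the Sugawara construction degenerates in the associated graded (the level becomes $0$), so $\mc V(\overline{\cur}\,\mf g)$ is not conformal in the required sense --- this is why the affine case is not covered by Theorem \ref{thm:ss21} or Corollary \ref{cor:ss21}. Second, your remark that proving the isomorphism with $(\bigwedge^n\mf g)^*\oplus(\bigwedge^{n+1}\mf g)^*$ would ``upgrade the inequality to an equality'' overstates what the chain gives: Corollary \ref{cor:ss} is only an inequality, and equality of $H^n_{\chb}$ with the classical PVA cohomology would additionally require the spectral sequence of Theorem \ref{thm:ss1} to collapse, which in the worked examples (Theorems \ref{thm:ss4}, \ref{thm:ss5}, \ref{thm:bos}) is deduced from the cohomology being concentrated in symmetric degrees $q\le 1$; an analogous concentration statement would have to be proved for $\overline{\cur}\,\mf g$ as well.
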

%

\section{Application to integrability of evolution PDE}\label{sec:qint}

\subsection{Integrability via cohomology}

In this subsection we introduce a general cohomological framework which allows one to prove
integrability of both classical and quantum Hamiltonian systems
of evolution equations.
A cohomogical approach to integrability of classical Hamiltonian PDEs
was initiated in \cite{Kra88} and \cite{Ol87}, and further developed in \cite{DSK13}.
Let
$$
W=\bigoplus_{k\geq-1}W^k
$$
be a $\mb Z$-graded Lie superalgebra with parity $\bar p$.
The superspace $\Pi W^{-1}$, with the opposite parity $p=1-\bar p$, 
is called the space of \emph{Hamiltonian functionals},
and the space $W^1$ is called the space of \emph{structures}.

\begin{definition}\label{def:ham-str}
For an element $K\in W^1$, we define the bilinear product $\{\cdot\,,\,\cdot\}_K$ on $\Pi W^{-1}$
given by
\begin{equation}\label{eq:8.1}
\{f,g\}_K=(-1)^{p(f)}[[K,f],g]
\,.
\end{equation}
If $K\in W^1$ is odd (i.e., $\bar p(K)=\bar 1$) such that $[K,K]=0$,
we call $K$ a \emph{Poisson structure},
and the corresponding bilinear product \eqref{eq:8.1}
a \emph{Poisson bracket} on $\Pi W^{-1}$.
\end{definition}
Note that in the left-hand side of \eqref{eq:8.1} we view $f$ and $g$ as elements of $\Pi W^{-1}$,
while in the right-hand side we view them in $W^{-1}$,
and make computations in the Lie superalgebra $W$.
Also observe that $p(\{f,g\}_K)=p(f)+p(g)$, since $p(K)=\bar 0$.
Given a Poisson structure $K\in W^1$,
the associated to $f\in\Pi W^{-1}$
\emph{evolutionary vector field} is defined as $X_f=[K,f]\in W^0$.
\begin{lemma}\label{lem:8.2}
\begin{enumerate}[(a)]
\item
For every\/ $K\in W^1$, \eqref{eq:8.1} defines a super skewsymmetric bracket on\/ $\Pi W^{-1}$.
\item\medskip
If\/ $K$ is a Poisson structure,
\eqref{eq:8.1} defines a Lie superalgebra bracket on\/ $\Pi W^{-1}$.
\item\medskip
We have
\begin{equation}\label{eq:81a}
[X_f,X_g]=X_{\{f,g\}_K}
\,.
\end{equation}
\end{enumerate}
\end{lemma}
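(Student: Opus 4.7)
Two structural facts will drive everything. First, since $W=\bigoplus_{k\geq-1}W^k$ starts in degree $-1$, we have $[W^{-1},W^{-1}]\subset W^{-2}=0$; that is, $[f,g]=0$ in $W$ for all $f,g\in W^{-1}$. Second, when $K\in W^1$ is odd with $[K,K]=0$, the super-Jacobi identity applied to $[K,[K,g]]$ yields $2[K,X_g]=[[K,K],g]=0$, so $[K,X_g]=0$ for every $g\in W^{-1}$. The three parts will be proved in the order (a), (c), (b), since (c) is the input for (b).

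For part (a), which holds for any $K\in W^1$, I will start from super-Jacobi in $W$ for $[g,[K,f]]$: the term $(-1)^{\bar p(g)\bar p(K)}[K,[g,f]]$ vanishes by the first observation above, leaving $[g,[K,f]]=[[g,K],f]$. Two applications of super-skewsymmetry of the $W$-bracket then rearrange this to
\[
[[K,f],g]=(-1)^{\bar p(f)\bar p(g)}\,[[K,g],f].
\]
Multiplying by $(-1)^{p(f)}$ and reducing the exponent modulo $2$ via $\bar p(h)=1-p(h)$ (the identity $p(f)+\bar p(f)\bar p(g)+p(g)\equiv 1+p(f)p(g)\pmod 2$ is a one-line check) yields the desired super-skewsymmetry $\{f,g\}_K=-(-1)^{p(f)p(g)}\{g,f\}_K$ on $\Pi W^{-1}$ with respect to the reversed parity $p$. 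No use of $[K,K]=0$ is needed here.

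For part (c), I will expand $X_{\{f,g\}_K}=[K,\{f,g\}_K]=(-1)^{p(f)}[K,[X_f,g]]$ and apply super-Jacobi in $W$ to $[K,[X_f,g]]$. The term $[[K,X_f],g]$ vanishes by the second structural observation, and the remaining term is $(-1)^{\bar p(K)\bar p(X_f)}[X_f,[K,g]]=(-1)^{p(f)}[X_f,X_g]$ because $\bar p(X_f)=\bar p(K)+\bar p(f)=p(f)$. The prefactor $(-1)^{p(f)}$ then cancels, giving $X_{\{f,g\}_K}=[X_f,X_g]$.

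Granted (c), part (b) is a direct verification of the super-Jacobi identity for $\{\cdot,\cdot\}_K$. Writing $\{f,\{g,h\}_K\}_K=(-1)^{p(f)+p(g)}[X_f,[X_g,h]]$ and applying super-Jacobi in $W$ to $[X_f,[X_g,h]]$ produces
\[
[X_f,[X_g,h]]=[[X_f,X_g],h]+(-1)^{p(f)p(g)}[X_g,[X_f,h]].
\]
By (c) the first term equals $[X_{\{f,g\}_K},h]=(-1)^{p(f)+p(g)}\{\{f,g\}_K,h\}_K$, and applying the definition \eqref{eq:8.1} twice rewrites the second term as $(-1)^{p(f)+p(g)}\{g,\{f,h\}_K\}_K$. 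Substituting and cancelling the overall sign $(-1)^{p(f)+p(g)}$ yields the Jacobi identity on $\Pi W^{-1}$. No step is genuinely difficult; the only care required is the consistent bookkeeping of the parity shift between $W^{-1}$ and $\Pi W^{-1}$, uniformly handled by $\bar p(X_f)=p(f)$.
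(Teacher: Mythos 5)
Your proof is correct, but the logical route through parts (b) and (c) is the reverse of the paper's. For (a) you and the paper do the same thing: super-Jacobi in $W$ together with $[W^{-1},W^{-1}]\subset W^{-2}=0$ gives $[[K,f],g]=(-1)^{\bar p(f)\bar p(g)}[[K,g],f]$, which is skewsymmetry after the parity shift (and, as you note, needs nothing beyond $K\in W^1$). For the remaining parts, the paper proves (b) first, by a direct computation showing that the Jacobiator of $\{\cdot\,,\,\cdot\}_K$ equals $\tfrac12(-1)^{p(b)}[[[[K,K],a],b],c]$ for any odd $K$, so Jacobi follows from $[K,K]=0$; it then disposes of (c) as ``just a restatement of the Jacobi identity.'' You instead extract the single consequence $[K,X_f]=\tfrac12[[K,K],f]=0$, prove (c) directly as an identity in $W^0$ by one application of super-Jacobi, and then deduce (b) from (c) by a second application, with the parity bookkeeping $\bar p(X_f)=p(f)$ and $p(\{f,g\}_K)=p(f)+p(g)$ handled correctly. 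Each route buys something: the paper's identity is quantitatively stronger (it measures the failure of Jacobi when $[K,K]\neq0$), while yours is more economical and, for (c), more scrupulous --- the Jacobi identity for $\{\cdot\,,\,\cdot\}_K$ by itself only says that $[X_f,X_g]-X_{\{f,g\}_K}$ acts trivially on $W^{-1}$ under $\ad$, whereas your argument proves the equality in $W^0$ itself. One cosmetic point: in your part (b), ``the second term'' must be read as $[X_g,[X_f,h]]$ without the prefactor $(-1)^{p(f)p(g)}$; with that reading the signs combine to give exactly the Jacobi identity in the form L3.
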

\begin{proof}
For $a,b\in W^{-1}$ we have,
by the Jacobi identity in the Lie superalgebra $W$,
$$
[[K,a],b]=(-1)^{(1+p(a))(1+p(b))}[[K,b],a]
\,,
$$
which is the same as the skewsymmetry condition
for the bracket \eqref{eq:8.1}.
This proves claim (a).
Next, we prove claim (b).
For $a,b,c\in W^{-1}$, we get, after a straightforward computation
in the Lie superalgebra $W$, using only the assumption that $K\in W^1$ is odd,
\begin{align*}
\{\{a,b\}_K,&c\}_K
-
\{a,\{b,c\}_K\}_K
+(-1)^{p(a)p(b)}
\{b,\{a,c\}_K\}_K
\\
&= \frac{1}{2}(-1)^{p(b)}
[[[[K,K],a],b],c]
\,.
\end{align*}
Hence, the Jacobi identity for the bracket \eqref{eq:8.1}
follows from the assumption that $[K,K]=0$.
Finally, claim (c) is just a restatement of the Jacobi identity for the bracket \eqref{eq:8.1}.
\end{proof}
By \eqref{eq:81a}, 
the map $f\mapsto X_f$ defines a Lie superalgebra homomorphism $\Pi W^{-1}\to W^0$,
whose image is the subalgebra of $W^0$ of evolutionary vector fields.

\begin{proposition}[Lenard--Magri scheme]\label{prop:8.3}
Let\/ $K,H\in W^1$ be odd and, for $N\geq1$,  
let\/ $h_0,h_1,\dots,h_N$ be even elements of\/ $\Pi W^{-1}$
such that
\begin{equation}\label{eq:8.2}
[H,h_n]=[K,h_{n+1}]
\,\;\text{ for all }\;\,
n=0,\dots,N-1
\,.
\end{equation}
Then
\begin{equation}\label{eq:8.3}
\{h_m,h_n\}_K=0=\{h_m,h_n\}_H
\,\;\text{ for all }\;\,
m,n=0,\dots,N
\,,
\end{equation}
where\/ $\{f,g\}_K$ and\/ $\{f,g\}_H$ are defined by \eqref{eq:8.1}.
\end{proposition}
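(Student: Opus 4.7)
The plan is to prove the vanishing of the two families of brackets by purely formal manipulations in the Lie superalgebra $W$, using only the skewsymmetry from Lemma \ref{lem:8.2}(a) together with the recursion \eqref{eq:8.2}; note in particular that $[K,K]=0$ and $[H,H]=0$ are not needed, and that all $h_i$ are even in $\Pi W^{-1}$, so $\{h_i,h_i\}_K = \{h_i,h_i\}_H = 0$ trivially. Thus it suffices to treat $m\ne n$, and by skewsymmetry we may assume $m<n$.

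The first step is to establish two elementary ``shift'' identities coming directly from \eqref{eq:8.2}. For $1\le n\le N$, applying skewsymmetry twice and inserting $[K,h_n]=[H,h_{n-1}]$ gives
\begin{equation*}
\{h_m,h_n\}_K = -\{h_n,h_m\}_K = -[[K,h_n],h_m] = -[[H,h_{n-1}],h_m] = -\{h_{n-1},h_m\}_H = \{h_m,h_{n-1}\}_H \,.
\end{equation*}
For $0\le m\le N-1$, inserting $[H,h_m]=[K,h_{m+1}]$ directly yields
\begin{equation*}
\{h_m,h_n\}_H = [[H,h_m],h_n] = [[K,h_{m+1}],h_n] = \{h_{m+1},h_n\}_K \,.
\end{equation*}
Composing these gives the key diagonal relation
\begin{equation*}
\{h_m,h_n\}_K = \{h_{m+1},h_{n-1}\}_K \,, \qquad 0\le m\le N-1 \,,\;\; 1\le n\le N \,.
\end{equation*}

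The second step is to iterate this diagonal move, which preserves the sum $m+n$, until the two indices collide. Starting from $m<n$ with $m,n\in\{0,\dots,N\}$, the constraints $m\le N-1$ and $n\le N$ at each intermediate position are automatic because the sum is $\le 2N$; in particular the last step before reaching the diagonal is always legal. If $m+n=2k$, we reach $\{h_k,h_k\}_K=0$ by skewsymmetry; if $m+n=2k+1$, we reach $\{h_k,h_{k+1}\}_K$, and one further application of identity (I) delivers $\{h_k,h_k\}_H=0$. This proves $\{h_m,h_n\}_K=0$ for all $m,n$, and the vanishing of $\{h_m,h_n\}_H$ then follows from identity (II) (with a symmetric fallback to skewsymmetry when $m=N$). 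There is essentially no ``hard part'': the only thing to watch is that each shift stays within the range $\{0,\dots,N\}$, which is immediate from $m+n\le 2N$ and the assumption that the $h_i$ are defined for $0\le i\le N$.
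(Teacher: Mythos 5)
Your proof is correct, and it is essentially the paper's argument in a slightly repackaged form: your two shift identities are exactly the computations in the paper's inductive step (the paper inducts on $|m-n|$, alternating between the $K$- and $H$-brackets, while you compose the two shifts into a single diagonal move $(m,n)\mapsto(m+1,n-1)$ and iterate until the indices collide). The bookkeeping of index ranges and the observation that only skewsymmetry and the recursion — not $[K,K]=[H,H]=0$ — are needed both match the paper.
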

\begin{proof}
We prove equation \eqref{eq:8.3} by induction on $|n-m|$.
Obviously, $\{h_n,h_n\}_K=\{h_n,h_n\}_H=0$,
since, by Lemma \ref{lem:8.2}(a),
both brackets are skewsymmetric on $\Pi W^{-1}$
and, by assumption, all elements $h_n$ are even in $\Pi W^{-1}$.
This proves the basis of the induction $m=n$.
For the inductive step, we may assume that $m>n\geq0$.
We have
$$
\{h_m,h_n\}_K
=
[[K,h_m],h_n]
=
[[H,h_{m-1}],h_n]
=
\{h_{m-1},h_n\}_H
\,,
$$
which vanishes by the inductive assumption.
Similarly,
\begin{align*}
\{h_m,h_n\}_H
&=
(-1)^{1+p(h_m)p(h_n)}
\{h_n,h_m\}_H
=
-
[[H,h_n],h_m] \\
& =
-
[[K,h_{n+1}],h_m]
=
-
\{h_{n+1},h_m\}_K
\,,
\end{align*}
which again vanishes by inductive assumption.
\end{proof}

\begin{theorem}\label{thm:8.4}
Suppose that\/ $K$ and\/ $H$ are compatible Poisson structures,
i.e., odd elements of\/ $W^1$ such that\/ $[H,H]=[K,K]=[K,H]=0$.
Assume, moreover, that
\begin{equation}\label{eq:8.5}
\ker\bigl(\ad K|_{W^0_{\bar 0}}\bigr)
\,\subset\,
[K,W^{-1}_{\bar 1}]
\end{equation}
$($i.e., the even part of the first cohomology of the complex\/ $(W,\ad K)$ vanishes$)$.
Then, if\/
$h_0,\dots,h_N\in \Pi W^{-1}$, $N\geq1$, are even elements
solving equations \eqref{eq:8.2},
there exists an even element\/ $h_{N+1}\in\Pi W^{-1}$
such that \eqref{eq:8.2} holds for\/ $N+1$. 
\end{theorem}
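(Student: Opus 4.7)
The plan is to show that $[H,h_N]$ represents a class in the first cohomology of $(W,\ad K)$ that lies in the even part, and then use the hypothesis \eqref{eq:8.5} to find $h_{N+1}$ with $[K,h_{N+1}]=[H,h_N]$.

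First I would track parities and degrees. Since $h_N$ is even in $\Pi W^{-1}$, it is odd of degree $-1$ in $W$; together with $H$ odd of degree $1$, the element $[H,h_N]$ lies in $W^0_{\bar 0}$. Correspondingly the candidate $h_{N+1}$ must be even in $\Pi W^{-1}$, i.e.\ odd in $W^{-1}_{\bar 1}$, so that $[K,h_{N+1}]\in W^0_{\bar 0}$.

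The key computation is to verify that $[H,h_N]$ is $\ad K$-closed. Using the super Jacobi identity together with $\bar p(K)=\bar p(H)=\bar 1$,
\begin{equation*}
[K,[H,h_N]]=[[K,H],h_N]-[H,[K,h_N]].
\end{equation*}
The first summand vanishes by the compatibility hypothesis $[K,H]=0$. For the second, the recursion \eqref{eq:8.2} (applicable because $N\ge1$) gives $[K,h_N]=[H,h_{N-1}]$, so
\begin{equation*}
[H,[K,h_N]]=[H,[H,h_{N-1}]]=\tfrac12[[H,H],h_{N-1}]=0,
\end{equation*}
where the middle equality is the standard identity for odd elements (obtained by applying super Jacobi to $[H,[H,h_{N-1}]]$ and solving), and the last equality uses $[H,H]=0$. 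Hence $[H,h_N]\in\ker(\ad K|_{W^0_{\bar 0}})$.

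Now I invoke the hypothesis \eqref{eq:8.5}: there exists $h_{N+1}\in W^{-1}_{\bar 1}$, i.e.\ an even element of $\Pi W^{-1}$, such that $[K,h_{N+1}]=[H,h_N]$, which is precisely the extended recursion \eqref{eq:8.2} for index $N+1$. I don't expect any real obstacles here; the argument is a clean cohomological closedness-then-exactness step, and the only thing to be careful about is the bookkeeping of parities in the super Jacobi identity, together with making sure the recursion is used at the correct index $N-1\ge 0$.
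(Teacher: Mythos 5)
Your proposal is correct and follows essentially the same route as the paper's proof: show $[H,h_N]$ is $\ad K$-closed via the super Jacobi identity, compatibility $[K,H]=0$, the recursion at index $N-1$, and $(\ad H)^2=\tfrac12\ad[H,H]=0$, then invoke \eqref{eq:8.5}. Your extra bookkeeping of parities and degrees (that $[H,h_N]\in W^0_{\bar 0}$ and $h_{N+1}\in W^{-1}_{\bar 1}$) is a correct and welcome elaboration of what the paper leaves implicit.
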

\begin{proof}
Since $H$ and $K$ are compatible Poisson structures, we have by the Jacobi identity
$$
[K,[H,h_N]]
=
-[H,[K,h_N]]
=
[H,[H,h_{N-1}]]
=
(\ad H)^2h_{N-1}
=0
\,.
$$
Hence, $[H,h_N]$ lies in the kernel of $\ad K$.
The claim follows by assumption \eqref{eq:8.5}.
\end{proof}
\begin{corollary}\label{cor:8.5}
If\/ $H,K$ are as in Theorem \ref{thm:8.4} and\/ $h$ is an even element of\/ $\Pi W^{-1}$
such that\/ $[K,h]=0$,
then there exists an infinite sequence of even elements
$$
h_0=h,h_1,h_2,\dots\in\Pi W^{-1}
$$ 
such that equation \eqref{eq:8.2}
holds for every\/ $n\in\mb Z_{\geq0}$.
\end{corollary}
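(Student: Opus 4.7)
The plan is to construct the sequence $h_0,h_1,h_2,\dots$ recursively by iterating Theorem \ref{thm:8.4}, after first handling a small base case that is not literally covered by that theorem (which assumes $N\geq1$).

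I begin by constructing $h_1$. Since $[K,h_0]=[K,h]=0$ and $[K,H]=0$, the super Jacobi identity in $W$ gives
\begin{equation*}
[K,[H,h_0]]=-[H,[K,h_0]]+[[K,H],h_0]=0.
\end{equation*}
Because $H\in W^1$ is odd and $h_0\in W^{-1}$ is odd (remembering that $h_0$ is even in $\Pi W^{-1}$, hence odd in $W^{-1}$), the element $[H,h_0]$ is an even element of $W^0$. Thus $[H,h_0]\in\ker(\ad K|_{W^0_{\bar 0}})$, and assumption \eqref{eq:8.5} produces an odd element $h_1\in W^{-1}_{\bar 1}$, equivalently an even element of $\Pi W^{-1}$, such that $[K,h_1]=[H,h_0]$. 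This settles the case $N=0$ of \eqref{eq:8.2}.

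Now I proceed by induction on $N\geq 1$. Assume even elements $h_0,\dots,h_N\in\Pi W^{-1}$ have been constructed so that $[H,h_n]=[K,h_{n+1}]$ holds for $n=0,\dots,N-1$. Since $K$ and $H$ are compatible Poisson structures and since \eqref{eq:8.5} is in force, the hypotheses of Theorem \ref{thm:8.4} are satisfied; applying that theorem yields an even $h_{N+1}\in\Pi W^{-1}$ extending the chain \eqref{eq:8.2} to index $N$. Iterating indefinitely gives the desired infinite sequence.

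There is no real obstacle here: the whole argument is a direct iteration of Theorem \ref{thm:8.4}, with the only thing to check being the base step $N=0$, which is the same Jacobi-plus-cohomology computation as in the proof of Theorem \ref{thm:8.4} but with $[K,h_0]=0$ playing the role of $[K,h_N]=[H,h_{N-1}]$. Equivalently, one may formally set $h_{-1}=0$ and quote Theorem \ref{thm:8.4} uniformly for all $N\geq 0$.
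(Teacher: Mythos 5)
Your proof is correct and follows essentially the same route as the paper, whose entire proof is the remark you make at the end: set $h_{-1}=0$, $h_0=h$, and apply Theorem \ref{thm:8.4} recursively. Your explicit verification of the base step (using $[K,h_0]=0$ and the Jacobi identity to place $[H,h_0]$ in $\ker(\ad K|_{W^0_{\bar 0}})$) is just an unwinding of that same application of \eqref{eq:8.5}, so there is nothing to add.
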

\begin{proof}
Let $h_{-1}=0$ and $h_0=h$, and apply Theorem \ref{thm:8.4} recursively.
\end{proof}
\begin{theorem}[cf. \cite{BDSK09}]\label{thm:8.6}
Let\/ $H,K$ be as in Theorem \ref{thm:8.4}.
Consider two infinite sequences,
$h_0,h_1,h_2,\dots$ and $g_0,g_1,g_2,\dots$
of even elements of\/ $\Pi W^{-1}$ satisfying \eqref{eq:8.2},
and assume that\/ $[K,h_0]=0$.
Then
$$
\{h_m,h_n\}_{K,H}=
\{g_m,g_n\}_{K,H}=0,\quad
\{h_m,g_n\}_{K,H}=0
\,\,\text{ for all }\,\,
m,n\geq0
\,,
$$
where\/ $\{\cdot\,,\,\cdot\}_{H,K}$ denotes either of the two Poisson brackets.
\end{theorem}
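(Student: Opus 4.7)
The plan is to split the claim into three pieces: the "diagonal" assertions $\{h_m,h_n\}_{K,H}=0$, the "diagonal" assertions $\{g_m,g_n\}_{K,H}=0$, and the "mixed" assertions $\{h_m,g_n\}_{K,H}=0$. The first two are immediate from Proposition \ref{prop:8.3}: the hypothesis is precisely that both sequences satisfy the Lenard--Magri recursion \eqref{eq:8.2} (truncated at any finite $N$), and the proposition's conclusion is exactly the vanishing of all the brackets among elements of each sequence, under either of $\{\cdot,\cdot\}_K$ or $\{\cdot,\cdot\}_H$. So the only real work is in handling the cross-brackets $\{h_m,g_n\}_{K,H}$, where the compatibility of the two sequences (absent from Proposition \ref{prop:8.3}) and the extra input $[K,h_0]=0$ must be used.

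For the cross-brackets, I would introduce the shorthand $a_{m,n}=\{h_m,g_n\}_K$ and $b_{m,n}=\{h_m,g_n\}_H$ and establish two recursions that turn the problem into bookkeeping. Using \eqref{eq:8.2} for the sequence $h_\bullet$ gives, since all $h_i,g_n$ are even,
\begin{equation*}
a_{m,n}=[[K,h_m],g_n]=[[H,h_{m-1}],g_n]=b_{m-1,n}\qquad(m\ge1),
\end{equation*}
while the super-skewsymmetry of Lemma \ref{lem:8.2}(a) together with \eqref{eq:8.2} for the sequence $g_\bullet$ gives
\begin{equation*}
b_{m,n}=-[[H,g_n],h_m]=-[[K,g_{n+1}],h_m]=a_{m,n+1}.
\end{equation*}
Iterating these two identities shows that $b_{m,n}=b_{0,n+m}$ and $a_{m,n}=a_{0,n+m-1}$ for all admissible indices, so the entire problem collapses to the base case $m=0$.

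The base case is handled directly using $[K,h_0]=0$: since $h_0$ is a Casimir for $K$, one has $a_{0,n}=[[K,h_0],g_n]=0$ for every $n$, and then $b_{0,n}=a_{0,n+1}=0$ by the second recursion above. Combined with the induction scheme, this yields $a_{m,n}=b_{m,n}=0$ for all $m,n\ge0$, which completes the proof.

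I do not expect any serious obstacle here: once the two recursions are written down, the argument is a short book-keeping. The only subtlety is to confirm that no issue of parity or sign spoils the recursions --- but since every $h_i$ and $g_n$ is even in $\Pi W^{-1}$ by hypothesis, and $K,H$ are odd of the same parity in $W^1$, all the $(-1)^{p(\cdot)}$ factors in \eqref{eq:8.1} trivialize, and the super-skewsymmetry in Lemma \ref{lem:8.2}(a) reduces to honest skewsymmetry. Thus the computation above goes through verbatim, and the theorem follows.
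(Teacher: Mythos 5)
Your proposal is correct and follows essentially the same route as the paper: the diagonal vanishing is quoted from Proposition \ref{prop:8.3}, and the cross-brackets are killed by the same two identities $\{h_m,g_n\}_K=\{h_{m-1},g_n\}_H$ and $\{h_m,g_n\}_H=\{h_m,g_{n+1}\}_K$ (the paper chains them into an induction on $m$, you iterate them explicitly), with the base case coming from $[K,h_0]=0$ exactly as in the paper. The only blemish is the harmless index slip ``$a_{m,n}=a_{0,n+m-1}$'', which should read $a_{m,n}=b_{0,n+m-1}=a_{0,n+m}$; since all $a_{0,k}$ and $b_{0,k}$ vanish, the conclusion is unaffected.
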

\begin{proof}
The first equation holds by Proposition \ref{prop:8.3}.
We prove the second equation by induction on $m$.
We have
$$
\{h_0,g_n\}_K
=
[[K,h_0],g_n]
=
0
$$
since, by assumption, $[K,h_0]=0$.
Moreover,
$$
\{h_0,g_n\}_H
=
-\{g_n,h_0\}_H
=
-[[H,g_n],h_0]
=
-[[K,g_{n+1}],h_0]
=
[[K,h_0],g_{n+1}]
=0
\,,
$$
proving the base case of induction $m=0$.
For the inductive step, we have, for $m\geq1$,
$$
\{h_m,g_n\}_K
=
[[K,h_m],g_n]
=
[[H,h_{m-1}],g_n]
=
\{h_{m-1},g_n\}_H
\,,
$$
which vanishes by inductive assumption.
Similarly, we have
\begin{align*}
& \{h_m,g_n\}_H
=
-\{g_n,h_m\}_H
=
-[[H,g_n],h_m]
=
-[[K,g_{n+1}],h_m]
\\
& =
[[K,h_m],g_{n+1}]
=
[[H,h_{m-1}],g_{n+1}]
=
\{h_{m-1},g_{n+1}\}_H
\,,
\end{align*}
which again vanishes by induction.
\end{proof}
\begin{remark}\label{rem:8.7}
Note that a solution $h_{N+1}$ of equation \eqref{eq:8.2}
is unique up to adding an element from the kernel of $\ad K$.
Consider the increasing sequence of subspaces of $W^{-1}_{\bar 1}$,
$$
U_0\subset U_1\subset U_2\subset\dots\subset \widetilde U=\bigcup_{n\geq 0}U_n
\,,
$$
where $U_0=\ker(\ad K|_{W^{-1}_{\bar1}})$
and, recursively, $U_{n+1}=(\ad K)^{-1}[H,U_n]$ for every $n\geq 0$.
Then, by Proposition \ref{prop:8.3}, $\widetilde U$ is an abelian subalgebras
with respect to both $H$ and $K$-Poisson brackets:
$$
\{\widetilde U,\widetilde U\}_{H,K}=0
\,.
$$
Moreover, let $g_0,g_1,g_2,\dots\in W^{-1}_{\bar 1}$
be an infinite sequence satisfying \eqref{eq:8.2} for all $n\geq0$,
and let $V=\Span\{g_0,g_1,g_2,\dots\}\subset W^{-1}_{\bar 1}$.
Then, by Theorem \ref{thm:8.6}, we also have that
$$
\{\widetilde U+V,\widetilde U+V\}_{H,K}=0
\,.
$$
On the other hand, if $g_0^\prime,g_1^\prime,\dots\in W^{-1}_{\bar 1}$
is another infinite sequence satisfying \eqref{eq:8.2},
and $V^\prime=\Span\{g_0^\prime,g_1^\prime,\dots\}$,
we do not necessarily have that $\{V,V^\prime\}_{H,K}=0$.
\end{remark}

\subsection{Example 1: $W=W_{\PV}(\Pi\mc V)$}

Let $\mc V$ be a differential superalgebra, with parity denoted by $p$,
let $\partial$ be an even derivation on $\mc V$,
and consider the Lie superalgebra $W_{\PV}(\Pi\mc V)$
introduced in Section \ref{sec:clpva}.
Recall from \cite[Section 5.1]{DSK13} that $\Pi W_{\PV}^{-1}(\Pi\mc V)=\mc V/\partial\mc V$,
$W_{\PV}^0(\Pi\mc V)=\Der_{\partial}(\mc V)$ is the Lie superalgebra of all derivations of $\mc V$
commuting with $\partial$,
and odd elements $K\in W_{\PV}^1(\Pi\mc V)$ such that $[K,K]=0$
correspond bijectively to the PVA $\lambda$-brackets on $\mc V$,
via the map $K\mapsto\{\cdot\,_\lambda\,\cdot\}_K$ given by (cf. \eqref{eq:Xpva})
$$
\{f_\lambda g\}_K=(-1)^{p(f)}K_{\lambda,-\lambda-\partial}(f,g)
\,.
$$
Some of the commutators for the Lie superalgebra $W_{\PV}(\Pi\mc V)$
in low degrees are as follows.
Let $\tint f,\tint g\in\mc V/\partial\mc V$
(here and further $\tint:\,\mc V\to\mc V/\partial\mc V$ denotes the canonical
quotient map),
let $X,Y\in\Der_{\partial}(\mc V)$,
and let $K\in W_{\PV}^1(\Pi\mc V)$ be such that $[K,K]=0$.
We have
\begin{equation}\label{eq:brackets}
\begin{split}
& [\tint f,\tint h]=0
\,,\,\,
[X,\tint f]=\tint X(f)
\,,\,\,
[X,Y]=XY-YX
\,,\\
& [K,\tint f](g)=(-1)^{p(f)}\{f_\lambda g\}_K\big|_{\lambda=0}
\,.
\end{split}
\end{equation}
By the second and fourth equations in \eqref{eq:brackets},
the Poisson bracket \eqref{eq:8.1} on $\mc V/\partial\mc V$
associated to the Poisson structure $K\in W^1_{\PV}(\Pi\mc V)_{\bar 1}$
becomes
\begin{equation}\label{eq:8.6}
\{\tint f,\tint g\}_K
=
\tint \{f_\lambda g\}_K\big|_{\lambda=0}
\,.
\end{equation}
Furthermore, $\mc V$ is a left module over the Lie superalgebra $\mc V/\partial\mc V$
with the well-defined action
\begin{equation}\label{eq:8.7}
\{\tint f,g\}_K
=
\{f_\lambda g\}_K\big|_{\lambda=0}
\,,
\end{equation}
which is a derivation of both the $\lambda$-bracket and the product,
commuting with $\partial$.
Given a Hamiltonian functional $\tint h\in\mc V/\partial\mc V$
and a Poisson structure $K\in W^1_{\PV}(\Pi\mc V)_{\bar 1}$,
the corresponding \emph{Hamiltonian equation} is
\begin{equation}\label{eq:8.8}
\frac{du}{dt}
=
\{\tint h,u\}_K
\,\,,\,\,\,\,
u\in\mc V
\,.
\end{equation}
This equation is called \emph{integrable}
if $\tint h$ is contained in an infinite-dimensional abelian subalgebra
of the Lie algebra $\mc V/\partial\mc V$ (with the Poisson bracket \eqref{eq:8.6}).
Picking a basis $\tint h_0=\tint h,\tint h_1,\tint h_2,\dots$ of this abelian subalgebra,
we obtain a hierarchy of integrable equations
\begin{equation}\label{eq:8.9}
\frac{du}{dt_n}
=
\{\tint h_n,u\}_K
\,\,,\,\,\,\,
n\geq0
\,,
\end{equation}
which are compatible since the corresponding evolutionary vector fields $X_{h_n}$'s commute,
by \eqref{eq:81a}.

\begin{example}\label{ex:boson}
Let $\mc V=\mb F[u,u^\prime,u^{\prime\prime},\dots]$
be the algebra of differential polynomials in one differential variable $u$,
so that $\partial u^{(n)}=u^{(n+1)}$.
One has two compatible PVA $\lambda$-brackets on $\mc V$,
defined by
\begin{equation}\label{eq:8.10}
\{u_\lambda u\}_K=\lambda
\,\,,\,\,\,\,
\{u_\lambda u\}_H=(\partial+2\lambda)u+\frac{c}{12}\lambda^3\,.
\end{equation}
Note that condition \eqref{eq:8.5} holds by \cite{DSK12}.
Let $\tint h_0=\tint u$.
By the last equation in \eqref{eq:8.5},
it's easy to check, using sesquilinearity and the Leibniz rule,
that $[K,\tint h_0]=0$.
Hence we can apply Corollary \eqref{cor:8.5}
to construct an infinite sequence $\tint h_0,\tint h_1,\tint h_2,\dots$
such that \eqref{eq:8.2} holds.
Hence, by Proposition \ref{prop:8.3}
$$
\{\tint h_m,\tint h_n\}_{H,K}=0
\,\text{ for all }\,
m,n\geq0
\,.
$$
We can compute the first few integrals of motion using the recursive formula \eqref{eq:8.2}:
$$
\tint h_0=\tint u
\,,\,\,
\tint h_1=\frac12\tint u^2
\,,\,\,
\tint h_2=\frac12\tint (u^3-\frac{c}{12}u'^2)
\,,\dots
\,.
$$
The corresponding integrable hierarchy of classical Hamiltonian equations
is the \emph{classical KdV hierarchy}:
$$
\frac{du}{dt_0}
=
0
\,\,,\,\,\,\,
\frac{du}{dt_1}
=
u'
\,\,,\,\,\,\,
\frac{du}{dt_2}
=
3uu'+\frac{c}{12}u^{\prime\prime\prime}
\,,\dots
$$
\end{example}

\subsection{Example 2: $W=W_{\ch}(\Pi V)$}

Let $V$ be a vector superspace and let $\partial$
be an even endomorphism of $V$.
Consider the $\mb Z$-graded Lie superalgebra
$W_{\ch}(\Pi V)=\bigoplus_{k\geq-1}W_{\ch}^k$
defined in Section \ref{sec:4.2}.
This Lie superalgebra is described in detail in \cite{BDSHK18}.
We have
$W_{\ch}^{-1}=V/\partial V$,
$W_{\ch}^0$ is the Lie superalgebra of all endomorphisms
of the $\mb F[\partial]$-module $V$,
and the odd elements $K\in W_{\ch}^1$ such that $[K,K]=0$
correspond bijectively to the VA structures on $V$,
via the map $K\mapsto\int^\lambda\!d\sigma[\cdot\,_\sigma\,\cdot]_K$ given by \eqref{eq:Xint}.
Some of the commutators for the Lie superalgebra $W_{\ch}(\Pi V)$
in low degrees are given by formulas \eqref{eq:brackets}.
It follows that the Poisson bracket \eqref{eq:8.1} on $V/\partial V$
associated to the Poisson structure $K\in W^1_{\ch}(\Pi V)$
coincides with formula \eqref{eq:8.6}.

Again, $V$ is a left module over the Lie superalgebra $V/\partial V$
with the well-defined action \eqref{eq:8.7}.
It is a derivation of both the $\lambda$-bracket and the normally ordered product,
commuting with $\partial$.
Given a Hamiltonian functional $\tint h\in V/\partial V$
and a Poisson structure $K\in W^1_{\ch}(\Pi\mc V)_{\bar 1}$,
the corresponding \emph{quantum Hamiltonian equation} is
again \eqref{eq:8.8}.
The notions of integrability, etc., in the quantum case
are the same as in the classical case.

In a similar fashion as in Example \ref{ex:boson},
one obtains Hamiltonian equations of the quantum KdV hierarchy.
The details of this and other examples will be discussed in a subsequent publication.

\appendix
\section{The spectral sequence of a filtered complex}\label{seg:ssfil}

In this appendix, we recall the construction of a spectral sequence from a filtered cohomology complex in a slightly more general setting than is usually discussed in the literature
(see e.g.\ \cite{McL01}). 

Consider a cochain complex $C = \bigoplus_{n=0}^\infty C^n$, where each $C^n$ is a vector superspace over the field $\mb F$, equipped with a differential $d$, an even linear operator on $C$ such that $d^2=0$ and $d C^n \subset C^{n+1}$ for all $n$. We let $C^n=0$ for $n\le-1$.
We suppose that the complex $(C,d)$ has a decreasing filtration $\{\fil^p C\}_{p\in\mb Z}$, so that each space $C^n$ is filtered by subspaces:
\begin{equation}\label{eq:fil}
\cdots\supset \fil^{p-1} C^n \supset \fil^p C^n \supset \fil^{p+1} C^n \supset\cdots
\,, \qquad n\ge0 \,,
\end{equation}
and the differential $d$ is compatible with the filtration:
\begin{equation}\label{eq:dfil}
d \fil^p C^n \subset \fil^p C^{n+1}
\,, \qquad p\in\mb Z \,, \; n\ge0 \,.
\end{equation}
Furthermore, we will assume that the filtration is \emph{separated}, i.e.,
\begin{equation}\label{eq:sepfil}
\bigcap_{p\in\mb Z} \fil^p C^n = 0
\,, \qquad n\ge0 \,.
\end{equation}
We will denote by $H=\bigoplus_{n=0}^\infty H^n$ the cohomology of the complex $(C,d)$:
\begin{equation}\label{eq:hn}
H^n := H^n(C,d) = (C^{n} \cap \ker d) / d C^{n-1}
\,, \qquad n\ge 0 \,.
\end{equation}
The filtration of $C$ induces a decreasing filtration of its cohomology $H$:
\begin{equation}\label{eq:filhn}
\cdots\supset \fil^{p-1} H^n \supset \fil^p H^n \supset \fil^{p+1} H^n \supset\cdots
\,, \qquad n\ge 0 \,,
\end{equation}
where
\begin{equation}\label{eq:fphn}
\fil^p H^n = \frac{(\fil^p C^{n} \cap \ker d) + d C^{n-1}}{d C^{n-1}}
\simeq \frac{\fil^p C^{n} \cap \ker d}{\fil^p C^{n} \cap d C^{n-1}}
\end{equation}
is the image of $\fil^p C^{n} \cap \ker d \subset C^{n} \cap \ker d$ under the canonical projection
$C^{n} \cap \ker d \twoheadrightarrow H^n$.
In other words, $\fil^p H^n$ is the image of $H^n(\fil^p C,d)$ in $H^n(C,d)$ under the linear map induced by the inclusion
$\fil^p C \hookrightarrow C$.

\begin{remark}\label{rem:hfilsep}
If the filtration \eqref{eq:fil} of $C^n$ is separated, then the induced filtration \eqref{eq:filhn} of the cohomology $H^n$ is also separated. 
\end{remark}

Let
$$
\gr H = \bigoplus_{p\in\mb Z} \gr^p H = \bigoplus_{\substack{p\in\mb Z \\ n\ge0}} \gr^p H^n
\,, \qquad 
\gr^p H^n = \fil^p H^n / \fil^{p+1} H^n
\,,
$$
be the associated graded space.
Then by \eqref{eq:fphn}, we have
\begin{equation}\label{eq:grphn}
\gr^p H^n \simeq 
\frac{\fil^p C^{n} \cap \ker d}{(\fil^{p+1} C^{n} \cap \ker d) + (\fil^p C^{n} \cap d C^{n-1})} \,.
\end{equation}

The spectral sequence $\{(E_r,d_r)\}_{r\ge0}$ associated to the filtered complex $(C,d)$ is constructed as follows. For $p,q\in\mb Z$ and $r\ge-1$, let
\begin{equation}\label{eq:zpq}
\begin{split}
Z^{p,q}_r 
&= \{ \al\in \fil^p C^{p+q} \,|\, d\al \in \fil^{p+r} C^{p+q+1} \}
\\
&= \fil^p C^{p+q} \cap d^{-1} (\fil^{p+r} C^{p+q+1}) \,,
\end{split}
\end{equation}
and
\begin{equation}\label{eq:bpq}
\begin{split}
B^{p,q}_r 
&= \{ \al\in\fil^p C^{p+q} \,|\, \al=d\be \;\text{ for some }\;  \be\in \fil^{p-r} C^{p+q-1} \} 
\\
&= \fil^p C^{p+q} \cap d (\fil^{p-r} C^{p+q-1})
\\
&= d Z^{p-r,q+r-1}_r \,.
\end{split}
\end{equation}
Obviously, we have $d B^{p,q}_r = 0$ and
$$
B^{p,q}_r \subset B^{p,q}_s \subset Z^{p,q}_s \subset Z^{p,q}_r
\,, \qquad -1\le r\le s \,.
$$
Note also that $Z^{p+1,q-1}_{r-1} = Z^{p,q}_r \cap \fil^{p+1} C^{p+q}$. We define
\begin{equation}\label{eq:epq}
E^{p,q}_r := Z^{p,q}_r / (Z^{p+1,q-1}_{r-1} + B^{p,q}_{r-1}) 
\,, \qquad p,q \in\mb Z \,, \; r\in\mb Z_+ \,.
\end{equation}
Since
$$
d Z^{p,q}_r = B^{p+r,q-r+1}_r \subset Z^{p+r,q-r+1}_r 
$$
and
$$
d (Z^{p+1,q-1}_{r-1} + B^{p,q}_{r-1}) = B^{p+r+1,q-r}_{r-1} 
\subset Z^{p+r+1,q-r}_{r-1} + B^{p+r+1,q-r}_{r-1} \,,
$$
the differential $d$ induces linear maps
\begin{equation}\label{eq:drepq}
d_r \colon E^{p,q}_r \to E^{p+r,q-r+1}_r 
\end{equation}
such that $d_r^2=0$.
One checks that the cohomology of $(E_r,d_r)$ is isomorphic to $E_{r+1}$, i.e.,
\begin{equation}\label{eq:epqr}
(E^{p,q}_r \cap \ker d_r) / d_r E^{p-r,q+r-1}_r  \simeq E^{p,q}_{r+1} \,,
\end{equation}
so indeed we have a spectral sequence (see e.g.\ \cite{McL01}).

\begin{remark}\label{rem:epqr}
Due to \eqref{eq:epqr}, if $\dim E^{p,q}_s < \infty$ for some $p,q,s$, then
$\dim E^{p,q}_r \le \dim E^{p,q}_s$ for all $r\ge s$. In particular, $E^{p,q}_s = 0$
implies $E^{p,q}_r = 0$ for all $r\ge s$.
\end{remark}

Consider in more detail the $r=0$ term. Observe that, by \eqref{eq:fil} and \eqref{eq:dfil},
$$
Z^{p,q}_{-1} = Z^{p,q}_0 = \fil^{p} C^{p+q} 
\,, \qquad
B^{p,q}_{-1} = d \fil^{p+1} C^{p+q-1} \subset Z^{p+1,q-1}_{-1}
\,.
$$
Hence,
\begin{equation}\label{eq:epq0}
E^{p,q}_0 
= \fil^{p} C^{p+q} / \fil^{p+1} C^{p+q}
= \gr^p C^{p+q}
\,, \qquad p,q \in\mb Z \,.
\end{equation}
The differential $d_0\colon E^{p,q}_0 \to E^{p,q+1}_0$ is induced by the restriction
$d\colon \fil^{p} C^{p+q} \to \fil^{p} C^{p+q+1}$.

\begin{remark}\label{rem:dfil+1}
Suppose that the differential $d$ of $C$ satisfies the following stronger property than
\eqref{eq:dfil}:
\begin{equation}\label{eq:dfil+1}
d \fil^p C^n \subset \fil^{p+1} C^{n+1}
\,, \qquad p\in\mb Z \,, \; n\ge0 \,.
\end{equation}
Then we have $d_0=0$ and
$$
Z^{p,q}_0 = Z^{p,q}_1 = \fil^{p} C^{p+q} 
\,, \qquad
B^{p,q}_0 = d \fil^{p} C^{p+q-1} \subset Z^{p+1,q-1}_0
\,.
$$
Hence,
$E^{p,q}_1 = E^{p,q}_0 = \gr^p C^{p+q}$ and
$d_1\colon E^{p,q}_1 \to E^{p+1,q}_1$ is induced by the restriction
$d\colon \fil^{p} C^{p+q} \to \fil^{p+1} C^{p+q+1}$.
\end{remark}

The limit of the spectral sequence is defined as
\begin{equation}\label{eq:epqinf}
E^{p,q}_\infty := Z^{p,q}_\infty / (Z^{p+1,q-1}_\infty + B^{p,q}_\infty) 
\,, \qquad p,q \in\mb Z \,,
\end{equation}
where
\begin{equation*}
B^{p,q}_\infty := \bigcup_{r\ge0} B^{p,q}_r 
\subset
Z^{p,q}_\infty := \bigcap_{r\ge0} Z^{p,q}_r \,.
\end{equation*}

\begin{lemma}\label{lem:epqinf}
If\/ $\dim E^{p,q}_s < \infty$ for some\/ $p,q,s$, then\/
$\dim E^{p,q}_\infty \le \dim E^{p,q}_s$. In particular, $E^{p,q}_s = 0$
implies\/ $E^{p,q}_\infty = 0$.
\end{lemma}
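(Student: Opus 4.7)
The plan is to exhibit a natural linear surjection from a subspace of $E^{p,q}_s$ onto $E^{p,q}_\infty$, which immediately gives the dimension inequality. First, I would use the separatedness assumption \eqref{eq:sepfil} to identify
$$
Z^{p,q}_\infty = \bigcap_{r\ge0} \bigl(\fil^p C^{p+q} \cap d^{-1}(\fil^{p+r} C^{p+q+1})\bigr)
= \fil^p C^{p+q} \cap \ker d \,,
$$
since $\bigcap_r \fil^{p+r} C^{p+q+1} = 0$. Then the inclusion $Z^{p,q}_\infty \subset Z^{p,q}_s$ composed with the canonical projection $Z^{p,q}_s \twoheadrightarrow E^{p,q}_s$ defines a linear map
$$
\phi_s \colon Z^{p,q}_\infty \longrightarrow E^{p,q}_s \,,
$$
whose image $I_s$ satisfies $\dim I_s \le \dim E^{p,q}_s < \infty$.

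The key step is to show that $\ker \phi_s \subset Z^{p+1,q-1}_\infty + B^{p,q}_\infty$. Suppose $\alpha \in Z^{p,q}_\infty$ lies in the kernel, so $\alpha = \beta + \gamma$ with $\beta \in Z^{p+1,q-1}_{s-1} \subset \fil^{p+1} C^{p+q}$ and $\gamma \in B^{p,q}_{s-1} = dZ^{p-s+1,q+s-2}_{s-1}$. Then $\gamma = d\delta$ gives $d\gamma = 0$, while $d\alpha = 0$ because $\alpha \in \ker d$ by the identification above, so $d\beta = d\alpha - d\gamma = 0$. Hence $\beta \in \fil^{p+1} C^{p+q} \cap \ker d = Z^{p+1,q-1}_\infty$, and trivially $\gamma \in B^{p,q}_{s-1} \subset B^{p,q}_\infty$, proving the claim.

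Given this containment, the canonical projection $Z^{p,q}_\infty \twoheadrightarrow E^{p,q}_\infty = Z^{p,q}_\infty/(Z^{p+1,q-1}_\infty + B^{p,q}_\infty)$ vanishes on $\ker\phi_s$, hence factors through $I_s = Z^{p,q}_\infty/\ker\phi_s$ as a surjection $I_s \twoheadrightarrow E^{p,q}_\infty$. Combining the two inequalities gives
$$
\dim E^{p,q}_\infty \,\le\, \dim I_s \,\le\, \dim E^{p,q}_s \,,
$$
which is the desired bound; the ``in particular'' statement is the special case $\dim E^{p,q}_s = 0$.

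I do not expect any serious obstacle here: the whole argument rests on the single observation that when a cocycle is decomposed as a cocycle-modulo-filtration plus a coboundary, the ``cocycle-modulo-filtration'' piece is automatically a genuine cocycle (by subtraction), and on the separatedness of the filtration, which forces $Z^{p,q}_\infty$ to consist of honest cocycles. Everything else is bookkeeping with the defining formulas \eqref{eq:zpq}--\eqref{eq:epqinf}.
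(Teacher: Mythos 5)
Your proof is correct, and it reaches the bound by a route that is parallel to, but not identical with, the paper's. You work directly with $Z^{p,q}_\infty=\fil^p C^{p+q}\cap\ker d$ (the identity \eqref{eq:zpqinf}, which the paper records right after the lemma) and build the map $\phi_s\colon Z^{p,q}_\infty\to E^{p,q}_s$; the only substantive point is the kernel containment $\ker\phi_s\subset Z^{p+1,q-1}_\infty+B^{p,q}_\infty$, which you obtain from the observation that in a decomposition $\alpha=\beta+\gamma$ of a cocycle with $\gamma$ exact the piece $\beta$ is again a cocycle, and the rest is the first isomorphism theorem. The paper instead pushes everything into the associated graded piece via the projection $\pi\colon\fil^p C^{p+q}\twoheadrightarrow\gr^p C^{p+q}$: it identifies every page as a subquotient of that single space, $E^{p,q}_r\simeq \bar Z^{p,q}_r/\bar B^{p,q}_{r-1}$ and $E^{p,q}_\infty\simeq\bar Z^{p,q}_\infty/\bar B^{p,q}_\infty$ with $\bar Z^{p,q}_r=\pi(Z^{p,q}_r)$, $\bar B^{p,q}_r=\pi(B^{p,q}_r)$, and then the inequality is immediate from the inclusions $\bar Z^{p,q}_\infty\subset\bar Z^{p,q}_s$ and $\bar B^{p,q}_\infty\supset\bar B^{p,q}_{s-1}$. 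The two arguments encode the same mechanism (every class surviving to the limit is represented inside $E^{p,q}_s$), but yours is leaner, never needing the identification of the finite pages with subquotients of $\gr^p C^{p+q}$, whereas the paper's sets up the tower $\bar B^{p,q}_{-1}\subset\cdots\subset\bar Z^{p,q}_0$ once and reuses it later (e.g.\ in the discussion of collapse and in Lemma \ref{lem:fdh2}). One small remark: your appeal to separatedness is slightly stronger than needed—in the kernel computation it suffices that $d\alpha\in\fil^{p+r}C^{p+q+1}$ for all $r$, which already gives $\beta\in Z^{p+1,q-1}_\infty$ directly from the definition \eqref{eq:zpq}—but since \eqref{eq:sepfil} is a standing assumption of the appendix, using it as you do is perfectly legitimate.
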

\begin{proof}
Note that, by construction, $Z^{p,q}_r$ and $B^{p,q}_r$ are subspaces of $\fil^p C^{p+q}$ for all $r\ge-1$. Let $\pi\colon \fil^p C^{p+q} \twoheadrightarrow \gr^p C^{p+q}$
be the canonical projection. Then we have a tower of subspaces
\begin{equation*}
0 = \bar B^{p,q}_{-1} \subset
\bar B^{p,q}_{0} \subset \bar B^{p,q}_{1} \subset\cdots\subset 
\bar Z^{p,q}_{2} \subset \bar Z^{p,q}_{1} \subset \bar Z^{p,q}_{0} = \gr^p C^{p+q}
\,,
\end{equation*}
where $\bar Z^{p,q}_r := \pi(Z^{p,q}_r)$ and $\bar B^{p,q}_r := \pi(B^{p,q}_r)$.
We claim that
\begin{equation*}
E^{p,q}_r \simeq \bar Z^{p,q}_r / \bar B^{p,q}_{r-1}
\,, \qquad r\ge 0 \,.
\end{equation*}
Indeed, the composition of the restriction of $\pi$ to $Z^{p,q}_r$ with the canonical projection 
$\bar Z^{p,q}_r \twoheadrightarrow \bar Z^{p,q}_r / \bar B^{p,q}_{r-1}$ is clearly surjective,
and its kernel is equal to $Z^{p+1,q-1}_{r-1} + B^{p,q}_{r-1}$.
Similarly,
\begin{equation*}
E^{p,q}_\infty \simeq \bar Z^{p,q}_\infty / \bar B^{p,q}_\infty
\,,
\end{equation*}
where
\begin{equation*}
\bar B^{p,q}_\infty := \pi(B^{p,q}_\infty) = \bigcup_{r\ge 0} \bar B^{p,q}_r 
\,, \qquad
\bar Z^{p,q}_\infty := \pi(Z^{p,q}_\infty) = \bigcap_{r\ge 0} \bar Z^{p,q}_r \,.
\end{equation*}
The lemma then follows from the inclusions $\bar Z^{p,q}_\infty \subset \bar Z^{p,q}_s$
and $\bar B^{p,q}_\infty \supset \bar B^{p,q}_{s-1}$.
\end{proof}

Notice that
\begin{equation}\label{eq:zpqinf}
Z^{p,q}_\infty = \fil^p C^{p+q} \cap \ker d \,,
\end{equation}
since the filtration of $C$ is separated (see \eqref{eq:sepfil}).
On the other hand,
\begin{equation}\label{eq:bpqinf}
B^{p,q}_\infty = \fil^p C^{p+q} \cap d \Bigl( \bigcup_{m\in\mb Z} \fil^m C^{p+q-1} \Bigr) \,.
\end{equation}
If the filtration of $C$ is \emph{exhaustive}, i.e., if
\begin{equation}\label{eq:filex}
\bigcup_{m\in\mb Z} \fil^m C^n = C^n 
\,, \qquad n\ge0 \,,
\end{equation}
then $B^{p,q}_\infty$ will be equal to
\begin{equation}\label{eq:bpqex}
\widetilde B^{p,q}_\infty := \fil^p C^{p+q} \cap d C^{p+q-1}
\,.
\end{equation}
In this case, comparing \eqref{eq:grphn} and \eqref{eq:epqinf}, we obtain
\begin{equation}\label{eq:egrh1}
\gr^p H^{p+q} \simeq
Z^{p,q}_\infty / (Z^{p+1,q-1}_\infty + B^{p,q}_\infty)
= E^{p,q}_\infty 
\,, \qquad p,q\in\mb Z \,.
\end{equation}

\begin{remark}\label{rem:hfilex}
Assume that, for some fixed $n\ge0$, the filtration of $C^n \cap \ker d$ is exhaustive, i.e.,
\begin{equation}\label{eq:kerdex}
\bigcup_{m\in\mb Z} (\fil^m C^n \cap \ker d) = C^n \cap \ker d
\,.
\end{equation}
Then the induced filtration of $H^n=H^n(C,d)$ is exhaustive
(see \eqref{eq:filhn}, \eqref{eq:fphn}). This condition is weaker than
the filtration of $C^n$ being exhaustive.
\end{remark}

In general, without assuming the filtration of $C$ is exhaustive, we have
$$
B^{p,q}_\infty \subset \widetilde B^{p,q}_\infty \subset Z^{p,q}_\infty \,.
$$
Then from \eqref{eq:grphn} and \eqref{eq:epqinf}, instead of the isomorphisms \eqref{eq:egrh1}, we get
surjective linear maps
\begin{equation}\label{eq:egrh}
E^{p,q}_\infty \twoheadrightarrow \gr^p H^{p+q} \simeq
Z^{p,q}_\infty / (Z^{p+1,q-1}_\infty + \widetilde B^{p,q}_\infty)
\,, \qquad p,q\in\mb Z \,.
\end{equation}
As a consequence of \eqref{eq:egrh}, we obtain upper bounds on the dimension of the cohomology $H$.

\begin{lemma}\label{lem:fdh}
Let\/ $r_0=0$ or\/ $1$, and\/ $(C,d)$ be a cochain complex equipped with 
a decreasing separated filtration such that\/ $d\fil^p C \subset \fil^{p+r_0} C$
for all\/ $p\in\mb Z$.
Consider the associated graded complex\/ $\gr C$ with the differential\/
$\bar d \colon \gr^p C \to \gr^{p+r_0} C$ induced by\/ $d$.

\begin{enumerate}[(a)]
\item
Suppose that\/ $\dim H^{p,q}(\gr C,\bar d) < \infty$ for some\/ $p,q\in\mb Z$, 
where
$$
H^{p,q}(\gr C,\bar d) := \bigl(\gr^p C^{p+q} \cap \ker\bar d\bigr) \big/ \bar d \bigl(\gr^{p-r_0} C^{p+q-1}\bigr) \,.
$$
Then
$$
\dim \gr^p H^{p+q}(C,d) \le \dim H^{p,q}(\gr C,\bar d) \,.
$$

\item
Assume that, for some fixed\/ $n\ge0$, the filtration of\/ $C^n \cap \ker d$ is exhaustive.
Then\/ $\dim H^n(\gr C,\bar d) < \infty$ implies that\/
$\dim H^n(C,d) \le \dim H^n(\gr C,\bar d)$.
\end{enumerate}
\end{lemma}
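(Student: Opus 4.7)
The plan is to interpret $H^{p,q}(\gr C,\bar d)$ as an early page of the spectral sequence constructed earlier in this appendix, deduce (a) from Lemma \ref{lem:epqinf} together with the surjection \eqref{eq:egrh}, and then deduce (b) from (a) by an elementary filtration argument.

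For part (a), I will first locate $H^{p,q}(\gr C,\bar d)$ within the spectral sequence. If $r_0=0$, then $d_0\colon E_0^{p,q}\to E_0^{p,q+1}$ is precisely $\bar d$ on $\gr^p C^{p+q}$, so by construction $E_1^{p,q}\cong H^{p,q}(\gr C,\bar d)$. If instead $r_0=1$, then by Remark \ref{rem:dfil+1} we have $d_0=0$, $E_1^{p,q}=\gr^p C^{p+q}$, and $d_1$ is induced by $\bar d\colon \gr^p C^{p+q}\to\gr^{p+1} C^{p+q+1}$, so this time $E_2^{p,q}\cong H^{p,q}(\gr C,\bar d)$. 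In either case the finite-dimensionality hypothesis combined with Lemma \ref{lem:epqinf} yields $\dim E_\infty^{p,q}\le\dim H^{p,q}(\gr C,\bar d)$, and the surjection $E_\infty^{p,q}\twoheadrightarrow \gr^p H^{p+q}(C,d)$ from \eqref{eq:egrh} then gives the desired bound.

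For part (b), I would sum (a) over all $p,q$ with $p+q=n$. Since $H^n(\gr C,\bar d)=\bigoplus_{p\in\mb Z}H^{p,n-p}(\gr C,\bar d)$ is finite-dimensional, each summand is finite-dimensional and only finitely many are nonzero, so applying (a) bidegree-by-bidegree gives
$$
\sum_{p\in\mb Z}\dim\gr^p H^n(C,d)\,\le\,\dim H^n(\gr C,\bar d)\,<\,\infty.
$$
It remains to upgrade this estimate on the associated graded to an estimate on $\dim H^n(C,d)$ itself. The induced filtration of $H^n(C,d)$ is separated by Remark \ref{rem:hfilsep}, and the assumption that the filtration of $C^n\cap\ker d$ is exhaustive yields via Remark \ref{rem:hfilex} that it is also exhaustive.

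The main (minor) subtlety I foresee is in this last step, since the filtration of $H^n(C,d)$ is \emph{a priori} indexed by all of $\mb Z$: I would point out that because $\sum_p\dim\gr^p H^n(C,d)<\infty$, the graded pieces $\gr^p H^n(C,d)$ vanish for all but finitely many $p$, so separatedness forces $\fil^p H^n(C,d)=0$ for $p$ sufficiently large and exhaustiveness forces $\fil^p H^n(C,d)=H^n(C,d)$ for $p$ sufficiently small. Consequently the filtration has finite length, whence $\dim H^n(C,d)=\sum_p\dim\gr^p H^n(C,d)\le \dim H^n(\gr C,\bar d)$, completing the argument.
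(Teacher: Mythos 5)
Your proposal is correct and follows essentially the same route as the paper: identifying $H^{p,q}(\gr C,\bar d)$ with $E^{p,q}_{r_0+1}$ (via \eqref{eq:epq0} and Remark \ref{rem:dfil+1}) and combining Lemma \ref{lem:epqinf} with the surjection \eqref{eq:egrh} for part (a), then using Remarks \ref{rem:hfilsep} and \ref{rem:hfilex} together with the finiteness of the nonzero graded pieces to conclude the filtration of $H^n$ has finite length for part (b). The only cosmetic difference is that you treat the cases $r_0=0$ and $r_0=1$ separately where the paper handles them uniformly.
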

\begin{proof}
(a)
By \eqref{eq:epq0} and Remark \ref{rem:dfil+1}, we have that
$E^{p,q}_{r_0} =  \gr^p C^{p+q}$ and
$\bar d = d_{r_0} \colon E^{p,q}_{r_0} \to E^{p+{r_0},q-{r_0}+1}_{r_0}$ is the corresponding differential. 
Thus, $H^{p,q}(\gr C,\bar d) \simeq E^{p,q}_{r_0+1}$,
and claim (a) follows from \eqref{eq:egrh} and Lemma \ref{lem:epqinf}.

(b)
As before, let us write $H^n=H^n(C,d)$ for short.
We have
$$
H^n(\gr C,\bar d) = \bigoplus_{p\in\mb Z} H^{p,n-p}(\gr C,\bar d) \,,
$$
and part (a) implies that $\gr^p H^n \ne 0$ only for finitely many $p\in\mb Z$.
Hence, the filtration of $H^n$ is finite, i.e., of the form
$$
\cdots = \fil^{k-1} H^n = \fil^{k} H^n \supset \fil^{k+1} H^n \supset\cdots\supset \fil^{\ell-1} H^n \supset \fil^{\ell} H^n = \fil^{\ell+1} H^n = \cdots
$$
for some integers $k\le\ell$. Since, by Remarks \ref{rem:hfilsep} and \ref{rem:hfilex}, the filtration of $H^n$ is separated and exhaustive, it follows that $\fil^{k} H^n = H^n$ and $\fil^{\ell} H^n = 0$. Thus,
$$
\dim H^n = \sum_{p=k}^{\ell-1} \dim\gr^p H^n \,,
$$
which together with part (a) completes the proof of (b).
\end{proof}

A spectral sequence $\{(E_r,d_r)\}$ is said to \emph{collapse} (or degenerate) at the $s$-th term if all differentials $d_r=0$ for $r\ge s$. We will use the following notation from the proof of Lemma \ref{lem:epqinf}. Let $\pi\colon\fil^p C \twoheadrightarrow \gr^p C$ be the canonical projection and
$\bar Z^{p,q}_r := \pi(Z^{p,q}_r)$, $\bar B^{p,q}_r := \pi(B^{p,q}_r)$. 
Then $E^{p,q}_r \simeq \bar Z^{p,q}_r / \bar B^{p,q}_{r-1}$ and we have short exact sequences
\begin{equation*}
0 \to \bar Z^{p,q}_{r+1} / \bar B^{p,q}_{r-1} \to \bar Z^{p,q}_r / \bar B^{p,q}_{r-1} 
\xrightarrow{\, d_r \,} \bar B^{p,q}_r / \bar B^{p,q}_{r-1} \to 0 \,.
\end{equation*}
If $d_r=0$ for $r\ge s$, we obtain
\begin{align}
\label{eq:col1}
\bar B^{p,q}_{s-1} &= \bar B^{p,q}_{s} = \bar B^{p,q}_{s+1} = \cdots = \bar B^{p,q}_\infty \,,
\\ \label{eq:col2}
\bar Z^{p,q}_{s} &= \bar Z^{p,q}_{s+1} = \bar Z^{p,q}_{s+2} = \cdots = \bar Z^{p,q}_\infty \,,
\\ \label{eq:col3}
E^{p,q}_s &\simeq E^{p,q}_{s+1} \simeq E^{p,q}_{s+2} \simeq\cdots\simeq E^{p,q}_\infty \,.
\end{align}

A common cause for collapse is given in the next remark.

\begin{remark}\label{rem:col}
Fix $s\ge 2$ and suppose that, for all $p\in\mb Z$, we have $E^{p,q}_s=0$ whenever $q<0$ or $q\ge s-1$. Then the spectral sequence $\{(E_r,d_r)\}$ collapses at the $s$-th term. This follows from 
\eqref{eq:drepq} and Remark \ref{rem:epqr}.
\end{remark}

The following lemma will be useful for us.

\begin{lemma}\label{lem:fdh2}
Let\/ $(C,d)$ be a cochain complex equipped with a decreasing separated filtration such that\/ $d\fil^p C \subset \fil^{p+1} C$ for all\/ $p\in\mb Z$. Consider the associated graded complex\/ $\gr C$ with the differential\/ $\bar d \colon \gr^p C \to \gr^{p+1} C$ induced by\/ $d$, and denote by\/
$\pi$ the canonical projection\/ $\fil^p C \twoheadrightarrow \gr^p C$. If the spectral sequence \eqref{eq:epq} collapses at the second term, then
$$
\pi(\fil^p C^{n} \cap \ker d) = \gr^p C^{n} \cap \ker\bar d
$$
for all\/ $p,n\in\mb Z$.
\end{lemma}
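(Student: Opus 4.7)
The plan is to establish the two inclusions separately. The inclusion $\pi(\fil^p C^{n} \cap \ker d) \subset \gr^p C^{n} \cap \ker\bar d$ is immediate from the compatibility of $\pi$ with the differentials: if $\al \in \fil^p C^n \cap \ker d$, then $\bar d(\pi(\al)) = \pi(d\al) = 0$.

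For the reverse inclusion, I would let $\bar\al \in \gr^p C^n \cap \ker\bar d$ and lift it to some $\al \in \fil^p C^n$. The condition $\bar d(\pi(\al)) = 0$ translates directly into $d\al \in \fil^{p+2} C^{n+1}$, i.e., $\al \in Z_2^{p,n-p}$ in the notation of \eqref{eq:zpq}. The goal is then to produce $\al' \in \fil^p C^n \cap \ker d$ with $\pi(\al') = \pi(\al)$; equivalently, to show that $\pi(\al) \in \pi(Z_\infty^{p,n-p})$, where $Z_\infty^{p,n-p} = \fil^p C^n \cap \ker d$ by \eqref{eq:zpqinf}.

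The key input will be the collapse hypothesis $d_r = 0$ for all $r \geq 2$. By the discussion of collapse preceding the lemma, this vanishing propagates through the short exact sequences $0 \to \bar Z^{p,q}_{r+1}/\bar B^{p,q}_{r-1} \to \bar Z^{p,q}_r/\bar B^{p,q}_{r-1} \xrightarrow{d_r} \bar B^{p,q}_r/\bar B^{p,q}_{r-1} \to 0$ to yield the identity $\bar Z_2^{p,q} = \bar Z_\infty^{p,q}$ recorded in \eqref{eq:col2}, where $\bar Z_r^{p,q} := \pi(Z_r^{p,q})$. Specializing to $q = n-p$, I would obtain $\pi(Z_2^{p,n-p}) = \pi(Z_\infty^{p,n-p}) = \pi(\fil^p C^n \cap \ker d)$, so $\pi(\al)$ lies in the desired subspace.

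I do not expect a serious obstacle: the lemma is essentially a translation of the identity $\bar Z_2 = \bar Z_\infty$ back to a statement about representatives in $\fil^p C^n$, and the propagation of the collapse from vanishing of $d_r$ to stabilization of the $\bar Z_r$ towers is already recorded in the appendix. The only point that requires care is ensuring that one uses the correct indexing convention (the bigrading $(p, n-p)$ rather than $(p,n)$) so that the second sesquilinearity-style shift from $\fil^{p+1}$ to $\fil^{p+2}$ produced by $\bar d$ matches the definition of $Z_2^{p,q}$.
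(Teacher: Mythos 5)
Your proposal is correct and is essentially the paper's own argument: the paper proves the lemma by observing that $\bar Z^{p,q}_2 = \gr^p C^{p+q}\cap\ker\bar d$ and then invoking \eqref{eq:col2} (stabilization $\bar Z^{p,q}_2=\bar Z^{p,q}_\infty$ from the collapse) together with \eqref{eq:zpqinf} ($Z^{p,q}_\infty=\fil^p C^{p+q}\cap\ker d$, using separatedness). Your spelled-out version of the two inclusions, with the lift of $\bar\al$ landing in $Z^{p,n-p}_2$, is just this same chain of identities made explicit.
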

\begin{proof}
This follows from \eqref{eq:zpqinf} and \eqref{eq:col2}, since 
$\bar Z^{p,q}_2 = \gr^p C^{p+q} \cap \ker\bar d$.
\end{proof}




\end{document}